\newcommand{\der}{\nabla}
\newcommand{\cder}{\D^{(A)}}
\newcommand{\rder}{\mbox{$\nabla \mkern-13mu /$\,}}
\newcommand{\rcder}{\mbox{$\D \mkern-13mu /$\,}^{(A)}}
\newcommand{\les}{\lesssim}
\newcommand{\bea}{\begin{eqnarray}}
\newcommand{\eea}{\end{eqnarray}}
\newenvironment{proof}{\noindent {\bf Proof} }{\endprf\par}
\def \endprf{\hfill  {\vrule height6pt width6pt depth0pt}\medskip}
\def\beaa{\begin{eqnarray*}}
\def\eeaa{\end{eqnarray*}}
\def\pa{\partial}
\def\a{{\alpha}}
\def\b{{\beta}}
\def\ga{\gamma}
\def\Ga{\Gamma}
\def\de{\delta}
\def\eps{\epsilon}
\def\la{\lambda}
\def\La{\Lambda}
\def\si{\sigma}
\def\Si{\Sigma}
\def\om{\omega}
\def\Om{\Omega}
\def\th{\theta}
\def\ze{\zeta}
\def\Lb{{\underline{L}}}
\def\D{{\bf D}}
\def\J{{\bf J}}
\def\T{{\bf T}}
\def\g{{\bf g}}
\def\pr{\partial}
\def\lap{\Delta}
\def\SSS{{\Bbb S}}
\begin{document}
\theoremstyle{plain}
  \newtheorem{theorem}{Theorem}
  \newtheorem{proposition}[subsection]{Proposition}
  \newtheorem{lemma}[subsection]{Lemma}

\theoremstyle{remark}
  \newtheorem{remark}[subsection]{Remark}
  \newtheorem{remarks}[subsection]{Remarks}

\theoremstyle{definition}
  \newtheorem{definition}[subsection]{Definition}

\include{psfig}
\title[Yang-Mills on Curved Spacetimes]{The Global Existence of Yang-Mills Fields \\ on Curved Space-Times}
\author{Sari Ghanem}
\address{Institut de Math\'ematiques de Jussieu, Universit\'e Paris Diderot - Paris VII, 75205 Paris Cedex 13, France}
\email{ ghanem@@math.jussieu.fr}
\maketitle

\begin{abstract} This is an introductory chapter in a series in which we take a systematic study of the Yang-Mills equations on curved space-times. In this first, we provide standard material that consists in writing the proof of the global existence of Yang-Mills fields on arbitrary curved space-times using the Klainerman-Rodnianski parametrix combined with suitable Gr\"onwall type inequalities. While the Chru\'sciel-Shatah argument requires a simultaneous control of the $L^{\infty}_{loc}$ and the $H^{2}_{loc}$ norms of the Yang-Mills curvature, we can get away by controlling only the $H^{1}_{loc}$ norm instead, and write a new gauge independent proof on arbitrary, fixed, sufficiently smooth, globally hyperbolic, curved 4-dimensional Lorentzian manifolds. This manuscript is written in an expository way in order to provide notes to Master's level students willing to learn mathematical General Relativity. 
\end{abstract}

\setcounter{page}{1}
\pagenumbering{arabic}

\section{Introduction}

Gauge field theories, such as the Maxwell equations and the Yang-Mills equations, arise in important physical theories to describe electromagnetism and the weak and strong interactions, and are to some extent mathematically related to the Einstein vacuum equations in General Relativity. Indeed, using Cartan formalism the Einstein vacuum equations can be written as the Yang-Mills equations except to the fact that the background geometry is part of the unknown solution of the evolution problem in General Relativity, while in Yang-Mills theory one can fix the background to be a given space-time.

In a classical paper, [EM1]-[EM2], Eardley and Moncrief proved global existence of solutions of the Yang-Mills equations in the 4-dimensional Minkowski background. This is an introductory chapter in a series in which we aim to extend their global regularity result to curved backgrounds. In this first, we write the proof of the global existence of Yang-Mills fields on arbitrary fixed curved space-time.

The Eardley-Moncrief result made a use of a hyperbolic formulation of the problem. Indeed, while the Yang-Mills equations say that the Yang-Mills curvature is divergence free on the background geometry, one can obtain a hyperbolic formulation by taking the covariant divergence of the Bianchi identity. This leads to a tensorial covariant wave equation on the Yang-Mills curvature with a non-linear term. It is exactly the study of this non-linear term that permits one to answer the question of local well-posdness, and global well-posdness of the equations. In this formulation, the initial data consists of the Yang-Mills potential, that is a one form valued in the Lie algebra, and the electric field (loosely speaking the time derivative of the potential) on a given spacelike Cauchy hypersurface $\Sigma$. The initial data set has to verify itself the Yang-Mills equations, that is the covariant divergence of the electric field vanishes. One looks for a Yang-Mills curvature that satisfies the Yang-Mills equations such that once restricted on this hypersurface $\Sigma$ the Yang-Mills curvature corresponds to that given by the prescribed potential and electric field.

Eardley and Moncrief proved global existence of solutions of the Yang-Mills equations in the 4-dimensional Minkowski background by proving a local existence result and providing pointwise estimates on the curvature, [EM1]-[EM2]. Their approach depended on the fundamental solution of the wave equation on flat space-time, and the use of the Cronstr\"om gauge condition, that has the remarkable advantage of expressing the potential as a function of the curvature directly in terms of an integral, to estimate the non-linear term. Later on, this result was extended by Chru\'sciel and Shatah, [CS], to curved space-times using the same approach, by making use of the Friedlander parametrix for the wave equation in causal domains in curved space-times, [Fried], and the Cronstr\"om gauge condition as well. In a recent paper, [KR1], Klainerman and Rodnianski constructed a parametrix for the wave equation which permitted them to give a new gauge independent proof of the Eardley-Moncrief result [EM2] in a Minkowski background.

The Klainerman-Rodnianski's approach relies on their derivation of a covariant representation formula for the wave equation on arbitrary, smooth, globally hyperbolic, curved space-times, in which the integral terms are supported on the past null cone. As the authors pointed out, their parametrix can be immediately adapted to gauge covariant derivatives; this is because the scalar product on the Lie algebra $<\; ,\;>$ is Ad-invariant. They used it to give a new gauge independent proof of the Eardley-Moncrief result [EM2], of which the only ingredient is the conservation of the energy. As the authors mentioned, one can generalize their proof of the global existence of Yang-Mills fields on the flat Minkowski space-time to arbitrary smooth, globally hyperbolic, curved space-times under the assumption that there exists a timelike verctor field $\frac{\pa}{\pa t}$ of which the deformation tensor is finite, as it has been assumed in previous work by Chru\'sciel and Shatah, [CS].

In this manuscript, we provide standard material, but not so clearly pointed out in literature, that consists in writing the proof of the global existence of Yang-Mills fields on arbitrary curved space-times by using the Klainerman-Rodnianski parametrix combined with suitable Gr\"onwall type inequalities. While the Chru\'sciel-Shatah argument requires a simultaneous control of the $L^{\infty}_{loc}$ and the $H^{2}_{loc}$ norms of the Yang-Mills curvature, we can get away by controlling only the $H^{1}_{loc}$ norm instead. However, we were unable to get rid of any control on the gradient of the Yang-Mills curvature, as it is the case in the proof on Minkowski space-time in [KR1]. Hence, this provides a new gauge independent proof and improves the Chru\'sciel-Shatah's result, [CS], for sufficiently smooth, globally hyperbolic, curved 4-dimensional Lorentzian manifolds.\\

\subsection{The statement}\

More precisely, we will prove the following theorem,\\

\begin{theorem}  \label{theoremglocalexstenceYang-Mills}
Let $(M, \g)$ be a curved 4-dimensional Lorentzian manifold. We know by then that at each point $p \in M$, there exists a frame $\{ \hat{t}, n, e_{a}, e_{b} \}$ where,
\beaa
\g &=& - d\hat{t}^{2} + dn^{2} + de_{a}^{2} + de_{b}^{2} 
\eeaa

We assume that $\g$ is sufficiently smooth, $M$ is globally hyperbolic, and that there exists a timelike vector field $\frac{\pa}{\pa t}$ and $C(t) \in L_{loc}^{1}$, such that for all $\hat{\mu}, \hat{\nu} \in \{\hat{t}, n, e_{a}, e_{b} \}$, the components of the deformation tensor $\pi^{\hat{\mu}\hat{\nu}}( \frac{\pa}{\pa t})  = \frac{1}{2} [ \der^{\hat{\mu}}  (\frac{\pa}{\pa t})^{\hat{\nu}}+  \der^{\hat{\nu}}  (\frac{\pa}{\pa t})^{\hat{\mu}} ] $ verify,
\beaa
| \pi^{\hat{\mu}\hat{\nu}}( \frac{\pa}{\pa t}) |_{L^{\infty}_{loc(\Sigma_{t})}} \leq C(t) 
\eeaa

where $\Sigma_{t}$ are the $t= constant$ hypersurfaces, and coordinate $t$ could be defined only locally. Let, $\Sigma_{t=t_{0}} $ be a Cauchy hypersurface prescribed by $t=t_{0}$. Let $F_{\hat{\mu}\hat{\nu}}$ be the components of the Yang-Mills fields in the frame $\{\hat{t}, n, e_{a}, e_{b} \}$, defined as the anti-symmetric 2-tensor solution of the Cauchy problem of the Yang-Mills equations $\cder_{\a}F^{\a\b} = 0$, where the initial data prescribed on the Cauchy hypersurface $\Sigma_{t=0}$ verifies the Yang-Mills constraint equations, $${\cder}^{\b} F_{\hat{t}\b} (t= 0) = 0 $$

Then, we have that local solutions to the Yang-Mills equations can be extended globally in $t$ if,
\beaa
E_{F}^{\frac{\pr}{\pr t}} (t=t_{0}) < \infty
\eeaa
and,
\beaa
E_{ \cder F}^{\frac{\pr}{\pr t}} (t=t_{0}) < \infty
\eeaa
where,
\beaa
E_{F}^{\frac{\pr}{\pr t}} (t=0) &=&  \int_{q \in \Sigma_{t=0} }  \frac{1}{2} [  | F_{\hat{t}n}|^{2} + |F_{\hat{t}a}|^{2} +  | F_{\hat{t}b}|^{2} +   |F_{na}|^{2} +   |F_{nb}|^{2} + | F_{ab}|^{2} ] (q) \\
&& \quad \quad \quad . \sqrt{ - \g(\frac{\pr}{\pr t },\frac{\pr}{\pr t }) } dV_{\Sigma}(q)
\eeaa
and,
\beaa
E_{\cder F}^{\frac{\pr}{\pr t}} (t=t_{0}) &=&  \int_{q \in \Sigma_{t=0} }  \frac{1}{2} [  | \cder F_{\hat{t}n}|^{2} + | \cder F_{\hat{t}a}|^{2} +  | \cder F_{\hat{t}b}|^{2} +   | \cder F_{na}|^{2}  \\
&&\quad \quad \quad +   | \cder F_{nb}|^{2} + | \cder F_{ab}|^{2} ] (q)  . \sqrt{ - \g(\frac{\pr}{\pr t },\frac{\pr}{\pr t }) } dV_{\Sigma} (q)
\eeaa
where,
\beaa
|\cder F_{\hat{\mu}\hat{\nu}}|^{2} &=&  |\cder_{\hat{t}}F_{\hat{\mu}\hat{\nu}}|^{2} + |\cder_{n}F_{\hat{\mu}\hat{\nu}}|^{2}  + |\cder_{e_{a}} F_{\hat{\mu}\hat{\nu}}|^{2}  + |\cder_{e_{b}} F_{\hat{\mu}\hat{\nu}}|^{2} \\
\eeaa

\end{theorem}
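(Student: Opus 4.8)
The plan is to run a continuation argument: local existence gives a maximal interval of existence, and one shows that on any finite time interval the relevant norms of $F$ stay bounded, which forbids blow-up and hence forces the solution to be global. The quantities to propagate are exactly the two energies in the statement, $E_F^{\pa/\pa t}(t)$ and $E_{\cder F}^{\pa/\pa t}(t)$, and the first reduction is to show that finiteness of these two energies at time $t$ controls the $H^1_{loc}$ norm of $F$ on $\Sigma_t$ — this is where the relaxation from the Chru\'sciel-Shatah $L^\infty_{loc}\cap H^2_{loc}$ requirement down to $H^1_{loc}$ must be made precise.

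First I would derive the energy-type inequalities: contract the covariant wave equation satisfied by $F$ (obtained by taking the gauge-covariant divergence of the Bianchi identity, producing $\Box^{(A)} F = \mathcal{F}\#\mathcal{F}$ schematically, with the Yang-Mills curvature playing the role of the nonlinearity via the Ad-invariant inner product) against the energy-momentum current associated with $\pa/\pa t$, and integrate over the spacetime slab between $\Sigma_{t_0}$ and $\Sigma_t$. The divergence theorem yields $E_F^{\pa/\pa t}(t)$ plus a bulk term; the bulk term splits into a piece controlled by the deformation tensor $\pi(\pa/\pa t)$, hence by $C(t)\in L^1_{loc}$ times the energy, and a piece coming from the nonlinearity. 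The same procedure applied to $\cder F$ (differentiating the wave equation once) gives a control on $E_{\cder F}^{\pa/\pa t}(t)$ with analogous bulk terms, now quadratic and cubic in $F$ and $\cder F$. One then closes with a Gr\"onwall argument in $t$, but to do so one needs to bound the nonlinear bulk terms by (something integrable in $t$) times (the energies); this is precisely the point where the Klainerman-Rodnianski parametrix enters.

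The key step — and the main obstacle — is to obtain a pointwise (or $L^\infty_{loc}$) bound on $F$, and on the lower-order interactions in the $\cder F$ equation, in terms of the two energies, so that the nonlinear bulk terms can be absorbed. Here I would invoke the covariant representation formula of Klainerman-Rodnianski: since their parametrix is adapted to gauge-covariant derivatives (the Ad-invariance of $\langle\,,\,\rangle$ is what makes this work), one can write $F$ at a point $p$ as an integral over the past null cone from $p$ of terms involving $F$, $\cder F$, and curvature of $(M,\g)$, plus data on the initial cone slice. Estimating this integral using the energy flux through the null cone — controlled by $E_F^{\pa/\pa t}$ and $E_{\cder F}^{\pa/\pa t}$ via the comparison between the $\pa/\pa t$-energy and the null-cone flux (this comparison costs a factor depending on $\pi(\pa/\pa t)$, hence on $C(t)$) — yields the desired $L^\infty_{loc}$ control of $F$ from the $H^1_{loc}$ energies. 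The subtlety is that, unlike the Minkowski case in [KR1] where the energy alone suffices, one genuinely needs the first-order energy $E_{\cder F}^{\pa/\pa t}$ because the curvature of the background and the non-flatness of the null cone produce error terms in the parametrix that are not present on Minkowski; honestly tracking these error terms, showing they are controlled by $\cder F$ in $L^2$ over the cone, and checking that all the $t$-dependence assembled along the way is integrable (so Gr\"onwall applies on $[t_0,T]$ for arbitrary finite $T$) is the technical heart of the argument.

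Once the nonlinear bulk terms are bounded by $C(t)\,(E_F^{\pa/\pa t}(t)+E_{\cder F}^{\pa/\pa t}(t))$ plus lower-order integrable-in-$t$ contributions, Gr\"onwall's inequality gives that both energies remain finite on every compact $t$-interval, contradicting any finite maximal time of existence; hence the local solution extends globally in $t$. I would organize the write-up as: (i) the covariant wave equation and energy identities; (ii) the parametrix-based $L^\infty_{loc}$ estimate on $F$ in terms of the energies; (iii) the differentiated energy estimate for $\cder F$; (iv) the Gr\"onwall closure and the continuation argument. The only real danger points are bookkeeping ones — making sure every error term from the curved background and from the local-in-$t$ coordinate is genuinely absorbed by $C(t)\in L^1_{loc}$ and the two prescribed energies, with no hidden need for a second derivative of $F$.
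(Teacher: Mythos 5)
Your skeleton (wave equation, energy and flux estimates via the deformation tensor, parametrix for the pointwise bound, a Gr\"onwall-type closure, continuation) matches the paper, but two of the load-bearing steps are missing or misidentified. First, the closure is not a linear Gr\"onwall on the energies. The parametrix does not bound $\|F\|_{L^{\infty}}$ purely by $E_{F}^{\pa/\pa t}$ and $E_{\cder F}^{\pa/\pa t}$: the term $\int \langle \la, [F,F]\rangle$ on the cone is estimated by taking one factor of $F$ out in $L^{\infty}$ and putting the other into the flux, so the resulting inequality is self-referential and \emph{quadratic} in the unknown, of the form $\|F\|_{L^{\infty}(\Sigma_t^p)} \les 1 + \int \|F\|_{L^{\infty}}^{2} + \int\!\!\int \|F\|_{L^{\infty}}^{2}$ on a backward time interval of length $\tau$. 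A naive Gr\"onwall does not close such an inequality; the paper needs the smallness of $\tau$ (the powers $\tau^{1/2}$, $\tau^{3/2}$ in each term) and Pachpatte's Bellman--Bihari result to conclude $\|F\|_{L^{\infty}}\les 1$. Relatedly, your ``bulk term coming from the nonlinearity'' in the zeroth-order energy identity does not exist: the Yang--Mills stress tensor is divergence-free by the field equations, so the energy and flux are controlled by the deformation tensor alone.

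Second, you do not address the term $\int \delta(u)\langle \hat{\lap}^{(A)}\la_{\a\b}, F^{\a\b}\rangle$ in the parametrix, which is precisely the place where the curved background departs from [KR1] and where $E_{\cder F}^{\pa/\pa t}$ actually enters. On Minkowski space one closes a transport system for $\hat{\lap}\la$ along the cone; on a curved background this fails, and the paper instead integrates by parts on the spheres $S_t$ to trade this term for $\int \langle \cder_a\la, \cder_a F\rangle$. One must then (i) control $\|\cder_a\la\|_{L^2}$ on the cone near the vertex, which requires the weighted estimates of [KR3] (propagating $s^2\cder_a\la$, lowering the $s$-power, and an $L^2$ maximum principle, since the naive fundamental-theorem-of-calculus argument produces the uncontrollable quantity $\frac{1}{s}-tr\chi$), and (ii) control $\|\cder_a F\|_{L^2}$ on the cone via the divergence theorem applied to the $h$-contracted wave-equation stress tensor $T_1$, combined with a separate Gr\"onwall for $\|\cder F\|_{L^2(\Sigma_t)}$ to eliminate the gradient from the error terms. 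Without these two ingredients the ``bookkeeping'' you defer is not bookkeeping at all: it is the technical core of the proof, and the argument does not close without it.
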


\subsection{Strategy of the proof}\

As we will show, see \eqref{hyperbolic}, the Yang-Mills fields satisfy a non-linear hyperbolic differential equation on the background geometry. Since the scalar product on the Lie algebra $<\; , \; >$ is Ad-invariant, the Klainerman-Rodnianski parametrix can be immediately generalized (see Appendix) to gauge covariant derivatives to give a representation formula for solutions of $(\Box^{(A)}_{\g} F )_{\mu\nu} = S_{\mu\nu}$, where $S_{\mu\nu}$ is a source tensor, and hence it can be used for the Yang-Mills fields, see \eqref{KSparametrixYMsetting}.

We would like to bound all the terms in the representation formula in a way that we could use Gr\"onwall lemma to deduce that the $L^{\infty}$ norm of $F$ will stay finite (see \eqref{termstoapplyGronwall}). For this we need a parameter in which the extension of local solutions can make sense; this would be a timelike vector field $\frac{\pa}{\pa t}$.

The main advantage of the parametrix is that all it's integral terms are supported on the past null cone. Naively, one can hope that those can be bounded by the flux of the energy generated from $\frac{\pa}{\pa t}$. Thus, if one can bound the energy flux along the null cones, the proof might go through. To bound the energy flux, one needs, as in [CS], to assume that the deformation tensor of a timelike vector field has it's integral in $t$ finite on bounded domains:
\beaa
| \pi^{\hat{\mu}\hat{\nu}}( \frac{\pa}{\pa t}) |_{L^{\infty}_{loc(\Sigma_{t})}} \leq C(t)  \in L_{loc}^{1}
\eeaa
 Using the divergence theorem on the energy-momentum tensor contracted with $\frac{\pa}{\pa t}$ will lead to an inequality on the energy. The assumption on the deformation tensor above can show using Gr\"onwall lemma that the local energy will stay finite, see \eqref{localenergywillstayfinite}. Using this and the assumption on the deformation tensor again, one can show that the space-time integral generated from the divergence theorem will stay finite in $t$, see \eqref{boundingthespacetimeintegralfromthedivergencetheorem}, from which one can deduce the finiteness of the flux \eqref{finitenessflux}.

The integral terms supported on the past null cone in the Klainerman-Rodnianski parametrix involve a term that is a generalization of the fundamental solution of the wave equation on flat space-time to curved space-times. This is $\la_{\mu\nu}$ that is a two tensor solution of a transport equation along the null cone given by \eqref{eq:transport} and \eqref{eq:initial condition}. Using the transport equation, one can prove that the $L^{\infty}$ norm of $s\la_{\mu\nu}$, where $s$ is the geodesic parameter for a null vector field $L$ normal to the null cone used to define the transport equation for $\la_{\mu\nu}$, will be bounded by the initial data for $\la_{\mu\nu}$, see \eqref{linfinitynormofslamda}. Yet, since we would want to apply Gr\"onwall lemma, the terms which contain $\la_{\mu\nu}$ and $F_{\mu\nu}$ can be bounded as in \eqref{controllingtermslamdaF}, by controlling $s\la_{\mu\nu}$, see \eqref{linfinitynormofslamda}, and $s^{-1} F_{\mu\nu}$, see \eqref{controllingsF}. The terms which contain $\la_{\mu\nu}$ and $[F, F ]$ can be bounded as in  \eqref{ControllinglamdabracketFF} by using the finiteness of the energy flux, see \eqref{Controllingsminus1Fusingfinitnessofflux}.

However, a major difference with the situation on Minkowski space, is in the way to deal with the term which contains $\hat{\lap}^{(A)}\la_{\a\b}$ and $F$, where $\hat{\lap}^{(A)} \la_{\a\b} $ is the induced Laplacian on the 2-sphere prescribed by $s = constant$ defined by \eqref{laplacianonab}. In Minkowski space, one can control directly $\hat{\lap}^{(A)}\la_{\a\b}$ as shown by Rodnianski and Klainerman in [KR1], because one can close a system of transport equations along the null cone. On curved space-times, we are unable to close such a system, consequently, we will use the divergence theorem on $\SSS^{2}$, see \eqref{integrationbypartsons2}, so as to bring the problem to controlling $\cder_{a} \la$ and $\cder_{a} F$, where these are the derivatives tangential to the 2-sphere prescribed by $s = constant$.

To control $\cder_{a} \la$ we will follow [KR3], see \eqref{controlofthetangderivativeoflamdaasinKR3}. Since the area element on the 2-spheres is at the level of $s^{2}$, see \eqref{areaexpression}, one would want to control the $L^{2}$ norm of $s \cder_{a} \la$ on the null cone, with respect to the measure $ds d\sigma^{2}$, where $d\sigma^{2}$ is the usual volume form on $\SSS^{2}$. One could try to use the fundamental theorem of calculus directly to control the $L^{2}$ norm on $\SSS^{2}$ then integrate in $s$, yet by doing so, we would find ourselves confronted to controlling near the vertex $p$ ($s=0$) a quantity of the type $(\frac{1}{s} - tr\chi)$, where $\chi$ is the null second fundamental form of the null hypersurfaces. This quantity cannot be controlled even in the 4-dimensional Minkowski space where $tr\chi = \frac{2}{s}$. To change the factor in front of $\frac{1}{s}$ from $1$ to $2$, one would need to apply the fundamental theorem of calculus to control the $L^{2}$ norm on the null cone of $s^{2} \cder_{a} \la$ instead of $s \cder_{a} \la$, see \eqref{derivativeofthehnormsquaredofs2lamda}. Since it is the $L^{2}$ norm, this means that one would have to consider $s^{4} |\cder_{a} \la|^{2}$ instead of $s^{2} |\cder_{a} \la|^{2}$ for applying the fundamental theorem of calculus. However, since what we want to control is the integral on the null cone of $s^{2} |\cder_{a} \la|^{2}$, which is bigger than that of $s^{4} |\cder_{a} \la|^{2}$, near $s=0$, we would need to lower the power on $s$, for this one can actually control the integral on $\SSS^{2}$ of $s^{-1} s^{4} |\cder_{a} \la|^{2}$ by applying the fundamental theorem of calculus to $s^{4} |\cder_{a} \la|^{2}$ as described above, see \eqref{sminus1fundamentaltheoremcalculussfourdlamdasquared} and \eqref{controlontheLtwonormonStwoofsthreedlamdasquared}. This would allow then to control the integral on $\SSS^{2}$ of $ s^{3} |\cder_{a} \la|^{2}$ in a way that one could then get an estimate on  the $L^{1}$ norm on $\SSS^{2}$ for $s^{2} |\cder_{a} \la|^{2}$, see \eqref{estimateontheLonenormofstwolamdasquaredtouseLtwomaximumprinciple}, which permits one to apply the $L^{2}$ maximum principle to control the integral on the null cone of $s^{2} |\cder_{a} \la|^{2}$ near the vertex $p$ ($s=0$), see \eqref{estimateonLtwonormonthenullconeofserivativelamda}. Away from the vertex $s=0$ the integral is clearly finite and hence, this would give the desired control.

In order to control the $L^{2}$ norm of $\cder_{a} F$ on the null cone, we will use the energy momentum tensor of the wave equation $T_{1}$ after contracting the free indices of the Yang-Mills fields with respect to a Riemannian metric $h$, as in [CS], see \eqref{energy-momuntumtensorwaveequationafterfullcontractionwithrespecttoh}. Since it is a full contraction, we can compute it by choosing a normal frame , i.e. a frame where the Christoffel symbols vanish at that point, and hence we can get the derivatives inside the scalar product as covariant derivatives (and also as gauge covariant derivatives using the fact that the scalar product is Ad-invariant) instead of partial derivatives. Since it is the energy momentum tensor for the wave equation, the boundary term supported on the null cone obtained after contracting $T_{1}$ with the normalized timelike vector field, $\frac{\pa}{\pa \hat{t}}$, and applying the divergence theorem in a region inside the null cone, is at the level of the $L^{2}$ norm of $\cder_{a} F$ and  $\cder_{L} F$, see \eqref{T1alphabetathatL}, and thus it controls the $L^{2}$ norm of $\cder_{a} F$. We know by then, from the divergence theorem, that this can be controlled by a quantity that is at the level of a space integral of $T_{1}^{\hat{t}\hat{t}}$ on the initial spacelike hypersurface and in addition a spacetime integral of  $|\cder F| ( |\cder F| + |F| + |F|^{2} )$, see \eqref{controlafterdivergencetheoremappliedwithT1}, where $|F|$ and $|F|^{2}$ arise from the sources of the tensorial gauge hyperbolic wave equation verified by $F$, and $|\cder F|$ in the parenthesis is due to the fact that the deformation tensor of $\frac{\pa}{\pa \hat{t}}$, as well as the covariant derivative of $h$, do not vanish. As we wish to get rid of the gradient of $F$, so as to have a control that involves an integral or a double integral of the square of the $L^{\infty}$ norm of $F$, see \eqref{cotrolontheL2normofcderaFonthenullcone}, we recall that the divergence theorem that we applied previously also permits one to control the space integral of $T_{1}^{\hat{t}\hat{t}}$ on the spacelike hypersurface, that is at the level of the $L^{2}$ norm of the $\cder F$, by the same quantity that controls the boundary term on the null cone, \eqref{controlafterdivergencetheoremappliedwithT1}. This allows one to use Gr\"onwall lemma, after using $a.b \les a^{2} + b^{2}$, and the conservation of the energy that is at the level of the $L^{2}$ norm of $F$, to control the $L^{2}$ norm of $\cder F$ on the spacelike hypersurfaces by the desired quantity, see \eqref{inequalitytocontrolgradientofF}. Injecting this in the previous control on the $L^{2}$ norm of $\cder_{a} F$, \eqref{controlafterdivergencetheoremappliedwithT1}, and using again $a.b \les a^{2} + b^{2}$ and the conservation of the energy, leads to the desired control \eqref{cotrolontheL2normofcderaFonthenullcone}.

Now, the parametrix \eqref{KSparametrixYMsetting} permits us to control the value of the Yang-Mills fields contracted with an arbitrary tensor, at a point $q$ in space-time, by the estimates mentioned above. We would want to establish a Gr\"onwall type inequality in $t$ on the $L^{\infty}$ norm of $F$ on $\Sigma_{t}^{p}$, the spacelike hypersurfaces prescribed by $t = constant$ in the past of a point $p$, so as to deduce the finiteness of the fields at the point $p$. To obtain this, we take the supremum on $q \in \Sigma_{t}^{p}$ in the inequality described above, i.e. after using the parametrix and the above estimates, see \eqref{termstoapplyGronwall}. This can be used to show that $||F||_{L^{\infty}(\Sigma_{t}^{p})}$ verifies a generalized Gr\"onwall type inequality \eqref{Pachpatte} to which Pachpatte in [Pach], proved a result that ensures that the solutions will stay finite. A local existence result would give that solutions of the Yang-Mills equations will either blow up in finite time, or they will be defined globally in time. Hence, that the non-blow up result that we have established gives that local solutions of the Yang-Mills equations can be extended globally in time $t$, under the assumptions of theorem \eqref{theoremglocalexstenceYang-Mills}.

\begin{remark}
The whole manuscript is written in an expository way, where we detail all the calculations, and we show standard material to make these notes self-contained. We also detail well known material in the Appendix.
\end{remark}

\textbf{Acknowledgments.} The author would like to thank his PhD thesis advisors, Fr\'ed\'eric H\'elein and Vincent Moncrief, for their advice and support, and Sergiu Klainerman for suggesting the problem as a first stage in a research proposal for the author's doctoral dissertation. This work was supported by a full tuition fellowship from Universit\'e Paris VII - Institut de Math\'ematiques de Jussieu, and from the Mathematics Department funds of Yale University. The author would like to thank the Mathematics Department of Yale University for their  kindness and hospitality while completing this work. We also thank Arick Shao for looking at the Appendix and for making remarks about it. The manuscript was edited by the author while receiving financial support from the Albert Einstein Institute, Max-Planck Institute for Gravitational Physics, and we would like to thank them for their kind invitation and hospitality, and for their interest in our work.\\

\section{The Field Equations}

In this section we present the Yang-Mills curvature, and we derive the Yang-Mills equations from the Yang-Mills Lagrangian. We will also show the Bianchi identities.

\subsection{The Yang-Mills curvature}

Let $(M, \g)$ be a four dimensional globally hyperbolic Lorentzian manifold. Let G be a compact Lie group, and ${\cal G}$ its Lie algebra such that it has a faithful real matrix representation \{$\th_{a}$\}. Let $<$ $,$ $>$ be a positive definite Ad-invariant scalar product on ${\cal G}$. The Yang Mills potential can be regarded locally as a ${\cal G}$-valued one form $A$ on $M$, say $$A = A^{(a)}_{\alpha}\th_{a}dx^{\alpha} = A_{\alpha}dx^{\alpha}$$ in a given system of coordinates. The gauge covariant derivative of a ${\cal G}$-valued tensor $\Psi$ is defined as
\bea
\textbf{D}^{(A)}_{\alpha}\Psi = \der_{\alpha}\Psi + [A_{\alpha},\Psi]
\eea
where $\der_{\alpha}$ is the space-time covariant derivative of Levi-Cevita on $(M,\g)$, and $\der_{\alpha}\Psi$ is the tensorial covariant derivative of $\Psi$, that is
\bea
\notag
(\der_{\alpha}\Psi)(X, Y, Z, \ldots) &=& \pa_{\alpha} (\Psi(X, Y, Z, \ldots)) - \Psi(\der_{\alpha}X, Y, Z, \ldots) \\
\notag
&&- \Psi (X, \der_{\alpha}Y, Z, \ldots) - \Psi (X, Y, \der_{\alpha}Z, \ldots) \\
&& - ............... - ...............
\eea
The tensorial second order derivative is defined as
\bea
\notag
(\der_{\beta} \der_{\alpha}\Psi)(X, Y, Z, \ldots) &=&\pa_{\beta} [(\der_{\alpha}\Psi)(X, Y, Z, \ldots)] - (\der_{\der_{\beta} e_{\alpha}} \Psi)(X, Y, Z, \ldots) \\
\notag
&&- (\der_{\alpha} \Psi )(\der_{\beta}X, Y, Z, \ldots)  - (\der_{\alpha} \Psi)  (X, \der_{\beta} Y, Z, \ldots) \\
&& - ............... - ...............
\eea
By letting 
\beaa
\notag
(\der_{\beta} (\der_{\alpha}\Psi)) (X, Y, Z, \ldots) &=& \pa_{\beta} [(\der_{\alpha}\Psi)(X, Y, Z, \ldots)] - (\der_{\alpha} \Psi )(\der_{\beta}X, Y, Z, \ldots)  \\
&& - (\der_{\alpha} \Psi)  (X, \der_{\beta} Y, Z, \ldots)  - ............... 
\eeaa
We can then write
\bea
\notag
(\der_{\beta} \der_{\alpha}\Psi)(X, Y, Z, \ldots) &=& (\der_{\beta} (\der_{\alpha}\Psi))(X, Y, Z, \ldots) - (\der_{\der_{\beta} e_{\alpha}} \Psi)(X, Y, Z, \ldots) \\
\eea
The Yang-Mills curvature is a ${\cal G}$-valued two form $$F = F^{(a)}_{\alpha\beta}\th_{a}dx^{\alpha} \wedge dx^{\beta} = F_{\alpha\beta}  dx^{\alpha} \wedge dx^{\beta}$$ obtained by commutating in a system of coordinates two gauge covariant derivatives of a ${\cal G}$-valued tensor $\Psi$, where the tensorial second order gauge derivative of $\Psi$ is defined by
\bea
\textbf{D}^{(A)}_{\a}\textbf{D}^{(A)}_{\b}\Psi = \D^{2}_{\a\b}\Psi = \cder_{\a}(\cder_{\b}\Psi) - \cder_{\der_{\a}e_{\b}}\Psi
\eea
\begin{eqnarray*}
\textbf{D}^{(A)}_{\a}(\textbf{D}^{(A)}_{\b}\Psi) &=& \der_{\a}(\textbf{D}^{(A)}_{\b}\Psi) + [A_{\a},\textbf{D}^{(A)}_{\b}\Psi] \\ 
&=& \der_{\a} ( \der_{\b}\Psi + [A_{\b},\Psi]) + [A_{\a},\der_{\b}\Psi + [A_{\b},\Psi]] \\
& =& \der_{\a}(\der_{\b}\Psi) + [\der_{\a}A_{\b},\Psi] + [A_{\b},\der_{\a}\Psi] + [A_{\a},\der_{\b}\Psi] + [A_{\a},[A_{\b},\Psi]]\\
\end{eqnarray*}
As is a system of coordinates $[e_{\a}, e_{\b}] = 0 = \der_{\a}e_{\b} - \der_{\b}e_{\a}$ (the metric is assumed to be torsion free), then
\bea
\notag
&& \textbf{D}^{(A)}_{\a} \textbf{D}^{(A)}_{\b}\Psi - \textbf{D}^{(A)}_{\b} \textbf{D}^{(A)}_{\a}\Psi \\
\notag
&=& \textbf{D}^{(A)}_{\a} (\textbf{D}^{(A)}_{\b}\Psi) - \textbf{D}^{(A)}_{\b}( \textbf{D}^{(A)}_{\a}\Psi) - (\cder_{\der_{\a}e_{\b}}\Psi) + (\cder_{\der_{\a}e_{\b}}\Psi) \\
\notag
 &=& \textbf{D}^{(A)}_{\a} (\textbf{D}^{(A)}_{\b}\Psi) - \textbf{D}^{(A)}_{\b}( \textbf{D}^{(A)}_{\a}\Psi) + (\cder_{(\der_{\a}e_{\b} - \der_{\b}e_{\a})}\Psi) \\
\notag
&=& \textbf{D}^{(A)}_{\a} (\textbf{D}^{(A)}_{\b}\Psi) - \textbf{D}^{(A)}_{\b}( \textbf{D}^{(A)}_{\a}\Psi) + 0 \\
\notag
&=& \der_{\a}(\der_{\b}\Psi) + [\der_{\a}A_{\b},\Psi] + [A_{\b},\der_{\a}\Psi] + [A_{\a},\der_{\b}\Psi] + [A_{\a},[A_{\b},\Psi]] \\
\notag
&&     - \der_{\b}(\der_{\a}\Psi) - [\der_{\b}A_{\a},\Psi] - [A_{\a},\der_{\b}\Psi] - [A_{\b},\der_{\a}\Psi] - [A_{\b},[A_{\a},\Psi]] \\
\notag
&=&\sum_{i} {{R_{a_{i}}}^{\ga}}_{\a\b} \Psi_{....\ga....} + [\der_{\a}A_{\b},\Psi] + [A_{\a},[A_{\b},\Psi]] - [\der_{\b}A_{\a},\Psi] - [A_{\a},\der_{\b}\Psi] \\
&& - [A_{\b},[A_{\a},\Psi]] \\
\notag
&=&\sum_{i} {{R_{a_{i}}}^{\ga}}_{\a\b} \Psi_{....\ga....} + [\der_{\a}A_{\b} - \der_{\b}A_{\a} + [A_{\a},A_{\b}],\Psi] \\
&=& \sum_{i} {{R_{a_{i}}}^{\ga}}_{\a\b} \Psi_{....\ga....} + [F_{\a\b},\Psi] 
\eea
 where $\Psi = \Psi_{a_{1}a_{2}.....a_{i}.....}$, and $\ga$ is at the $i^{th}$ place.\
This gives
\bea
F_{\a\b} = \der_{\a}A_{\b} - \der_{\b}A_{\a} + [A_{\a},A_{\b}]
\eea

\subsection{The Yang-Mills equations}

The Yang-Mills Lagrangian is given by $$L = -\frac{1}{4}<F_{\a\b},F^{\a\b}>$$

A compact variation $(F(s),U)$, where $U$ is any compact set of $M$, can be written in terms of a compact variation $(A(s),U)$ of a gauge potential in the following manner:

$$\dot{F}_{\a\b} = \frac{d}{ds}F_{\a\b}(s)|_{s=0} = \der_{\a}\dot{A}_{\b} - \der_{\b}\dot{A}_{\a} + [\dot{A}_{\a},A_{\b}] + [A_{\a},\dot{A}_{\b}] $$

where \[\dot{A} = \frac{d}{ds}A(s)|_{s=0}\]

The action principle gives
\begin{eqnarray*}
 \frac{d}{ds}L(s)|_{s=0} &=& -\frac{1}{2}<\dot{F}_{\a\b},F^{\a\b}>_{\g}dv_{\g} = 0 \\
&=& -\frac{1}{2}\int_{U}<\der_{\a}\dot{A}_{\b} - \der_{\b}\dot{A}_{\a} + [\dot{A}_{\a},A_{\b}] + [A_{\a},\dot{A}_{\b}], F^{\a\b}>_{\g}dv_{\g} \\
&=& -\frac{1}{2}\int_{U}<\der_{\a}\dot{A}_{\b}, F^{\a\b}>_{\g}dv_{\g} + \frac{1}{2}\int_{U}< \der_{\b}\dot{A}_{\a}, F^{\a\b}>_{\g}dv_{\g} \\
&& -\frac{1}{2}\int_{U}<[\dot{A}_{\a},A_{\b}], F^{\a\b}>_{\g}dv_{\g} -\frac{1}{2}\int_{U}< [A_{\a},\dot{A}_{\b}], F^{\a\b}>_{\g}dv_{\g}\\
&=& -\int_{U}<\der_{\a}\dot{A}_{\b}, F^{\a\b}>_{\g}dv_{\g} - \int_{U}<[\dot{A}_{\a},A_{\b}], F^{\a\b}>_{\g}dv_{\g}
\end{eqnarray*}
(where we have used the anti-symmetry of $F$)

$$= -\int_{U}<\dot{A}_{\b}, \der_{\a}F^{\a\b}>_{\g}dv_{\g} - \int_{U}<[\dot{A}_{\a},A_{\b}], F^{\a\b}>_{\g}dv_{\g}$$

(where we have integrated by parts, and the boundary terms are zero since F has compact support)

On the other hand $$- \int_{U}<[\dot{A}_{\a},A_{\b}], F^{\a\b}>_{\g}dv_{\g} = \int_{U}<[\dot{A}_{\b},A_{\a}], F^{\a\b}>_{\g}dv_{\g}$$ 
(By anti-symmetry of $F$)
$$= \int_{U}<\dot{A}_{\b},[A_{\a}, F^{\a\b}]>_{\g}dv_{\g}$$
 because $<\; ,\;>$ is Ad-invariant.
This yields to

 $$0 = <\dot{A}_{\b}, \der_{\a}F^{\a\b} + [A_{\a}, F^{\a\b}]>_{\g}dv_{\g} = \int_{U}<\dot{A}_{\b}, \textbf{D}^{(A)}_{\a}F^{\a\b}>_{\g}dv_{\g}$$

So the covariant divergence of the curvature is zero 
\bea
\textbf{D}^{(A)}_{\a}F^{\a\b} = 0 \label{eq:YM}
\eea

On the other hand, computing 
\begin{eqnarray*}
&&\textbf{D}^{(A)}_{\a}F_{\mu\nu} + \textbf{D}^{(A)}_{\mu}F_{\nu\a} + \textbf{D}^{(A)}_{\nu}F_{\a\mu} \\
&=& \der_{\a}F_{\mu\nu} + [A_{\a}, F_{\mu\nu}] + \der_{\mu}F_{\nu\a} + [A_{\mu}, F_{\nu\a}] +  \der_{\nu}F_{\a\mu} + [A_{\nu}, F_{\a\mu}] \\
&=& \der_{\a}(\der_{\mu}A_{\nu} - \der_{\nu}A_{\mu} + [A_{\mu},A_{\nu}]) + [A_{\a}, \der_{\mu}A_{\nu} - \der_{\nu}A_{\mu} + [A_{\mu},A_{\nu}]] \\
&&+ \der_{\mu}(\der_{\nu}A_{\a} - \der_{\a}A_{\nu} + [A_{\nu},A_{\a}]) + [A_{\mu}, \der_{\nu}A_{\a} - \der_{\a}A_{\nu} + [A_{\nu},A_{\a}]] \\
&&+  \der_{\nu}(\der_{\a}A_{\mu} - \der_{\mu}A_{\a} + [A_{\a},A_{\mu}]) + [A_{\nu}, \der_{\a}A_{\mu} - \der_{\mu}A_{\a} + [A_{\a},A_{\mu}]] \\
&=&  \der_{\a}\der_{\mu}A_{\nu} - \der_{\a}\der_{\nu}A_{\mu} + [\der_{\a}A_{\mu},A_{\nu}] + [A_{\mu},\der_{\a}A_{\nu}]  \\
&&+ [A_{\a}, \der_{\mu}A_{\nu} - \der_{\nu}A_{\mu} + [A_{\mu},A_{\nu}]] + \der_{\mu}\der_{\nu}A_{\a} - \der_{\mu}\der_{\a}A_{\nu} \\
&&+ [\der_{\mu}A_{\nu},A_{\a}] + [A_{\nu}, \der_{\mu}A_{\a}] + [A_{\mu}, \der_{\nu}A_{\a} - \der_{\a}A_{\nu} + [A_{\nu},A_{\a}]] \\
&&+  \der_{\nu}\der_{\a}A_{\mu} - \der_{\nu}\der_{\mu}A_{\a} + [\der_{\nu}A_{\a},A_{\mu}] \\
&&+ [A_{\a},\der_{\nu}A_{\mu}] + [A_{\nu}, \der_{\a}A_{\mu} - \der_{\mu}A_{\a} + [A_{\a},A_{\mu}]]
\end{eqnarray*}
(where $\der_{\a}\der_{\mu} A = \der_{\a}(\der_{\mu}A) - \der_{\der_{\a}{e_{\mu}}} A$ is the tensorial covariant derivative of $A$)\
\begin{eqnarray*}
&=& {{R_{\nu}}^{\ga}}_{\a\mu}A_{\ga} + {{R_{\a}}^{\ga}}_{\mu\nu}A_{\ga} + {{R_{\mu}}^{\ga}}_{\nu\a}A_{\ga} + [A_{\a}, [A_{\mu},A_{\nu}]] +  [A_{\mu}, [A_{\nu},A_{\a}]] \\
&& +  [A_{\nu}, [A_{\a},A_{\mu}]]\\
&=& - ( {R^{\ga}}_{\nu\a\mu} + {R^{\ga}}_{\a\mu\nu} + {R^{\ga}}_{\mu\nu\a} ) A_{\ga} + [A_{\a}, [A_{\mu},A_{\nu}]] +  [A_{\mu}, [A_{\nu},A_{\a}]] \\
&& +  [A_{\nu}, [A_{\a},A_{\mu}]] \\
&=& 0
\end{eqnarray*}
by Bianchi identity and symmetry of the curvature tensor.

So we have, 

\bea
\textbf{D}^{(A)}_{\a}F_{\mu\nu} + \textbf{D}^{(A)}_{\mu}F_{\nu\a} + \textbf{D}^{(A)}_{\nu}F_{\a\mu} = 0 \label{eq:Bianchi}
\eea

The equations \eqref{eq:YM} and \eqref{eq:Bianchi} form the Yang-Mills equations. The Maxwell equations correspond to the abelian case where $[\; ,\;] =0$, and therefore $\cder = \der$.\

The Cauchy problem for the Yang-Mills equations formulates as the following: given a Cauchy hypersurface $\Sigma$ in M, and a ${\cal G}$-valued one form $A_{\mu}$ on $\Sigma$, and a ${\cal G}$-valued one form $E_{i}$ on $\Sigma$ satisfying $\mbox{\D}^{(A)}_{i}E^{i}$, we are looking for a ${\cal G}$-valued two form $F_{\mu\nu}$ satisfying the Yang-Mills equations such that once $F_{\mu\nu}$ restricted on M we have  $F_{0i} = E_{i}$, and such that $F_{\mu\nu}$ corresponds to the curvature derived from the Yang-Mills potential $A_{\mu}$ (i.e. $F_{\a\b} = \der_{\a}A_{\b} - \der_{\b}A_{\a} + [A_{\a},A_{\b}]$).\\

\section{Motivation}

Our motivation for the systematic study of gauge field theories such as the Yang-Mills equations is to have insights into the Einstein vacuum equations in General Relativity. Indeed, using Cartan formalism the Einstein vacuum equations can be viewed as to some extent mathematically related to the Yang-Mills equations. General Relativity postulates that the space-time is a 4-dimensional Lorentzian manifold $(M, \g)$, that satisfies the Einstein vacuum equations $R_{\mu\nu} = 0$, where $R_{\mu\nu}$ is the Ricci curvature, i.e. $R_{\mu\nu} = {R^{\ga}}_{\mu\ga\nu}$, and where $R_{\mu\nu\a\b}$ is the Riemann tensor associated to the metric $\g$, defined by, 
$$ R(X, Y,  U, V) = \g\big(X, \big[ \der_U \der_V-\der_V \der_U -\der_{[U,V]} Y\big]\big) $$
where $X, Y, U, V$ are vectorfields in the tangent bundle of $M$. We will see that using Cartan formalism one can write the Riemann tensor as a Yang-Mills curvature.

\subsection{Cartan formalism}\

At a point $p$ of the space-time, one can choose a normal frame, which means a frame such that $
\g(e_\a, e_\b) (p) =\mbox{diag}(-1,1,\ldots,1) $, and $\frac{\pa}{\pa \si} \g(e_{\a}, e_{\b})(p) = 0$, i.e. the first partial derivatives of the metric at $p$ vanish. Cartan formalism consists in defining the connection 1-form,
\bea
(A)_{\a\b}(X)=\g(\der_{X}e_\b,e_\a)  \label{cartanformalism}
\eea
where $\der$ is the Levi-Cevita connection. Thus, since $A = A_{\mu} dx^{\mu}$, we can write,
\beaa
(A_\mu)_{\a\b}=  (A)_{\a\b}(\frac{\pa}{\pa \mu})      =       \g(\der_{\mu}e_\b ,e_\a)
\eeaa

Computing,
\beaa
R(e_{\a}, e_{\b} , \frac{\pa}{\pa \mu }, \frac{\pa}{\pa \nu}) &=& \g\big(e_{\a},  \big( \der_{\frac{\pa}{\pa \mu}} \der_{\frac{\pa}{\pa \nu }}-\der_{\frac{\pa}{\pa \nu }} \der_{\frac{\pa}{\pa \mu }} -\der_{[\frac{\pa}{\pa \mu },\frac{\pa}{\pa \nu }]} \big ) e_{\b} \big) \\
&=& \g\big(e_{\a},   \der_{\frac{\pa}{\pa_{\mu}}} \der_{\frac{\pa}{\pa \nu }} e_{\b} -\der_{\frac{\pa}{\pa \nu}} \der_{\frac{\pa}{\pa \mu}} e_{\b} -\der_{[\frac{\pa}{\pa \mu},\frac{\pa}{\pa \nu}]} e_{\b} \big) \\
&=& \frac{\pa}{\pa \mu} \g ( e_{\a},    \der_{\frac{\pa}{\pa \nu}} e_{\b} ) -  \g ( \der_{\frac{\pa}{\pa \mu}} e_{\a},    \der_{\frac{\pa}{\pa \nu}} e_{\b} )   \\
&&- \big [ \frac{\pa}{\pa \nu} \g ( e_{\a},    \der_{\frac{\pa}{\pa \mu}} e_{\b} ) -  \g ( \der_{\frac{\pa}{\pa \nu}} e_{\a},    \der_{\frac{\pa}{\pa \mu}} e_{\b} ) \big ] \\
&& - \g( e_{\a}, \der_{[\frac{\pa}{\pa \mu},\frac{\pa}{\pa \nu}]} e_{\b} ) \\
&=& \frac{\pa}{\pa \mu} \g ( e_{\a},    \der_{\frac{\pa}{\pa \nu}} e_{\b} ) -   \frac{\pa}{\pa \nu} \g ( e_{\a},    \der_{\frac{\pa}{\pa \mu}} e_{\b} )  \\
&& +  \g ( \der_{\frac{\pa}{\pa \nu}} e_{\a},    \der_{\frac{\pa}{\pa \mu}} e_{\b} )  -  \g ( \der_{\frac{\pa}{\pa \mu}} e_{\a},    \der_{\frac{\pa}{\pa \nu}} e_{\b} )    \\
\eeaa

($ [\frac{\pa}{\pa \mu},\frac{\pa}{\pa \nu}] = 0 $ since they are coordinate vectorfields)\\

\beaa 
&=& \frac{\pa}{\pa \mu} (A_\nu)_{\a\b} -   \frac{\pa}{\pa \nu} (A_\mu)_{\a\b}  +  \g ( \der_{\frac{\pa}{\pa \nu}} e_{\a},    \der_{\frac{\pa}{\pa \mu}} e_{\b} )  -  \g ( \der_{\frac{\pa}{\pa \mu}} e_{\a},    \der_{\frac{\pa}{\pa \nu}} e_{\b} )    \\
&=& \frac{\pa}{\pa \mu} (A_\nu)_{\a\b} -   \frac{\pa}{\pa \nu} (A_\mu)_{\a\b}  +  \g ( e^{\la} ( \der_{\frac{\pa}{\pa \nu}} e_{\a}) e_{\la},    \der_{\frac{\pa}{\pa \mu}} e_{\b} )  -  \g ( e^{\la} (\der_{\frac{\pa}{\pa \mu}} e_{\a}) e_{\la},    \der_{\frac{\pa}{\pa \nu}} e_{\b} )    \\
&=& \frac{\pa}{\pa \mu} (A_\nu)_{\a\b} -   \frac{\pa}{\pa \nu} (A_\mu)_{\a\b}  +  e^{\la} ( \der_{\frac{\pa}{\pa \nu}} e_{\a}) \g (  e_{\la},    \der_{\frac{\pa}{\pa \mu}} e_{\b} )  -  e^{\la} (\der_{\frac{\pa}{\pa \mu}} e_{\a}) \g (  e_{\la},    \der_{\frac{\pa}{\pa \nu}} e_{\b} )    \\
&=& \frac{\pa}{\pa \mu} (A_\nu)_{\a\b} -   \frac{\pa}{\pa \nu} (A_\mu)_{\a\b}  +  e^{\la} ( \der_{\frac{\pa}{\pa \nu}} e_{\a}) (A_\mu)_{\la\b}  -  e^{\la} (\der_{\frac{\pa}{\pa \mu}} e_{\a}) (A_\nu)_{\la\b}    
\eeaa

Computing at the point $p$,
\beaa
 (A_\mu)^{\la}\,_{\a} = \g^{\ga\la}  (A_\mu)_{ \ga \a} =  \g^{\la\la} (A_\mu)_{\la\a} = \g(e_{\la}, e_{\la})^{-1} (A_\mu)_{\la\a} = \g(e_{\la}, e_{\la})^{-1} \g(\der_{\mu}e_\a ,e_\la)
\eeaa

Since the metric is compatible, we have $\der \g = 0$, and thus,

\beaa
\der_{\mu} \g(e_{\a}, e_{\la}) &=& \frac{\pa}{\pa \mu}  \g(e_{\a}, e_{\la}) - \g( \der_{\mu} e_{\a}, e_{\la})  - \g(e_{\a}, \der_{\mu} e_{\la})\\
&=&  - \g( \der_{\mu} e_{\a}, e_{\la})  - \g(e_{\a}, \der_{\mu} e_{\la}) = 0
\eeaa

Therefore,
\beaa
 \g( \der_{\mu} e_{\a}, e_{\la})  = - \g(e_{\a}, \der_{\mu} e_{\la}) 
\eeaa
and thus, the matrix $A$ is anti-symmetric, ie. $(A_{\mu})_{\a\b} = (A_{\mu})_{\b\a} $.

We have,
\beaa
\der_{\frac{\pa}{\pa \mu}} e_{\a} = e^{\la} (\der_{\frac{\pa}{\pa \mu}} e_{\a}) e_{\la}
\eeaa
Thus,
\beaa
\g(e_{\la}, \der_{\frac{\pa}{\pa \mu}} e_{\a}) = \g(e_{\la}, e^{\la} (\der_{\frac{\pa}{\pa \mu}} e_{\a}) e_{\la} ) = \g(e_{\la}, e_{\la}) e^{\la} (\der_{\frac{\pa}{\pa \mu}} e_{\a})
\eeaa
Consequently,
\beaa
e^{\la} (\der_{\frac{\pa}{\pa \mu}} e_{\a}) = \g(e_{\la}, e_{\la})^{-1} \g(e_{\la}, \der_{\frac{\pa}{\pa \mu}} e_{\a} ) = (A_\mu)^{\la}\,_{\a}
\eeaa
and,
\beaa
e^{\la} (\der_{\frac{\pa}{\pa \nu}} e_{\a}) =  \g(e_{\la}, e_{\la})^{-1} \g(e_{\la}, \der_{\frac{\pa}{\pa \nu}} e_{\a} ) = (A_\nu)^{\la}\,_{\a}
\eeaa

Therefore,

\beaa
R(e_{\a}, e_{\b} , \frac{\pa}{\pa \mu }, \frac{\pa}{\pa \nu}) &=& \pa_{\mu} (A_{\nu})_{\a\b} - \pr_{\nu} (A_{\mu})_{\a\b}+ (A_\nu)^{\la}\,_{\a}  (A_{\mu})_{\la\b} - (A_\mu)^{\la}\,_{\a} (A_\nu)_{\la\b} \\
&=& \pa_{\mu} (A_{\nu})_{\a\b} - \pr_{\nu} (A_{\mu})_{\a\b}  - (A_{\nu})_{\a}\,^{\la}  (A_{\mu})_{\la\b} + (A_{\mu})_{\a}\,^{\la}  (A_\nu)_{\la\b} 
\eeaa
(by anti-symmetry of the matrix $A$).

As commutator of matrices, we have,
\beaa
[A_\mu, A_\nu] = A_\mu  A_\nu -
A_\nu A_\mu \\
\eeaa
and thus,
\beaa
([A_\mu, A_\nu])_{\a\b}=(A_\mu)_{\a}{\,^\la} \,( A_\nu)_{\la\b}- (A_\nu)_{\a}{\,^\la} \,( A_\mu)_{\la\b}
\eeaa

Consequently, we get,
\bea
R_{\a\b\mu\nu}=\pr_\mu (A_\nu)_{\a\b}-\pr_\nu (A_\mu)_{\a\b}  + ([A_\mu, A_\nu])_{\a\b},
\eea

We have,
\beaa
\der_{\mu} A_{\nu} = \pr_{\mu} (A_\nu) - (A)( \der_{\mu} \frac{\pa}{\pa \nu})
\eeaa
and,
\beaa
\der_{\nu} A_{\mu} = \pr_{\nu} (A_\mu) - (A)( \der_{\nu} \frac{\pa}{\pa \mu})
\eeaa
Thus,
\beaa
\der_{\mu} A_{\nu}-\der_{\nu} A_{\mu} &=& \pr_{\mu} (A_\nu)  -\pr_{\nu} (A_\mu) + (A)( \der_{\nu} \frac{\pa}{\pa \mu}) - (A)( \der_{\mu} \frac{\pa}{\pa \nu})\\
&=& \pr_{\mu} (A_\nu)  -\pr_{\nu} (A_\mu) + (A)( \der_{\nu} \frac{\pa}{\pa \mu} -  \der_{\mu} \frac{\pa}{\pa \nu})\\  
&=& \pr_{\mu} (A_\nu)  -\pr_{\nu} (A_\mu) + (A)( [\frac{\pa}{\pa \nu}, \frac{\pa}{\pa \mu}] ) 
\eeaa
(because the metric is symmetric)
\beaa
&=& \pr_{\mu} (A_\nu)  -\pr_{\nu} (A_\mu) \\
\eeaa
(since $\frac{\pa}{\pa \mu}$, and $\frac{\pa}{\pa \nu}$ are coordinate vectorfields, therefore they commute).\\
As a result, 
\bea
R_{\a\b\mu\nu}=\big(\der_\mu A_\nu-\der_\nu A_\mu + [A_\mu, A_\nu]\big)_{\a\b} 
\eea
Since the curvature tensor of the connection $A$ is,
\bea
(F_{\mu\nu})_{\a\b} = \big(\der_\mu A_\nu-\der_\nu A_\mu + [A_\mu, A_\nu]\big)_{\a\b} \label{cartanformalism2}
\eea
We get,
\bea
R_{\a\b\mu\nu} = (F_{\mu\nu})_{\a\b}
\eea

\subsection{The Einstein equations in a Yang-Mills form}\

The following is a well known proposition, of which we sketch the proof.

\begin{proposition}
Let $(M, \g)$ be a 4-dimensional Lorentzian manifold that is Ricci flat, i.e. $R_{\mu\nu} = 0$. Then, we have $( {\D^{(A)}}^{\mu} F_{\mu\nu} )_{\a\b} = 0$, where $A$ and $F$ are defined as in \eqref{cartanformalism} and \eqref{cartanformalism2}.
\end{proposition}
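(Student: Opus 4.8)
The plan is to exploit the identification $R_{\a\b\mu\nu} = (F_{\mu\nu})_{\a\b}$ established in the Cartan-formalism subsection, and to show that the Yang-Mills equation $({\D^{(A)}}^{\mu} F_{\mu\nu})_{\a\b} = 0$ is, under the hypothesis $R_{\mu\nu}=0$, nothing but the twice-contracted second Bianchi identity for the Riemann tensor. First I would fix a point $p \in M$ and work in a normal frame $\{e_\a\}$ at $p$, so that the Christoffel symbols and the connection coefficients vanish at $p$; in such a frame the gauge-covariant derivative $\D^{(A)}_\mu$ acting on $F$ reduces, at $p$, to the coordinate derivative $\pr_\mu$ plus the $[A_\mu,\,\cdot\,]$ term, and since $A_\mu(p)=0$ (the connection 1-form $(A_\mu)_{\a\b}=\g(\der_\mu e_\b,e_\a)$ vanishes at $p$ in a normal frame), the whole expression $({\D^{(A)}}^\mu F_{\mu\nu})_{\a\b}$ at $p$ coincides with $\g^{\mu\rho}\pr_\rho (F_{\mu\nu})_{\a\b}$. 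The point of passing to the gauge-covariant derivative is precisely that the result is tensorial, so it suffices to verify the vanishing at each $p$ in such a frame.

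Next I would recall the differential (second) Bianchi identity for the Riemann curvature, $\der_{[\sigma} R_{\a\b]\mu\nu} = 0$, equivalently $\der_\sigma R_{\a\b\mu\nu} + \der_\mu R_{\a\b\nu\sigma} + \der_\nu R_{\a\b\sigma\mu} = 0$, which holds on any Lorentzian manifold. Raising an index and contracting $\sigma$ with $\mu$ gives $\der^\mu R_{\a\b\mu\nu} = \der_\nu R_{\a\b} \,{}^{\mu}{}_{\mu}$-type terms; more precisely, after the contraction the right-hand side is a combination of $\der^\mu R_{\a\b\nu\mu}$ and the divergence-of-Ricci terms coming from the pair symmetry $R_{\a\b\mu\nu} = R_{\mu\nu\a\b}$. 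Using $R_{\a\b\mu\nu} = R_{\mu\nu\a\b}$ to rewrite the contracted quantity, the once-contracted Bianchi identity yields $\der^\mu R_{\a\b\mu\nu}$ in terms of $\der_\a R_{\b\nu} - \der_\b R_{\a\nu}$ (the derivatives of the Ricci tensor). Imposing $R_{\mu\nu} \equiv 0$ kills these terms, so $\der^\mu R_{\a\b\mu\nu} = 0$ identically. Translating back via $R_{\a\b\mu\nu} = (F_{\mu\nu})_{\a\b}$ and noting that at $p$ in the normal frame $\der^\mu$ on $F_{\mu\nu}$ agrees with ${\D^{(A)}}^\mu$ as explained above, I conclude $({\D^{(A)}}^\mu F_{\mu\nu})_{\a\b}(p) = 0$; since $p$ was arbitrary and the expression is a tensor, this holds everywhere.

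The only genuinely delicate point is bookkeeping with the index symmetries in the contraction of the Bianchi identity: one must be careful to use both the antisymmetry in $[\a\b]$ and in $[\mu\nu]$ and the pair-interchange symmetry $R_{\a\b\mu\nu} = R_{\mu\nu\a\b}$ in the right order, so that the contracted identity produces exactly $\der^\mu R_{\a\b\mu\nu} = \der_\b R_{\a\nu} - \der_\a R_{\b\nu}$ (up to sign conventions matching the Riemann tensor defined in the text), rather than some spurious extra curvature term. I would also double-check that the sign convention for $R(X,Y,U,V)$ used in the Motivation section is consistently carried through, since the statement $(\der^\mu R_{\a\b\mu\nu})$ versus $(\der^\mu R_{\a\b\nu\mu})$ differs by a sign that must be absorbed correctly. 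Everything else — the reduction to a normal frame, the vanishing of $A_\mu(p)$, and the passage between $\der$ and $\D^{(A)}$ at $p$ — is routine given the material already developed in the excerpt.
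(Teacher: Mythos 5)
Your argument is correct and is essentially the paper's: both proofs reduce the claim to the identity $({\D^{(A)}}^{\mu}F_{\mu\nu})_{\a\b}=\der^{\mu}R_{\a\b\mu\nu}$ followed by the once-contracted second Bianchi identity, the pair symmetry $R_{\a\b\mu\nu}=R_{\mu\nu\a\b}$, and Ricci flatness. The only (minor) divergence is that the paper establishes $\der_{\si}R_{\a\b\mu\nu}=(\D^{(A)}_{\si}F_{\mu\nu})_{\a\b}$ by a direct frame computation using the antisymmetry of $(A_{\si})_{\a\b}$, whereas you get it pointwise in a frame with $\der_{\mu}e_{\b}(p)=0$; note that the paper's notion of normal frame (metric components Minkowskian with vanishing first partials at $p$) only gives antisymmetry of $A_{\mu}(p)$, not $A_{\mu}(p)=0$, so for your step you should invoke a parallelly propagated frame or the coordinate frame of Riemann normal coordinates, which of course exists.
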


\begin{proof}\

Computing,
\beaa
\der_{\si} R_{\a\b\mu\nu} &=& \frac{\pa}{\pa \si} R(e_{\a}, e_{\b}, \frac{\pa}{\pa \mu}, \frac{\pa}{\pa \nu}) - R(\der_{\si} e_{\a}, e_{\b}, \frac{\pa}{\pa \mu}, \frac{\pa}{\pa \nu}) - R(e_{\a}, \der_{\si} e_{\b}, \frac{\pa}{\pa \mu}, \frac{\pa}{\pa \nu}) \\
&& - R(e_{\a}, e_{\b},  \der_{\si} \frac{\pa}{\pa \mu}, \frac{\pa}{\pa \nu}) - R(e_{\a}, e_{\b}, \frac{\pa}{\pa \mu}, \der_{\si} \frac{\pa}{\pa \nu}) \\
&=& \frac{\pa}{\pa \si} R(e_{\a}, e_{\b}, \frac{\pa}{\pa \mu}, \frac{\pa}{\pa \nu})  - R(e_{\a}, e_{\b},  \der_{\si} \frac{\pa}{\pa \mu}, \frac{\pa}{\pa \nu}) - R(e_{\a}, e_{\b}, \frac{\pa}{\pa \mu}, \der_{\si} \frac{\pa}{\pa \nu})    \\
&& - R(\der_{\si} e_{\a}, e_{\b}, \frac{\pa}{\pa \mu}, \frac{\pa}{\pa \nu}) - R(e_{\a}, \der_{\si} e_{\b}, \frac{\pa}{\pa \mu}, \frac{\pa}{\pa \nu}) \\
&=& ( \der_{\si}  F_{\mu\nu} )_{\a\b}  - R(\der_{\si} e_{\a}, e_{\b}, \frac{\pa}{\pa \mu}, \frac{\pa}{\pa \nu}) - R(e_{\a}, \der_{\si} e_{\b}, \frac{\pa}{\pa \mu}, \frac{\pa}{\pa \nu}) \\
&=& ( \der_{\si}  F_{\mu\nu} )_{\a\b} - R(e^{\la} ( \der_{\si} e_{\a} ) e_{\la}, e_{\b}, \frac{\pa}{\pa \mu}, \frac{\pa}{\pa \nu}) - R(e_{\a}, e^{\la} ( \der_{\si} e_{\b} ) e_{\la}, \frac{\pa}{\pa \mu}, \frac{\pa}{\pa \nu}) \\
&=& ( \der_{\si}  F_{\mu\nu} )_{\a\b} - e^{\la} ( \der_{\si} e_{\a} ) R( e_{\la}, e_{\b}, \frac{\pa}{\pa \mu}, \frac{\pa}{\pa \nu}) - e^{\la} ( \der_{\si} e_{\b} ) R(e_{\a},  e_{\la}, \frac{\pa}{\pa \mu}, \frac{\pa}{\pa \nu}) \\
 &=& ( \der_{\si}  F_{\mu\nu} )_{\a\b} - (A_\si)^{\la}\,_{\a}   (F_{\mu\nu})_{\la\b} - (A_\si)^{\la}\,_{\b}  (F_{\mu\nu})_{\a\la} \\
 &=& ( \der_{\si}  F_{\mu\nu} )_{\a\b} + (A_\si)_{\a}\,^{\la}   (F_{\mu\nu})_{\la\b} - (F_{\mu\nu})_{\a\la} (A_\si)^{\la}\,_{\b}  
\eeaa
(using the anti-symmetry of $A$). Thus,

\bea
\notag
\der_{\si} R_{\a\b\mu\nu} &=& ( \der_{\si}  F_{\mu\nu} )_{\a\b}  + ( [A_\si, F_{\mu\nu}])_{\a\b} \\
&=&  ( \D^{(A)}_{\si} F_{\mu\nu} )_{\a\b}
\eea

Computing,
\beaa
 ( {\D^{(A)}}^{\mu} F_{\mu\nu} )_{\a\b} &=& \der^{\mu}  R_{\a\b\mu\nu} = \g^{\mu\si} \der_{\si} R_{\a\b\mu\nu} \\
&=& \der_{\si}  \g^{\mu\si} R_{\a\b\mu\nu} 
\eeaa
(because the metric is compatible)
\beaa
&=& \der_{\si}  R_{\a\b}\,^{\si}\,_{\nu}  = \der_{\si}  R^{\si}\,_{\nu\a\b}  
\eeaa
(using the symmtery of the Riemann tensor)
\beaa
&=& - \der_{\a} R^{\si}\,_{\nu\b\si}  - \der_{\b} R^{\si}\,_{\nu\si\a} 
\eeaa
(where we have used another symmetry of the Riemann tensor)
\beaa
&=&  \der_{\a} R^{\si}\,_{\nu\si\b}  - \der_{\b} R^{\si}\,_{\nu\si\a}  = \der_{\a} R_{\nu\b} - \der_{\b} R_{\nu\a} = 0
\eeaa
(since the Einstein vacuum equations say that $R_{\mu\ga} = 0 = {R^{\si}}_{\mu\si\ga}$ ). We get,
\bea
 ( {\D^{(A)}}^{\mu} F_{\mu\nu} )_{\a\b} = 0 \label{eq:divfree}
\eea

\end{proof}

The second Bianchi identitiy for the Riemann tensor,
\bea
\notag
0 &=& \der_{\a} R_{\ga\si\mu\nu} + \der_{\mu} R_{\ga\si\nu\a} + \der_{\nu} R_{\ga\si\a\mu} \\
&=& \textbf{D}^{(A)}_{\a}F_{\mu\nu} + \textbf{D}^{(A)}_{\mu}F_{\nu\a} + \textbf{D}^{(A)}_{\nu}F_{\a\mu} \label{eq:permut}
\eea
which is the Bianchi identity for the Yang-Mills fields. The equations above \eqref{eq:divfree} and \eqref{eq:permut} are the Yang-Mills equations except to the fact that the background geometry $(M, \g)$ is part of the unknown that we are looking for while trying to solve the Einstein vacuum equations.

This analogy between the Einstein equations in General Relativity and the Yang-Mills equations has been pursued by V. Moncrief in [M], by developing an integral representation formula for the curvature tensor in General Relativity, and then independently by I. Rodnianski and S. Klainerman in [KR1] and [KR3] partly as a desire to adapt the Eardley-Moncrief argument [EM1]-[EM2] to General Relativity.\\

\section{The Proof of the Global Existence of Yang-Mills Fields on Arbitrary, Sufficiently Smooth, Globally Hyperbolic, Curved Lorentzian Manifolds}

\subsection{A hyperbolic formulation for the Yang-Mills equations}\

It is known that the Yang-Mills fields can be shown to satisfy a tensorial hyperbolic wave equation with sources, on the background geometry. To see this, we start by taking the covariant divergence of \eqref{eq:Bianchi}, we obtain:

$${\cder}^{\a}{\cder}_{\a}F_{\mu\nu} + {\cder}^{\a}{\cder}_{\mu}F_{\nu\a} + {\cder}^{\a}{\cder}_{\nu}F_{\a\mu} = 0$$

We have:
\begin{eqnarray*}
{\cder}^{\a}{\cder}_{\mu}F_{\nu\a} &=& {\cder}_{\mu}{\cder}^{\a}F_{\nu\a} +  \der^{\a}\der_{\mu}F_{\nu\a} -  \der_{\mu}\der^{\a}F_{\nu\a} + [{F^{\a}}_{\mu}, F_{\nu\a}] \\
&=& {\cder}_{\mu}{\cder}^{\a}F_{\nu\a} +{{{R_{\nu}}^{\ga}}^{\a}}_{\mu}F_{\ga\a} + {{{R_{\a}}^{\ga}}^{\a}}_{\mu}F_{\nu\ga} + [F^{\a}_{\mu},F_{\nu\a}] \\
&=& {\cder}_{\mu} ({\cder}^{\a}F_{\nu\a} ) - {\cder}^{\der_{nu} e_\a}F_{\nu\a}+ {{{R_{\nu}}^{\ga}}^{\a}}_{\mu}F_{\ga\a} + {{{R_{\a}}^{\ga}}^{\a}}_{\mu}F_{\nu\ga} \\
&& + [{F^{\a}}_{\mu}, F_{\nu\a}] \\
&=& 0   - {\cder}^{\der_{\nu} e_\a}F_{\nu\a}+ {{{R_{\nu}}^{\ga}}^{\a}}_{\mu}F_{\ga\a} + {{{R_{\a}}^{\ga}}^{\a}}_{\mu}F_{\nu\ga} + [{F^{\a}}_{\mu}, F_{\nu\a}] 
\end{eqnarray*}
(by equation \eqref{eq:YM}). By choosing a normal frame at each point in space-time, i.e. a frame where $\g(e_\a, e_\b) =\mbox{diag}(-1,1,\ldots,1) $ and $ \der_{\a} e_{\b} = 0$ at that point, to compute the contraction ${\cder}^{\der_{\nu} e_\a}F_{\nu\a}$, we get that it vanishes.
So
\begin{eqnarray*}
 {\cder}^{\a}{\cder}_{\mu}F_{\nu\a} &=& R_{\nu\ga\a\mu}F^{\ga\a} + {{{R_{\a}}_{\ga}}^{\a}}_{\mu}{F_{\nu}}^{\ga} + [{F^{\a}}_{\mu}, F_{\nu\a}] \\
&=& R_{\nu\ga\a\mu}F^{\ga\a} + R_{\ga\mu}{F_{\nu}}^{\ga} + [{F^{\a}}_{\mu}, F_{\nu\a}] \\
\end{eqnarray*}
$$ {\cder}^{\a}{\cder}_{\mu}F_{\nu\a}= R_{\ga\mu\nu\a}F^{\a\ga} + R_{\mu\ga}{F_{\nu}}^{\ga} + [{F^{\a}}_{\mu}, F_{\nu\a}] $$

On the other hand, we have:
\begin{eqnarray*}
{\cder}^{\a}{\cder}_{\nu}F_{\a\mu} &=& {\cder}_{\nu}{\cder}^{\a}F_{\a\mu} +  \der^{\a}\der_{\nu}F_{\a\mu} -  \der_{\nu}\der^{\a}F_{\a\mu} + [{F^{\a}}_{\nu}, F_{\a\mu}] \\
&=& 0 + {{{R_{\a}}^{\ga}}^{\a}}_{\mu}F_{\ga\mu} + {{{R_{\mu}}^{\ga}}^{\a}}_{\nu}F_{\a\ga} + [{F^{\a}}_{\nu},F_{\a\mu}] \\
&& \text{(by equation \eqref{eq:YM})} \\
&=& {{{R_{\a}}_{\ga}}^{\a}}_{\mu}{F^{\ga}}_{\mu} + R_{\mu\ga\a\nu}F^{\a\ga} + [F_{\a\nu}, {F^{\a}}_{\mu}] \\
&=&  R_{\ga\nu}{F^{\ga}}_{\mu} + R_{\ga\mu\nu\a}F^{\a\ga} + [{F^{\a}}_{\mu}, F_{\nu\a}] \
\end{eqnarray*}

(where we have used the anti-symmetry of F)\

$$= R_{\nu\ga}{F^{\ga}}_{\mu} + R_{\ga\mu\nu\a}F^{\a\ga} + [{F^{\a}}_{\mu}, F_{nu\a}]$$

As we have 

$$\textbf{D}^{(A)}_{\a}F_{\mu\nu} + \textbf{D}^{(A)}_{\mu}F_{\nu\a} + \textbf{D}^{(A)}_{\nu}F_{\a\mu} = 0$$

we get $${\cder}^{\a} {\cder}_{\a}F_{\mu\nu} + 2R_{\ga\mu\nu\a}F^{\a\ga} + R_{\mu\ga}{F_{\nu}}^{\ga} + R_{\nu\ga}{F^{\ga}}_{\mu} + 2[{F^{\a}}_{\mu}, F_{\nu\a}] = 0 $$

We obtain:
\bea
\notag
\Box^{(A)}_{\g} F_{\mu \nu} = {\cder}^{\a}{\cder}_{\a}F_{\mu\nu} = -2R_{\ga\mu\nu\a}F^{\a\ga} - R_{\mu\ga}{F_{\nu}}^{\ga} - R_{\nu\ga}{F^{\ga}}_{\mu} - 2[{F^{\a}}_{\mu}, F_{\nu\a}] \\  \label{hyperbolic}
\eea

Due to the equation \eqref{hyperbolic}, the held belief is that the Yang-Mills equations are hyperbolic in nature.\\

\subsection{Energy estimates}\

Consider the energy momentum tensor:

\bea
T_{\mu\nu} = < F_{\mu\b},{F_{\nu}}^{\b}>  - \frac{1}{4} \g_{\mu\nu} <F_{\a\b},F^{\a\b}>
\eea

We will wright $<F_{\a\b},F^{\a\b}>$ as $F_{\a\b}.F^{\a\b}$ to lighten the notation.\

Taking the covariant divergence of $T_{\mu\nu}$ we obtain:
\begin{eqnarray*}
\der^{\nu} T_{\mu\nu} &=&  \der^{\nu} ( F_{\mu\b}.{F_{\nu}}^{\b} - \frac{1}{4} \g_{\mu\nu} F_{\a\b}.F^{\a\b} ) \\
&=& ({\cder}^{\nu} F_{\mu\b}) .{F_{\nu}}^{\b} +  F_{\mu\b}.{\cder}^{\nu} {F_{\nu}}^{\b} - \frac{1}{4} \g_{\mu\nu}{\cder}^{\nu}  F_{\a\b} .F^{\a\b}  \\
&& - \frac{1}{4} \g_{\mu\nu} F_{\a\b}.  {\cder}^{\nu} F^{\a\b}  
\end{eqnarray*}
(where we used the fact that the metric is Killing, i.e. $ \der \g = 0$, and that  $<$ , $>$ is Ad-invariant )\
\begin{eqnarray*}
&=&   ({\cder}^{\nu} F_{\mu\b}) .{F_{\nu}}^{\b} - \frac{1}{2} \g_{\mu\nu}{\cder}^{\nu} F_{\a\b} .F^{\a\b} \\
\end{eqnarray*}
(we used the field equations)
\begin{eqnarray*}
&=&   (\cder_{\a} F_{\mu\b}) .F^{\a\b} - \frac{1}{2}(\cder_{\mu}  F_{\a\b}) .F^{\a\b} \\
&=&   (\cder_{\a} F_{\mu\b}) .F^{\a\b} + \frac{1}{2}( \cder_{\a} F_{\b\mu} ) .F^{\a\b} + \frac{1}{2} (\cder_{\b} F_{\mu\a} ) .F^{\a\b}
\end{eqnarray*}
(using the Bianchi identities)\
\begin{eqnarray*}
&=&   (\cder_{\a} F_{\mu\b}) .F^{\a\b} - \frac{1}{2}( \cder_{\a} F_{\mu\b} ) .F^{\a\b} + \frac{1}{2} (\cder_{\a} F_{\mu\b} ) .F^{\b\a} \\
&=&   (\cder_{\a} F_{\mu\b}) .F^{\a\b} - \frac{1}{2}( \cder_{\a} F_{\mu\b} ) .F^{\a\b} - \frac{1}{2} (\cder_{\a} F_{\mu\b} ) .F^{\a\b} 
\end{eqnarray*}
(where we used the anti-symmetry of $F$ in the last two equalities)\
\bea
&&= 0
\eea

Considering a vector field $V^{\nu}$ we let $$J_{\mu}(V) = V^{\nu}T_{\mu\nu}$$

We have
\begin{eqnarray*}
\der^{\mu} J_{\mu}(V) &=& \der^{\mu} ( V^{\nu}T_{\mu\nu} ) \\
&=& \der^{\mu} ( V^{\nu}) T_{\mu\nu}
\end{eqnarray*}
(since $T$ is divergenceless )
\begin{eqnarray*}
= \frac{1}{2} ( \der^{\mu} ( V^{\nu}) T_{\mu\nu} +  \der^{\mu} ( V^{\nu}) T_{\mu\nu} ) = \frac{1}{2} ( \der^{\mu} ( V^{\nu}) T_{\mu\nu} +  \der^{\nu} ( V^{\mu}) T_{\mu\nu} ) 
\end{eqnarray*}
(where we used the symmetry of $T_{\mu\nu}$)
\bea
= \pi^{\mu\nu}(V) T_{\mu\nu}
\eea
where $\pi^{\mu\nu}(V)$ is the deformation tensor that is,
\bea
\pi^{\mu\nu}(V) = \frac{1}{2} ( \der^{\mu}  V^{\nu}+  \der^{\nu}  V^{\mu} ) 
\eea

Applying the divergence theorem on $ J_{\mu}(V)$ in a region $B$ bounded to the past by a spacelike hypersurface $\Sigma_{1}$ and to the future by a spacelike hypersurface $\Sigma_{2}$, and by a null hypersurface $N$, we obtain: \
\bea
\notag
\int_{B}  \pi^{\mu\nu}(V) T_{\mu\nu} dV_{B}  = \int_{\Sigma_{1}} J_{\mu}(V) w^{\mu} dV_{\Sigma_{1}} -  \int_{\Sigma_{2}} J_{\mu}(V) w^{\mu} dV_{\Sigma_{2}} - \int_{N}J_{\mu}(V) w_{N}^{\mu} dV_{N} \\
\eea

where $w^{\mu}$ are the unit normal to the hypersurfaces $\Sigma$, $w_{N}^{\mu}$ is any null generator of $N$, $dV_{\Sigma}$ are the induced volume forms and $dV_{N}$ is defined such that the divergence theorem applies. \

Taking $V = \frac{\pr}{\pr t}$, where $\frac{\pr}{\pr t}$ is a timelike vector field.

Taking $B = \Sigma_{+} \cap J^{-}(p)$, we get:\

\[
\int_{\Sigma_{+} \cap J^{-}(p)}  \pi^{\mu\nu}(\frac{\pr}{\pr t}) T_{\mu\nu} dV_{B}  = \int_{\Sigma \cap J^{-}(p)} J_{\mu}(\frac{\pr}{\pr t}) w^{\mu} dV_{\Sigma_{+}} - \int_{N^{-}(p)\cap \Sigma_{+}}J_{\mu}(\frac{\pr}{\pr t}) w_{N}^{\mu} dV_{N}
\]

where $w =  - \frac{\pr}{\pr \hat{t}}$, is the normalized timelike vector field, i.e.
\bea
\g(\frac{\pr}{\pr \hat{t} },\frac{\pr}{\pr \hat{t} }) = -1
\eea

\begin{definition}
Define the energy $E_{t}^{\frac{\pr}{\pr t}}$ by,
\bea
 E_{t}^{\frac{\pr}{\pr t}} = \int_{\Sigma_{t}} J_{\mu}(\frac{\pr}{\pr t}) (\frac{\pr}{\pr \hat{t} })^{\mu} dV_{\Sigma_{t}} = \int_{\Sigma_{t}} J_{\mu}(\frac{\pr}{\pr \hat{t} }) (\frac{\pr}{\pr \hat{t} })^{\mu} \sqrt{ - \g(\frac{\pr}{\pr t },\frac{\pr}{\pr t }) } dV_{\Sigma_{t}}  
\eea

And define the flux $F^{\frac{\pr}{\pr t }} (N^{-}(p)\cap \Sigma_{+} )$ by,
\bea
F^{\frac{\pr}{\pr t }} (N^{-}(p)\cap \Sigma_{+} ) = -  \int_{N^{-}(p)\cap \Sigma_{+}} J_{\mu}(\frac{\pr}{\pr t }) w_{N^{-}(p)}^{\mu} dV_{N^{-}(p)}
\eea

\end{definition}

We get:
\bea
\int_{\Sigma_{+} \cap J^{-}(p)}  \pi^{\mu\nu}(\frac{\pr}{\pr t }) T_{\mu\nu} dV_{B}  = -   E_{t =0}^{\frac{\pr}{\pr t }} (\Sigma\cap J^{-}(p))  + F^{\frac{\pr}{\pr t }} (N^{-}(p)\cap \Sigma_{+} )
\eea

\begin{definition}

We define $\{L, \Lb, e_{1}, e_{2}\}$ a null frame as in \eqref{defnullframe1}, \eqref{defnullframe2}, \eqref{defnullframe3}, and \eqref{defnullframe4}, in the following manner:

We define $L$ as in \eqref{definitionoftheparameters}, and we define $\Lb$ as:

\bea
\Lb = - \g(L, \hat{t})^{-1} (2 \hat{t} + \g(L, \hat{t})^{-1} L )  \label{definitionofLbar}
\eea
Define $e_{i}$, $i \in \{1, 2 \}$, such that,
\bea
\g(e_{i}, e_{j}) = \delta_{ij}
\eea
\bea
\g(L, e_{i}) = \g(\Lb, e_{i}) = 0
\eea

\end{definition}

We verify that

\bea
\notag
\g(\Lb, \Lb) &=&\g_{L\hat{t}}^{-2} \g(2 \hat{t} + \g_{L\hat{t}}^{-1} L , 2 \hat{t} + \g_{L\hat{t}}^{-1} L ) = 4 \g_{L \hat{t}}^{-2} \g(\hat{t}, \hat{t})  + 4 \g_{L\hat{t}}^{-2} \g_{L\hat{t}}^{-1} \g(\hat{t}, L)  \\
\notag
&=& -4 \g_{L \hat{t}}^{-2} + 4\g_{L \hat{t}}^{-2} \\
&=& 0
\eea
and,
\bea
\notag
\g(L, \Lb) &=& - \g_{L \hat{t}}^{-1 }\g(L, 2 \hat{t} + \g_{L\hat{t}}^{-1} L ) = -2 \g_{L \hat{t}}^{-1 } \g(L, \hat{t})  \\
&=& -2 \label{scalarproductLLbar}
\eea

We have
\bea
\hat{t} = - \frac{( \g_{L\hat{t}}^{-1}  L + \g_{L\hat{t}} \Lb)}{2}
\eea

\subsubsection{Computing explicitly $F^{\frac{\pr}{\pr t}} (N^{-}(p)\cap \Sigma_{+} )$}\

\begin{eqnarray*}
&& J_{\mu}(\frac{\pr}{\pr \hat{t}})  L^{\mu}  \\
&=& T_{\mu \hat{t}}  L^{\mu} = T_{L \hat{t}} = F_{L \b}.{F_{\hat{t}}}^{\b} - \frac{1}{4} \g_{L \hat{t}} F_{\a\b}.F^{\a\b} \\
&=& F_{L \Lb}.{F_{\hat{t}}}^{\Lb} +  F_{L e_{a}}.{F_{\hat{t}}}^{e_{a}} +  F_{L e_{b}}.{F_{\hat{t}}}^{e_{b}}  - \frac{1}{2} \g_{L \hat{t}} F_{\Lb L}.F^{\Lb L}  - \frac{1}{2} \g_{L \hat{t}} F_{L a}.F^{L a} \\
&&- \frac{1}{2} \g_{L \hat{t}} F_{L b}.F^{L b} - \frac{1}{2} \g_{L \hat{t}} F_{\Lb a}.F^{\Lb a} - \frac{1}{2} \g_{L \hat{t}} F_{\Lb b}.F^{\Lb b} - \frac{1}{2} \g_{L \hat{t}} F_{ab}.F^{ab} \\
&=&  - \frac{1}{2}  F_{L \Lb}.F_{\hat{t} L} +  F_{L e_{a}}.F_{\hat{t} e_{a}} +  F_{L e_{b}}.F_{\hat{t} e_{b}}   + \frac{1}{8} \g_{L \hat{t}} F_{L \Lb}.F_{L \Lb}    + \frac{1}{4} \g_{L \hat{t}} F_{L a}.F_{\Lb a} \\
&& + \frac{1}{4} \g_{L \hat{t}} F_{L b}.F_{\Lb b} + \frac{1}{4} \g_{L \hat{t}} F_{\Lb a}.F_{L a} + \frac{1}{4} \g_{L \hat{t}} F_{\Lb b}.F_{L b} - \frac{1}{2} \g_{L \hat{t}} F_{ab}.F_{ab} \\
&=&  - \frac{1}{2}  F_{L \Lb}.F_{\hat{t} L} +  F_{L e_{a}}.F_{\hat{t} e_{a}} +  F_{L e_{b}}.F_{\hat{t} e_{b}}   + \frac{1}{8} \g_{L \hat{t}} F_{L \Lb}.F_{L \Lb}    + \frac{1}{2} \g_{L \hat{t}} F_{L a}.F_{\Lb a}  \\
&& + \frac{1}{2} \g_{L \hat{t}} F_{L b}.F_{\Lb b}  - \frac{1}{2} \g_{L \hat{t}} F_{ab}.F_{ab}
\end{eqnarray*}

We have,

\begin{eqnarray*}
- \frac{1}{2}  F_{L \Lb}.F_{\hat{t} L} &=&  (- \frac{1}{2}) (-\frac{1}{2}) \g(L, \hat{t})  F_{L \Lb}.F_{\Lb L} = - \frac{1}{4} \g_{L\hat{t}}  F_{L \Lb}.F_{L \Lb} \\
 F_{L e_{a}}.F_{\hat{t} e_{a}} +  F_{L e_{b}}.F_{\hat{t} e_{b}} &=& - \frac{1}{2}\g_{L \hat{t}}^{-1} F_{L e_{a}}.F_{L e_{a}}   - \frac{1}{2}\g_{L\hat{t}}  F_{L e_{a}}.F_{\Lb e_{a}}  - \frac{1}{2} \g_{L \hat{t}}^{-1}  F_{L e_{b}}.F_{L e_{b}} \\
&& - \frac{1}{2} \g_{L\hat{t}}  F_{L e_{b}}.F_{\Lb e_{b}} 
\eeaa

Therefore,
\beaa
\notag
&&J_{\mu}(\frac{\pr}{\pr \hat{t}})  L^{\mu} \\
\notag
&=&  - \frac{1}{4} \g_{L\hat{t}}  F_{L \Lb}.F_{L \Lb} - \frac{1}{2} \g_{L \hat{t}}^{-1} F_{L e_{a}}.F_{L e_{a}}   - \frac{1}{2} \g_{L\hat{t}}  F_{L e_{a}}.F_{\Lb e_{a}}  - \frac{1}{2} \g_{L \hat{t}}^{-1} F_{L e_{b}}.F_{L e_{b}} \\
&& - \frac{1}{2} \g_{L\hat{t}}  F_{L e_{b}}.F_{\Lb e_{b}} + \frac{1}{8} \g_{L\hat{t}}  F_{L \Lb}.F_{L \Lb}    + \frac{1}{2}  \g_{L\hat{t}}  F_{L a}.F_{\Lb a}  + \frac{1}{2} \g_{L\hat{t}}  F_{L b}.F_{\Lb b}  \\
&& - \frac{1}{2} \g_{L\hat{t}}  F_{ab}.F_{ab} \\
&=&  - \frac{1}{8} \g_{L\hat{t}} F_{L \Lb}.F_{L \Lb} - \frac{1}{2} \g_{L \hat{t}}^{-1} F_{L e_{a}}.F_{L e_{a}}    - \frac{1}{2} \g_{L \hat{t}}^{-1} F_{L e_{b}}.F_{L e_{b}}     - \frac{1}{2}  \g_{L\hat{t}}  F_{ab}.F_{ab} 
\end{eqnarray*}

Thus,
\bea
\notag
&& F^{\frac{\pr}{\pr t }} (N^{-}(p)\cap \Sigma_{+} ) \\
\notag
&=&  \int_{N^{-}(p)\cap \Sigma_{+}}   ( \frac{1}{8} \g_{L\hat{t}} |F_{L \Lb}|^{2} + \frac{1}{2} \g_{L \hat{t}}^{-1} |F_{L e_{a}}|^{2}  +  \frac{1}{2} \g_{L \hat{t}}^{-1} |F_{L e_{b}}|^{2}  +  \frac{1}{2} \g_{L\hat{t}}  |F_{ab}|^{2} ) (q) \\
\eea
where $| \; . \; |$ is the norm deduced from $<$ , $>$.\

\subsubsection{Computing $E_{t}^{\frac{\pr}{\pr t}} $}\

\bea
E_{t}^{\frac{\pr}{\pr t}} = \int_{\Sigma_{t}} J_{\mu}(\frac{\pr}{\pr \hat{t}}) (\frac{\pr}{\pr \hat{t}})^{\mu} \sqrt{ - \g(\frac{\pr}{\pr t },\frac{\pr}{\pr t }) }    dV_{\Sigma_{t}} =  \int_{\Sigma_{t}} T_{\hat{t} \hat{t}}  \sqrt{ - \g(\frac{\pr}{\pr t },\frac{\pr}{\pr t }) }   dV_{\Sigma_{t}}
\eea
$$T_{\hat{t} \hat{t}} = F_{\hat{t} \b}.{F_{\hat{t}}}^{\b} - \frac{1}{4} \g_{\hat{t} \hat{t}} F_{\a\b}.F^{\a\b}$$

Choosing the frame $\{ \hat{t}, n, e_{a}, e_{b} \} $\

\begin{eqnarray*}
T_{\hat{t} \hat{t}} &=&  F_{\hat{t} n}.F_{\hat{t} n} + F_{\hat{t} a}.F_{\hat{t} a} +  F_{\hat{t} b}.F_{\hat{t} b} + \frac{1}{2}  F_{\hat{t}n}.F^{\hat{t}n} + \frac{1}{2}  F_{\hat{t}a}.F^{\hat{t}a} \\
&&+ \frac{1}{2}  F_{\hat{t}b}.F^{\hat{t}b} + \frac{1}{2}  F_{na}.F^{na} + \frac{1}{2}  F_{nb}.F^{nb} + \frac{1}{2}  F_{ab}.F^{ab}\\
&=&  F_{\hat{t} n}.F_{\hat{t} n} + F_{\hat{t} a}.F_{\hat{t} a} +  F_{\hat{t} b}.F_{\hat{t} b} - \frac{1}{2}  F_{\hat{t}n}.F_{\hat{t}n} - \frac{1}{2}  F_{\hat{t}a}.F_{\hat{t}a} - \frac{1}{2}  F_{\hat{t}b}.F_{\hat{t}b} \\
&&+ \frac{1}{2}  F_{na}.F_{na} + \frac{1}{2}  F_{nb}.F_{nb} + \frac{1}{2}  F_{ab}.F_{ab} \\
&=& \frac{1}{2}  F_{tn}.F_{\hat{t}n} + \frac{1}{2}  F_{\hat{t}a}.F_{\hat{t}a} + \frac{1}{2}  F_{\hat{t}b}.F_{\hat{t}b} + \frac{1}{2}  F_{na}.F_{na} + \frac{1}{2}  F_{nb}.F_{nb} + \frac{1}{2}  F_{ab}.F_{ab} \\
&=&  \frac{1}{2} [  | F_{\hat{t}n}|^{2} + |F_{\hat{t}a}|^{2} +  | F_{\hat{t}b}|^{2} +   |F_{na}|^{2} +   |F_{nb}|^{2} + | F_{ab}|^{2} ] (q) \geq 0 
\end{eqnarray*}

Thus,
\bea
E_{t=t_{0}}^{\frac{\pr}{\pr t}} (\Sigma\cap J^{-}(p)) \leq  E_{t=t_{0}}^{\frac{\pr}{\pr t }} \label{positivityofthe energy}
\eea

\subsubsection{Finiteness of the flux from finite initial energy}

Let $\Sigma^{-}_{t}$ be the past of  $\Sigma_{t}$.

We also have,
\bea
\notag
&& \int_{\Sigma_{+} \cap \Sigma^{-}_{t}  \cap J^{-}(p) }  \pi^{\mu\nu}(\frac{\pr}{\pr t }) T_{\mu\nu} dV_{B}  \\
\notag
 &\les& \sum_{\hat{\mu}, \hat{\nu} \in \{ \hat{t}, n, e_{a}, e_{b} \}  } \int_{\Sigma_{+} \cap  \Sigma^{-}_{t}  \cap J^{-}(p) }   |\pi^{\hat{\mu}\hat{\nu}}(\frac{\pr}{\pr t }) | |< F_{\hat{\mu}\b},{F_{\hat{\nu}}}^{\b}>  - \frac{1}{4} \g_{\hat{\mu}\hat{\nu}} <F_{\a\b},F^{\a\b}> | dV_{B}  \\
\notag
&\les& \sum_{\a, \b, \hat{\mu}, \hat{\nu} \in \{ \hat{t}, n, e_{a}, e_{b} \}  } \int_{t_{0}}^{t}   |\pi^{\hat{\mu}\hat{\nu}}(\frac{\pr}{\pr t })   |_{L^{\infty}_{\Sigma_{\overline{t}}  \cap J^{-}(p) }}    \int_{\Sigma_{\overline{t}}  \cap J^{-}(p) } ( |F_{\hat{\mu}\b}|^{2} + |{F_{\hat{\nu}}}^{\b}|^{2} \\
&&  \quad \quad \quad \quad \quad \quad \quad  + |F_{\a\b}|^{2} + |F^{\a\b} |^{2}) .\sqrt{ - \g(\frac{\pr}{\pr t },\frac{\pr}{\pr t }) }dV_{\Sigma_{\overline{t}}}  \label{inequalityondeformationtensorspacetimeintegral}
\eea
(by using $a.b \les a^{2} + b^{2}$).\\

 As in [CS] we assume that the deformation tensor of $\frac{\pr}{\pr t}$ is finite. More precisely, we assume that for all $\hat{\mu}, \hat{\nu} \in \{\hat{t}, n, e_{a}, e_{b} \}$, the components of the deformation tensor, $\pi^{\hat{\mu}\hat{\nu}}( \frac{\pa}{\pa t })  = \frac{1}{2} [ \der^{\hat{\mu}}  (\frac{\pa}{\pa t })^{\hat{\nu}}+  \der^{\hat{\nu}}  (\frac{\pa}{\pa t })^{\hat{\mu}} ] $, verify,
\bea
| \pi^{\hat{\mu}\hat{\nu}}( \frac{\pa}{\pa t }) |_{L^{\infty}_{loc(\Sigma_{t})}} \leq C(t)  \label{deformationtensorfinite}
\eea
where $C(t) \in L_{loc}^{1}$.\\

Applying the divergence theorem again in the future of $\Sigma$ and the past of $\Sigma_{t} \cap J^{-}(p) $, we get:
\bea
\notag
E_{t}^{\frac{\pr}{\pr t}}  (\Sigma_{t} \cap J^{-}(p)) &=& \int_{\Sigma_{+} \cap   \Sigma^{-}_{t} \cap J^{-}(p) }  \pi^{\mu\nu}(\frac{\pr}{\pr t}) T_{\mu\nu} dV_{B}  +   E_{t=t_{0}}^{\frac{\pr}{\pr t}} (\Sigma_{t_{0}} \cap J^{-}(p))\\
&\les&  E_{t=t_{0}}^{\frac{\pr}{\pr t}}   +  \int_{t =t_{0}}^{t }  C(\overline{t})  E_{\overline{t} }^{\frac{\pr}{\pr t }} (\Sigma_{\overline{t}} \cap J^{-}(p))  d \overline{t}     \label{localenergywillstayfinite}
\eea
(where we used \eqref{inequalityondeformationtensorspacetimeintegral}, \eqref{deformationtensorfinite}, and the positivity of the energy \eqref{positivityofthe energy}).

Using Gr\"onwall lemma, we get that $E_{t}^{\frac{\pr}{\pr t}} (\Sigma_{t} \cap J^{-}(p))   $ is finite and continuous in $t$, and therefore
\bea
\notag
 \int_{\Sigma_{+} \cap  J^{-}(p) }  \pi^{\mu\nu}(\frac{\pr}{\pr t }) T_{\mu\nu} dV_{B}  &\les& \int_{\hat{t} =t_{0}}^{t} c(\overline{t}) E_{\overline{t}}^{\frac{\pr}{\pr t }}  (\Sigma_{\overline{t}} \cap J^{-}(p))   d \overline{t} \\
\notag
&\les&  \int_{t_{0}}^{t} c(\overline{t}) d\overline{t} \\
&\les& c(t_{p} ) \label{boundingthespacetimeintegralfromthedivergencetheorem}
\eea

and therefore  $$\int_{\Sigma_{+} \cap J^{-}(p)}  \pi^{\mu\nu}(\frac{\pr}{\pr t}) T_{\mu\nu} dV_{B}  +  E_{t=t_{0}}^{\frac{\pr}{\pr t}} (\Sigma\cap J^{-}(p))  \lesssim c(t_{p}) E_{t=t_{0}}^{\frac{\pr}{\pr t}}  $$

Hence,
\bea
F^{\frac{\pr}{\pr t}} (N^{-}(p)\cap \Sigma_{+} )  \lesssim c(t_{p}) E_{t=t_{0}}^{\frac{\pr}{\pr t}} \label{finitenessflux}
\eea

This finiteness of the flux will play a key role in the proof.\\

\subsection{Definitions and notations}\

\begin{definition}
We define $\la_{\a\b}$ as in \eqref{eq:transport} and \eqref{eq:initial condition}, by fixing at $p$ a ${\cal G}$-valued anti-symmetric 2-tensor $\J_{p}$, and defining $\la_{\a\b}$ as the unique 2-tensor field along $N^{-}(p)$, the boundary of the causal past of $p$, that verifies the linear transport equation:
\beaa
\textbf{D}^{(A)}_{L}\la_{\a\b} + \frac{1}{2}tr\chi\la_{\a\b} = 0  \\
(s\la_{\a\b})(p) = \J_{\a\b}(p) 
\eeaa
where $s$ is the affine parameter on $N^{-}(p)$ defined as in \eqref{definitionoftheparameters}, and $\chi$ is the null second fundamental form of $N^{-}(p)$ defined as in \eqref{definitionofchi}, and $tr\chi$ defined as in \eqref{definitionoftraceofchi}.

\end{definition}

\begin{definition}
We define a timelike foliation $\Sigma_{t}$ by considering $t = constant$ hypersurfaces.
\end{definition}

\begin{definition}
We define positive definite Riemannian metric as in \eqref{positiveriemannianmetrich}, in the following manner:

\beaa
h(e_{\a}, e_{\b}) = \g(e_{\a}, e_{\b}) + 2 \g(e_{\a},  \frac{\pr}{\pr \hat{t}} ) . \g(e_{\b}, \frac{\pr}{\pr \hat{t}} )
\eeaa

where
\beaa
\frac{\pr}{\pr \hat{t}} = (- \g( \frac{\pr}{\pr t} , \frac{\pr}{\pr t}  ) )^{-\frac{1}{2}} \frac{\pr}{\pr t} 
\eeaa
\end{definition}

\begin{definition}
For any ${\cal G}$-valued 2-tensor $K$, we define
\beaa
|K|^{2} =  h_{\a\mu} h_{\b\nu} |K^{\mu\nu}|. |K^{\a\b}|
\eeaa
and
\beaa
|K|_{L^\infty} =  || (h_{\a\mu} h_{\b\nu} |K^{\mu\nu}|. |K^{\a\b}| )^{\frac{1}{2}} ||_{L^{\infty}} = || (|K|^{2} )^{\frac{1}{2}} ||_{L^{\infty}}
\eeaa

We recall \eqref{Cauchy-Schwarzinequalitywithmetrich}, that for any two  ${\cal G}$-valued tensors $K$ and $G$, we have
\beaa
| <K_{\a\b}, G^{\a\b}>| \les ( |K|^{2} )^{\frac{1}{2}}. ( |G|^{2} )^{\frac{1}{2}} 
\eeaa

\end{definition}

\subsubsection{Notations}

We denote by $N^{-}_{\tau}(p)$ the portion of $N^{-}(p)$ to the past of $\Sigma_{t_{p} }$ and to the future of $\Sigma_{t_{p} - \tau}$. 

$t^{*}$ and $\overline{t}$ are values of $t$, where $\frac{\pa}{\pa t }$ is a timelike vector field verifying  \eqref{deformationtensorfinite}.

$\overline{s}$ and $\hat{s}$ are values of $s$.

We denote by $s_{\tau}$, the largest value of $s$ on $N^{-}_{\tau}(p)$.

We let $\Sigma_{t}^{p} = \Sigma_{t}\cap J^{-}(p) $.

\subsection{Estimates for $s \la_{\a\b}$}\

\begin{proposition}
We have,
\bea
\sup _{N^{-}_{\tau}(p)} |s \la| \leq C(p, \tau ) |J|  \label{linfinitynormofslamda}
\eea
\end{proposition}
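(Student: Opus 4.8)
The plan is to rewrite the transport equation satisfied by $\la_{\a\b}$ as a transport equation for the rescaled tensor $\mu_{\a\b} := s\,\la_{\a\b}$ along the null generators of $N^{-}(p)$, to observe that the coefficient occurring in it is bounded on the compact region $N^{-}_{\tau}(p)$ — this is the single delicate point, and it sits at the vertex — and then to close a Gr\"onwall estimate for $|\mu|$ along each generator, initialized at $p$. Since $L$ is the null geodesic generator of $N^{-}(p)$ with affine parameter $s$, we have $L(s) = 1$; applying $\cder_{L}$ to $\mu_{\a\b} = s\,\la_{\a\b}$, using the Leibniz rule together with the transport equation $\cder_{L}\la_{\a\b} + \frac12 tr\chi\,\la_{\a\b} = 0$ from the definition of $\la$, and rewriting $\la_{\a\b} = \frac1s\mu_{\a\b}$, we obtain
\beaa
\cder_{L}\mu_{\a\b} &=& \Big(\frac{1}{s} - \frac12 tr\chi\Big)\mu_{\a\b}.
\eeaa
In Minkowski space $tr\chi \equiv \frac2s$, so the right-hand side vanishes, $\mu$ is gauge-parallel along $L$, and one recovers the flat-space situation of [KR1]; on a general curved background this coefficient no longer vanishes.

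The key point is that $\frac{1}{s} - \frac12 tr\chi$ is bounded on $N^{-}_{\tau}(p)$. Away from the vertex both $\frac1s$ and $tr\chi$ are bounded, since $\g$ is smooth and $N^{-}_{\tau}(p)$ is compact. Near the vertex one invokes the standard asymptotic expansion $tr\chi = \frac2s + O(s)$ for the expansion of a null cone emanating from a point of a smooth Lorentzian manifold, so that $\frac1s - \frac12 tr\chi = O(s)$ stays bounded as $s \to 0$. Hence there is $C_{1}(p,\tau)$ with $\big|\frac1s - \frac12 tr\chi\big| \le C_{1}(p,\tau)$ on $N^{-}_{\tau}(p)$.

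Next I would estimate the pointwise norm $|\mu|$ — contracting the space-time indices with the Riemannian metric $h$, as in the definition above — along a fixed generator. Differentiating $|\mu|^{2}$ along $L$, using that $\der$ commutes with contraction and with raising indices ($\der\g = 0$), that the scalar product on ${\cal G}$ is Ad-invariant so that $\der_{L}\langle\Psi,\Phi\rangle = \langle\cder_{L}\Psi,\Phi\rangle + \langle\Psi,\cder_{L}\Phi\rangle$, and then inserting the transport equation above, one gets
\beaa
\Big|\frac{d}{ds}|\mu|^{2}\Big| &\les& \Big(\Big|\frac1s - \frac12 tr\chi\Big| + |\der h|\Big)|\mu|^{2} \;\les\; C_{2}(p,\tau)\,|\mu|^{2},
\eeaa
where $|\der h|$ is bounded on $N^{-}_{\tau}(p)$ because $h$ is built from $\g$ and $\frac{\pr}{\pr\hat{t}}$, both smooth, on a compact set. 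The initial condition $(s\,\la_{\a\b})(p) = \J_{\a\b}(p)$ says exactly that $\mu$ extends continuously to the vertex with $|\mu|(p) = |\J(p)| = |J|$. Integrating this differential inequality along each generator starting from $s = 0$, and using that $s \le s_{\tau}$ on $N^{-}_{\tau}(p)$, gives $|\mu|(q) \le e^{\frac12 C_{2}(p,\tau)\,s_{\tau}}\,|J|$ for every $q \in N^{-}_{\tau}(p)$; taking the supremum yields \eqref{linfinitynormofslamda}.

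The main obstacle is the second step: showing that $tr\chi - \frac2s$ remains bounded up to the vertex, i.e. controlling the behavior of the null second fundamental form at the tip of the cone. It is harmless here precisely because rescaling $\la$ by exactly $s$ matches the leading $\frac2s$ blow-up of $tr\chi$; in the later estimates for $\cder_{a}\la$ one is instead confronted with the combination $\frac1s - tr\chi$, which genuinely blows up near $s = 0$ and forces the more elaborate argument outlined in the strategy above.
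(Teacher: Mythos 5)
Your proof is correct and follows essentially the same route as the paper: both rewrite the transport equation as $\cder_{L}(s\la_{\a\b}) = -\frac{1}{2}\big(tr\chi-\frac{2}{s}\big)(s\la_{\a\b})$, use the boundedness of $tr\chi-\frac{2}{s}$ at the vertex to get $|\der_{L}|s\la|^{2}|\les |s\la|^{2}$ (absorbing the $\der h$ contributions since $h$ is smooth), and integrate from $s=0$ with the initial condition $|s\la|(p)=|J|$. The paper closes the estimate by absorbing a supremum for small $s$ and invoking smoothness away from the vertex rather than by a literal Gr\"onwall integration, but this is an immaterial difference.
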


\begin{proof}\

We proved in \eqref{boundingB} that,

\bea
\sup _{0 \le \overline{s} \le s}  |\overline{s} \la|^{2} \leq C(p, s) |J|^{2}
\eea

Hence,
\bea
\sup _{N^{-}_{\tau}(p)} |s \la |^{2} \leq C(p, s_{\tau} ) |J|^{2} \label{boundBbysstarandJ}
\eea
(where in this last inequality $s_{\tau}$ is the largest value of $s$ on $N^{-}_{\tau} (p)$ ).\

In view of \eqref{relationsandt},
\bea
s = t_{p} - t + O(t_{p}-t) 
\eea
we get,
\bea
\sup _{N^{-}_{\tau}(p)} |s \la | \leq C(p, \tau ) |J| 
\eea

\end{proof}

\begin{proposition}
\bea
||  \la ||_{L^{2}(N^{-}_{\tau}(p))}   \leq(\tau)^{\frac{1}{2}}       C(p, \tau) |J|  
\eea
\end{proposition}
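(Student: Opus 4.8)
The plan is to deduce this $L^{2}$ estimate directly from the pointwise bound $\sup_{N^{-}_{\tau}(p)}|s\la| \leq C(p,\tau)|J|$ obtained in \eqref{linfinitynormofslamda}, together with the fact that the volume element on the null cone $N^{-}_{\tau}(p)$ carries an $s^{2}$ factor coming from the area element of the $s=\text{constant}$ spheres, see \eqref{areaexpression}. First I would write
\beaa
\|\la\|_{L^{2}(N^{-}_{\tau}(p))}^{2} = \int_{N^{-}_{\tau}(p)} |\la|^{2}\, dV_{N},
\eeaa
and express $dV_{N}$ in terms of $ds$, the standard volume form $d\sigma^{2}$ on $\SSS^{2}$, and the area factor, which by \eqref{areaexpression} is comparable, with a constant depending on $p$ and $\tau$, to $s^{2}\,ds\,d\sigma^{2}$. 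Rewriting the integrand as $|\la|^{2}s^{2} = |s\la|^{2}$ times the remaining harmless factors then yields
\beaa
\|\la\|_{L^{2}(N^{-}_{\tau}(p))}^{2} \les C(p,\tau) \int_{0}^{s_{\tau}} \int_{\SSS^{2}} |s\la|^{2}\, d\sigma^{2}\, ds.
\eeaa

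Next I would insert the bound $|s\la|^{2} \leq C(p,\tau)|J|^{2}$ from \eqref{linfinitynormofslamda}, pull it out of the integral, and note that the $\SSS^{2}$ integral contributes the bounded area of the unit sphere while the $s$ integral over $[0,s_{\tau}]$ contributes the length $s_{\tau}$, where $s_{\tau}$ is the largest value of $s$ on $N^{-}_{\tau}(p)$. Finally, using \eqref{relationsandt}, namely $s = t_{p} - t + O(t_{p}-t)$, one has $s_{\tau} \les C(p,\tau)\,\tau$, so that
\beaa
\|\la\|_{L^{2}(N^{-}_{\tau}(p))}^{2} \les C(p,\tau)\,\tau\,|J|^{2},
\eeaa
and taking square roots gives the asserted inequality, after renaming the constant.

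There is no real obstacle here; the one point to keep in mind — and the reason the $s^{2}$ weight is essential rather than incidental — is the behavior near the vertex $s=0$. The pointwise bound on $\la$ itself is only $|\la| \les s^{-1}|J|$, which is \emph{not} square-integrable against $ds$ alone on a two-dimensional cone cross-section; it is precisely the $s^{2}$ Jacobian from the area element that converts $|\la|^{2}$ into the bounded quantity $|s\la|^{2}$, so that no delicate analysis at the vertex is required. One only has to make sure that the comparison constant between $dV_{N}$ and $s^{2}\,ds\,d\sigma^{2}$, which depends on the geometry of $J^{-}(p)$, as well as the constant from \eqref{relationsandt}, are absorbed into $C(p,\tau)$.
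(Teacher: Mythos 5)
Your proposal is correct and follows essentially the same route as the paper: both arguments use the pointwise bound on $|s\la|$ from \eqref{boundBbysstarandJ}, let the $s^{2}$ factor in the area element $da_{s}$ (cf. \eqref{areaexpression}) absorb the $s^{-2}$ in $|\la|^{2}=s^{-2}|s\la|^{2}$, integrate the resulting bounded quantity in $s$ over $[0,s_{\tau}]$ to pick up a factor $s_{\tau}$, and finally convert $s_{\tau}$ to $\tau$ via \eqref{relationsandt}. Your closing remark about the vertex behavior is a correct and useful observation, though the paper does not spell it out.
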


\begin{proof}\

We have,
\beaa
\int_{\SSS^{2}}  | s \la |^{2} (u=0, s, \om) da_{s}   &\lesssim&  \int_{\SSS^{2}}   C(p, s_{\tau})^{2} |J|^{2}   da_{s} \\
&& \text{(by \eqref{boundBbysstarandJ} )}\\
&\leq&  C(p, s_{\tau})^{2} |J|^{2}  \int_{\SSS^{2}}   1 da_{s} \\
& \lesssim& C(p, s_{\tau})^{2} |J|^{2}  s^{2} 
\eeaa

Thus, $$ \int_{\SSS^{2}}  | \la |^{2} (u=0, s, \om) da_{s}  \leq C(p, s_{\tau})^{2} |J|^{2}   $$

$$ || \la_{\a\b} ||_{L^{2}(N^{-}_{\tau}(p))}^{2} = \int_{0}^{s^{*}_{\tau} } \int_{\SSS^{2}}  |  \la |^{2} (u=0, s, \om) da_{s} ds $$ (where $s^{*}_{\tau} $ is the largest value of $s$ for a fixed $\om$ such that $(u=0, s, \om) \in N^{-}_{\tau}(p) ) $\

$$  \lesssim  C(p, s_{\tau} )^{2} |J|^{2}  \int_{0}^{s_{\tau} } 1   ds   \leq  s_{\tau}      C(p, s_{\tau})^{2} |J|^{2}   $$

Thus, $$ ||  \la ||_{L^{2}(N^{-}_{\tau}(p))}   \leq(s_{\tau} )^{\frac{1}{2}}       C(p, s_{\tau}  ) |J|   $$

Therefore, since $t_{p} - t = s + o(s)$,

\bea
||  \la ||_{L^{2}(N^{-}_{\tau}(p))}   \leq(\tau)^{\frac{1}{2}}       C(p, \tau) |J|  
\eea

\end{proof}

\subsection{Estimates for $|| \cder_{a} \la ||_{L^{2}(N^{-}_{\tau}(p))} $} \label{controlofthetangderivativeoflamdaasinKR3}\

\begin{definition}
Let
\bea
\hat{\chi}_{ab} = \chi_{ab} - \frac{1}{2} tr\chi \de_{ab} 
\eea
and let $\ze_{a}$ be defined as in \eqref{defzea}:
\bea
\ze_{a} &=& \frac{1}{2}\g(\der_{a}L, \Lb)  
\eea
\end{definition}

\begin{lemma}

\bea
\notag
\cder_{L} \cder_{a} \la_{\a\b} &=&     -  tr\chi  \cder_{a}  \la_{\a\b}    - \hat{\chi}_{ab} \cder_{b} \la_{\a\b} - \frac{1}{2} (\der_{a} tr\chi ) \la_{\a\b} - \frac{1}{2}   \ze_{a}   tr\chi \la_{\a\b} \\
&& + [F_{L a} , \la_{\a\b} ] + {{R_{\a}}^{\ga}}_{L a} \la_{\ga \b} + {{R_{\b}}^{\ga}}_{L a} \la_{\a \ga} \label{rcderLrcderalambdaalphabeta}
\eea

\end{lemma}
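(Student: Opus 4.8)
The plan is to derive the commutation formula \eqref{rcderLrcderalambdaalphabeta} directly from the transport equation $\textbf{D}^{(A)}_{L}\la_{\a\b} + \frac{1}{2}tr\chi\la_{\a\b} = 0$ by applying the tangential gauge covariant derivative $\cder_{a}$ and then commuting it past $\cder_{L}$. First I would write
\beaa
\cder_{a}\cder_{L}\la_{\a\b} = -\frac{1}{2}(\der_{a}tr\chi)\la_{\a\b} - \frac{1}{2}tr\chi\,\cder_{a}\la_{\a\b},
\eeaa
which handles the right-hand side and produces the $-\frac{1}{2}(\der_{a}tr\chi)\la_{\a\b}$ term together with part of the $-tr\chi\,\cder_{a}\la_{\a\b}$ term. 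The work is then entirely in the commutator $[\cder_{L},\cder_{a}]\la_{\a\b}$.

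Next I would expand $[\cder_{L},\cder_{a}]\la_{\a\b}$. Since the gauge covariant derivative differs from the Levi-Civita covariant derivative by the bracket with $A$, commuting two gauge covariant derivatives produces two kinds of terms: the gauge curvature term $[F_{La},\la_{\a\b}]$ (exactly as in the computation leading to \eqref{eq:Bianchi}, using that $F_{\mu\nu}$ is the curvature of $A$), and the Riemann curvature terms acting on the free tensor indices $\a$ and $\b$, namely ${{R_{\a}}^{\ga}}_{La}\la_{\ga\b} + {{R_{\b}}^{\ga}}_{La}\la_{\a\ga}$. In addition, because $L$ and $e_{a}$ do not commute as vector fields, there is a connection term $\cder_{[L,e_{a}]}\la_{\a\b}$, equivalently $\cder_{\der_{L}e_{a} - \der_{a}L}\la_{\a\b}$; I would evaluate $\der_{L}e_{a}$ and $\der_{a}L$ in the null frame $\{L,\Lb,e_{1},e_{2}\}$. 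Decomposing $\der_{a}L$ into its frame components and using the definitions $\chi_{ab} = \g(\der_{a}L,e_{b})$, $tr\chi = \de^{ab}\chi_{ab}$, $\hat{\chi}_{ab} = \chi_{ab} - \frac{1}{2}tr\chi\,\de_{ab}$, and $\ze_{a} = \frac{1}{2}\g(\der_{a}L,\Lb)$, together with $\g(L,\Lb) = -2$ from \eqref{scalarproductLLbar} and $\g(\der_{a}L,L) = 0$, one finds that the $e_{b}$-component of $\der_{a}L$ contributes $-\chi_{ab}\cder_{b}\la_{\a\b} = -(\hat{\chi}_{ab} + \frac{1}{2}tr\chi\,\de_{ab})\cder_{b}\la_{\a\b}$, which supplies the remaining half of $-tr\chi\,\cder_{a}\la_{\a\b}$ plus the $-\hat{\chi}_{ab}\cder_{b}\la_{\a\b}$ term, while the $\Lb$-component, weighted by $\ze_{a}$, hits $\cder_{L}\la_{\a\b}$ which is then re-expressed via the transport equation as $-\frac{1}{2}tr\chi\,\la_{\a\b}$, yielding the $-\frac{1}{2}\ze_{a}\,tr\chi\,\la_{\a\b}$ term. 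The $L$-component of $\der_{a}L$ vanishes and, with the appropriate choice of frame transported along the generators, the $\der_{L}e_{a}$ contribution can be arranged to vanish or be absorbed.

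Collecting all pieces — the two terms from differentiating the transport equation, the $-\hat{\chi}_{ab}\cder_{b}\la_{\a\b}$ and extra $-\frac{1}{2}tr\chi\,\cder_{a}\la_{\a\b}$ from $\chi$, the $-\frac{1}{2}\ze_{a}\,tr\chi\,\la_{\a\b}$ from $\ze$, the gauge term $[F_{La},\la_{\a\b}]$, and the two Riemann terms — gives precisely \eqref{rcderLrcderalambdaalphabeta}. The main obstacle I expect is the careful bookkeeping of the frame decomposition of $\der_{a}L$ and $\der_{L}e_{a}$: one must be precise about which null frame conventions and which normalization of $e_{a}$ along the generators of $N^{-}(p)$ are in force (so that the unwanted $\der_{L}e_{a}$ and $\der_{a}e_{b}$ terms genuinely drop out), and one must track the index positions in the Riemann curvature terms consistently with the sign conventions fixed earlier in the paper. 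The gauge-curvature part is routine given the earlier commutator computation, since the Ad-invariance and the identity $F_{\a\b} = \der_{\a}A_{\b} - \der_{\b}A_{\a} + [A_{\a},A_{\b}]$ make $[\cder_{L},\cder_{a}]$ act on the Lie-algebra value exactly by $[F_{La},\,\cdot\,]$.
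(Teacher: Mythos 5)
Your proposal follows essentially the same route as the paper: apply the tangential derivative to the transport equation, commute it past $\cder_{L}$ to pick up $[F_{La},\la_{\a\b}]$ and the two Riemann terms, and decompose $\der_{a}L = \chi_{ab}e_{b} - \ze_{a}L$ (using $\g(\der_{a}L,L)=0$ and $\ze_{a}=\frac{1}{2}\g(\der_{a}L,\Lb)$) so that the $\chi_{ab}$ piece yields $-\hat{\chi}_{ab}\cder_{b}\la_{\a\b}-\frac{1}{2}tr\chi\,\cder_{a}\la_{\a\b}$ and the $\ze_{a}L$ piece, fed back into the transport equation, yields $-\frac{1}{2}\ze_{a}\,tr\chi\,\la_{\a\b}$. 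All terms are accounted for correctly.

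The one point you leave vague is the $\der_{L}e_{a}$ contribution, which you propose to ``arrange to vanish or be absorbed'' by a frame choice. This is not how the paper handles it, and a frame choice alone does not quite do the job: even with $e_{a}$ Fermi-propagated one generally has $\der_{L}e_{a}$ proportional to $L$ (since $\g(\der_{L}e_{a},L)=-\g(e_{a},\der_{L}L)=0$ but $\g(\der_{L}e_{a},\Lb)$ need not vanish), and then $\cder_{\der_{L}e_{a}}\la_{\a\b}$ would contribute an extra multiple of $tr\chi\,\la_{\a\b}$ via the transport equation. The paper avoids the issue entirely because $\cder_{L}\cder_{a}$ in the statement is the \emph{tensorial} second-order gauge derivative $\D^{2}_{La}\la_{\a\b}=\cder_{L}(\cder_{a}\la_{\a\b})-\cder_{\der_{L}e_{a}}\la_{\a\b}$, so the $\der_{L}e_{a}$ term is subtracted by definition and the commutator $\D^{2}_{La}-\D^{2}_{aL}$ produces exactly the gauge and Riemann curvature terms with no $[L,e_{a}]$ remainder; the only frame decomposition needed is that of $\der_{a}L$ inside $\D^{2}_{aL}\la_{\a\b}=\cder_{a}(\cder_{L}\la_{\a\b})-\cder_{\der_{a}L}\la_{\a\b}$. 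Replace your hand-waving about $\der_{L}e_{a}$ with this observation and the argument is complete.
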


\begin{proof}

We have,

\begin{eqnarray*}
\cder_{L} \cder_{a} \la_{\a\b} &=&  \D^{(A)}_{a} \D^{(A)}_{L} \la_{\a\b}  + [F_{L a} , \la_{\a\b} ]  + {{R_{\a}}^{\ga}}_{L a} \la_{\ga \b} + {{R_{\b}}^{\ga}}_{L a} \la_{\a \ga} \\
 &=&  \D^{(A)}_{a} ( \D^{(A)}_{L} \la_{\a\b} ) - \D^{(A)}_{\der_{a} L} \la_{\a\b} + [F_{L a} , \la_{\a\b} ]  \\
&&+ {{R_{\a}}^{\ga}}_{L a} \la_{\ga \b} + {{R_{\b}}^{\ga}}_{L a} \la_{\a \ga} \\
 &=&  \D^{(A)}_{a} ( - \frac{tr\chi}{2} \la_{\a\b} )  - \D^{(A)}_{  \der_{a} L} \la_{\a\b}  + [F_{L a} , \la_{\a\b} ]  \\
&& + {{R_{\a}}^{\ga}}_{L a} \la_{\ga \b} + {{R_{\b}}^{\ga}}_{L a} \la_{\a \ga} 
\end{eqnarray*}

on $N^{-}_{\tau}(p) $.\

We remind that for any vectorfield $X$, we have $$X= - \frac{1}{2} \g(X, \Lb) L -  \frac{1}{2} \g(X, L) \Lb  + \g(X, e_{a}) e_{a}$$
Taking $X = \der_{a}L$, we get $$\der_{a}L = - \frac{1}{2} \g(\der_{a}L, \Lb) L -  \frac{1}{2} \g(\der_{a}L, L) \Lb  + \g(\der_{a}L, e_{b}) e_{b}$$
we get,
\bea
\der_{a} L =   \g(\der_{a}L, e_{b}) e_{b} - \ze_{a} L = \chi_{ab}  e_{b} - \ze_{a} L
\eea

Thus,

\bea
\notag
\cder_{L} \cder_{a} \la_{\a\b} &=&  - \frac{1}{2} (\der_{a} tr\chi ) \la_{\a\b}    - \frac{1}{2} tr\chi ( \cder_{a}  \la_{\a\b} )  +   \ze_{a}   \D^{(A)}_{L} \la_{\a\b} \\
&& - \chi_{ab} \cder_{b} \la_{\a\b} + [F_{L a} , \la_{\a\b} ] + {{R_{\a}}^{\ga}}_{L a} \la_{\ga \b} + {{R_{\b}}^{\ga}}_{L a} \la_{\a \ga} 
\eea

Since $\hat{\chi}_{ab} = \chi_{ab} - \frac{1}{2} tr\chi \de_{ab}$, and since we have $\cder_{L} \la_{\a\b} = - \frac{1}{2} tr \chi \la_{\a\b}$ on  $N^{-}_{\tau}(p)$, we get,

\beaa
\notag
\cder_{L} \cder_{a} \la_{\a\b} &=&  - \frac{1}{2} (\der_{a} tr\chi ) \la_{\a\b}    - \frac{1}{2} tr\chi ( \cder_{a}  \la_{\a\b} )  - \frac{1}{2}  \ze_{a}   tr\chi \la_{\a\b} \\
\notag
&& - \hat{\chi}_{ab} \cder_{b} \la_{\a\b} - \frac{1}{2} tr\chi \de_{ab} ( \cder_{b}  \la_{\a\b} ) + [F_{L a} , \la_{\a\b} ]  \\
&& + {{R_{\a}}^{\ga}}_{L a} \la_{\ga \b} + {{R_{\b}}^{\ga}}_{L a} \la_{\a \ga}\\
\notag
&=&     -  tr\chi  \cder_{a}  \la_{\a\b}    - \hat{\chi}_{ab} \cder_{b} \la_{\a\b} - \frac{1}{2} (\der_{a} tr\chi ) \la_{\a\b} - \frac{1}{2}  \ze_{a}   tr\chi \la_{\a\b} \\
&& + [F_{L a} , \la_{\a\b} ] + {{R_{\a}}^{\ga}}_{L a} \la_{\ga \b} + {{R_{\b}}^{\ga}}_{L a} \la_{\a \ga} 
\eeaa

\end{proof}

We want to control $|| \cder_{a} \la ||_{L^{2}(N^{-}_{\tau}(p))} $, where we are summing over $a \in \{1, 2\}$, by abuse of notation. Following [KR3], let's compute,
$$ \overline{s} \int_{\SSS^{2}} \overline{s}^{2} | \D^{(A)}_{a} \la |^{2} d\sigma^{2} =  \int_{\SSS^{2}} \overline{s}^{-1} \overline{s}^{4} | \D^{(A)}_{a} \la |^{2} d\sigma^{2} =  \int_{\SSS^{2}} | \overline{s}^{-1} \int_{0}^{\overline{s}} \rder_{L}  |s^{2} \D^{(A)}_{a} \la |^{2} ds | d\sigma^{2} $$

\begin{lemma}
Let $\Psi$ be a ${\cal G}$-valued tensor, we have,
\bea \label{estimateonderivativeofafullcontractionwithhintermsofmixedterms}
| \der_{\si} |\Psi|^{2} |(p) &\le& C(p) [  |\cder_{\si} \Psi|. | \Psi | +   |\Psi|^{2} ]
\eea
where $C(p)$ depends on the space-time geometry on the point $p$.
\end{lemma}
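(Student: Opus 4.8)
The plan is to work entirely at the fixed point $p$ where we are free to pick coordinates adapted to the metric $\g$ and to the Riemannian metric $h$. First I would recall that by definition $|\Psi|^2 = h_{\a\mu}h_{\b\nu}\cdots |\Psi^{\mu\nu\cdots}|\,|\Psi^{\a\b\cdots}|$ is a \emph{full} contraction of the tensor indices against $h$, with the Lie-algebra-valued components paired using the pointwise norm $|\cdot|$ coming from the Ad-invariant inner product $<\;,\;>$. Because it is a full contraction, I may compute $\der_\si |\Psi|^2$ at $p$ in a frame that is \emph{normal for $\g$} at $p$, i.e.\ $\g(e_\a,e_\b)(p)=\mathrm{diag}(-1,1,1,1)$ and $\der_\si e_\b = 0$ at $p$; in such a frame the tensorial covariant derivative $\der_\si$ acting on the components reduces to the coordinate derivative $\pa_\si$ of the components, and moreover, since the scalar product on the Lie algebra is Ad-invariant (so $\der_\si <K,G> = <\cder_\si K,G> + <K,\cder_\si G>$), the derivative lands inside the scalar product as a \emph{gauge} covariant derivative $\cder_\si$ acting on the components of $\Psi$.

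Next I would apply the Leibniz rule to $\der_\si |\Psi|^2$. Two kinds of terms appear. The first kind is where $\der_\si$ hits a component factor $|\Psi^{\mu\nu\cdots}|$; using $\der_\si |\Psi^{\mu\nu\cdots}|^2 = 2<\cder_\si \Psi^{\mu\nu\cdots},\Psi^{\mu\nu\cdots}>$ and the pointwise Cauchy–Schwarz inequality on the Lie algebra, such a term is bounded by $C\,|\cder_\si\Psi|\,|\Psi|$ after re-summing the contracted indices against $h$ (here $C$ absorbs the components of $h$ and $h^{-1}$ at $p$, which are finite and depend only on the spacetime geometry at $p$). The second kind is where $\der_\si$ hits one of the metric factors $h_{\a\mu}$; this produces a factor $\der_\si h_{\a\mu}$, which at $p$ need \emph{not} vanish — the frame is normal for $\g$, not for $h$ — but $\der_\si h$ is a smooth tensor built from $\g$ and from $\frac{\pr}{\pr\hat t}$ and its covariant derivative, hence bounded by a geometric constant $C(p)$; contracting it against two copies of $\Psi$ and using Cauchy–Schwarz again gives a term bounded by $C(p)\,|\Psi|^2$. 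Collecting, $|\der_\si|\Psi|^2|(p) \le C(p)[\,|\cder_\si\Psi|\,|\Psi| + |\Psi|^2\,]$, which is exactly \eqref{estimateonderivativeofafullcontractionwithhintermsofmixedterms}.

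The only genuinely delicate point is the bookkeeping that justifies converting the plain derivative $\der_\si$ of the scalar expression $|\Psi|^2$ into gauge covariant derivatives $\cder_\si$ inside the inner product: this rests on the normal-frame reduction (so $\der_\si$ on components is $\pa_\si$ on components) together with Ad-invariance of $<\;,\;>$ (so $\pa_\si <K,G> = <\cder_\si K, G> + <K,\cder_\si G>$, the $[A_\si,\cdot]$ terms cancelling by antisymmetry of the bracket against the invariant form). Once this is in place, the estimate is a routine Leibniz-plus-Cauchy–Schwarz argument, and the appearance of the lower-order $|\Psi|^2$ term is forced precisely by $\der h \neq 0$ at $p$, together with the fact that a $\g$-normal frame is generically not $h$-normal. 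I would remark that $C(p)$ depends only on $h$, $h^{-1}$ and $\der h$ at $p$, all controlled by the deformation tensor of $\frac{\pr}{\pr t}$ and the metric $\g$ near $p$.
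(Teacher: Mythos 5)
Your proposal is correct and follows essentially the same route as the paper: a Leibniz expansion of the full contraction, a $\g$-normal frame at $p$ to kill the frame-derivative terms and convert the derivative of the Lie-algebra norm of each component into a gauge covariant derivative (via Ad-invariance, which the paper packages in its earlier estimate \eqref{derivativeestimateonthenormofacomponent}), the observation that $\der_\si h$ is bounded but nonvanishing (the paper's \eqref{derivativeofthemetrich}) producing the $|\Psi|^2$ term, and a final Cauchy--Schwarz. No substantive difference.
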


\begin{proof}

\beaa
 \der_{\si} |\Psi|^{2} &=&  \der_{\si} ( h_{\a\mu} h_{\b\nu}  |\Psi^{\mu\nu}|. |\Psi^{\a\b} | ) =  \der_{\si} ( h_{\a\mu} h_{\b\nu} ) .  |\Psi^{\mu\nu}|. |\Psi^{\a\b} | \\
&& +  h_{\a\mu} h_{\b\nu}  . \der_{\si} ( |\Psi^{\mu\nu}|. |\Psi^{\a\b} | ) 
\eeaa
Therefore,
\beaa
| \der_{\si} |\Psi|^{2} | &\le& | (\der_{\si} h_{\a\mu})  h_{\b\nu}  |.  |\Psi^{\mu\nu}|. |\Psi^{\a\b} | + | h_{\a\mu} (\der_{\si} h_{\b\nu} )| .  |\Psi^{\mu\nu}|. |\Psi^{\a\b} | \\
&& + | h_{\a\mu} h_{\b\nu} | .  ( | \cder_{\si} \Psi^{\mu\nu}| + |\Psi(\der_{\si} e^{\mu}, e^{\nu})| + | \Psi(e^{\mu}, \der_{\si} e^{\nu}|   ) . |\Psi^{\a\b} |) \\
&& + | h_{\a\mu} h_{\b\nu}|. |  \Psi^{\mu\nu}|. ( |\cder_{\si} \Psi^{\a\b} |  + | \Psi(\der_{\si} e^{\a}, e^{\b}) |+ | \Psi (e^{\a}, \der_{\si} e^{\b}) | ) 
\eeaa
(due to \eqref{derivativeestimateonthenormofacomponent}).\\

Choosing a normal frame (where the Christoffel symbols vanish at that point) to consider the contactions, using \eqref{derivativeofthemetrich}, and the fact that the metric is smooth, we get,
\beaa
\notag
| \der_{\si} |\Psi|^{2} |(p) &\le& C(p) [ h_{\a\mu} h_{\b\nu}  |\cder_{\si} \Psi^{\mu\nu}|. | \Psi^{\a\b} | +  h_{\a\mu} h_{\b\nu}  |\Psi^{\mu\nu}|. |\Psi^{\a\b} | ] \\ 
\eeaa
Using Cauchy-Schwarz, we obtain the desired estimate.

\end{proof}

We can compute,
\begin{eqnarray*}
\rcder_{L} (s^{2} \cder_{a} \la_{\a\b})  & =&   2 s \D^{(A)}_{a} \la_{\a\b} + s^{2} \cder_{L} \cder_{a} \la_{\a\b}  \\
& = &  s^{2} ( 2 s^{-1} \D^{(A)}_{a} \la_{\a\b} +  \cder_{L} \cder_{a} \la_{\a\b}  )  
\end{eqnarray*}

Thus, using \eqref{estimateonderivativeofafullcontractionwithhintermsofmixedterms}, we get

\bea
\notag
| \der_{L} |s^{2} \cder_{a} \la |^{2} | &\les& |s^{2} ( 2 s^{-1} \D^{(A)}_{a} \la +  \cder_{L} \cder_{a} \la  ) |.| s^{2} \cder_{a} \la |   + | s^{2} \cder_{a} \la |^{2}   \\  \label{derivativeofthehnormsquaredofs2lamda}
\eea

Therefore,
\bea
\notag
&&  \overline{s} \int_{\SSS^{2}} \overline{s}^{2} | \D^{(A)}_{a} \la |^{2} d\sigma^{2} \\
\notag
&\les&   \int_{\SSS^{2}} \overline{s}^{-1} | \overline{s}^{2} \D^{(A)}_{a} \la |^{2} d\sigma^{2} \\
&\les&   \int_{\SSS^{2}} \overline{s}^{-1} \int_{0}^{\overline{s}} | \der_{L} | \overline{s}^{2} \D^{(A)}_{a} \la |^{2}| ds d\sigma^{2}  \label{sminus1fundamentaltheoremcalculussfourdlamdasquared} \\
\notag
&\lesssim&  \int_{\SSS^{2}} [  \overline{s}^{-1} \int_{0}^{\overline{s}}  |s^{2} ( 2 s^{-1} \D^{(A)}_{a} \la +  \cder_{L} \cder_{a} \la  ) |.| s^{2} \cder_{a} \la |   + | s^{2} \cder_{a} \la |^{2} ds ] d\sigma^{2} \\
\notag
&\lesssim& \int_{\SSS^{2}} [ \overline{s}^{-1}  \int_{0}^{\overline{s}} \sup_{s \in [0, \overline{s} ] } s^{4} | \D^{(A)}_{a} \la|^{2}  ds \\
\notag
&& +  \overline{s}^{-1} \int_{0}^{\overline{s}} \eps^{-\frac{1}{2}} s^{\frac{5}{2}} | 2 s^{-1} \D^{(A)}_{a} \la +  \cder_{L} \cder_{a} \la |. \eps^{\frac{1}{2}} s^{\frac{3}{2}} | \D^{(A)}_{a} \la| ds ] d\sigma^{2} \\
\notag
&\lesssim&    \overline{s}^{-1}  \sup_{s \in [0, \overline{s} ] } (s)  \int_{\SSS^{2}} \int_{0}^{\overline{s}}   \sup_{s \in [0, \overline{s} ] } ( s^{3}  |\D^{(A)}_{a} \la|^{2}  ) ds  d\sigma^{2} +  \overline{s}^{-1}  \eps \int_{\SSS^{2}} \int_{0}^{\overline{s}}   \sup_{s \in [0, \overline{s} ] } ( s^{3}  |\D^{(A)}_{a} \la|^{2}  ) ds  d\sigma^{2}  \\
\notag
&& +   \frac{1}{\eps}   \int_{\SSS^{2}}  [  \overline{s}^{-1} \int_{0}^{\overline{s}} s^{\frac{5}{2}} | 2 s^{-1} \D^{(A)}_{a} \la +  \cder_{L} \cder_{a} \la |  ds  ]^{2} d\sigma^{2} \\
\notag
&\lesssim&    \overline{s}  \int_{\SSS^{2}}   \sup_{s \in [0, \overline{s} ] } ( s^{3}  |\D^{(A)}_{a} \la|^{2}  )  d\sigma^{2}  + \eps   \int_{\SSS^{2}}   \sup_{s \in [0, \overline{s} ] } ( s^{3}  |\D^{(A)}_{a} \la|^{2}  )  d\sigma^{2} \\
&&+    \frac{1}{\eps}   \int_{\SSS^{2}}  [  \overline{s}^{-1} \int_{0}^{\overline{s}} s^{\frac{5}{2}} | 2 s^{-1} \D^{(A)}_{a} \la +  \cder_{L} \cder_{a} \la |  ds  ]^{2} d\sigma^{2}  \label{controlontheLtwonormonStwoofsthreedlamdasquared}
\eea

Taking the supremum in this last inequality on $\overline{s} \in [0, \hat{s} ] $, where $\overline{s}$ and $\hat{s}$ are values of $s$, we get,
\begin{eqnarray*}
&& \sup_{\overline{s} \in [0, \hat{s} ] } \int_{\SSS^{2}} \overline{s}^{3} | \D^{(A)}_{a} \la |^{2} (0, \overline{s}, \om) d\sigma^{2} \\
& \lesssim& (\hat{s} + \eps )   \sup_{\overline{s} \in [0, \hat{s} ] }  \int_{\SSS^{2}}     \sup_{s \in [0, \overline{s} ] } ( s^{3}  |\D^{(A)}_{a} \la|^{2}  )   d\sigma^{2}  \\
&& +    \sup_{\overline{s} \in [0, \hat{s} ] }   \int_{\SSS^{2}}  [  \overline{s}^{-1} \int_{0}^{\overline{s}} s^{\frac{5}{2}} | 2 s^{-1} \D^{(A)}_{a} \la +  \cder_{L} \cder_{a} \la |  ds ]^{2} d\sigma^{2} 
\end{eqnarray*}

Choosing $\hat{s}$ and $\eps$ small enough depending on the space-time geometry on $p$, we obtain:

\bea
\notag
&& \sup_{\overline{s} \in [0, \hat{s} ] } \int_{\SSS^{2}} \overline{s}^{3} | \D^{(A)}_{a} \la |^{2} (0, \overline{s}, \om) d\sigma^{2}\\
&\lesssim&  \sup_{\overline{s} \in [0, \hat{s} ] }   \int_{\SSS^{2}}  [  \overline{s}^{-1} \int_{0}^{\overline{s}} s^{\frac{5}{2}} | 2 s^{-1} \D^{(A)}_{a} \la +  \cder_{L} \cder_{a} \la |  ds ]^{2} d\sigma^{2} \label{boundonthesupremumoftheL2normonS2ofs3squareofcderlambda}
\eea

We want to control 
\begin{eqnarray*}
&& \sup_{\overline{s} \in [0, \hat{s} ] }   \int_{\SSS^{2}}  [  \overline{s}^{-1} \int_{0}^{\overline{s}} s^{\frac{5}{2}} | 2 s^{-1} \D^{(A)}_{a} \la +  \cder_{L} \cder_{a} \la |  ds ]^{2} d\sigma^{2} \\
&=& || \sup_{\overline{s} \in [0, \hat{s} ] }   \overline{s}^{-1} \int_{0}^{\overline{s}} s^{\frac{5}{2}} | 2 s^{-1} \D^{(A)}_{a} \la +  \cder_{L} \cder_{a} \la |  ds ||_{L^{2}_{\om}}^{2}
\end{eqnarray*}
 where the $L^{p}_{\om}$ denotes the $L^{p}$ norm on $s =$ constant, with the canonical induced volume form $d\sigma^{2}$ induced on $\SSS^{2}$. \

\bea
\notag
&&|| \sup_{\overline{s} \in [0, \hat{s} ] }   \overline{s}^{-1} \int_{0}^{\overline{s}} s^{\frac{5}{2}} | 2 s^{-1} \D^{(A)}_{a} \la +  \cder_{L} \cder_{a} \la |  ds ||_{L^{2}_{\om}}^{2}  \\
\notag
&=& || \sup_{\overline{s} \in [0, \hat{s} ] }   \overline{s}^{-1} \int_{0}^{\overline{s}} s^{\frac{5}{2}} | 2 s^{-1} \D^{(A)}_{a} \la -tr\chi \D^{(A)}_{a} \la + tr\chi \D^{(A)}_{a} \la +  \cder_{L} \cder_{a} \la |  ds ||_{L^{2}_{\om}}^{2}   \\
\notag
&=& || \sup_{\overline{s} \in [0, \hat{s} ] }   \overline{s}^{-1} \int_{0}^{\overline{s}} s^{\frac{5}{2}} | 2 s^{-1} \D^{(A)}_{a} \la_{\a\b} -tr\chi \D^{(A)}_{a} \la_{\a\b} - \hat{\chi}_{ab} \cder_{b} \la_{\a\b} - \frac{1}{2} (\der_{a} tr\chi ) \la_{\a\b} \\
\notag
&& - \frac{1}{2}   \ze_{a}   tr\chi \la_{\a\b} + [F_{L a} , \la_{\a\b} ] + {{R_{\a}}^{\ga}}_{L a} \la_{\ga \b} + {{R_{\b}}^{\ga}}_{L a} \la_{\a \ga}  |_{h}  ds ||_{L^{2}_{\om}}^{2}   \\
\notag
\eea
where we used \eqref{rcderLrcderalambdaalphabeta}, and where $| \; \; |_{h}$ means that we consider a full contraction with respect to the metric $h$, in the indices $\a, \b$. Hence,

\bea
\notag
&&|| \sup_{\overline{s} \in [0, \hat{s} ] }   \overline{s}^{-1} \int_{0}^{\overline{s}} s^{\frac{5}{2}} | 2 s^{-1} \D^{(A)}_{a} \la +  \cder_{L} \cder_{a} \la |  ds ||_{L^{2}_{\om}}^{2}  \\
\notag
& \lesssim & || \sup_{\overline{s} \in [0, \hat{s} ] }   \overline{s}^{-1} \int_{0}^{\overline{s}} s^{\frac{5}{2}} | 2 s^{-1} \D^{(A)}_{a} \la -tr\chi \D^{(A)}_{a} \la - \hat{\chi}_{ab} \cder_{b} \la  |  ds ||_{L^{2}_{\om}}^{2} \\
\notag
&& + || \sup_{\overline{s} \in [0, \hat{s} ] }   \overline{s}^{-1} \int_{0}^{\overline{s}} s^{\frac{5}{2}} |  - \frac{1}{2} (\der_{a} tr\chi ) \la  |  ds ||_{L^{2}_{\om}}^{2} \\
\notag
&&+  || \sup_{\overline{s} \in [0, \hat{s} ] }   \overline{s}^{-1} \int_{0}^{\overline{s}} s^{\frac{5}{2}} |  - \frac{1}{2}   \ze_{a}   tr\chi \la |  ds ||_{L^{2}_{\om}}^{2}  \\
\notag
&&+  || \sup_{\overline{s} \in [0, \hat{s} ] }   \overline{s}^{-1} \int_{0}^{\overline{s}} s^{\frac{5}{2}} | [F_{L a} , \la_{\a\b} ] |_{h}  ds ||_{L^{2}_{\om}}^{2}   \\
\notag
&&+ || \sup_{\overline{s} \in [0, \hat{s} ] }   \overline{s}^{-1} \int_{0}^{\overline{s}} s^{\frac{5}{2}} | {{R_{\a}}^{\ga}}_{L a} \la_{\ga \b} + {{R_{\b}}^{\ga}}_{L a} \la_{\a \ga}  |_{h}  ds ||_{L^{2}_{\om}}^{2}   \\
&=&  I_{1} +  I_{2} +  I_{3} +  I_{4} +  I_{5} \label{I1I2I3I4I5}
\eea
where $I_{i}\; , \; i \in \{1, ..., 5\}$, are defined in order.

\subsubsection{Estimating $I_{1}$}

\begin{eqnarray*}
 I_{1} &=&  || \sup_{\overline{s} \in [0, \hat{s} ] }   \overline{s}^{-1} \int_{0}^{\overline{s}} s^{\frac{5}{2}} | 2 s^{-1} \D^{(A)}_{a} \la -tr\chi \D^{(A)}_{a} \la - \hat{\chi}_{ab} \cder_{b} \la  |  ds ||_{L^{2}_{\om}}^{2} \\
 & \lesssim& || \sup_{\overline{s} \in [0, \hat{s} ] }    \overline{s}^{-1}  \sup_{s \in [0, \overline{s}]} | s^{\frac{5}{2}}  \cder_{a} \la |  \int_{0}^{\overline{s}}  | 2 s^{-1} \ -tr\chi - \hat{\chi}_{ab}   |  ds ||_{L^{2}_{\om}}^{2} 
\end{eqnarray*}

(we remind that we were summing over $a$ with abuse of notation)

\begin{eqnarray*}
& \lesssim& || \sup_{s \in [0, \hat{s} ] }  (s^{\frac{3}{2}}  \cder_{a} \la ) ||_{L^{2}_{\om}}^{2} || \sup_{\overline{s} \in [0, \hat{s} ] }  \int_{0}^{\overline{s}}  | 2 s^{-1} \ -tr\chi - \hat{\chi}_{ab}   |  ds ||_{L^{\infty}_{\om}}^{2} \\
& \lesssim& || \sup_{\overline{s} \in [0, \hat{s} ] }  \int_{0}^{\overline{s}} 1 . | 2 s^{-1}  -tr\chi - \hat{\chi}_{ab}   |  ds ||_{L^{\infty}_{\om}}^{2}   || \sup_{s \in [0, \hat{s} ] }  (s^{\frac{3}{2}}  \cder_{a} \la ) ||_{L^{2}_{\om}}^{2} \\
& \lesssim& || \sup_{\overline{s} \in [0, \hat{s} ] }  \overline{s} \int_{0}^{\overline{s}} | 2 s^{-1} -tr\chi - \hat{\chi}_{ab}   |^{2}  ds ||_{L^{\infty}_{\om}}   || \sup_{s \in [0, \hat{s} ] }  (s^{\frac{3}{2}}  \cder_{a} \la ) ||_{L^{2}_{\om}}^{2} \\
& \lesssim& || \sup_{\overline{s} \in [0, \hat{s} ] }  \int_{0}^{\overline{s}} | 2 s^{-1}  -tr\chi - \hat{\chi}_{ab}   |^{2}  ds ||_{L^{\infty}_{\om}}   || \sup_{\overline{s} \in [0, \hat{s} ] }  \overline{s}  ||_{L^{\infty}_{\om}}   || \sup_{s \in [0, \hat{s} ] }  (s^{\frac{3}{2}}  \cder_{a} \la ) ||_{L^{2}_{\om}}^{2}\\
& \lesssim& || \sup_{\overline{s} \in [0, \hat{s} ] }  \int_{0}^{\overline{s}} [ | 2 s^{-1} -tr\chi|^{2} + | \hat{\chi}_{ab}   |^{2}]  ds ||_{L^{\infty}_{\om}}   || \sup_{\overline{s} \in [0, \hat{s} ] }  \overline{s}  ||_{L^{\infty}_{\om}}   || \sup_{s \in [0, \hat{s} ] }  (s^{\frac{3}{2}}  \cder_{a} \la ) ||_{L^{2}_{\om}}^{2}\\
\end{eqnarray*}

We know that
\bea 
| 2 s^{-1} \ -tr\chi| = O(s^{2}) \label{controlontracechi}
\eea
and
\bea
\int_{0}^{\overline{s}} | \hat{\chi}_{ab}   |^{2}]  ds  \lesssim 1
\eea
(see proposition 3.1 in [Wang]).

We get,
\bea
I_{1}  \lesssim \hat{s} || \sup_{s \in [0, \hat{s} ] }  (s^{\frac{3}{2}}  \cder_{a} \la ) ||_{L^{2}_{\om}}^{2} \label{I1}
\eea

\subsubsection{Estimating $I_{2}$}

\begin{eqnarray*}
I_{2} &\lesssim& || \sup_{\overline{s} \in [0, \hat{s} ] }   \overline{s}^{-1} \int_{0}^{\overline{s}} s^{\frac{5}{2}} |  - \frac{1}{2} (\der_{a} tr\chi ) \la  |  ds ||_{L^{2}_{\om}}^{2} \\
& \lesssim& || \sup_{\overline{s} \in [0, \hat{s} ] }   \overline{s} \la    ||_{L^{\infty}_{\om}}^{2}   || \sup_{\overline{s} \in [0, \hat{s} ] }   \overline{s}^{-1} \int_{0}^{\overline{s}} s^{\frac{3}{2}} |  - \frac{1}{2} (\der_{a} tr\chi ) |  ds ||_{L^{2}_{\om}}^{2} \\
& \lesssim& | J  |^{2}  . || \sup_{\overline{s} \in [0, \hat{s} ] }   \overline{s}^{-1} |\overline{s} \der_{a} tr\chi | \int_{0}^{\overline{s}} s^{\frac{1}{2}} ds ||_{L^{2}_{\om}}^{2} \\
& \lesssim&| J  |^{2} .  | \sup_{\overline{s} \in [0, \hat{s} ] }   \overline{s}^{-1} \overline{s}^{\frac{3}{2}} |^{2} ||  \sup_{\overline{s} \in [0, \hat{s} ] }  \overline{s} \der_{a} tr\chi  ||_{L^{2}_{\om}}^{2} \\
& \lesssim& \hat{s}  || \sup_{\overline{s} \in [0, \hat{s} ] }  \overline{s} \der_{a} tr\chi  ||_{L^{2}_{\om}}^{2} 
\end{eqnarray*}

We have $$ ||  \sup_{\overline{s} \in [0, \hat{s} ] }  \overline{s} \der_{a} tr\chi  ||_{L^{2}_{\om}}^{2}  \lesssim 1 $$ (see proposition 3.2 in [Wang]). \

We get,
\bea
I_{2} \lesssim  \hat{s} \label{I2}
\eea

\subsubsection{Estimating $I_{3}$}

\begin{eqnarray*}
I_{3} &=& || \sup_{\overline{s} \in [0, \hat{s} ] }   \overline{s}^{-1} \int_{0}^{\overline{s}} s^{\frac{5}{2}} |  - \frac{1}{2}   \ze_{a}   tr \chi \la |  ds ||_{L^{2}_{\om}}^{2}  \\
&\lesssim& || \sup_{\overline{s} \in [0, \hat{s} ] }   \overline{s}  \la ||_{L^{\infty}_{\om}}^{2}   || \sup_{\overline{s} \in [0, \hat{s} ] }   \overline{s}^{-1} \int_{0}^{\overline{s}} s^{\frac{3}{2}} |  - \frac{1}{2}   \ze_{a}   tr\chi  |  ds ||_{L^{2}_{\om}}^{2}  \\
&\lesssim& |J|^{2}  . || \sup_{\overline{s} \in [0, \hat{s} ] }   \overline{s}  tr \chi ||_{L^{\infty}_{\om}}^{2}  || \sup_{\overline{s} \in [0, \hat{s} ] }   \overline{s}^{-1} \int_{0}^{\overline{s}} s^{\frac{1}{2}}   |\ze_{a}   |   ds ||_{L^{2}_{\om}}^{2}  \\
&\lesssim& || \sup_{\overline{s} \in [0, \hat{s} ] }   \overline{s}^{-1} ( \int_{0}^{\overline{s}} s  ds )^{\frac{1}{2}} (\int_{0}^{\overline{s}}   |\ze_{a}   |^{2} ds )^{\frac{1}{2}} ||_{L^{2}_{\om}}^{2}  \\
&& \text{(from \eqref{controlontracechi})} \\
&\lesssim& || \sup_{\overline{s} \in [0, \hat{s} ] }   \overline{s}^{-1} \overline{s} ||_{L^{\infty}_{\om}}^{2}   \int_{\SSS^{2}} \int_{0}^{\hat{s}}    |\ze_{a}   |^{2} ds  d\sigma^{2}  \\
\end{eqnarray*}
\bea
I_{3} \lesssim \int_{\SSS^{2}} \int_{0}^{\hat{s}}    |\ze_{a}   |^{2} ds  d\sigma^{2} \label{I3}
\eea

\subsubsection{Estimating $I_{4}$}

\begin{eqnarray*}
I_{4} &=&  || \sup_{\overline{s} \in [0, \hat{s} ] }   \overline{s}^{-1} \int_{0}^{\overline{s}} s^{\frac{5}{2}} | [F_{L a} , \la_{\a\b} ] |_{h}  ds ||_{L^{2}_{\om}}^{2}  \\
&\lesssim& || \sup_{\overline{s} \in [0, \hat{s} ] }   \overline{s}^{-1} \int_{0}^{\overline{s}} s^{\frac{5}{2}} s^{-1}   | [ F_{L a} , s \la_{\a\b} ] |_{h}  ds ||_{L^{2}_{\om}}^{2}   \\
&\lesssim& || \sup_{\overline{s} \in [0, \hat{s} ] } |  \overline{s} \la |_{h} ||_{L^{\infty}_{\om}}^{2}   || \sup_{\overline{s} \in [0, \hat{s} ] }   \overline{s}^{-1} \overline{s}^{\frac{1}{2}} \int_{0}^{\overline{s}} 1 . s |  F_{L a}|  ds ||_{L^{2}_{\om}}^{2}   \\
&\lesssim& |J|^{2}  || \sup_{\overline{s} \in [0, \hat{s} ] }   \overline{s}^{- \frac{1}{2}}  \overline{s}^{\frac{1}{2}} (\int_{0}^{\overline{s}}  s^{2} |  F_{L a}|^{2} ds )^{\frac{1}{2}}  ||_{L^{2}_{\om}}^{2}   \\
&\lesssim&  || \sup_{\overline{s} \in [0, \hat{s} ] }  (\int_{0}^{\overline{s}}  s^{2} |  F_{L a}|^{2} ds )^{\frac{1}{2}}  ||_{L^{2}_{\om}}^{2} 
\end{eqnarray*}
\bea
I_{4} \lesssim   \int_{\SSS^{2}} \int_{0}^{\hat{s}}  s^{2} |  F_{L a}|^{2} ds d\sigma^{2} \label{I4}
\eea

\subsubsection{Estimating $I_{5}$}

\begin{eqnarray*}
I_{5} &=& || \sup_{\overline{s} \in [0, \hat{s} ] }   \overline{s}^{-1} \int_{0}^{\overline{s}} s^{\frac{5}{2}} | {{R_{\a}}^{\ga}}_{L a} \la_{\ga \b} + {{R_{\b}}^{\ga}}_{L a} \la_{\a \ga}  |_{h}  ds ||_{L^{2}_{\om}}^{2}   \\
&\les& || \sup_{\overline{s} \in [0, \hat{s} ] }   \overline{s}^{-1} \int_{0}^{\overline{s}} s^{\frac{5}{2}} |h^{\si\ga}|.|  R_{\a\si L a} \la_{\ga \b} + R_{\b \si L a} \la_{\a \ga}  |_{h}  ds ||_{L^{2}_{\om}}^{2}   \\
&\lesssim& || \sup_{\overline{s} \in [0, \hat{s} ] }  | \overline{s} \la|_{h}  ||_{L^{\infty}_{\om}}^{2}   || \sup_{\overline{s} \in [0, \hat{s} ] }   \overline{s}^{-1} \int_{0}^{\overline{s}} s^{\frac{3}{2}}  | R_{\a\b L a} |_{h}    ds ||_{L^{2}_{\om}}^{2}   \\
&\lesssim& |J|^{2} . || \sup_{\overline{s} \in [0, \hat{s} ] }   \overline{s}^{-1} \overline{s}^{\frac{1}{2}}  \int_{0}^{\overline{s}}  s | R_{\a\b L a} |_{h} ds ||_{L^{2}_{\om}}^{2}   \\
&\lesssim&  || \sup_{\overline{s} \in [0, \hat{s} ] }    \overline{s}^{- \frac{1}{2}}  \int_{0}^{\overline{s}}  1.  s | R_{\a\b L a} |_{h}   ds ||_{L^{2}_{\om}}^{2}   \\
&\lesssim&    || \sup_{\overline{s} \in [0, \hat{s} ] }    \overline{s}^{- \frac{1}{2}}  \overline{s}^{\frac{1}{2}} [ \int_{0}^{\overline{s}}  s^{2} | R_{\a\b L a} |_{h}^{2} ds  ]^{\frac{1}{2}} ||_{L^{2}_{\om}}^{2}  \\
&\lesssim& || \sup_{\overline{s} \in [0, \hat{s} ] }    [ \int_{0}^{\overline{s}}  s^{2} | R_{\a\b L a} |_{h}  ds  ]^{\frac{1}{2}} ||_{L^{2}_{\om}}^{2}  \\
\end{eqnarray*}
\bea
I_{5} \lesssim \int_{\SSS^{2}} \int_{0}^{\hat{s}}  s^{2} | R_{\a\b L a} |_{h}^{2}  ds  d\sigma^{2}  \label{I5}
\eea

\subsubsection{Estimating $|| \cder_{a} \la ||_{L^{2}(N^{-}_{\tau}(p))}$ }\

Injecting \eqref{I1}, \eqref{I2}, \eqref{I3}, \eqref{I4}, \eqref{I5} in \eqref{I1I2I3I4I5}, and then in \eqref{boundonthesupremumoftheL2normonS2ofs3squareofcderlambda},
we obtain:

\begin{eqnarray*}
&& \sup_{\overline{s} \in [0, \hat{s} ] } \int_{\SSS^{2}} \overline{s}^{3} | \D^{(A)}_{a} \la |^{2} (0, \overline{s}, \om) d\sigma^{2}  \\
&\lesssim&  \hat{s} || \sup_{s \in [0, \hat{s} ] }  |s^{\frac{3}{2}}  \cder_{a} \la|_{h} ||_{L^{2}_{\om}}^{2} + \hat{s}  + \int_{\SSS^{2}} \int_{0}^{\hat{s}}    |\ze_{a}   |^{2} ds  d\sigma^{2}  \\
&&+  \int_{\SSS^{2}} \int_{0}^{\hat{s}}  s^{2} |  F_{L a}|^{2} ds d\sigma^{2} + \int_{\SSS^{2}} \int_{0}^{\hat{s}}  s^{2} | R_{\a\b L a} |_{h}^{2}  ds  d\sigma^{2}
\end{eqnarray*}

There exists $C(p)$ (constant depending on $p$) such that for $\hat{s} \les C(p) $, we have: \

\begin{eqnarray*}
&& \sup_{\overline{s} \in [0, \hat{s} ] } \int_{\SSS^{2}} \overline{s}^{3} | \D^{(A)}_{a} \la |^{2} (0, \overline{s}, \om) d\sigma^{2}  \\
&\lesssim&  \hat{s}  + \int_{\SSS^{2}} \int_{0}^{\hat{s}}  |\ze_{a}   |^{2} ds  d\sigma^{2} + \int_{\SSS^{2}} \int_{0}^{\hat{s}}  s^{2} |  F_{L a}|^{2} ds d\sigma^{2}\\
&& +  \int_{\SSS^{2}} \int_{0}^{\hat{s}}  s^{2} | R_{\a\b L a} |_{h}^{2}  ds  d\sigma^{2}
\end{eqnarray*}

For $\hat{s} \les C(p) $, we have,
\begin{eqnarray*}
&& \hat{s} \int_{\SSS^{2}} \hat{s}^{2} | \D^{(A)}_{a} \la |^{2} (0, \hat{s}, \om) d\sigma^{2} \\
& \lesssim&  \sup_{\overline{s} \in [0, \hat{s} ] } \int_{\SSS^{2}} \overline{s}^{3} | \D^{(A)}_{a} \la |^{2} (0, \overline{s}, \om) d\sigma^{2}  \\
&\lesssim&  \hat{s}  + \int_{\SSS^{2}} \int_{0}^{\hat{s}}   |\ze_{a}   |^{2} ds  d\sigma^{2} + \int_{\SSS^{2}} \int_{0}^{\hat{s}}  s^{2} |  F_{L a}|^{2} ds d\sigma^{2} +  \int_{\SSS^{2}} \int_{0}^{\hat{s}}  s^{2} | R_{\a\b L a} |_{h}^{2}  ds  d\sigma^{2}
\end{eqnarray*}

Hence,
\bea
\notag
&& \int_{\SSS^{2}} \hat{s}^{2} | \D^{(A)}_{a} \la |^{2} (0, \hat{s}, \om) d\sigma^{2} \\
\notag
& \lesssim&  1  + \frac{1}{\hat{s}} \int_{\SSS^{2}} \int_{0}^{\hat{s}}   |\ze_{a}   |^{2}  ds  d\sigma^{2} \\
&& +  \frac{1}{\hat{s}} \int_{\SSS^{2}} \int_{0}^{\hat{s}}  s^{2} |  F_{L a}|^{2} ds d\sigma^{2} + \frac{1}{\hat{s}}  \int_{\SSS^{2}} \int_{0}^{\hat{s}}  s^{2} | R_{\a\b L a} |_{h}^{2}  ds  d\sigma^{2} \label{estimateontheLonenormofstwolamdasquaredtouseLtwomaximumprinciple}
\eea

Integrating, we obtain
\bea
\notag
&& \int_{0}^{C(p)} \int_{\SSS^{2}} \hat{s}^{2} | \D^{(A)}_{a} \la |^{2} (0, \hat{s}, \om) d\sigma^{2} d\hat{s}  \\
\notag
&\lesssim&  \int_{0}^{C(p)} 1 d\hat{s}  + \int_{0}^{C(p)} [\frac{1}{\hat{s}} \int_{\SSS^{2}} \int_{0}^{\hat{s}}   |\ze_{a}   |^{2} ds  d\sigma^{2} ] d\hat{s}  \\
\notag
&& + \int_{0}^{C(p)} [ \frac{1}{\hat{s}} \int_{\SSS^{2}} \int_{0}^{\hat{s}}  s^{2} |  F_{L a}|^{2} ds d\sigma^{2} ] d\hat{s} + \int_{0}^{C(p)} [ \frac{1}{\hat{s}}  \int_{\SSS^{2}} \int_{0}^{\hat{s}}  s^{2} | R_{\a\b L a} |_{h}^{2}  ds  d\sigma^{2} ] d\hat{s}  \\
\notag
& \lesssim& 1  + ( \int_{0}^{C(p)} [\frac{1}{\hat{s}} \int_{\SSS^{2}} \int_{0}^{\hat{s}}    |\ze_{a}   |^{2}  ds  d\sigma^{2} ]^{2} d\hat{s} )^{\frac{1}{2}}+  ( \int_{0}^{C(p)} [ \frac{1}{\hat{s}} \int_{\SSS^{2}} \int_{0}^{\hat{s}}  s^{2} |  F_{L a}|^{2} ds d\sigma^{2} ]^{2} d\hat{s} )^{\frac{1}{2}} \\
&&+  ( \int_{0}^{C(p)} [ \frac{1}{\hat{s}}  \int_{\SSS^{2}} \int_{0}^{\hat{s}} ( s^{2} | R_{\a\b L a} |_{h}^{2}  )^{\frac{1}{2}}  ds  d\sigma^{2} ]^{2} d\hat{s}   \label{estimatethatallowstoapplytheLtwomaximumprincipletocontroltheLtwonormoflamdaontehnullcone}
\eea

Using the $L^{2}$ maximum principle, and letting $t(C(p))$ be the value of $t$ for which $s(t) = C(p)$, we get in view of \eqref{areaexpression}:

\bea
\notag
&& || \D^{(A)}_{a} \la ||_{L^{2}(N^{-}_{t(C(p))}(p))} \\
\notag
&\lesssim&  1  + ( \int_{0}^{C(p)} [ \int_{\SSS^{2}} \int_{0}^{\hat{s}}   |\ze_{a}   |^{2}  ds  d\sigma^{2} ]^{2} d\hat{s} )^{\frac{1}{2}}  +  ( \int_{0}^{C(p)} [  \int_{\SSS^{2}} \int_{0}^{\hat{s}}  s^{2} |  F_{L a}|^{2} ds d\sigma^{2} ]^{2} d\hat{s} )^{\frac{1}{2}} \\
\notag
&&+  ( \int_{0}^{C(p)}  [ \int_{\SSS^{2}} \int_{0}^{\hat{s}} ( s^{2} | R_{\a\b L a} |_{h}^{2}  )^{\frac{1}{2}}  ds  d\sigma^{2} ]^{2} d\hat{s}   \\
\notag
& \lesssim& 1 + ( \int_{0}^{C(p)} [ \int_{\SSS^{2}} \int_{0}^{s_{\tau} }   |\ze_{a}   |^{2}  ds  d\sigma^{2} ]^{2} d\hat{s} )^{\frac{1}{2}}+  ( \int_{0}^{C(p)} [  \int_{\SSS^{2}} \int_{0}^{s_{\tau}}  s^{2} |  F_{L a}|^{2} ds d\sigma^{2} ]^{2} d\hat{s} )^{\frac{1}{2}} \\
\notag
&& + ( \int_{0}^{C(p)}  [ \int_{\SSS^{2}} \int_{0}^{s_{\tau}} ( s^{2} | R_{\a\b L a} |_{h}^{2}  )^{\frac{1}{2}}  ds  d\sigma^{2} ]^{2} d\hat{s}   \\
&\lesssim& 1 + ( \int_{0}^{C(p)} [ \int_{\SSS^{2}} \int_{0}^{s_{\tau}}   |\ze_{a}   |^{2} ds  d\sigma^{2} ]^{2} d\hat{s} )^{\frac{1}{2}} +[ C(p) ( F^{\frac{\pr}{\pr t}} (N^{-}_{\tau} (p)) )^{2}]^{\frac{1}{2}}   \label{estimateonLtwonormonthenullconeofserivativelamda}
\eea
(since the metric is smooth).

We have,
\bea
 ( \int_{0}^{C(p)} [ \int_{\SSS^{2}} \int_{0}^{s_{\tau}}  ( |\ze_{a}   |^{2} )ds  d\sigma^{2} ]^{2} d\hat{s} )^{\frac{1}{2}} \lesssim 1
\eea
(see proposition 3.1 in the Appendix of [Wang]) .\

Thus,
$$ || \D^{(A)}_{a} \la ||_{L^{2}(N^{-}_{t(C(p))}(p))} \lesssim 1  $$\\

Since the metric is smooth, $ \la_{\a\b}$ is smooth away from $s = 0$; we finally obtain:

\bea
|| \D^{(A)}_{a} \la ||_{L^{2}(N^{-}_{\tau}(p))} \lesssim 1 
\eea

\subsection{Estimates for $ || \cder_{a} F ||_{L^{2}(N^{-}_{\tau}(p))} $} \label{controlofthegradientofFasinCS} \

We want to control $|| {\cder}_{a} F ||_{L^{2}(N^{-}_{\tau}(p))}$. For this, as in [CS], we take the energy momentum tensor for the wave equation, after considering a full contraction with respect to the Riemannian metric $h$, and define the 2-tensor:

\bea
T_{1}^{\a\b} = h^{\mu\nu}h^{\rho\sigma} [ <\D^{(A)\a} F_{\mu\rho}, \D^{(A)\b}F_{\nu\sigma} > - \frac{1}{2} \g^{\a\b} <\D^{(A)\la} F_{\mu\rho}, \D^{(A)}_{\la} F_{\nu\sigma} > ]   \label{energy-momuntumtensorwaveequationafterfullcontractionwithrespecttoh}
\eea

Let $\hat{t}_{\b} = ( \frac{\pr}{\pr \hat{t}} )_{\b}$

We have
\begin{eqnarray*}
 T_{1}^{\a\b} \hat{t}_{\b}  &=&  h^{\mu\nu}h^{\rho\sigma}  \hat{t}_{\b}    [ <\D^{(A)\a} F_{\mu\rho}, \D^{(A)\b}F_{\nu\sigma} > - \frac{1}{2} \g^{\a\b} <\D^{(A)\la} F_{\mu\rho}, \D^{(A)}_{\la} F_{\nu\sigma} > ]
\end{eqnarray*}

We would like to compute $\der_{\a} (T_{1}^{\a\b} \hat{t}_{\b} )$. Since it is a full contraction, we can compute it by choosing a normal frame , i.e. a frame where the Christoffel sympbols vanish at that point, and hence we can get the derivatives inside the scalar product as covariant derivatives and also as gauge covariant derivatives using the fact that the scalar product is Ad-invariant, instead of partial derivatives. We obatin

\begin{eqnarray*}
&& \der_{\a} (T_{1}^{\a\b} \hat{t}_{\b} ) \\
&=&  \der_{\a} ( h^{\mu\nu}h^{\rho\sigma} \hat{t}_{\b}  )   [ <\D^{(A)\a} F_{\mu\rho}, \D^{(A)\b}F_{\nu\sigma} > - \frac{1}{2} \g^{\a\b} <\D^{(A)\la} F_{\mu\rho}, \D^{(A)}_{\la} F_{\nu\sigma} > ]  \\
&& +  h^{\mu\nu}h^{\rho\sigma} \hat{t}_{\b} [ <\D^{(A)}_{\a} \D^{(A)\a} F_{\mu\rho}, \D^{(A)\b}F_{\nu\sigma} > + <\D^{(A)\a} F_{\mu\rho}, \D^{(A)}_{\a} \D^{(A)\b}F_{\nu\sigma} >  \\
&& - \frac{1}{2} \g^{\a\b} <\D^{(A)}_{\a} \D^{(A)\la} F_{\mu\rho}, \D^{(A)}_{\la} F_{\nu\sigma} > ]   - \frac{1}{2} \g^{\a\b} <\D^{(A)\la} F_{\mu\rho}, \D^{(A)}_{\a} \D^{(A)}_{\la} F_{\nu\sigma} > ]\\
\end{eqnarray*}

We have $$- \frac{1}{2} \g^{\a\b} <\D^{(A)}_{\a} \D^{(A)\la} F_{\mu\rho}, \D^{(A)}_{\la} F_{\nu\sigma} >  = - \frac{1}{2}  <\D^{(A)\b} \D^{(A)\la} F_{\mu\rho}, \D^{(A)}_{\la} F_{\nu\sigma} >  $$

Computing $$h^{\mu\nu}h^{\rho\sigma} <\D^{(A)\b} \D^{(A)\la} F_{\mu\rho}, \D^{(A)}_{\la} F_{\nu\sigma} >  = h^{\rho\sigma}h^{\mu\nu} <\D^{(A)\b} \D^{(A)\la} F_{\nu\sigma}, \D^{(A)}_{\la} F_{\mu\rho} >  $$

We get 
\bea
\notag
&& \der_{\a} (T_{1}^{\a\b} \hat{t}_{\b} ) \\
\notag
&=&  \der_{\a} ( h^{\mu\nu}h^{\rho\sigma} \hat{t}_{\b}  )   [ <\D^{(A)\a} F_{\mu\rho}, \D^{(A)\b}F_{\nu\sigma} > - \frac{1}{2} \g^{\a\b} <\D^{(A)\la} F_{\mu\rho}, \D^{(A)}_{\la} F_{\nu\sigma} > ]  \\
\notag
&& +  h^{\mu\nu}h^{\rho\sigma} \hat{t}_{\b} [ <\D^{(A)}_{\a} \D^{(A)\a} F_{\mu\rho}, \D^{(A)\b}F_{\nu\sigma} > + <\D^{(A)\a} F_{\mu\rho}, \D^{(A)}_{\a} \D^{(A)\b}F_{\nu\sigma} >\\
\notag
&&  - <\D^{(A)\b} \D^{(A)\la} F_{\nu\sigma}, \D^{(A)}_{\la} F_{\mu\rho} >  ] \\
\notag
&=&  \der_{\a} ( h^{\mu\nu}h^{\rho\sigma} \hat{t}_{\b}  )   [ <\D^{(A)\a} F_{\mu\rho}, \D^{(A)\b}F_{\nu\sigma} > - \frac{1}{2} \g^{\a\b} <\D^{(A)\la} F_{\mu\rho}, \D^{(A)}_{\la} F_{\nu\sigma} > ]  \\
\notag
&& +  h^{\mu\nu}h^{\rho\sigma} \hat{t}_{\b} [ <\Box^{(A)}_{\g} F_{\mu\rho}, \D^{(A)\b}F_{\nu\sigma} > \\
\notag
&& + <\D^{(A)\a} F_{\mu\rho}, \D^{(A)}_{\a} \D^{(A)\b}F_{\nu\sigma} - \D^{(A)\b} \D^{(A)}_{\a} F_{\nu\sigma}>    ] \\
\notag
&=&  \der_{\a} ( h^{\mu\nu}h^{\rho\sigma} \hat{t}_{\b}  )   [ <\D^{(A)\a} F_{\mu\rho}, \D^{(A)\b}F_{\nu\sigma} > - \frac{1}{2} \g^{\a\b} <\D^{(A)\la} F_{\mu\rho}, \D^{(A)}_{\la} F_{\nu\sigma} > ]  \\
\notag
&& +  h^{\mu\nu}h^{\rho\sigma} \hat{t}_{\b} [ -2<R_{\ga\mu\rho\a}F^{\a\ga}, \D^{(A)\b}F_{\nu\sigma} > \\
\notag
&& - < R_{\mu\ga}{F_{\rho}}^{\ga}, \D^{(A)\b}F_{\nu\sigma} > - < R_{\rho\ga}{F^{\ga}}_{\mu}, \D^{(A)\b}F_{\nu\sigma} > \\
\notag
&& - 2< [{F^{\a}}_{\mu}, F_{\rho\a}], \D^{(A)\b}F_{\nu\sigma} > \\
\notag
&& + <\D^{(A)\a} F_{\mu\rho}, [{F_{\a}}^{\b}, F_{\nu\sigma}] + {{{R_{\nu}}^{\ga}}_{\a}}^{\b} F_{\ga\sigma} + {{{R_{\sigma}}^{\ga}}_{\a}}^{\b } F_{\nu\ga} >    ]  \label{derivativeTalphabetac1contractedwiththat}
\eea
(where we used \eqref{hyperbolic}).

From \eqref{definitionofLbar}, we  have
\beaa
 \hat{t}   = -  \frac{1}{2} ( \g(L, \hat{t}) \Lb + \g(L, \hat{t})^{-1} L ) 
\eeaa

we get,
\bea
T_{1}^{\hat{t} L} =  -\frac{1}{2}  \g(L, \hat{t}) T^{\Lb L } - \frac{1}{2} \g(L, \hat{t})^{-1} T^{L L } \label{T1hattL}
\eea

Computing
\beaa
&& <\D^{(A)\la} F_{\mu\rho}, \D^{(A)}_{\la} F_{\nu\sigma} > \\
&=& <\D^{(A)L} F_{\mu\rho}, \D^{(A)}_{L} F_{\nu\sigma} >  + <\D^{(A)\Lb} F_{\mu\rho}, \D^{(A)}_{\Lb} F_{\nu\sigma} >  \\
\notag
&& + <\D^{(A)a} F_{\mu\rho}, \D^{(A)}_{a} F_{\nu\sigma} >  \\
&=& \g_{L \Lb} <\D^{(A) L} F_{\mu\rho}, \D^{(A) \Lb} F_{\nu\sigma} >  + \g_{L \Lb} <\D^{(A)\Lb } F_{\mu\rho}, \D^{(A) L} F_{\nu\sigma} >  \\
&& + <\D^{(A)a} F_{\mu\rho}, \D^{(A)}_{a} F_{\nu\sigma} > \\
&=& -4 <\D^{(A) L} F_{\mu\rho}, \D^{(A) \Lb} F_{\nu\sigma} > + <\D^{(A)a} F_{\mu\rho}, \D^{(A)}_{a} F_{\nu\sigma} > 
\eeaa
(using \eqref{scalarproductLLbar}).

We get,
\bea
\notag
&& T_{1}^{\Lb L} \\
\notag
&=& h^{\mu\nu}h^{\rho\sigma} [ <\D^{(A)\Lb} F_{\mu\rho}, \D^{(A) L}F_{\nu\sigma} > - \frac{1}{2} \g^{\Lb L} <\D^{(A)\la} F_{\mu\rho}, \D^{(A)}_{\la} F_{\nu\sigma} > ]  \\
\notag
&=&  h^{\mu\nu}h^{\rho\sigma} [ <\D^{(A)\Lb} F_{\mu\rho}, \D^{(A) L}F_{\nu\sigma} > + \frac{1}{4}  (-4 <\D^{(A) L} F_{\mu\rho}, \D^{(A) \Lb} F_{\nu\sigma} > \\
\notag
&& + <\D^{(A)a} F_{\mu\rho}, \D^{(A)}_{a} F_{\nu\sigma} >  ) ]  \\
&=&  \frac{1}{4}   h^{\mu\nu}h^{\rho\sigma}  <\D^{(A)a} F_{\mu\rho}, \D^{(A)}_{a} F_{\nu\sigma} >    \label{T1LbarL}
\eea

and we have,
\bea
\notag
T_{1}^{L L} &=& h^{\mu\nu}h^{\rho\sigma} [ <\D^{(A) L} F_{\mu\rho}, \D^{(A) L}F_{\nu\sigma} > - \frac{1}{2} \g^{L L} <\D^{(A)\la} F_{\mu\rho}, \D^{(A)}_{\la} F_{\nu\sigma} > ]  \\
&=& h^{\mu\nu}h^{\rho\sigma}  < \D^{(A) L} F_{\mu\rho}, \D^{(A) L}F_{\nu\sigma} > \label{T1LL}
\eea

Injecting \eqref{T1LbarL} and \eqref{T1LL} in \eqref{T1hattL}, we obtain
\bea
\notag
T_{1}^{\hat{t} L}  &=&  -\frac{1}{2}  \g(L, \hat{t}) T^{\Lb L } - \frac{1}{2} \g(L, \hat{t})^{-1} T^{L L }  \\
\notag
&=&  -\frac{1}{2} [ \frac{1}{4} \g(L, \hat{t})    h^{\mu\nu}h^{\rho\sigma}  <\D^{(A)a} F_{\mu\rho}, \D^{(A)}_{a} F_{\nu\sigma} > \\
\notag
&& + \g(L, \hat{t})^{-1}  h^{\mu\nu}h^{\rho\sigma}  < \D^{(A) L} F_{\mu\rho}, \D^{(A) L}F_{\nu\sigma} > ] \\
&=&  -\frac{1}{2} [ \frac{1}{4} \g(L, \hat{t})      |\D^{(A)a} F|^{2} + \g(L, \hat{t})^{-1}    | \D^{(A) L} F|^{2}  ]  \label{T1alphabetathatL}
\eea

On the other hand, we have

\bea
\notag
T_{1}^{\hat{t} \hat{t}} &=& h^{\mu\nu}h^{\rho\sigma} [ <\D^{(A)\hat{t}} F_{\mu\rho}, \D^{(A)\hat{t}}F_{\nu\sigma} > - \frac{1}{2} \g^{\hat{t}\hat{t}} <\D^{(A)\la} F_{\mu\rho}, \D^{(A)}_{\la} F_{\nu\sigma} > ]  \\
\notag
&=& h^{\mu\nu}h^{\rho\sigma} [ <\D^{(A)\hat{t}} F_{\mu\rho}, \D^{(A)\hat{t}} F_{\nu\sigma} > +\frac{1}{2}  ( <\D^{(A)\hat{t}} F_{\mu\rho}, \D^{(A)}_{\hat{t}} F_{\nu\sigma} >  \\
\notag
&&   +  <\D^{(A) \hat{n}} F_{\mu\rho}, \D^{(A)}_{ \hat{n}} F_{\nu\sigma} > + <\D^{(A)a} F_{\mu\rho}, \D^{(A)}_{a} F_{\nu\sigma} >  ) ] \\
\notag
&=& h^{\mu\nu}h^{\rho\sigma} [ <\D^{(A)\hat{t}} F_{\mu\rho}, \D^{(A)\hat{t}} F_{\nu\sigma} > - \frac{1}{2}   <\D^{(A)\hat{t}} F_{\mu\rho}, \D^{(A)\hat{t}} F_{\nu\sigma} >  \\
\notag
&&   + \frac{1}{2}  <\D^{(A) \hat{n}} F_{\mu\rho}, \D^{(A) \hat{n}} F_{\nu\sigma} > + \frac{1}{2} <\D^{(A)a} F_{\mu\rho}, \D^{(A)a} F_{\nu\sigma} >  ) ] \\
\notag
&=& \frac{1}{2}  h^{\mu\nu}h^{\rho\sigma} [ <\D^{(A)\hat{t}} F_{\mu\rho}, \D^{(A)\hat{t}} F_{\nu\sigma} >  +   <\D^{(A) \hat{n}} F_{\mu\rho}, \D^{(A) \hat{n}} F_{\nu\sigma} > \\
\notag
&& +  <\D^{(A)a} F_{\mu\rho}, \D^{(A)a} F_{\nu\sigma} >   ]  \\
&=& \frac{1}{2}   [ |\D^{(A)\hat{t}} F|^{2} +   |\D^{(A) \hat{n}} F|^{2} +  |\D^{(A)a} F|^{2}     ]  \label{T1hatthatt}
\eea

Denoting by $N^{-}_{t_{p} -\tau, t}(p)$ the portion of $N^{-}(p)$ that is to the future of $\Sigma_{t_{p} -\tau}$ and to the past of $\Sigma_{t}$.  Denoting the gradient of $F$ by $\cder F$, and defining,
\bea
|\cder F|^{2} &=&  h^{\a\b} h^{\mu\ga} h^{\nu\si} <\cder_{\a} F_{\ga\si}, \cder_{\b} F_{\mu\nu} > 
\eea

Applying the divergence theorem to $T_{1}^{\a\b} \hat{t}_{\b}$  in $J^{-}(p)\cap \Sigma_{t_{p} -\tau}^{+} \cap \Sigma^{-}_{t}$, using \eqref{derivativeTalphabetac1contractedwiththat} and the fact that the metric is sufficiently smooth so that $\der_{\a} ( h^{\mu\nu}h^{\rho\sigma} \hat{t}_{\b} ) $ is finite, using \eqref{T1alphabetathatL}, \eqref{T1hatthatt}, and applying Cauchy-Schwarz, we obtain:
\bea
\notag
&&  || \D^{(A)} F ||^{2}_{L^{2} (\Sigma_{t}\cap J^{-}(p))} + || \D^{(A)}_{a} F ||^{2}_{L^{2} (N^{-}_{t_{p}-\tau, t}(p))}  \\
\notag
&\lesssim& || \D^{(A)} F ||^{2}_{L^{2} (\Sigma_{t_{p} - \tau}\cap J^{-}(p))} \\
\notag
&& + \int_{t_{p} -\tau}^{t} \int_{\Sigma_{\overline{t}}\cap J^{-}(p)} |\cder F| ( |\cder F| + |F| + |F|^{2} ) dVol_{\Sigma_{\overline{t}}\cap J^{-}(p)} d\overline{t}  \\ \label{controlafterdivergencetheoremappliedwithT1}   
\eea

We get,
\begin{eqnarray*}
&&\int_{\Sigma_{t}\cap J^{-}(p)} |\cder F|^{2} dVol_{\Sigma_{t}\cap J^{-}(p)} + || \D^{(A)}_{a} F ||^{2}_{L^{2} (N^{-}_{t_{p} -\tau, t}(p))} \\
& \lesssim& || \D^{(A)} F ||^{2}_{L^{2} (\Sigma_{t_{p} - \tau}\cap J^{-}(p))} + \int_{t_{p}-\tau}^{t} \int_{\Sigma_{\overline{t}}\cap J^{-}(p)} |\cder F|^{2} dVol_{\Sigma_{\overline{t}}\cap J^{-}(p)} d\overline{t} \\
&&  + \int_{t_{p} -\tau}^{t} \int_{\Sigma_{\overline{t}}\cap J^{-}(p)} |\cder F| (|F|^{2} + |F|  ) dVol_{\Sigma_{\overline{t}}\cap J^{-}(p)} d\overline{t} \\
& \lesssim& || \D^{(A)} F ||^{2}_{L^{2} (\Sigma_{t_{p} - \tau}\cap J^{-}(p))} + \int_{t_{p} -\tau}^{t} \int_{\Sigma_{\overline{t}}\cap J^{-}(p)} |\cder F|^{2} dVol_{\Sigma_{\overline{t}}\cap J^{-}(p)} d\overline{t}  \\
&& + \int_{t_{p} -\tau}^{t} \int_{\Sigma_{\overline{t}}\cap J^{-}(p)} ( |\cder F|^{2} + |F|^{4} + |F|^{2}  ) dVol_{\Sigma_{\overline{t}}\cap J^{-}(p)} d\overline{t} \\
&\lesssim& || \D^{(A)} F ||^{2}_{L^{2} (\Sigma_{t_{p} - \tau}\cap J^{-}(p))} + \int_{t_{p} -\tau}^{t} \int_{\Sigma_{\overline{t}}\cap J^{-}(p)} ( |F|^{4} + |F|^{2}  ) dVol_{\Sigma_{\overline{t}}\cap J^{-}(p)} d\overline{t} \\
&& + \int_{t_{p} -\tau}^{t} \int_{\Sigma_{\overline{t}}\cap J^{-}(p)} |\cder F|^{2} dVol_{\Sigma_{\overline{t}}\cap J^{-}(p)} d\overline{t} \\
\eeaa
\bea
\notag
& \lesssim& 1 +  \int_{t_{p} -\tau}^{t} \int_{\Sigma_{\overline{t}}\cap J^{-}(p)}  |F|^{4}  dVol_{\Sigma_{\overline{t}}\cap J^{-}(p)} d\overline{t}\\
\notag
&& + \int_{t_{p} -\tau}^{t} \int_{\Sigma_{\overline{t}}\cap J^{-}(p)} |F|^{2}   dVol_{\Sigma_{\overline{t}}\cap J^{-}(p)} d\overline{t}  + \int_{t_{p} -\tau}^{t} \int_{\Sigma_{\overline{t}}\cap J^{-}(p)} |\cder F|^{2} dVol_{\Sigma_{\overline{t}}\cap J^{-}(p)} d\overline{t} \\
\notag
& \lesssim& 1 +     \int_{t_{p} -\tau}^{t} ( |F|^{2}_{L^{\infty}_{\Sigma_{\overline{t}}^{p}}}   \int_{\Sigma_{\overline{t}}}  |F|^{2}  dVol_{\Sigma_{\overline{t}}\cap J^{-}(p)} ) d\overline{t}\\
\notag
&& + \int_{t_{p} -\tau}^{t} \int_{\Sigma_{\overline{t}}} |F|^{2}   dVol_{\Sigma_{\overline{t}}\cap J^{-}(p)} d\overline{t}  + \int_{ t_{p} -\tau}^{t} \int_{\Sigma_{\overline{t}}\cap J^{-}(p)} |\cder F|^{2} dVol_{\Sigma_{\overline{t}}\cap J^{-}(p)} d\overline{t} \\
\notag
&  \lesssim& 1 +     \int_{t_{p} -\tau}^{t}  |F|^{2}_{L^{\infty}_{\Sigma_{\overline{t}}^{p}}}    E_{t=0}^{\frac{\pr}{\pr t}} (\Sigma\cap J^{-}(p)) d\overline{t}  + \int_{t_{p} -\tau}^{t}E_{t=0}^{\frac{\pr}{\pr t}} (\Sigma\cap J^{-}(p))  d\overline{t} \\
&& + \int_{t_{p} -\tau}^{t} \int_{\Sigma_{\overline{t}}\cap J^{-}(p)} |\cder F|^{2} dVol_{\Sigma_{\overline{t}}\cap J^{-}(p)} d\overline{t}  \label{inequalitytocontrolgradientofF}
\eea

From \eqref{inequalitytocontrolgradientofF}, we get

\begin{eqnarray*}
&&\int_{\Sigma_{t}\cap J^{-}(p)} |\cder F|^{2} dVol_{\Sigma_{t}\cap J^{-}(p)} \\
& \les& 1 +   \int_{t_{p} -\tau}^{t}  |F|^{2}_{L^{\infty}_{\Sigma_{\overline{t}}^{p}}}  d\overline{t} + \int_{t_{p} -\tau}^{t} ( \int_{\Sigma_{\overline{t}}\cap J^{-}(p)} |\cder F|^{2} dVol_{\Sigma_{\overline{t}}\cap J^{-}(p)} ) d\overline{t} 
\end{eqnarray*}

Using Gr\"onwall lemma, we obtain

\bea
\int_{\Sigma_{t}\cap J^{-}(p)} |\cder F|^{2} dVol_{\Sigma_{t}\cap J^{-}(p)}  \lesssim C(t) +   \int_{t_{p} -\tau}^{t}  |F|^{2}_{L^{\infty}_{\Sigma_{\overline{t}}^{p}}}  d\overline{t}   \label{inequalityfortheL2normofthegradientofF}
\eea

where $C(t)$ is a finite constant that depends on $t$.\

Injecting \eqref{inequalityfortheL2normofthegradientofF} in \eqref{inequalitytocontrolgradientofF}, we have
\begin{eqnarray*}
&&|| \D^{(A)}_{a} F ||^{2}_{L^{2} (N^{-}_{t_{p} -\tau, t}(p))}  \\
&\lesssim& 1  + \int_{t_{p} -\tau}^{t}  |F|^{2}_{L^{\infty}_{\Sigma_{\overline{t}}^{p}}}   d\overline{t}  + \int_{t_{p} -\tau}^{t} \int_{\Sigma_{\overline{t}}\cap J^{-}(p)} |\cder F|^{2} dVol_{\Sigma_{\overline{t}}\cap J^{-}(p)} d\overline{t} \\
& \lesssim& 1  + \int_{t_{p} -\tau}^{t}  |F|^{2}_{L^{\infty}_{\Sigma_{\overline{t}}^{p}}}   d\overline{t} +   \int_{t_{p}-\tau}^{t} ( C(t) +   \int_{t_{p}-\tau}^{t^{*} }  |F|^{2}_{L^{\infty}_{\Sigma_{\overline{t}}^{p}}}  d\overline{t} ) dt^{*}  \\
& \lesssim & c(t) +     \int_{t_{p}-\tau}^{t}  |F|^{2}_{L^{\infty}_{\Sigma_{\overline{t}}^{p}}}   d\overline{t} +   \int_{t_{p}-\tau}^{t}    \int_{t_{p}-\tau}^{t^{*}}  |F|^{2}_{L^{\infty}_{\Sigma_{\overline{t}}^{p}}}  d\overline{t}  dt^{*} 
\end{eqnarray*}
Finally,
\bea
|| \D^{(A)}_{a} F ||^{2}_{L^{2} (N^{-}_{t_{p} -\tau, t}(p))}   \lesssim c(t) +     \int_{t_{p} -\tau}^{t}  |F|^{2}_{L^{\infty}_{\Sigma_{\overline{t}}^{p}}}   d\overline{t} +   \int_{t_{p}-\tau}^{t}    \int_{t_{p} -\tau}^{t^{*}}  |F|^{2}_{L^{\infty}_{\Sigma_{\overline{t}}^{p}}}  d\overline{t}  dt^{*}  \label{cotrolontheL2normofcderaFonthenullcone}
\eea

\subsection{The proof}\

Let $p \in \Sigma_{t_{p}}$.

Let $q \in \Sigma_{t}$ where $t_{p} - \tau \le t \le t_{p}$.

Let $\Omega_{q} = J^{-}(q) \cap J^{+}(\Sigma_{t_{p} - \tau} )$.

Let $\Sigma_{t}^{p} = \Sigma_{t}\cap J^{-}(p) $.

Using an adaptation of the Klainerman-Rodnianski parametrix in [KR1] to the Yang-Mills setting, see Appendix \eqref{KSparametrixYMsetting}, we have
\begin{eqnarray*}
&& 4\pi <\J_{\a\b}, F^{\a\b}>(q) \\
&=& - \int_{\Omega_{q}}<\la_{\a\b} \delta(u),\Box_{\g}^{(A)}F^{\a\b}> + \int_{\Omega_{q}}\delta(u)< \hat{\lap}^{(A)}\la_{\a\b} + 2 \ze_{a}\cder_{a}\la_{\a\b} + \frac{1}{2}\hat{\mu}\la_{\a\b} \\
&& + [F_{L \Lb}, \la_{\a\b}]  - \frac{1}{2}{{R_{\a}}^{\ga}}_{\Lb L}\la_{\ga\b} - \frac{1}{2}{{R_{\b}}^{\ga}}_{\Lb L}\la_{\a\ga}, F^{\a\b}> + C_{t_{p}-\tau}
\end{eqnarray*}

where $\hat{\lap}^{(A)} \la_{\a\b}$ is the induced Laplacian on the span of  $\{e_{a}\}$, $a \in \{ 1, 2 \}$, of $\la_{\a\b}$, as in \eqref{laplacianonab}, and where $ C_{t_{p}-\tau}$ depends on the value of $F$ on $\Sigma_{t_{p}-\tau}$. Let $\hat{\mu}, \hat{\nu} \in \{ \hat{t}, n, e_{a}, e_{b} \}$. Hence,

\begin{eqnarray*}
&&  |F_{\hat{\mu}\hat{\nu}}(q)|  \\
&\lesssim&  \int_{\Omega_{q}}  | <\la_{\a\b} \delta(u),\Box_{\g}^{(A)}F^{\a\b}> | + \int_{\Omega_{q}} | \delta(u)< \hat{\lap}^{(A)}\la_{\a\b},  F^{\a\b}> | \\
&& +  \int_{\Omega_{q}} | \delta(u)<\ze_{a}\cder_{a}\la_{\a\b}, F^{\a\b}>|  +  \int_{\Omega_{q}} | \delta(u)< \frac{1}{2}\hat{\mu}\la_{\a\b}, F^{\a\b}> |  \\
&& +  \int_{\Omega_{q}} | \delta(u)< [F_{L \Lb}, \la_{\a\b}], F^{\a\b}> | +  \int_{\Omega_{q}} | \delta(u)< \frac{1}{2}{{R_{\a}}^{\ga}}_{\Lb L}\la_{\ga\b}, F^{\a\b}> |  \\
&& +  \int_{\Omega_{q}} | \delta(u)<\frac{1}{2}{{R_{\b}}^{\ga}}_{\Lb L}\la_{\a\ga}, F^{\a\b}> |+  C_{t_{p}-\tau}
\end{eqnarray*}

We have,
\begin{eqnarray*}
&&\int_{\Omega_{q}}  | <\la_{\a\b}\delta(u),\Box_{\g}^{(A)}F^{\a\b}> | = \int_{\Omega_{q}}  | <\la^{\mu\nu}\delta(u),\Box_{\g}^{(A)}F_{\mu\nu}> | \\
&=& \int_{\Omega_{q}}  | <\la^{\mu\nu} \delta(u) ,  -2R_{\ga\mu\nu\a}F^{\a\ga} - R_{\mu\ga}{F_{\nu}}^{\ga} - R_{\nu\ga}{F^{\ga}}_{\mu} - 2[{F^{\a}}_{\mu}, F_{\nu\a}] >\\
&& \text{(using \eqref{hyperbolic})} \\
& \lesssim& \int_{\Omega_{q}}  | <\la^{\mu\nu} \delta(u) ,  R_{\ga\mu\nu\a}F^{\a\ga}>|  +\int_{\Omega_{q}}  | <\la\delta(u) , R_{\mu\ga} {F_{\nu}}^{\ga} >|   \\
&& + \int_{\Omega_{q}}  | <\la\delta(u) ,  R_{\nu\ga}{F^{\ga}}_{\mu}> |  + \int_{\Omega_{q}}  | <\la^{\mu\nu} \delta(u) ,  [{F^{\a}}_{\mu}, F_{\nu\a}] >|
\end{eqnarray*}

Finally,

\bea
\notag
&& \sup_{q \in \Sigma_{t}\cap J^{-}(p)} |F_{\hat{\mu}\hat{\nu}}(q)| \\
\notag
&\lesssim&  \sup_{q \in \Sigma_{t}\cap J^{-}(p)} [ \int_{\Omega_{q}}   | <\la^{\mu\nu} \delta(u) ,  R_{\ga\mu\nu\a}F^{\a\ga}>|  +  \int_{\Omega_{q}}  | <\la^{\mu\nu} \delta(u) , R_{\mu\ga}{F_{\nu}}^{\ga} >| \\
\notag
&&   +  \int_{\Omega_{q}}   | <\la^{\mu\nu} \delta(u) ,  R_{\nu\ga}{F^{\ga}}_{\mu}> |+  \int_{\Omega_{q}}  | <\la^{\mu\nu} \delta(u) ,  [{F^{\a}}_{\mu}, F_{\nu\a}] >\\
\notag
&&   +   | \int_{\Omega_{q}}   \delta(u)< \hat{\lap}^{(A)}\la_{\a\b},  F^{\a\b} > |+  \int_{\Omega_{q}}  | \delta(u)<\ze_{a}\cder_{a}\la_{\a\b}, F^{\a\b} >| \\
\notag
&&  +   | \int_{\Omega_{q}} \delta(u)< \frac{1}{2}\hat{\mu}\la_{\a\b}, F^{\a\b} > | +   \int_{\Omega_{q}}  | \delta(u)< [F_{L \Lb}, \la_{\a\b}], F^{\a\b} > | \\
\notag
&&    +  | \int_{\Omega_{q}}  \delta(u)< \frac{1}{2}{{R_{\a}}^{\ga}}_{\Lb L}\la_{\ga\b}, F^{\a\b} > |   +  \int_{\Omega_{q}}  | \delta(u)<\frac{1}{2}{{R_{\b}}^{\ga}}_{\Lb L}\la_{\a\ga}, F^{\a\b} > |  ] \\
&&   + C_{t_{p}-\tau}    \label{termstoapplyGronwall}
\eea

In what follows, we note $\Sigma_{t}\cap J^{-}(p)$ as $\Sigma_{t}^{p}$.\

\begin{lemma} \label{controllingtermslamdaF}
We have,
\beaa
&& \sup_{q \in \Sigma_{t}^{p}}  [  \int_{\Omega_{q}}  | <\la^{\mu\nu} \delta(u) ,  R_{\ga\mu\nu\a}F^{\a\ga}>|  +
\int_{\Omega_{q}}  | <\la^{\mu\nu}\delta(u) , R_{\mu\ga}{F_{\nu}}^{\ga} >| \\
&& \quad \quad  + \int_{\Omega_{q}}  | <\la^{\mu\nu} \delta(u) ,  R_{\nu\ga}{F^{\ga}}_{\mu}> |  + \int_{\Omega_{q}} | \delta(u)< \frac{1}{2}{{R_{\a}}^{\ga}}_{\Lb L}\la_{\ga\b}, F^{\a\b}> |  \\
&&  \quad \quad  + \int_{\Omega_{q}} | \delta(u)<\frac{1}{2}{{R_{\b}}^{\ga}}_{\Lb L}\la_{\a\ga}, F^{\a\b}> | ]\\
&\lesssim & \tau^{\frac{3}{2}} +  [ \int_{t_{p} - \tau}^{t}  ||F||_{L^{\infty}(\Sigma_{t}^{p})}^{2}  dt ]  \tau^{\frac{3}{2}} 
\eeaa
\end{lemma}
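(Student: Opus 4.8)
The plan is to estimate each of the five integral terms separately, all of which have the common structure $\int_{\Omega_q}|\langle \la^{\mu\nu}\delta(u), (\text{Riemann})\cdot F\rangle|$, using the $\delta(u)$ to collapse the spacetime integral onto the null cone $N^-_\tau(p)$ and then applying Cauchy–Schwarz on the null cone with respect to the measure $ds\,d\sigma^2$, together with the area expression $da_s\sim s^2 d\sigma^2$ from \eqref{areaexpression}. Concretely, for a term like $\int_{\Omega_q}|\langle \la^{\mu\nu}\delta(u), R_{\ga\mu\nu\a}F^{\a\ga}\rangle|$, I would first bound the Riemann factor pointwise by $C(p)$ (the metric being smooth, so all curvature components are bounded on the relevant compact region), reducing matters to $C(p)\int_{N^-_\tau(p)}|\la|\,|F|\,da$. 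Then I insert the weights $s$ and $s^{-1}$: write $|\la||F| = (s|\la|)(s^{-1}|F|)$, so that by Cauchy–Schwarz this is bounded by $\|s\la\|_{L^\infty}$ times $\int_{N^-_\tau(p)} s^{-1}|F|\,da$, and since $da\sim s^2\,ds\,d\sigma^2$ the latter is $\int s\,|F|\,ds\,d\sigma^2$.

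Next I would bring in the two main estimates established earlier: the pointwise bound $\sup_{N^-_\tau(p)}|s\la|\le C(p,\tau)|J|$ from \eqref{linfinitynormofslamda}, and a bound on $\int_{N^-_\tau(p)} s|F|\,ds\,d\sigma^2$ in terms of $\|F\|_{L^\infty(\Sigma^p_{\bar t})}$. For the latter, I use Cauchy–Schwarz in $s$ on each generator, $\int_0^{s_\tau} s|F|\,ds \le (\int_0^{s_\tau}s^2\,ds)^{1/2}(\int_0^{s_\tau}|F|^2\,ds)^{1/2}\lesssim \tau^{3/2}(\int_0^{s_\tau}|F|^2 ds)^{1/2}$, then $|F|(s,\om)^2 \le \|F\|^2_{L^\infty(\Sigma^p_{\bar t})}$ where $\bar t=\bar t(s)$ is the time coordinate of the point, and $ds\sim d\bar t$ by \eqref{relationsandt}; integrating over $\SSS^2$ and using Cauchy–Schwarz once more converts $\int(\int|F|^2 d\bar t)^{1/2}d\sigma^2$ into something controlled by $\tau^{3/2}(\int_{t_p-\tau}^t \|F\|^2_{L^\infty(\Sigma^p_t)}dt)^{1/2}$. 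Combining, each term is bounded by $C(p,\tau)|J|\cdot\tau^{3/2}(1+\int_{t_p-\tau}^t\|F\|^2_{L^\infty}dt)^{1/2}$; absorbing $|J|$ and $C(p,\tau)$ into the implied constant and using $a^{1/2}\le 1+a$ (or handling the square root by splitting) gives the stated form $\tau^{3/2} + \tau^{3/2}\int_{t_p-\tau}^t\|F\|^2_{L^\infty(\Sigma^p_t)}dt$. The terms involving ${R_\a}^\ga{}_{\Lb L}\la_{\ga\b}$ and ${R_\b}^\ga{}_{\Lb L}\la_{\a\ga}$ are treated identically, since the curvature component $R_{\a\b\Lb L}$ is again bounded by $C(p)$ on the compact region and the structure $\la\cdot F$ is the same.

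The main obstacle I anticipate is the bookkeeping near the vertex $s=0$: one must be careful that the weights are arranged so that the $s^{-1}$ coming from $|F|=s\cdot(s^{-1}|F|)$ is compensated by the $s^2$ in the area element, leaving an integrable $s^{+1}$ weight — which is exactly why the estimate $\|s\la\|_{L^\infty}$ rather than $\|\la\|_{L^\infty}$ is the right object to use, matching the control on $s^{-1}F$ alluded to in the strategy section (see \eqref{controllingsF}). A secondary point requiring care is the passage between the geodesic parameter $s$ and the time coordinate $t$ on the cone: one uses $s = t_p - t + O(t_p-t)$ from \eqref{relationsandt}, so that integration $ds$ and $dt$ are comparable with constants depending on $p$, and the domain $\Omega_q$ for $q\in\Sigma_t$ corresponds on the cone to $s$ ranging over an interval of length $\lesssim\tau$; the uniformity in $q\in\Sigma^p_t$ of all these comparisons (needed before taking the supremum) follows from compactness and smoothness of the metric, which I would invoke rather than re-derive.
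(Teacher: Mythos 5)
Your proposal is correct and follows essentially the same route as the paper: bound the curvature components by a constant, split $|\la|\,|F| = (s|\la|)(s^{-1}|F|)$ so that $\|s\la\|_{L^{\infty}}\lesssim |J|$ from \eqref{linfinitynormofslamda} applies and the area element $da\sim s^{2}$ renders the $s^{-1}$ weight integrable, then use Cauchy--Schwarz (the paper applies it in $t$ against $\int_{S_{t}}s^{-1}da_{t}\lesssim s$, you apply it in $s$ along generators --- an immaterial difference) to produce the $\tau^{3/2}$ factor and the quadratic quantity $\int \|F\|^{2}_{L^{\infty}(\Sigma_{t}^{p})}dt$, finishing with $a^{1/2}\le 1+a$. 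The treatment of the $R_{\a}{}^{\ga}{}_{\Lb L}$ terms as identical in structure also matches the paper.
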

\begin{proof}\

We have,
\begin{eqnarray*}
&&\sup_{q \in \Sigma_{t}^{p}} \int_{\Omega_{q}}  | <\la^{\mu\nu} \delta(u) ,  R_{\ga\mu\nu\a}F^{\a\ga}>| \\
&\lesssim& \sup_{q \in \Sigma_{t}^{p}}  \int_{t_{p} - \tau}^{t}   \int_{S_{t}}   | <s \la^{\mu\nu} ,  R_{\ga\mu\nu\a} s^{-1} F^{\a\ga}>| \phi da_{t} dt \\
& \lesssim& \sup_{q \in \Sigma_{t}^{p}} ||\phi ||_{L^{\infty}(N^{-}_{t_{p} - \tau, t}(q))}  \sup_{q \in \Sigma_{t}^{p}} ||s\la||_{L^{\infty}(N^{-}_{t_{p} - \tau, t}(q))}  \sup_{q \in \Sigma_{t}^{p}} ||  R ||_{L^{\infty}(N^{-}_{t_{p} - \tau, t}(q))}  \\
&&. \sup_{q \in \Sigma_{t}^{p}}   \int_{t_{p} - \tau}^{t}    ( ||F||_{L^{\infty}(\Sigma_{t})}   \int_{S_{t}}  s^{-1} da_{t} )  dt .\\
\end{eqnarray*}

We have,
\begin{eqnarray*}
\sup_{q \in \Sigma_{t}^{p}} ||  R ||_{L^{\infty}(N^{-}_{t_{p} - \tau, t}(q))}   \lesssim 1
\end{eqnarray*}
(since the metric is smooth) 
\begin{eqnarray*}
&& ||s\la||_{L^{\infty}(N^{-}_{t_{p} - \tau, t}(q))} \lesssim |J| \\
\end{eqnarray*}
thus $$ \sup_{q \in \Sigma_{t}}  ||s\la||_{L^{\infty}(N^{-}_{t_{p} - \tau, t}(q))} \lesssim 1 $$
\begin{eqnarray*}
&& \sup_{q \in \Sigma_{t}^{p}} ||\phi ||_{L^{\infty}(N^{-}_{t_{p} - \tau, t}(q))} \lesssim 1 
\end{eqnarray*}
(since $\phi$ is smooth and bounded)
\bea
\notag
&& \sup_{q \in \Sigma_{t}^{p}}  \int_{t_{p} - \tau}^{t} ( ||F||_{L^{\infty}(\Sigma_{t}^{p})}     \int_{S_{t}}  s^{-1} da_{t} ) dt \\
& \lesssim&  \sup_{q \in \Sigma_{t}^{p}} [ \int_{t_{p} - \tau}^{t} ( ||F||_{L^{\infty}(\Sigma_{t}^{p})}^{2} ) dt ]^{\frac{1}{2}}   \sup_{q \in \Sigma_{t}^{p}} [ \int_{t_{p} - \tau}^{t} (   \int_{S_{t}}  s^{-1} da_{t} )^{2} dt ]^{\frac{1}{2}}    \label{controllingsF} 
\eea

We get,\

\beaa
\int_{\Omega_{q}}  | <\la^{\mu\nu} \delta(u) ,  R_{\ga\mu\nu\a}F^{\a\ga}>|  &\lesssim &   [ \int_{t_{p} - \tau}^{t} ( ||F||_{L^{\infty}(\Sigma_{t})}^{2} ) dt ]^{\frac{1}{2}}  \sup_{q \in \Sigma_{t}^{p}} [ \int_{t_{p} - \tau}^{t} (   \int_{S_{t}}  s^{-1} da_{t} )^{2} dt ]^{\frac{1}{2}} \\
\eeaa
\beaa
[ \int_{t_{p} - \tau}^{t} ( ||F||_{L^{\infty}(\Sigma_{t})}^{2} ) dt ]^{\frac{1}{2}}  &\lesssim& 1 +   \int_{t_{p} - \tau}^{t}  ||F||_{L^{\infty}(\Sigma_{t})}^{2}  dt 
\eeaa

Recall that $$A_{t}(p) = O((t_{p} - t)^{2}) $$ (see \eqref{areaexpression}), and $$t_{p} - t = s + o(s)$$ (see \eqref{relationsandt}).   

Thus, $$A_{t}(p) = O(s^{2}) $$

We get,

$$   \sup_{q \in \Sigma_{t}^{p}} [ \int_{t_{p} - \tau}^{t}    (    \int_{S_{t}}  s^{-1}(t) da_{t}  )^{2} dt ]^{\frac{1}{2}} \lesssim  [ \int_{t_{p} - \tau}^{t_{p}}   s_{p}(t)^{2}  dt ]^{\frac{1}{2}}   \lesssim \tau^{\frac{3}{2}}$$

Thus,

\bea
\int_{\Omega_{q}}  | <\la^{\mu\nu}\delta(u) ,  R_{\ga\mu\nu\a}F^{\a\ga}>|  \lesssim  \tau^{\frac{3}{2}} +  [ \int_{t_{p} - \tau}^{t}  ||F||_{L^{\infty}(\Sigma_{t}^{p})}^{2}  dt ]  \tau^{\frac{3}{2}} 
\eea

In the same manner this controls the terms
\begin{eqnarray*}
&&\int_{\Omega_{q}}  | <\la^{\mu\nu}\delta(u) , R_{\mu\ga}{F_{\nu}}^{\ga} >| , \int_{\Omega_{q}}  | <\la^{\mu\nu} \delta(u) ,  R_{\nu\ga}{F^{\ga}}_{\mu}> | , \\
&& \int_{\Omega_{q}} | \delta(u)< \frac{1}{2}{{R_{\a}}^{\ga}}_{\Lb L}\la_{\ga\b}, F^{\a\b}> |, \mbox{and} \int_{\Omega_{q}} | \delta(u)<\frac{1}{2}{{R_{\b}}^{\ga}}_{\Lb L}\la_{\a\ga}, F^{\a\b} > | 
\end{eqnarray*}

\end{proof}

\begin{lemma}
We have,
\beaa
\sup_{q \in \Sigma_{t}^{p}}  \int_{\Omega_{q}} | \delta(u)< \frac{1}{2}\hat{\mu}\la_{\a\b}, F^{\a\b} > |  \lesssim  \tau^{\frac{1}{2}} +  [ \int_{t_{p} - \tau}^{t}  ||F||_{L^{\infty}(\Sigma_{t}^{p})}^{2}  dt ]     \tau^{\frac{1}{2}} 
\eeaa
\end{lemma}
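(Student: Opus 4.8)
The plan is to follow verbatim the argument of Lemma~\ref{controllingtermslamdaF}, the only genuinely new point being the behaviour of $\hat{\mu}$ near the vertex $p$. First I would use the factor $\delta(u)$ to collapse the spacetime integral over $\Omega_{q}$ onto the truncated null cone $N^{-}_{t_{p}-\tau,t}(q)$: writing its induced measure as $\phi\,da_{t}\,dt$ with $\phi$ a smooth bounded weight and $da_{t}$ the area element of the sections $S_{t}=N^{-}(q)\cap\Sigma_{t}$, and splitting $\la_{\a\b}=s^{-1}(s\la_{\a\b})$, one gets
\[
\sup_{q\in\Sigma_{t}^{p}}\int_{\Omega_{q}}\Big|\delta(u)\big\langle\frac{1}{2}\hat{\mu}\la_{\a\b},F^{\a\b}\big\rangle\Big|\;\lesssim\;\sup_{q\in\Sigma_{t}^{p}}\int_{t_{p}-\tau}^{t}\int_{S_{t}}|\hat{\mu}|\,|s\la|\,s^{-1}|F|\,\phi\;da_{t}\,dt .
\]

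Next I would pull out, uniformly in $q$, the factors $||s\la||_{L^{\infty}(N^{-}_{t_{p}-\tau,t}(q))}\lesssim|J|\lesssim1$ (from \eqref{linfinitynormofslamda}), $||\phi||_{L^{\infty}}\lesssim1$ (smoothness of the metric), and $||F||_{L^{\infty}(S_{t})}\le||F||_{L^{\infty}(\Sigma_{t}^{p})}$, thereby reducing the estimate to controlling $\int_{t_{p}-\tau}^{t}||F||_{L^{\infty}(\Sigma_{t}^{p})}\big(\int_{S_{t}}|\hat{\mu}|\,s^{-1}\,da_{t}\big)\,dt$. Here is where the crucial input enters: $\hat{\mu}$ has at worst a $1/s$ singularity at the vertex, so $|\hat{\mu}|\lesssim s^{-1}$, and hence, by the area bound $A_{t}(p)=O(s^{2})$ of \eqref{areaexpression}, $\int_{S_{t}}|\hat{\mu}|\,s^{-1}\,da_{t}\lesssim s^{-2}A_{t}(p)\lesssim1$. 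Thus the whole quantity is $\lesssim\int_{t_{p}-\tau}^{t}||F||_{L^{\infty}(\Sigma_{t}^{p})}\,dt$.

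Finally, Cauchy--Schwarz in $t$ over an interval of length at most $\tau$ gives a bound $\lesssim\tau^{1/2}\big[\int_{t_{p}-\tau}^{t}||F||_{L^{\infty}(\Sigma_{t}^{p})}^{2}\,dt\big]^{1/2}$, and the elementary inequality $\sqrt{x}\le1+x$ upgrades this to $\tau^{1/2}+\tau^{1/2}\int_{t_{p}-\tau}^{t}||F||_{L^{\infty}(\Sigma_{t}^{p})}^{2}\,dt$, which is the asserted estimate. Throughout, powers of $s$ are matched with powers of $\tau$ via $s=t_{p}-t+o(t_{p}-t)$ from \eqref{relationsandt}, and the geometric constants are uniform over the compact region $\bigcup_{q\in\Sigma_{t}^{p}}N^{-}_{t_{p}-\tau,t}(q)$.

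The main obstacle, and the reason this term is split off from the curvature terms of Lemma~\ref{controllingtermslamdaF}, is precisely the $s^{-1}$ blow-up of $\hat{\mu}$ at the vertex: the Riemann terms there are bounded, which is why they only produce the weaker power $\tau^{3/2}$, whereas here one must verify that the extra singular factor is exactly absorbed by the area element (so that $\int_{S_{t}}|\hat{\mu}|\,s^{-1}\,da_{t}$ stays bounded rather than being $O(s)$), and this is what yields the sharper power $\tau^{1/2}$.
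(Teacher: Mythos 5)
Your proof is correct and follows essentially the same route as the paper: collapse $\delta(u)$ onto the truncated null cone, split $\la=s^{-1}(s\la)$ and use the uniform bound on $s\la$, control the sphere integral of $|\hat{\mu}|s^{-1}$ via the vertex asymptotics of $\hat{\mu}$ and the area $A_{t}=O(s^{2})$, and finish with Cauchy--Schwarz in $t$ plus $\sqrt{x}\le 1+x$. The only cosmetic difference is that you bound $\int_{S_{t}}|\hat{\mu}|s^{-1}\,da_{t}\lesssim 1$ pointwise in $t$ before a single Cauchy--Schwarz, whereas the paper first applies Cauchy--Schwarz in $t$ and then again on $S_{t}$ (using $\hat{\mu}=o(s^{-1})$ from [Wang], which implies your $|\hat{\mu}|\lesssim s^{-1}$); both hinge on exactly the same facts.
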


\begin{proof}\

The term 
\begin{eqnarray*}
&& \sup_{q \in \Sigma_{t}^{p}} \int_{\Omega_{q}} | \delta(u)< \frac{1}{2}\hat{\mu}\la_{\a\b}, F^{\a\b}> | \lesssim \sup_{q \in \Sigma_{t}^{p}}  \int_{t_{p} - \tau}^{t}  \int_{S_{t}}  | < \frac{1}{2}\hat{\mu} s \la_{\a\b} ,   s^{-1} F^{\a\b} >| \phi da_{t} dt \\
&&\lesssim  \sup_{q \in \Sigma_{t}^{p}} ||s\la ||_{L^{\infty}(N^{-}_{t_{p} - \tau, t}(q))}   \sup_{q \in \Sigma_{t}^{p}}  \int_{t_{p} - \tau}^{t}  ( ||F||_{L^{\infty}(\Sigma_{t}^{p})}    \int_{S_{t}}  | \hat{\mu} | s^{-1} \phi  da_{t} )  dt 
\end{eqnarray*}

We get,
$$  \sup_{q \in \Sigma_{t}^{p}}  \int_{\Omega_{q}} | \delta(u)< \frac{1}{2}\hat{\mu}\la_{\a\b}, F^{\a\b} > |  \lesssim    [ \int_{t_{p} - \tau}^{t} ( ||F||_{L^{\infty}(\Sigma_{t}^{p})}^{2} ) dt ]^{\frac{1}{2}}    \sup_{q \in \Sigma_{t}^{p}}  [ \int_{t_{p} - \tau}^{t} (     \int_{S_{t}} | \hat{\mu} | s^{-1}\phi  da_{t} )^{2} dt  ]^{\frac{1}{2}} $$

And  we have
$$  \sup_{q \in \Sigma_{t}^{p}}   \int_{t_{p} - \tau}^{t}  ( \int_{S_{t}}   |\hat{\mu}| s^{-1} \phi  da_{t} )^{2} dt \lesssim    \int_{t_{p} - \tau}^{t}  \sup_{q \in \Sigma_{t}^{p}} (    \int_{S_{t}}   |\hat{\mu}|^{2} \phi  da_{t} )  (     \int_{S_{t}}    s^{-2} \phi  da_{t} ) dt$$

We have $\hat{\mu} = o(s^{-1})$ ( see proposition 3.1 in [Wang] ) \

Thus,
$$  (   \sup_{q \in \Sigma_{t}^{p}}  \int_{S_{t}}   |\hat{\mu}|^{2} \phi  da_{t} )  \lesssim \sup_{q \in \Sigma_{t}^{p}} ||\phi ||_{L^{\infty}(N^{-}_{t_{p} - \tau, t}(q))}  (   \sup_{q \in \Sigma_{t}^{p}}  \int_{S_{t}}   s^{-2}  da_{t} ) \lesssim 1$$

$$ (   \sup_{q \in \Sigma_{t}^{p}}  \int_{S_{t}}    s^{-2} \phi  da_{t} ) \lesssim 1 $$
Thus,
$$ \sup_{q \in \Sigma_{t}^{p}}  [ \int_{t_{p} - \tau}^{t} (     \int_{S_{t}} | \hat{\mu} | s^{-1}\phi  da_{t} )^{2} dt  ]^{\frac{1}{2}} \lesssim \tau^{\frac{1}{2}}$$

We obtain,
\bea
\sup_{q \in \Sigma_{t}^{p}}  \int_{\Omega_{q}} | \delta(u)< \frac{1}{2}\hat{\mu}\la_{\a\b}, F^{\a\b} > |  \lesssim  \tau^{\frac{1}{2}} +  [ \int_{t_{p} - \tau}^{t}  ||F||_{L^{\infty}(\Sigma_{t}^{p})}^{2}  dt ]     \tau^{\frac{1}{2}} 
\eea

\end{proof}

Next, we want to control the term $$\sup_{q \in \Sigma_{t}^{p}}  \int_{\Omega_{q}}  | <\la^{\mu\nu} \delta(u) ,  [{F^{\a}}_{\mu}, F_{\nu\a}] > =  \sup_{q \in \Sigma_{t}^{p}} \int_{t_{p} - \tau}^{t}  \int_{S_{t}}   | <\la^{\mu\nu} \delta(u) ,  [{F^{\a}}_{\mu}, F_{\nu\a}] >  \phi  da_{t} ) dt$$

\begin{lemma} \label{ControllinglamdabracketFF}
We have,
\beaa
\sup_{q \in \Sigma_{t}^{p}}  \int_{\Omega_{q}}  | <\la^{\mu\nu} \delta(u) ,  [{F^{\a}}_{\mu}, F_{\nu\a}] > \lesssim ( \tau )^{\frac{1}{2}} + [ \int_{t_{p} - \tau}^{t} ||F||_{L^{\infty}(\Sigma_{t}^{p})}^{2}  dt ]  ( \tau )^{\frac{1}{2}} 
\eeaa
\end{lemma}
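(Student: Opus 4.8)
The plan is to run, with one extra factor of $F$, the scheme already used for the curvature terms in Lemma \ref{controllingtermslamdaF}. First I would use the adapted Klainerman--Rodnianski parametrix \eqref{KSparametrixYMsetting} and the support of $\delta(u)$ to replace the spacetime integral over $\Omega_{q}$ by an integral over the truncated past null cone $N^{-}_{t_{p}-\tau, t}(q)$, foliated by the spheres $S_{t}$ with area element $da_{t}$ and the smooth bounded Jacobian $\phi$, exactly as in the proof of Lemma \ref{controllingtermslamdaF}. Then, inserting the factor $s\cdot s^{-1}$, using the boundedness of the structure constants of ${\cal G}$ to get the pointwise inequality $|[{F^{\a}}_{\mu},F_{\nu\a}]|\lesssim |F|^{2}$, and applying the Cauchy--Schwarz inequality with respect to $h$ of \eqref{Cauchy-Schwarzinequalitywithmetrich}, I would reduce the estimate to
$$\sup_{q\in\Sigma_{t}^{p}}\int_{\Omega_{q}} |<\la^{\mu\nu}\delta(u),[{F^{\a}}_{\mu},F_{\nu\a}]>|\;\lesssim\;\sup_{q\in\Sigma_{t}^{p}}\int_{t_{p}-\tau}^{t}\int_{S_{t}}|s\la|\,s^{-1}|F|^{2}\,\phi\,da_{t}\,dt.$$

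Next I would discard $\la$ using Proposition \eqref{linfinitynormofslamda}, which gives $||s\la||_{L^{\infty}(N^{-}_{\tau}(q))}\lesssim|J|\lesssim 1$ uniformly in $q\in\Sigma_{t}^{p}$. For the remaining factor $s^{-1}|F|^{2}$ I would peel off one copy of $|F|$ as $||F||_{L^{\infty}(\Sigma_{t}^{p})}$, which is constant on $S_{t}\subset\Sigma_{t}^{p}$ and therefore comes out of the $S_{t}$-integral, and estimate the leftover sphere integral by Cauchy--Schwarz on $S_{t}$,
$$\int_{S_{t}}s^{-1}|F|\,\phi\,da_{t}\;\le\;\Big(\int_{S_{t}}s^{-2}\phi\,da_{t}\Big)^{\frac12}\Big(\int_{S_{t}}|F|^{2}\phi\,da_{t}\Big)^{\frac12}\;\lesssim\;\Big(\int_{S_{t}}|F|^{2}\phi\,da_{t}\Big)^{\frac12},$$
where the last step uses that $\phi$ is bounded and that the area of $S_{t}$ is $O(s^{2})$, see \eqref{areaexpression}, so that $\int_{S_{t}}s^{-2}\phi\,da_{t}\lesssim 1$. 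Applying Cauchy--Schwarz in $t$, then the finiteness of the flux \eqref{finitenessflux} to bound $\int_{t_{p}-\tau}^{t}\int_{S_{t}}|F|^{2}\phi\,da_{t}\,dt$ (together with the area relation and $t_{p}-t=s+o(s)$, see \eqref{relationsandt}), and finally $a.b\lesssim a^{2}+b^{2}$ together with the boundedness of $\tau$, I would collect the pieces into the claimed bound $(\tau)^{\frac12}+[\int_{t_{p}-\tau}^{t}||F||_{L^{\infty}(\Sigma_{t}^{p})}^{2}\,dt](\tau)^{\frac12}$; the half-integer power of $\tau$ comes from integrating the area $O((t_{p}-t)^{2})$ in $t$ over an interval of length $\tau$, just as the power $\tau^{\frac32}$ arose in Lemma \ref{controllingtermslamdaF}.

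The main obstacle is the singular weight $s^{-1}$ as $s\to 0$, that is, near the vertex $q$: one cannot simply replace it by a constant, and the correct way to handle it, see \eqref{Controllingsminus1Fusingfinitnessofflux}, is to pair $s^{-1}$ with the area element $da_{t}$ through Cauchy--Schwarz on the spheres $S_{t}$ and then to absorb the resulting $\int_{N^{-}_{\tau}(q)}|F|^{2}$ on the cone into the flux, which is finite by \eqref{finitenessflux}; away from the vertex the metric, $\phi$ and $\la$ are smooth and bounded so the integral is clearly finite there. Once this is set up, the very same chain of inequalities controls all the remaining $[F,F]$-type terms occurring in \eqref{termstoapplyGronwall} with no change.
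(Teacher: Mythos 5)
Your overall architecture (pass to the null cone, insert $s\cdot s^{-1}$, put $s\la$ in $L^{\infty}$, peel off one $\|F\|_{L^{\infty}}$, Cauchy--Schwarz on $S_{t}$ against $\int_{S_{t}}s^{-2}\,da_{t}\lesssim 1$, then Cauchy--Schwarz in $t$) matches the paper's. But there is one genuine gap at the decisive step: you replace the commutator by the crude bound $|[{F^{\a}}_{\mu},F_{\nu\a}]|\lesssim|F|^{2}$ and then propose to control $\int_{t_{p}-\tau}^{t}\int_{S_{t}}|F|^{2}\phi\,da_{t}\,dt$ by the flux \eqref{finitenessflux}. The flux does \emph{not} control the full $h$-norm $|F|^{2}$ on the cone: it only controls the null components $F_{L\Lb}$, $F_{La}$, $F_{Lb}$, $F_{ab}$, and contains no term in $|F_{\Lb a}|^{2}$ or $|F_{\Lb b}|^{2}$. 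The trace of the full $|F|^{2}$ on a null hypersurface is not controlled by any of the available quantities (the energy identity only yields the specific combination above as the boundary term on $N^{-}(p)$), so this step fails.

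The missing ingredient is the Eardley--Moncrief null-structure observation, which is exactly what the paper invokes: writing $[{F^{\a}}_{\mu},F_{\nu\a}]=-\frac{1}{2}[F_{\Lb\mu},F_{\nu L}]-\frac{1}{2}[F_{L\mu},F_{\nu\Lb}]+[F_{a\mu},F_{\nu a}]$ in the null frame, one checks that in every product at least one factor is a ``good'' component, so that
\beaa
|[{F^{\a}}_{\mu},F_{\nu\a}]|\lesssim \|F\|_{L^{\infty}(S_{t})}\,\big(|F_{L\Lb}|+|F_{La}|+|F_{Lb}|+|F_{ab}|\big)
\eeaa
on $S_{t}$. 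The factor taken in $L^{\infty}$ may be a bad component, but the factor left in $L^{2}$ on the cone consists only of the components appearing in the flux, and your Cauchy--Schwarz on $S_{t}$ then closes exactly as you describe, yielding $\tau^{\frac12}(E^{\frac{\pr}{\pr t}}_{t=0})^{\frac12}$ as in \eqref{Controllingsminus1Fusingfinitnessofflux}. With this substitution your argument coincides with the paper's; without it the estimate cannot be closed.
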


\begin{proof}\

Following the remark of Eardley and Moncrief in [EM2], we have
$$| [{F^{\a}}_{\mu}, F_{\nu\a}] | \lesssim ||F||_{L^{\infty}(S_{t})} ( |F_{L\Lb}| + |F_{La}| + |F_{Lb}| + |F_{ab}| )$$
on $N^{-}_{t_{p} -\tau, t}(q)\cap \Sigma_{t}= S_{t}$, and therefore,

\begin{eqnarray*}
&& \sup_{q \in \Sigma_{t}^{p}}  \int_{\Omega_{q}}  | <\la^{\mu\nu} \delta(u) ,  [{F^{\a}}_{\mu}, F_{\nu\a}] > \\
&\lesssim& \sup_{q \in \Sigma_{t}^{p}} \int_{t_{p} - \tau}^{t}  \int_{S_{t}}   | < s \la^{\mu\nu} \delta(u) ,  [s^{-1} {F^{\a}}_{\mu}, F_{\nu\a}] >  \phi  da_{t} ) dt \\
&  \lesssim&  \sup_{q \in \Sigma_{t}^{p}} ||\phi||_{L^{\infty}(N^{-}_{t_{p} - \tau, t}(q))} \sup_{q \in \Sigma_{t}^{p}} ||s \la ||_{L^{\infty}(N^{-}_{t_{p} - \tau, t}(q))} \\
&& \sup_{q \in \Sigma_{t}^{p}} \int_{t_{p} - \tau}^{t} ||F||_{L^{\infty}(\Sigma_{t})}    \int_{S_{t}} ( s^{-1}|F_{L\Lb}| + s^{-1} |F_{La}| + s^{-1} |F_{Lb}| + s^{-1} |F_{ab}| )   dt da_{t} \\
& \lesssim&   [ \int_{t_{p} - \tau}^{t} ( ||F||_{L^{\infty}(\Sigma_{t}^{p})}^{2} ) dt ]^{\frac{1}{2}}  \\
&& . \sup_{q \in \Sigma_{t}^{p}}    [ \int_{t_{p} - \tau}^{t} ( \int_{S_{t}} ( s^{-1}|F_{L\Lb}| + s^{-1} |F_{La}| + s^{-1} |F_{Lb}| + s^{-1} |F_{ab}| )   da_{t} )^{2}dt  ]^{\frac{1}{2}} 
\end{eqnarray*}

(since $\phi $ is smooth and bounded near $p$).\

And we have,
\beaa
&&  [ \int_{S_{t}}  ( s^{-1}|F_{L\Lb}| + s^{-1} |F_{La}| + s^{-1} |F_{Lb}| + s^{-1} |F_{ab}| )  da_{t} ]^{2} \\
& \lesssim& (  \int_{S_{t}}  ( s^{-2})   da_{t} ) (    \int_{S_{t}}  ( |F_{L\Lb}|^{2} + |F_{La}|^{2} +  |F_{Lb}|^{2} +  |F_{ab}|^{2} )   da_{t})
\eeaa

(by Cauchy-Schwarz inequality)

and $$F^{\frac{\pr}{\pr t}} (N^{-}_{\tau} (q)) =  \int_{N^{-}_{\tau} (q)} \frac{1}{8} |F_{L \Lb}|^{2} + \frac{1}{2} |F_{L e_{a}}|^{2}  +  \frac{1}{2}  |F_{L e_{b}}|^{2}  +  \frac{1}{2}  |F_{ab}|^{2}$$

Thus,
\bea
\notag
&&  \int_{t_{p} - \tau}^{t} \int_{S_{t}} ( s^{-1}|F_{L\Lb}| + s^{-1} |F_{La}| + s^{-1} |F_{Lb}| + s^{-1} |F_{ab}| ) dt da_{t}   \\
\notag
&\lesssim&     ( \int_{t_{p} - \tau}^{t}  1   dt )^{\frac{1}{2}}       (F^{\frac{\pr}{\pr t}} (N^{-}_{t_{p} -\tau, t} (q)) )^{\frac{1}{2}} \\
& \lesssim& ( \tau )^{\frac{1}{2}}  ( E_{t=0}^{\frac{\pr}{\pr t}} )^{\frac{1}{2}}   \label{Controllingsminus1Fusingfinitnessofflux}
\eea

Thus,
\bea
\sup_{q \in \Sigma_{t}^{p}}  \int_{\Omega_{q}}  | <\la^{\mu\nu} \delta(u) ,  [{F^{\a}}_{\mu}, F_{\nu\a}] > \lesssim ( \tau )^{\frac{1}{2}} + [ \int_{t_{p} - \tau}^{t} ||F||_{L^{\infty}(\Sigma_{t}^{p})}^{2}  dt ]  ( \tau )^{\frac{1}{2}} 
\eea

\end{proof}

\begin{lemma}
We have,
\bea
\notag
\sup_{q \in \Sigma_{t}\cap J^{-}(p)} \int_{\Omega_{q}}  | \delta(u)< [F_{L \Lb}, \la_{\a\b}], F^{\a\b} > |   \lesssim  ( \tau )^{\frac{1}{2}} + [ \int_{t_{p} - \tau}^{t} ||F||_{L^{\infty}(\Sigma_{t}^{p})}^{2}  dt ]  ( \tau )^{\frac{1}{2}} \\
\eea
\end{lemma}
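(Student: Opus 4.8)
The plan is to treat this term in exactly the same way as the term $\sup_{q \in \Sigma_{t}^{p}} \int_{\Omega_{q}} | <\la^{\mu\nu}\delta(u), [F^{\a}{}_{\mu}, F_{\nu\a}]> |$ handled in Lemma \ref{ControllinglamdabracketFF}, since the structure is identical: a null-cone integral involving the product of $\la$ with a quadratic expression in $F$, one factor of which is a ``good'' null component $F_{L\Lb}$. First I would localize the integral to the null cone using the $\delta(u)$, writing $\int_{\Omega_q}|\delta(u)<[F_{L\Lb},\la_{\a\b}],F^{\a\b}>| \lesssim \int_{t_p-\tau}^{t}\int_{S_t} |<s\la_{\a\b}, s^{-1}[F_{L\Lb},F^{\a\b}]>|\,\phi\,da_t\,dt$, inserting the factor $s\cdot s^{-1}$ so that I can pull out $\sup \|s\la\|_{L^\infty}$, which is $\lesssim |J| \lesssim 1$ by \eqref{linfinitynormofslamda}, and $\sup\|\phi\|_{L^\infty} \lesssim 1$ since $\phi$ is smooth and bounded near $p$. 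This leaves $\sup_{q}\int_{t_p-\tau}^{t} \|F\|_{L^\infty(\Sigma_t^p)} \int_{S_t} s^{-1}|F_{L\Lb}|\,da_t\,dt$ to estimate.

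Next I would apply Cauchy--Schwarz in $t$ to separate $\|F\|_{L^\infty}$ from the flux-type quantity: the integral is bounded by $\big[\int_{t_p-\tau}^{t}\|F\|_{L^\infty(\Sigma_t^p)}^2\,dt\big]^{1/2}\cdot\sup_q\big[\int_{t_p-\tau}^{t}\big(\int_{S_t} s^{-1}|F_{L\Lb}|\,da_t\big)^2\,dt\big]^{1/2}$. For the second factor, Cauchy--Schwarz on $\SSS^2$ gives $\big(\int_{S_t} s^{-1}|F_{L\Lb}|\,da_t\big)^2 \lesssim \big(\int_{S_t} s^{-2}\,da_t\big)\big(\int_{S_t}|F_{L\Lb}|^2\,da_t\big)$, and since the area element is at the level of $s^2$, see \eqref{areaexpression}, the first factor is $\lesssim 1$; integrating the second over $t$ against $1$ and recognizing $\int_{t_p-\tau}^t\int_{S_t}|F_{L\Lb}|^2\,da_t\,dt$ as controlled by the flux $F^{\frac{\pr}{\pr t}}(N^-_{t_p-\tau,t}(q))$, which by \eqref{finitenessflux} is $\lesssim c(t_p) E_{t=0}^{\frac{\pr}{\pr t}} \lesssim 1$, I obtain a factor $(\tau)^{1/2}$ from the extra $\int_{t_p-\tau}^t 1\,dt$ coming out of the Cauchy--Schwarz (as in \eqref{Controllingsminus1Fusingfinitnessofflux}). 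Combining with $\big[\int_{t_p-\tau}^t\|F\|_{L^\infty(\Sigma_t^p)}^2\,dt\big]^{1/2} \lesssim 1 + \int_{t_p-\tau}^t\|F\|_{L^\infty(\Sigma_t^p)}^2\,dt$ gives the claimed bound.

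The main subtlety, as throughout this section, is the behavior near the vertex $s=0$: one must make sure the powers of $s$ balance so that no negative power survives after using $da_t = O(s^2)\,d\sigma^2$. Here the bookkeeping is favorable --- the single $s^{-1}$ from writing $F_{L\Lb} = s^{-1}(sF_{L\Lb})$ is matched against one of the two $s$'s in the area element after Cauchy--Schwarz on $\SSS^2$, leaving a harmless $\int s^{-2}\,da_t \lesssim \int d\sigma^2 \lesssim 1$ --- so this is not a genuine obstacle, merely the same routine care exercised in Lemmas \ref{ControllinglamdabracketFF} and the preceding ones. The key external inputs are the pointwise bound \eqref{linfinitynormofslamda} on $|s\la|$, the finiteness of the flux \eqref{finitenessflux}, the area estimate \eqref{areaexpression}, the relation $t_p - t = s + o(s)$, and the observation that $F_{L\Lb}$ is one of the components appearing in the flux integrand; no new ideas beyond those already deployed are required.
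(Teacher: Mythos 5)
Your proposal is correct and follows essentially the same route as the paper, which itself disposes of this term "by same as previously," i.e.\ by the argument of Lemma \ref{ControllinglamdabracketFF}: insert $s\cdot s^{-1}$, pull out $\|s\la\|_{L^\infty}\lesssim|J|$ and $\|\phi\|_{L^\infty}$, Cauchy--Schwarz in $t$ to isolate $[\int\|F\|_{L^\infty}^2\,dt]^{1/2}$, then Cauchy--Schwarz on $S_t$ with $\int_{S_t}s^{-2}\,da_t\lesssim 1$ and the flux bound \eqref{finitenessflux} to produce the $\tau^{1/2}$ factor. Your bookkeeping near the vertex and the final step $a^{1/2}\lesssim 1+a$ match the paper exactly.
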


\begin{proof}\

By same as previously, the term
\begin{eqnarray*}
&&\sup_{q \in \Sigma_{t}\cap J^{-}(p)} \int_{\Omega_{q}}  | \delta(u)< [F_{L \Lb}, \la_{\a\b}], F^{\a\b} > | \\
&\lesssim&   [ \int_{t_{p} - \tau}^{t} ( ||F||_{L^{\infty}(\Sigma_{t}^{p})}^{2} ) dt ]^{\frac{1}{2}}  \sup_{q \in \Sigma_{t}^{p}}    [ \int_{t_{p} - \tau}^{t} ( \int_{S_{t}} ( s^{-1}|F_{L\Lb}|  )   da_{t} )^{2}dt  ]^{\frac{1}{2}} \\
& \lesssim&    ( \tau )^{\frac{1}{2}}     \sup_{q \in \Sigma_{t}^{p}}  (F^{\frac{\pr}{\pr t}}  (N^{-}_{t_{p} -\tau, t} (q)) )^{\frac{1}{2}} \\
& \lesssim & ( \tau )^{\frac{1}{2}}  ( E_{t=0}^{\frac{\pr}{\pr t}} )^{\frac{1}{2}} 
\end{eqnarray*}

\end{proof}

\begin{lemma}
We have,
\beaa
\sup_{q \in \Sigma_{t}\cap J^{-}(p)} \int_{\Omega_{q}} | \delta(u)<\ze_{a}\cder_{a}\la_{\a\b}, F^{\a\b} >| \lesssim ( \tau )^{4} + [ \int_{t_{p} - \tau}^{t} ||F||_{L^{\infty}(\Sigma_{t}^{p})}^{2}  dt ]  ( \tau )^{4}
\eeaa
\end{lemma}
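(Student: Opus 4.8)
The plan is to handle this term in the spirit of Lemmas \ref{controllingtermslamdaF}--\ref{ControllinglamdabracketFF}, the one novelty being that we no longer dispose of a pointwise bound on $s\cder_{a}\la$, so we must instead feed in the $L^{2}$ estimate on the cone $|| \cder_{a}\la ||_{L^{2}(N^{-}_{\tau}(q))}\lesssim 1$ proved in Section \ref{controlofthetangderivativeoflamdaasinKR3} (valid uniformly in $q$, since the metric is smooth on the compact set $J^{-}(p)$), together with the estimates on $\ze_{a}$ recalled from the Appendix of [Wang].

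First I would use that $\delta(u)$ localises the integral to the truncated null cone $N^{-}_{t_{p}-\tau, t}(q)$, which I foliate by the spheres $S_{t} = N^{-}(q)\cap\Sigma_{t}$, with $s$ and $t$ related by $s = t_{p}-t + o(t_{p}-t)$ as in \eqref{relationsandt} and with $A_{t}(q) = O(s^{2})$ as in \eqref{areaexpression}. Applying the Cauchy--Schwarz inequality \eqref{Cauchy-Schwarzinequalitywithmetrich} in the contracted indices gives, on $N^{-}_{t_{p}-\tau, t}(q)$,
$$| \delta(u)<\ze_{a}\cder_{a}\la_{\a\b}, F^{\a\b}> |  \lesssim  |\ze_{a}| \, |\cder_{a}\la| \, |F| \, \phi ,$$
and pulling $||F||_{L^{\infty}(\Sigma_{t}^{p})}$ out of the integral over each slice $S_{t}$ yields
$$\int_{\Omega_{q}} | \delta(u)<\ze_{a}\cder_{a}\la_{\a\b}, F^{\a\b}> |  \lesssim  \int_{t_{p}-\tau}^{t} ||F||_{L^{\infty}(\Sigma_{t}^{p})} \Big( \int_{S_{t}} |\ze_{a}| \, |\cder_{a}\la| \, \phi \, da_{t} \Big) dt .$$

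Next I would distribute powers of $s$ between the two factors so that the $\cder_{a}\la$ piece carries exactly the weight whose $L^{2}$ norm over $N^{-}_{\tau}(q)$ is controlled; schematically, writing $|\ze_{a}| \, |\cder_{a}\la| = ( s^{-k}|\ze_{a}|)( s^{k}|\cder_{a}\la|)$ for a suitable $k$, applying Cauchy--Schwarz on $S_{t}$, and then Cauchy--Schwarz in $t$ so as to pair the factor $\big(\int_{S_{t}} s^{2k}|\cder_{a}\la|^{2}\phi\, da_{t}\big)^{1/2}$ with itself. After integration in $t$ this produces the factor $|| s^{k}\cder_{a}\la ||_{L^{2}(N^{-}_{\tau}(q))}$, which by the estimate of Section \ref{controlofthetangderivativeoflamdaasinKR3} and the bound $s \lesssim \tau$ on $N^{-}_{\tau}(q)$ is $\lesssim \tau^{k}$, while $||F||_{L^{\infty}(\Sigma_{t}^{p})}$ is isolated as $\big(\int_{t_{p}-\tau}^{t}||F||_{L^{\infty}(\Sigma_{t}^{p})}^{2}\, dt\big)^{1/2}$. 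The complementary factor is of the form $\big(\int_{t_{p}-\tau}^{t}\sup_{S_{t}} \int_{S_{t}} s^{-2k}|\ze_{a}|^{2}\phi\, da_{t}\,  dt\big)^{1/2}$, and using $A_{t} = O(s^{2})$ together with the decay of $\ze_{a}$ near the vertex recalled from the Appendix of [Wang] one checks that, for the choice of $k$ making every integral convergent at $s=0$, this factor carries a high positive power of $s$, whose integral over the interval of length $\tau$ gives $\tau^{4}$.

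Collecting the pieces, and using $\sqrt{x}\lesssim 1 + x$ on $\big(\int_{t_{p}-\tau}^{t}||F||_{L^{\infty}(\Sigma_{t}^{p})}^{2}\, dt\big)^{1/2}$, gives
$$\int_{\Omega_{q}} | \delta(u)<\ze_{a}\cder_{a}\la_{\a\b}, F^{\a\b}> |  \lesssim  \tau^{4} + \Big[\int_{t_{p}-\tau}^{t} ||F||_{L^{\infty}(\Sigma_{t}^{p})}^{2}\, dt\Big] \tau^{4} ,$$
and taking the supremum over $q\in\Sigma_{t}\cap J^{-}(p)$ is harmless since all the geometric constants are uniform over $J^{-}(p)$. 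The delicate point --- the only place where more care is needed than in the earlier lemmas --- is the bookkeeping of the $s$-weights near the vertex $s=0$: one has to balance the negative powers of $s$ so that no integral diverges at $s=0$ while still extracting enough positive powers of $s$, and it is exactly here that the non-trivial cone estimate $|| \cder_{a}\la ||_{L^{2}(N^{-}_{\tau}(q))}\lesssim 1$ of Section \ref{controlofthetangderivativeoflamdaasinKR3} and the sharp behaviour of $\ze_{a}$ from the Appendix of [Wang] enter, in place of the pointwise bounds used previously.
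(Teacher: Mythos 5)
Your proposal is correct and follows essentially the same route as the paper: localise to the cone, pull out $\|F\|_{L^{\infty}(\Sigma_{t}^{p})}$, apply Cauchy--Schwarz on each sphere $S_{t}$ and then in $t$, and feed in $\ze_{a}=O(s)$ together with $|S_{t}|=O(s^{2})$ from [Wang] and the unweighted cone bound $\|\cder_{a}\la\|_{L^{2}(N^{-}_{\tau}(q))}\lesssim 1$. The $s$-weight redistribution with the parameter $k$ is unnecessary --- the paper simply takes $\int_{S_{t}}|\ze_{a}|^{2}\phi\,da_{t}\lesssim s^{4}\lesssim\tau^{4}$ and pairs it with the unweighted $L^{2}$ norm of $\cder_{a}\la$ --- and the precise positive power of $\tau$ that comes out (your bookkeeping actually yields $\tau^{5/2}$, and a strict reading of the paper's own Cauchy--Schwarz gives $\tau^{2}$ rather than $\tau^{4}$) is immaterial for the subsequent Gr\"onwall/Pachpatte argument.
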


\begin{proof}\

The term 
\begin{eqnarray*}
&& \sup_{q \in \Sigma_{t}\cap J^{-}(p)} \int_{\Omega_{q}} | \delta(u)<\ze_{a}\cder_{a}\la_{\a\b}, F^{\a\b} >|  \\
&=&  \sup_{q \in \Sigma_{t}\cap J^{-}(p)}  \int_{t_{p} - \tau}^{t}       ( ||F||_{L^{\infty}(S_{t})}      \int_{S_{t}}  | \ze_{a}\cder_{a}\la |\phi da_{t} ) dt \\
& \lesssim&   [ \int_{t_{p} - \tau}^{t} ( ||F||_{L^{\infty}(\Sigma_{t}^{p})}^{2} ) dt ]^{\frac{1}{2}}  \sup_{q \in \Sigma_{t}\cap J^{-}(p)}  [ \int_{t_{p} - \tau}^{t} ( [ \int_{S_{t}} | \ze_{a}|^{2} \phi da_{t}] [ \int_{S_{t}} |\cder_{a}\la |^{2} \phi  da_{t} ] ) dt ]^{\frac{1}{2}} 
\end{eqnarray*}

We have $$\ze_{a} = O(s)$$ (see proposition 3.1 in [Wang]).\

Thus, $$    \int_{S_{t}} | \ze_{a}|^{2} \phi da_{t} \lesssim  \int_{S_{t}} s^{2} da_{t} \lesssim s^{4} \lesssim \tau^{4} $$

Therefore,
\begin{eqnarray*}
&& \sup_{q \in \Sigma_{t}\cap J^{-}(p)} \int_{\Omega_{q}} | \delta(u)<\ze_{a}\cder_{a}\la_{\a\b}, F^{\a\b} >| \\
& \lesssim&  [ \int_{t_{p} - \tau}^{t} ( ||F||_{L^{\infty}(\Sigma_{t}^{p})}^{2} ) dt ]^{\frac{1}{2}} \tau^{4}  \sup_{q \in \Sigma_{t}^{p} \cap J^{-}(p)} [ \int_{t_{p} - \tau}^{t}  \int_{S_{t}} |\cder_{a}\la |^{2} \phi  da_{t}  dt ]^{\frac{1}{2}} 
\end{eqnarray*}

We showed previously that $ || \D^{(A)}_{a} \la ||_{L^{2}(N^{-}_{\tau}(q))} \lesssim 1 $, thus $$\sup_{q \in \Sigma_{t}^{p} \cap J^{-}(p)} || \D^{(A)}_{a} \la ||_{L^{2}(N^{-}_{\tau}(q))} \lesssim 1$$

Finally,
\bea
\notag
\sup_{q \in \Sigma_{t}\cap J^{-}(p)} \int_{\Omega_{q}} | \delta(u)<\ze_{a}\cder_{a}\la_{\a\b}, F^{\a\b} >| &\lesssim& ( \tau )^{4} + [ \int_{t_{p} - \tau}^{t} ||F||_{L^{\infty}(\Sigma_{t}^{p})}^{2}  dt ]  ( \tau )^{4} \\
\eea

\end{proof}

We are left with the term $ \sup_{q \in \Sigma_{t}^{p}} | \int_{\Omega_{q}}  \delta(u)< \hat{\lap}^{(A)}\la_{\a\b},  F^{\a\b} > | $. We recall that $\hat{\lap}^{(A)} \la_{\a\b}$ is the induced Laplacian on the span of $\{e_{a}\}$, $a \in \{ 1, 2 \}$,

\begin{lemma}
We have,
\beaa
&& \sup_{q \in \Sigma_{t}^{p}} | \int_{\Omega_{q}}  \delta(u)< \hat{\lap}^{(A)}\la_{\a\b},  F^{\a\b} >  |  \\
&\lesssim&  1 +     \int_{t_{p}-\tau}^{t}  ||F||^{2}_{L^{\infty}(\Sigma_{t}^{p})}   dt +   \int_{t_{p}-\tau}^{t}    \int_{t_{p}-\tau}^{t}  ||F||^{2}_{L^{\infty}(\Sigma_{t}^{p})}  d\overline{t}  dt 
\eeaa
\end{lemma}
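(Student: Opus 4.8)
The plan is to follow the same scheme as for the previous terms of \eqref{termstoapplyGronwall}, the essential new ingredient being an integration by parts on the $s=\text{constant}$ spheres. First I would use the $\delta(u)$ in the parametrix to collapse the integral onto the truncated null cone, writing
\beaa
\int_{\Omega_{q}}\delta(u)< \hat{\lap}^{(A)}\la_{\a\b},  F^{\a\b} > &=& \int_{t_{p}-\tau}^{t}\int_{S_{t}}<\hat{\lap}^{(A)}\la_{\a\b},  F^{\a\b}>\phi\, da_{t}\, dt ,
\eeaa
where $S_{t}=N^{-}(q)\cap\Sigma_{t}$ and $\phi$ is the smooth, bounded parametrix weight. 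Since on a curved background we cannot close a transport system for $\hat{\lap}^{(A)}\la$, unlike the Minkowski case of [KR1], the point is precisely to not estimate it pointwise: because $\hat{\lap}^{(A)}$ is the gauge covariant Laplacian induced on the closed surface $S_{t}$, see \eqref{laplacianonab}, and because $<\;,\;>$ is $Ad$-invariant, the divergence theorem on $\SSS^{2}$, see \eqref{integrationbypartsons2}, produces no boundary term and transfers one tangential derivative from $\la$ onto $F$, at the cost of a term where the derivative hits $\phi$ (and lower-order terms carrying the null second fundamental form of $S_{t}$, controlled exactly as in the $I_{1}$--$I_{5}$ estimates above):
\beaa
\int_{S_{t}}<\hat{\lap}^{(A)}\la_{\a\b},  F^{\a\b}>\phi\, da_{t} &=& - \int_{S_{t}}<\cder_{a}\la_{\a\b}, \cder_{a} F^{\a\b}>\phi\, da_{t} \\
&& - \int_{S_{t}}<\cder_{a}\la_{\a\b}, F^{\a\b}>(\cder_{a}\phi)\, da_{t} ,
\eeaa
with $\cder_{a}$, $a\in\{1,2\}$, the derivatives tangential to $S_{t}$ (and the free indices $\a,\b$ contracted appropriately, the passage between $\g$- and $h$-contractions costing only a bounded factor since the metric is smooth). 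The gain is that both $\cder_{a}\la$ and $\cder_{a}F$ are exactly the quantities we have already learned to control in $L^{2}$ over the whole null cone, including near the vertex $s=0$ where $\hat{\lap}^{(A)}\la$ is out of reach.

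Next I would estimate the main term by Cauchy--Schwarz on $N^{-}_{t_{p}-\tau,t}(q)$ with respect to the measure $\phi\, da_{t}\, dt$, using boundedness of $\phi$, so as to bound it by $||\cder_{a}\la||_{L^{2}(N^{-}_{t_{p}-\tau,t}(q))}\cdot||\cder_{a}F||_{L^{2}(N^{-}_{t_{p}-\tau,t}(q))}$. The first factor is $\les 1$ uniformly in $q\in\Sigma_{t}^{p}$ by the estimate of Section \ref{controlofthetangderivativeoflamdaasinKR3}, and the second factor is, by \eqref{cotrolontheL2normofcderaFonthenullcone}, bounded by the square root of
\beaa
c(t) + \int_{t_{p}-\tau}^{t}||F||^{2}_{L^{\infty}(\Sigma_{\overline{t}}^{p})}\, d\overline{t} + \int_{t_{p}-\tau}^{t}\int_{t_{p}-\tau}^{t^{*}}||F||^{2}_{L^{\infty}(\Sigma_{\overline{t}}^{p})}\, d\overline{t}\, dt^{*} .
\eeaa
Taking a square root, applying $(x+y+z)^{\frac{1}{2}}\les x^{\frac{1}{2}}+y^{\frac{1}{2}}+z^{\frac{1}{2}}$ and $\sqrt{x}\les 1+x$, and using that $c(t)$ is finite on the bounded range of $t$ under consideration, one sees that the product is $\les 1 + \int_{t_{p}-\tau}^{t}||F||^{2}_{L^{\infty}(\Sigma_{t}^{p})}\, dt + \int_{t_{p}-\tau}^{t}\int_{t_{p}-\tau}^{t}||F||^{2}_{L^{\infty}(\Sigma_{t}^{p})}\, d\overline{t}\, dt$, which is exactly the asserted right-hand side. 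For the remaining cutoff and lower-order terms I would again use Cauchy--Schwarz together with $||\cder_{a}\la||_{L^{2}(N^{-}_{\tau}(q))}\les 1$, the crude bound $\int_{S_{t}}|F|^{2}\, da_{t}\les ||F||^{2}_{L^{\infty}(S_{t})}\, A_{t}(q)$ and $A_{t}(q)=O(\tau^{2})$ from \eqref{areaexpression}: this makes each such term $\les \tau\,(\int_{t_{p}-\tau}^{t}||F||^{2}_{L^{\infty}(\Sigma_{t}^{p})}\, dt)^{\frac{1}{2}}\les 1 + \int_{t_{p}-\tau}^{t}||F||^{2}_{L^{\infty}(\Sigma_{t}^{p})}\, dt$, which is subsumed in the claimed bound. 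Since all the estimates are uniform in $q\in\Sigma_{t}^{p}$, taking $\sup_{q}$ concludes.

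The main obstacle is, in fact, upstream: this lemma is short only because it rests on the two delicate prior results, namely the $L^{2}$ bound on $\cder_{a}\la$ over the full null cone from Section \ref{controlofthetangderivativeoflamdaasinKR3} (which required the $L^{2}$ maximum-principle argument near $s=0$) and the $L^{2}$ bound \eqref{cotrolontheL2normofcderaFonthenullcone} on $\cder_{a}F$ (which required the energy-momentum tensor $T_{1}$ of the wave equation and a Gr\"onwall argument). Within the present lemma, the one point to verify carefully is that no spurious power of $s$ is introduced near the vertex: the $L^{2}$ norms over $N^{-}$ appearing after Cauchy--Schwarz carry the area weight $\sim s^{2}$ coming from $da_{t}$, which is exactly the weight built into the estimates of Sections \ref{controlofthetangderivativeoflamdaasinKR3} and \ref{controlofthegradientofFasinCS}. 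It is worth noting that this is the step at which the double time integral $\int\int||F||^{2}_{L^{\infty}}$ first enters the right-hand side, which is why the eventual Gr\"onwall-type inequality for $||F||_{L^{\infty}(\Sigma_{t}^{p})}$ is of the generalized Pachpatte type \eqref{Pachpatte} rather than the classical one.
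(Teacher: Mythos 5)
Your proposal is correct and follows essentially the same route as the paper: collapse onto the null cone, integrate by parts on the $s=\text{constant}$ spheres using Ad-invariance so that no boundary term appears, apply Cauchy--Schwarz to reduce to $||\cder_{a}\la||_{L^{2}(N^{-}_{\tau}(q))}\cdot||\cder_{a}F||_{L^{2}(N^{-}_{t_{p}-\tau,t}(q))}$, and invoke the two prior $L^{2}$ estimates together with $\sqrt{x}\les 1+x$. Your explicit tracking of the term where the tangential derivative falls on $\phi$ is slightly more careful than the paper's \eqref{integrationbypartsons2}, which passes over it, but this does not change the argument.
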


\begin{proof}

\begin{eqnarray*}
 \sup_{q \in \Sigma_{t}^{p}} | \int_{\Omega_{q}}  \delta(u)< \hat{\lap}^{(A)}\la_{\a\b},  F^{\a\b} > | &=&   \sup_{q \in \Sigma_{t}^{p}}  \int_{t_{p} - \tau}^{t_{p}}  \int_{S_{t}} < \hat{\lap}^{(A)}\la_{\a\b},  F^{\a\b} >  \phi da_{t } dt | \\
&\lesssim&    \sup_{q \in \Sigma_{t}^{p}}  \int_{t_{p} - \tau}^{t_{p}} | \int_{S_{t}}  < \hat{\lap}^{(A)}\la_{\a\b},  F^{\a\b} >    \phi da_{t} | dt  
\end{eqnarray*}

\begin{definition}

We define a restriction of the covariant derivative of $\der_{b} e_{a}$ to the span of $\{ e_{a} \}$, $a \in \{1, 2 \}$ at $q \in N^{-}(p) \backslash \{p\}$ as being $\rder_{b} e_{a}$.

\end{definition}

\begin{definition}
We define,
\bea
\rcder_{b} \rcder_{a} \la_{\a\b} =  \D^{(A)}_{b} (\D^{(A)}_{a} \la)_{\a\b}  - \D^{(A)}_{\rder_{b} e_{a}} \la_{\a\b}
\eea
whereas,  $$\D^{(A)}_{b} \D^{(A)}_{a} \la_{\a\b} =  \D^{(A)}_{b} (\D^{(A)}_{a} \la_{\a\b} ) - \D^{(A)}_{\der_{b} e_{a}} \la_{\a\b} $$
\end{definition}

We have
\bea
\notag
&& \hat{\lap}^{(A)} \la_{\a\b} \\
\notag
&=& ({\rcder}^{a} \rcder_{a}\la)(e_{\a}, e_{\b})  \\
\notag
&=&\pa^{a} [(\cder_{a}\Psi)(e_{\a}, e_{\b})  ]  + [A^{a}, (\cder_{a}\Psi)(e_{\a}, e_{\b})  ]  \\
\notag
&&- (\cder_{a} \Psi )({\der}^{a}e_{\a}, e_{\b})  - (\cder_{a} \Psi)  (e_{\a}, {\der}^{a}e_{\b})   - (\cder_{{\rder}^{a} e_{a}} \Psi)(e_{\a}, e_{\b}) 
\eea
Hence,
\beaa
&& < \hat{\lap}^{(A)}\la_{\a\b},  F^{\a\b} >  \\
\notag
&=& < \pa^{a} [(\cder_{a}\Psi)(e_{\a}, e_{\b})  ]  + [A^{a}, (\cder_{a}\Psi)(e_{\a}, e_{\b})  ],  F^{\a\b} >  \\
\notag
&& -< (\cder_{a} \Psi )({\der}^{a}e_{\a}, e_{\b}),  F^{\a\b} >  -  <(\cder_{a} \Psi)  (e_{\a}, {\der}^{a}e_{\b}),  F^{\a\b} > \\
\notag
&&  - <(\cder_{{\rder}^{a} e_{a}} \Psi)(e_{\a}, e_{\b}),  F^{\a\b} > 
\eeaa

To compute $ <(\cder_{{\rder}^{a} e_{a}} \Psi)(e_{\a}, e_{\b}),  F^{\a\b} >$, since it is a full contraction on the 2-spheres $S_{t}$, we can choose a normal frame with respect to the induced metric on $S_{t}$, i.e. a frame where the restricted covariant derivative of elements of the frame $\cder_{{\rder}^{a} e_{a}}$ vanish at that point. Hence, this term vanishes.

Whereas to the terms $$<- (\cder_{a} \Psi )({\der}^{a}e_{\a}, e_{\b}),  F^{\a\b} >$$ and $$<(\cder_{a} \Psi)  (e_{\a}, {\der}^{a}e_{\b}),  F^{\a\b} >$$ since they are full contractions with respect to the space-time metric $\g$, we can compute those with respect to a normal frame where $\der_{\a}e_{\b} = 0$ at that point. We can then express $\der^{a}$ as a combination of covariant derivatives at that frame $\der^{\a}$, and hence we get that ${\der}^{a}e_{\a}$ vanish.

Consequently,
\beaa
 < \hat{\lap}^{(A)}\la_{\a\b},  F^{\a\b} >  &=&  < \pa^{a} [(\cder_{a}\Psi)(e_{\a}, e_{\b})  ]  + [A^{a}, (\cder_{a}\Psi)(e_{\a}, e_{\b})  ],  F^{\a\b} >  
\eeaa

Similarly,
\beaa
< \cder_{a} \la_{\a\b},   {\cder}^{a}  F^{\a\b} > = < \cder_{a} \la_{\a\b},   {\pa}^{a}  F^{\a\b} +  [A^{a}, F^{\a\b}] >
\eeaa

Using the fact that the scalar product $<\;, \; >$ is Ad-invariant, we get
\bea
\notag
   < \hat{\lap}^{(A)}\la_{\a\b},  F^{\a\b} >     &=& {\rder}^{a} < \rcder_{a}  \la_{\a\b},  F^{\a\b}  > -   < \cder_{a} \la_{\a\b},   {\cder}^{a}  F^{\a\b} >
\eea

Integrating on $S_{t}$, then applying the divergence theorem, and using the fact that we have no boundary terms since it is an integral on $S_{t}$, we get \
\bea
\notag
&& | \int_{S_{t}}  \delta(u)< \hat{\lap}^{(A)}\la_{\a\b},  F^{\a\b} > \phi  da_{t} |  \\
&=& | \int_{S_{t}}  -  < \cder_{a} \la_{\a\b},   {\cder}^{a}  F^{\a\b} > \phi  da_{t} |     \label{integrationbypartsons2}  \\
\notag
&\lesssim&  ( \int_{S_{t}} | \cder_{a} \la |^{2}  \phi    da_{t})^{\frac{1}{2}}  ( \int_{S_{t}} |\cder F |^{2}  \phi    da_{t} )^{\frac{1}{2}}
\eea

Thus,
\begin{eqnarray*}
&& |  \int_{t_{p} - \tau}^{t_{p}} \int_{S_{t}}  < \hat{\lap}^{(A)}\la_{\a\b},  F_{\hat{\mu}\hat{\nu}}> |   \phi  da_{t} dt \\
&\lesssim& || \D^{(A)}_{a} \la ||_{L^{2}(N^{-}_{\tau}(q))}   ( \int_{t_{p} - \tau}^{t_{p}}    \int_{S_{t}} |\cder F|^{2}  \phi    da_{t}  dt )^{\frac{1}{2}}
\end{eqnarray*}

We proved that $ || \cder_{a} \la ||_{L^{2}(N^{-}_{\tau}(p))} \lesssim 1 $. We also have

\begin{eqnarray*}
( \int_{t_{p} - \tau}^{t_{p}}    \int_{S_{t}} |\cder F|^{2}  \phi    da_{t}  dt )^{\frac{1}{2}}  &\lesssim& 1 +   \int_{t_{p} - \tau}^{t_{p}}    \int_{S_{t}} |\cder F|^{2}  \phi    da_{t}  dt  \\
&\lesssim& 1 + || \D^{(A)}_{a} F ||^{2}_{L^{2} (N^{-}_{t_{p}-\tau, t}(q))} 
\end{eqnarray*}

We get,
\bea
\sup_{q \in \Sigma_{t}^{p}}   | \int_{\Omega_{q}}  \delta(u)< \hat{\lap}^{(A)}\la_{\a\b},  F^{\a\b} >  |  \lesssim  1 + \sup_{q \in \Sigma_{t}^{p}}   || \D^{(A)}_{a} F ||^{2}_{L^{2} (N^{-}_{t_{p}-\tau, t}(q))} 
\eea

We proved that,
$$|| \D^{(A)}_{a} F ||^{2}_{L^{2} (N^{-}_{t_{p}-\tau, t}(q))}   \lesssim c(t) +     \int_{t_{p}-\tau}^{t}  |F|^{2}_{L^{\infty}_{\Sigma_{\overline{t}}^{q}}}   d\overline{t} +   \int_{t_{p}-\tau}^{t}    \int_{t_{p}-\tau}^{t^{*}}  |F|^{2}_{L^{\infty}_{\Sigma_{\overline{t}}^{q}}}  d\overline{t}  dt^{*} $$
where $c(t)$ is a finite constant for all $t$ .
\beaa
&& \sup_{q \in \Sigma_{t}^{p}}   || \D^{(A)}_{a} F ||^{2}_{L^{2} (N^{-}_{t_{p}-\tau, t}(q))}    \\
& \lesssim & 1 +     \int_{t_{p}-\tau}^{t}  ||F||^{2}_{L^{\infty}(\Sigma_{\overline{t}}^{p})}   d\overline{t} +   \int_{t_{p}-\tau}^{t}    \int_{t_{p}-\tau}^{t^{*}}  ||F||^{2}_{L^{\infty}(\Sigma_{\overline{t}}^{p})}  d\overline{t}  dt^{*} 
\eeaa

Thus,
\bea
\notag
&& \sup_{q \in \Sigma_{t}^{p}} | \int_{\Omega_{q}}  \delta(u)< \hat{\lap}^{(A)}\la_{\a\b},  F^{\a\b} >  | \\
 &\lesssim&  1 +     \int_{t_{p}-\tau}^{t}  ||F||^{2}_{L^{\infty}(\Sigma_{\overline{t}}^{p})}   d\overline{t} +   \int_{t_{p}-\tau}^{t}    \int_{t_{p}-\tau}^{t^{*}}  ||F||^{2}_{L^{\infty}(\Sigma_{\overline{t}}^{p})}  d\overline{t}  dt^{*} 
\eea
\end{proof}
Finally, summing over all the indices we obtain,
\bea
||F||_{L^{\infty}(\Sigma_{t}^{p})} \lesssim  1  +    \int_{t_{p}-\tau}^{t}  ||F||^{2}_{L^{\infty}(\Sigma_{t}^{p})}   dt +   \int_{t_{p}-\tau}^{t}    \int_{t_{p}-\tau}^{t^{*}}  ||F||^{2}_{L^{\infty}(\Sigma_{\overline{t}}^{p})}  d\overline{t}  dt^{*} \label{Pachpatte}
\eea
Using the result of Pachpatte in [Pach], we get,
\beaa
||F||_{L^{\infty}(\Sigma_{t}^{p})} \lesssim 1 \mbox{,  for all   }  t \in [t_{p} - \tau, t_{p}] 
\eeaa
From a local existence result, at each point $p$ of the space-time, we have either the Yang-Mills fields blow up or they can be extended as solutions to the Yang-Mills equations up to that point. This combined with pointwise estimate above prove that the solutions can be extended up to the point $p$, and this can be done to any point $p$ in the space-time, under the assumption of global hyperbolicity.  \\

\section{Appendix: Kirchoff-Sobolev Parametrix for $\Box^{(A)}_{\g}F_{\mu\nu}$ }

We assume $(M, \g)$ to be globally hyperbolic, i.e. it admits a Cauchy surface $\Si$, which means a space-like hypersurface $\Si \subset M$, that is intersected precisely once by every inextendible causal curve. We also assume that the null cones are regular past the space-like hypersurface $\Si$.\

We sketch an adaptation to the Yang-Mills setting of the original construction by S. Klainerman and I. Rodnianski in [KR1] of the Kirchoff-Soboloev parametrix. Recall that the original construction presented in [KR1] was done in the context of a one tensor with values in the tangent bundle verifying the tensorial wave equation, and cannot be applied directly as it is to the the Yang-Mills equations. However, as noted in [KR1] this construction could be systematically generalized to ${\cal G}$-valued tensors of arbitrary order verifying the gauge covariant tensorial wave equations with a compatible Ad-invariant scalar product $<$ , $>$ on ${\cal G}$, leading to a representation formula suitable to present a gauge invariant proof of the global existence of Yang-Mills fields on the 4-dimensional Minkowski background. We are sketching the adaptation in this appendix so as to use it to give a proof of the global existence of Yang-Mills fields on curved backgrounds.\

\begin{definition} \label{definitionoftheparameters}
Let $p$ be a point to the future of $\Si$. The affine parameter $s$ on $N^{-}(p)$ is defined by fixing a future unit time-like vector $\T_{p}$ at $p$ and considering for every $\omega$ in $\SSS^{2}$, the null vector $l_{\om}$ in $T_{p}(M)$, such that
\bea
\g(l_{\om},\T_{p}) = 1   \label{normalisationcondition}
\eea
and associate to it the null geodesic $\ga_{\om}(s)$ such that $\ga_{\om}(0) = p$, $\frac{\partial}{\partial s}\ga_{\om}(0) = l_{\om} $, and $L = \frac{\partial}{\partial s}\ga_{\om}(s)$ where $s$ is chosen so that $\der_{L}L = 0.$ Thus, $L(s) = 1, s(p) = 0$.
\end{definition}

\begin{definition}
Let $N^{-}(p)$ be the boundary of the causal past of $p$. Let $\chi$ denote the null second fundamental form of $N^{-}(p)$, that is, for all $q \in N^{-}(p) \backslash \{p\}$,
\bea
\chi(X,Y)(q)=\g( \der_X L, Y)(q) \label{definitionofchi}
\eea
for all $X$, $Y$ in $T_{q} N^{-}(p)$.
\end{definition}

\begin{lemma} \label{definitionoftraceofchi}
$\chi$ is symmetric and thus $\chi$ is diagonalisable, moreover $\chi(L, X) = \chi (L, L) = 0$, for all $X \in T_{q} N^{-}(p)$, consequently, we can define $ tr \chi = \chi (L,L) + \chi (e_{1}, e_{1}) +  \chi (e_{2}, e_{2}) = \chi_{11} + \chi_{22} $.
\end{lemma}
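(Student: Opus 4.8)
The plan is to work at a fixed $q \in N^{-}(p)\setminus\{p\}$ along the null generator $\gamma_{\omega}$ through it, exploit that $L=\frac{\partial}{\partial s}\gamma_{\omega}$ is a geodesic ($\der_{L}L=0$) and null ($\g(L,L)\equiv 0$ on $N^{-}(p)$, since $l_{\omega}$ is null and $\g(L,L)$ is parallel-transported along $\gamma_{\omega}$), and deduce the four assertions in turn.

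First I would establish that $L$ is not only tangent to but also \emph{orthogonal} to $T_{q}N^{-}(p)$. Parametrising $N^{-}(p)$ near $q$ by $(s,\omega)$, the variation fields $Y=\frac{\partial}{\partial\omega}\gamma_{\omega}(s)$ satisfy $[L,Y]=0$ and, together with $L$, span $T_{q}N^{-}(p)$. Since $\g(L,L)\equiv 0$, one has $L\big(\g(L,Y)\big)=\g(\der_{L}L,Y)+\g(L,\der_{Y}L)=\frac{1}{2}Y\big(\g(L,L)\big)=0$, so $\g(L,Y)$ is constant along $\gamma_{\omega}$; as the variation fields of a congruence of geodesics through $p$ vanish at the vertex $s=0$, this forces $\g(L,Y)\equiv 0$, hence $\g(L,X)=0$ for all $X\in T_{q}N^{-}(p)$.

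With this in hand, symmetry of $\chi$ follows: since $\chi(X,Y)(q)$ depends only on the values of $X,Y$ at $q$, I would extend them to tangent vector fields on $N^{-}(p)$ with $[L,X]=[L,Y]=0$; then by metric compatibility, torsion-freeness and $\g(L,X)=\g(L,Y)=0$,
\[
\chi(X,Y)-\chi(Y,X)=\g(\der_{X}L,Y)-\g(\der_{Y}L,X)=-\g\big(L,\der_{X}Y-\der_{Y}X\big)=-\g(L,[X,Y])=0,
\]
the last equality because $[X,Y]$ is again tangent to $N^{-}(p)$. Next, $\der_{L}L=0$ gives $\chi(L,X)=\g(\der_{L}L,X)=0$ for all $X$, in particular $\chi(L,L)=0$, and $\chi(X,L)=0$ by symmetry. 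Thus $\chi$ descends to a symmetric bilinear form on $T_{q}N^{-}(p)/\langle L\rangle\cong T_{q}S_{s}$, which carries the positive-definite induced metric; the spectral theorem on this $2$-dimensional Euclidean space produces an orthonormal eigenbasis $e_{1},e_{2}$, completed by $L$ and $\Lb$ to the null frame, in which $\chi(e_{a},e_{b})=\chi_{aa}\delta_{ab}$. Finally, $tr\chi:=\chi(L,L)+\chi(e_{1},e_{1})+\chi(e_{2},e_{2})$ is well defined — it is unchanged under $e_{a}\mapsto e_{a}+c_{a}L$ because $\chi$ annihilates $L$, and the trace of a symmetric form on $T_{q}S_{s}$ is independent of the orthonormal basis chosen — and equals $\chi_{11}+\chi_{22}$ since $\chi(L,L)=0$.

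I expect the only delicate point to be the first step, namely that $L$ is normal to the degenerate hypersurface $N^{-}(p)$; this hinges on the Jacobi/variation-field description of the congruence of past null geodesics issuing from $p$ and on their vanishing at the vertex, and requires $q\notin\{p\}$ so that $N^{-}(p)$ is a regular null hypersurface near $q$. Everything else is linear algebra together with the torsion-free and metric-compatibility properties of $\der$.
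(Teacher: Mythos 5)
Your proof is correct and follows essentially the same route as the paper: symmetry of $\chi$ from metric compatibility, torsion-freeness and the tangency of $[X,Y]$ to $N^{-}(p)$, and $\chi(L,\cdot)=0$ from $\der_{L}L=0$. The only difference is that you additionally justify, via the variation-field argument at the vertex, the fact that $\g(L,X)=0$ for all $X\in T_{q}N^{-}(p)$, which the paper simply builds into its definition of the null frame; this is a welcome but not divergent addition.
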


\begin{proof}\

Given a point $q \in N^{-}(p) \backslash \{p\}$, we can define a null frame $\{L, \underline{L}, e_{1}, e_{2} \}$ - where $e_{1}$ and $e_{2}$ are tangent to $N^{-}(p) \cap \{s=constant\}$ 2-surfaces - that forms a basis of $T_{q}M$, such that at $q \in N^{-}(p) \backslash \{p\}$,
\bea
\g(L, L) &=& \g(\underline{L}, \underline{L}) = 0 \label{defnullframe1} \\
\g(L, \underline{L})&=& -2 \label{defnullframe2} \\
\g(e_{a}, e_{b}) &=& \delta_{ab},  \quad   a, b \in\{ 1,2\} \label{defnullframe3} \\
\g(L, e_{a}) &=& \g(\underline{L}, e_{a}) = 0,    \quad a, b \in\{ 1,2\}  \label{defnullframe4}
\eea
This null frame can be extended locally in a neighbourhood of $q \in N^{-}(p) \backslash \{p\}$ such that {$L, \underline{L}, e_{1}, e_{2}$} are vector fields in the neighbourhood and $\g(L, L)=\g(\underline{L}, \underline{L})= 0$ in the neighbourhood.\

Let, $X, Y \in T_{q}N^{-}(p)$. Since the metric is Killing, we have,
\bea
\notag
0 &=& \der_{X} \g(L, Y) = X \g(L, Y) - \g( \der_{X} L, Y) -  \g(L, \der_{X} Y) \\
&=& - \g( \der_{X} L, Y) -  \g(L, \der_{X} Y) \\
\notag
&& \text{(since $\g(L, Y) = 0$ for $Y \in T_{q}N^{-}(p)$)}
\eea
Thus, $\g( \der_{X} L, Y) = -  \g(L, \der_{X} Y)$, and since we have $[X, Y] \in T_{q} N^{-}(p)$, we get,
\beaa
0 = \g(L, [X, Y]) =  \g(L, \der_X Y - \der_Y X) 
\eeaa
which gives, $\g(L, \der_X Y) = \g(L, - \der_Y X) $, we finally get,
\beaa
\g( \der_{X} L, Y) = \g(L,  \der_Y X) 
\eeaa
Again,
\beaa
 \g( \der_{Y} L, X) = -  \g(L, \der_{Y} X)
\eeaa
(by inverting the roles of $X$ and $Y$ in before)

This gives,
\beaa
\g( \der_{X} L, Y) = \g (\der_{Y} L,  X) 
\eeaa
Consequently $\chi$ is symmetric, and hence $\chi(L, X) = \chi(X, L) = \g (\der_{L} L,  X) = 0$ because $\der_{L} L = 0$ by construction.
\end{proof}

\begin{definition}
Let $\J_{p}$ be a fixed ${\cal G}$-valued anti-symmetric 2-tensor at $p$, and let $\la_{\a\b}$ be the unique 2-tensor field along $N^{-}(p)$, that verifies the linear transport equation:
\bea\label{eq: transport}
\textbf{D}^{(A)}_{L}\la_{\a\b} + \frac{1}{2}tr \chi\la_{\a\b} = 0 \label{eq:transport} \\
(s\la_{\a\b})(p) = \J_{\a\b}(p) \label{eq:initial condition}
\eea
$\la_{\a\b}$ can be extended smoothly to be defined in a similar way in a neighborhood away from $N^{-}(p) \backslash \{p\}.$
\end{definition}

\begin{definition}
For small $\eps > 0 $, let $T_{\eps} : (1-\eps, 1+ \eps) \longmapsto M$ be the timelike geodesic from $p$ such that $T_{\eps}(1)=p$ and $T^{'}_{\eps}(1)= \T_{p}$. We define $u$, optical function, as $u_{|N^{-}(q)}= t -1$ for each $q = T_{\eps}(t)$, where $N^{-}(q)$ is the boundary of the past set of $q$, assumed to be regular.\
\end{definition}

\begin{definition} \label{definitionintegralonthenullcone}
The following integral in $\Sigma^{+}$, future of $\Sigma$, for any ${\cal G}$-valued 2-tensors $\la_{\a\b}$ and  $\La_{\a\b}$ supported in  $\Sigma^{+}$, is defined as,
\beaa
\int_{\Sigma^{+} }<\la_{\a\b}\delta(u), \La^{\a\b}>   = <\delta(u), <\la_{\a\b}, \La^{\a\b}>> 
\eeaa

 in the sense of the distribution, where $u$ is defined in a neighborhood $D_{\eps}$ of $N^{-}(p)\cap\Sigma^{+}$ as in above, and 
\bea
<\delta(u), <\la_{\a\b}, \La^{\a\b} >> = \int^{\infty}_{0}\int_{\SSS^{2}}<\la_{\a\b}, \La^{\a\b}>(t=1, s, \omega)dsd{A}_{{S}^{2}}  \label{definitionoftheintegralonthenullcone}
\eea
where $d{A}_{{S}^{2}}$ is the induced volume form on the $2$-surfaces defined by $s= constant$, and $t= 1 $. This integral depends only on $<\la_{\a\b}, \La_{\mu\nu}>$ on $N^{-}(p)$ and the normalisation condition 
\beaa
\g(l_{\omega}, \T_{p})= 1 \quad  \big( \g(l_{\omega},  l_{\omega})=0 \big).
\eeaa

Therefore, for any continuous function $f$ supported in $\Sigma^{+}$, we can define $\int_{N^{-}(p)}f$ as $<\delta(u), f>$ in the sense of the distribution.\
\end{definition}

\subsection{Computing $\int_{J^{-}(p)\cap\Sigma^{+}} <\la_{\a\b}\delta(u), \Box_{\g}^{(A)}F^{\a\b}>$ }\

Now, our goal is to compute $\int_{J^{-}(p)\cap\Sigma^{+}} <\la_{\a\b}\delta(u), \Box_{\g}^{(A)}F^{\a\b}>$ for $F$ supported in $\Sigma^{+}$.\

\begin{definition}
We define a timelike foliation near $p$, by extending locally the parameter $t$ near $p$ by starting with a fixed spacelike hypersurface $\Sigma_{1}$ passing through $p$ and orthogonal to the future unit timelike vectorfield $\T_{p}$ and considering the timelike geodesics orthogonal to $\Sigma_{1}$.
\end{definition}

\begin{definition}
We define $\Omega_{\eps} = (J^{-}(p)\cap\Sigma^{+})\backslash \cup_{t  \in [1-\eps, 1]} \Sigma_{t}$.
\end{definition}

We have 
\bea
\int_{J^{-}(p)\cap\Sigma^{+}} <\la_{\a\b}\delta(u), \Box_{\g}^{(A)}F^{\a\b}> = \lim_{\eps \to 0} \int_{\Omega_{\eps}}<\la_{\a\b}\delta(u), \Box_{\g}^{(A)}F^{\a\b}> \label{limitonomegatocoverp}
\eea

On the other hand, 
$$\int_{\Omega_{\eps}}<\la_{\a\b}\delta(u), \Box_{\g}^{(A)}F^{\a\b}> = \int_{\Omega_{\eps}}<\la_{\a\b}\delta(u), \D^{(A)\ga}\D^{(A)}_{\ga}F^{\a\b}>.$$

\begin{lemma}
 Given any two ${\cal G}$-valued tensors $K$ and $G$, since $< \; , \; >$ is Ad-invariant, we have, 
\bea
\der_{\ga}<K_{\a\b}, G^{\a\b}> = <\D_{\ga}^{(A)}K_{\a\b}, G^{\a\b}> + <K_{\a\b},\D^{(A)}_{\ga}G^{\a\b}> 
\eea
\end{lemma}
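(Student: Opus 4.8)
The plan is to reduce the identity to two ingredients: the ordinary Leibniz rule for the metric covariant derivative $\der$ acting on a fully contracted scalar, and the infinitesimal form of the Ad-invariance of $<\;,\;>$.

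First I would observe that $<K_{\a\b}, G^{\a\b}> = \g^{\a\mu}\g^{\b\nu}<K_{\a\b}, G_{\mu\nu}>$ is a scalar function on $M$, so $\der_{\ga}$ applied to it coincides with $\pa_{\ga}$. Writing $K_{\a\b} = K^{(a)}_{\a\b}\th_{a}$ and $G_{\mu\nu} = G^{(b)}_{\mu\nu}\th_{b}$ and using that the $<\th_{a},\th_{b}>$ are constants, this quantity is a contraction of the real tensors $K^{(a)}$ and $G^{(b)}$. Since the Levi-Cevita connection is metric compatible, $\der$ commutes with raising and lowering of indices, and the usual Leibniz rule for $\der$ on a tensor product gives
$$\der_{\ga}<K_{\a\b}, G^{\a\b}> = <\der_{\ga}K_{\a\b}, G^{\a\b}> + <K_{\a\b},\der_{\ga}G^{\a\b}>,$$
where on the right $\der_{\ga}$ is the tensorial covariant derivative acting componentwise on the ${\cal G}$-valued tensors (it sees only the tensor indices, since $\{\th_{a}\}$ is a fixed basis of ${\cal G}$).

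Next, using the definition $\D^{(A)}_{\ga}\Psi = \der_{\ga}\Psi + [A_{\ga},\Psi]$, I would add $<\D^{(A)}_{\ga}K_{\a\b}, G^{\a\b}>$ and $<K_{\a\b},\D^{(A)}_{\ga}G^{\a\b}>$ and compare with the previous display; the difference is exactly $<[A_{\ga}, K_{\a\b}], G^{\a\b}> + <K_{\a\b},[A_{\ga}, G^{\a\b}]>$. The last step is to show this combination vanishes pointwise. Differentiating the Ad-invariance identity $<\mathrm{Ad}_{g}Y, \mathrm{Ad}_{g}Z> = <Y,Z>$ at $g = e$ in the direction of an arbitrary $X \in {\cal G}$ yields $<[X,Y],Z> + <Y,[X,Z]> = 0$ for all $X, Y, Z \in {\cal G}$; applying this with $X = A_{\ga}$, $Y = K_{\a\b}$, $Z = G^{\a\b}$ (and summing over $\a,\b$) gives the cancellation and hence the claimed identity.

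The computation is essentially routine; the only point requiring genuine care is the passage from the group-level Ad-invariance to its infinitesimal (Lie-algebra) form, which is precisely what licenses moving the bracket across the scalar product — the same property already invoked repeatedly in the body of the paper — and I do not expect any obstacle beyond this bookkeeping.
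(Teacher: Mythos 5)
Your proposal is correct and follows essentially the same route as the paper: first the Leibniz rule for $\der_{\ga}$ on the fully contracted scalar (the paper gets this via the constancy of the scalar product and a normal frame, you via metric compatibility and the constancy of $<\th_{a},\th_{b}>$), then the infinitesimal form of Ad-invariance, $<[X,Y],Z>+<Y,[X,Z]>=0$, to absorb the bracket terms — the paper applies the same identity by adding and subtracting $<K_{\a\b},[A_{\ga},G^{\a\b}]>$ rather than by showing the two bracket terms cancel directly. No gap.
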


\begin{proof}\

Now, given any two ${\cal G}$-valued tensors $K_{\a\b}$ and $G_{\a\b}$, we have
\bea
\pa_{\ga}<K_{\a\b}, G^{\a\b}> - <\pa_{\ga}K_{\a\b}, G^{\a\b}> - <K_{\a\b}, \pa_{\ga}G^{\a\b}> = 0 \label{assumptionscalrproductliealgebra}
\eea

Since $<K_{\a\b}, G^{\a\b}>$ does not depend on the choice of the basis, one can choose a normal frame as in \eqref{cartanformalism} to compute \eqref{assumptionscalrproductliealgebra}. In such a frame $\pa_{\ga} K_{\a\b} =\der_{\ga}K_{\a\b}$ and $\pa_{\ga}G_{\a\b} =\der_{\ga}K_{\a\b}$,
and by abuse of notation, we will wright $\pa_{\ga}<K_{\a\b}, G^{\a\b}> $ as $\der_{\ga}<K_{\a\b}, G^{\a\b}>$. Hence, we have
$$\der_{\ga}<K_{\a\b}, G^{\a\b}> - <\der_{\ga}K_{\a\b}, G^{\a\b}> - <K_{\a\b}, \der_{\ga}G^{\a\b}> = 0$$
So we have
\begin{eqnarray*}
\der_{\ga}<K_{\a\b}, G^{\a\b}> &=& <\der_{\ga} K_{\a\b}, G^{\a\b}> + <K_{\a\b}, \der_{\ga}G^{\a\b}> \\
&=& <\der_{\ga}K_{\a\b}, G^{\a\b}> - <K_{\a\b}, [A_{\ga},G^{\a\b}]> \\
&& + <K_{\a\b},[A_{\ga},G^{\a\b}]>  + <K_{\a\b}, \der_{\ga}G^{\a\b}> \\
&=& <\der_{\ga}K_{\a\b}, G^{\a\b} > - <[K_{\a\b}, A_{\ga}],G^{\a\b}> \\
&& + <K_{\a\b},[A_{\ga},G^{\a\b}]>  + <K_{\a\b}, \der_{\ga}G^{\a\b}>
\end{eqnarray*}
 (since $<$ $,$ $>$ is Ad-invariant)
\begin{eqnarray*}
&&= <\der_{\ga}K_{\a\b}, G^{\a\b}> + <[A_{\ga},K_{\a\b}],G^{\a\b}> + <K_{\a\b},[A_{\ga},G^{\a\b}] + \der_{\ga}G^{\a\b}> \\
&& = <\D_{\ga}^{(A)}K_{\a\b}, G^{\a\b}> + <K_{\a\b},\D^{(A)}_{\ga}G^{\a\b}> \\
\end{eqnarray*}
\end{proof}

Let $D_{\eps} = ( \{ -\eps^{'} \leq u(t) \leq \eps^{'} \} \backslash \Sigma_{1-\eps}^{+} ) \cap \Sigma_{0}^{+} $, for $\eps^{'}$ chosen small enough so that $u(t)$ would be defined on $[-\eps^{'}, \eps^{'}]$. Also, recall that $\la$ is smooth in a neighborhood away from $p$, and thus, by choosing $\eps^{'}$ small enough, $\la$ is smooth in $D_{\eps}$.

Given this, we have, 
\begin{eqnarray*}
&&\int_{D_{\eps}}<\la_{\a\b}\delta(u), \D^{\ga (A)}\D_{\ga}^{(A)} F^{\a\b}> \\
&=& \int_{D_{\eps}}\der^{\ga}<\la_{\a\b}\delta(u), \cder_{\ga}F^{\a\b}> - \int_{D_{\eps}}\der^{\ga}<\cder_{\ga}(\la_{\a\b}\delta(u)), F^{\a\b}> \\
&& + \int_{D_{\eps}}<\Box_{\g}^{(A)} (\la_{\a\b} \delta(u)), F^{\a\b}>
\end{eqnarray*}

so,
\bea
\notag
&& \int_{D_{\eps}}<\la_{\a\b}\delta(u), \Box_{\g}^{(A)}F^{\a\b}> \\
\notag
&=& \int_{D_{\eps}}<\Box_{\g}^{(A)} (\la_{\a\b} \delta(u)), F^{\a\b}> \\
&&+ \int_{D_{\eps}}\der^{\ga}[<\la_{\a\b}\delta(u), \cder_{\ga}F^{\a\b}> - <\cder_{\ga}(\la_{\a\b}\delta(u)), F^{\a\b}>]
\eea
By divergence theorem,
\beaa
\notag
&&\int_{D_{\eps}}\der^{\ga}[<\la_{\a\b}\delta(u), \cder_{\ga}F^{\a\b}> - <\cder_{\ga}(\la_{\a\b}\delta(u)), F^{\a\b}>] \\
\notag
&=& - \int_{\Sigma_{t}}T^{\ga}[<\la_{\a\b}\delta(u), \cder_{\ga}F^{\a\b}> - <\cder_{\ga}(\la_{\a\b}\delta(u)), F^{\a\b}>]^{t=1-\eps}_{t=0} \\
&&+ \int_{(N^{-}(T(-\eps^{'}) )\backslash \Sigma_{1-\eps}^{+} ) \cap \Sigma_{0}^{+} }L^{\ga}[<\la_{\a\b}\delta(u), \cder_{\ga}F^{\a\b}> - <\cder_{\ga}(\la_{\a\b}\delta(u)), F^{\a\b}>] \\
&&+ \int_{(N^{-}(T(\eps^{'})) \backslash \Sigma_{1-\eps}^{+} ) \cap \Sigma_{0}^{+} }L^{\ga}[<\la_{\a\b}\delta(u), \cder_{\ga}F^{\a\b}> - <\cder_{\ga}(\la_{\a\b}\delta(u)), F^{\a\b}>]
\eeaa

Since the distributions $\delta$ and $\delta^{'}$ are supported on $N^{-}(T(0))= N^{-}(p)$, we get,
\beaa
 \int_{(N^{-}(T(-\eps^{'}) )\backslash \Sigma_{1-\eps}^{+} ) \cap \Sigma_{0}^{+} }L^{\ga}[<\la_{\a\b}\delta(u), \cder_{\ga}F^{\a\b}> - <\cder_{\ga}(\la_{\a\b}\delta(u)), F^{\a\b}>] &=& 0\\
 \int_{(N^{-}(T(\eps^{'})) \backslash \Sigma_{1-\eps}^{+} ) \cap \Sigma_{0}^{+} }L^{\ga}[<\la_{\a\b}\delta(u), \cder_{\ga}F^{\a\b}> - <\cder_{\ga}(\la_{\a\b}\delta(u)), F^{\a\b}>]&=& 0 \\
\int_{D_{\eps}}\der^{\ga}[<\la_{\a\b}\delta(u), \cder_{\ga}F^{\a\b}> - <\cder_{\ga}(\la_{\a\b}\delta(u)), F^{\a\b}>] && \\
= \int_{\Omega_{\eps}}\der^{\ga}[<\la_{\a\b}\delta(u), \cder_{\ga}F^{\a\b}> - <\cder_{\ga}(\la_{\a\b}\delta(u)), F^{\a\b}>] && \\
\eeaa

This yields to,
\beaa
\notag
&&\int_{\Omega_{\eps}}\der^{\ga}[<\la_{\a\b}\delta(u), \cder_{\ga}F^{\a\b}> - <\cder_{\ga}(\la_{\a\b}\delta(u)), F^{\a\b}>] \\
\notag
&=& - \int_{J^{-}(p)\cap \Sigma_{t}}T^{\ga}[<\la_{\a\b}\delta(u), \cder_{\ga}F^{\a\b}> - <\cder_{\ga}(\la_{\a\b}\delta(u)), F^{\a\b}>]^{t=1-\eps}_{t=0} 
\eeaa

where $\Sigma_{0}=\Sigma$, and where $T$ is defined on $\Sigma$ as being the unit normal timelike vectorfield on $\Sigma$.

We get,
\bea
\notag
 \int_{\Omega_{\eps}}<\la_{\a\b}\delta(u), \Box_{\g}^{(A)}F^{\a\b}> &=& \int_{\Omega_{\eps}}<\Box_{\g}^{(A)}(\la_{\a\b} \delta(u)), F^{\a\b}> \\
 \notag
&& - [\int_{J^{-}(p)\cap\Sigma_{t}}<\la_{\a\b}\delta(u), \cder_{T}F^{\a\b}>]^{t=1-\eps}_{t=0} \\
\notag
&& + [ \int_{J^{-}(p)\cap\Sigma_{t}}<\cder_{T}(\la_{\a\b}\delta(u)), F^{\a\b}>]^{t=1-\eps}_{t=0}  \label{afterdiv}  \\
\eea

\subsection{Computing $\int_{\Omega_{\eps}}<\Box_{\g}^{(A)}(\la_{\a\b}\delta(u)), F^{\a\b}>$ }\

Now, we would like to compute $\int_{\Omega_{\eps}}<\Box_{\g}^{(A)}(\la_{\a\b} \delta(u)), F^{\a\b}> = \int_{\Omega_{\eps}}<\Box_{\g}^{(A)}(\la\delta(u)), F>$ in \eqref{afterdiv}.\

We start by computing $\Box_{\g}^{(A)}(\la\delta(u))$. We have:
\begin{eqnarray*}
\Box_{\g}^{(A)}(\la\delta(u)) &=& \g^{\mu\nu}\D_{\mu}^{(A)}\D_{\nu}^{(A)}(\la\delta(u)) \\
&=& \g^{\mu\nu}\D_{\mu}^{(A)}(\der_{\nu}(\la\delta(u)) + [A_{\nu},(\la\delta(u))] \\
& =& \g^{\mu\nu}\D_{\mu}^{(A)}(\der_{\nu}(\la)\delta(u) + [A_{\nu},(\la\delta(u))] + \la\bigtriangledown_{\nu}(\delta(u)) \\
& =& \g^{\mu\nu}\D_{\mu}^{(A)}[(\der_{\nu}(\la) + [A_{\nu},\la])\delta(u) + \la\bigtriangledown_{\nu}(\delta(u))] \\
& =& \g^{\mu\nu}\D_{\mu}^{(A)}[\D_{\nu}^{(A)}(\la)\delta(u) + \la\delta^{'}(u)\der_{\nu}(u)] \\
&=&  \g^{\mu\nu}\cder_{\mu}\cder_{\nu}(\la)\delta(u) + \delta^{'}(u)\der_{\mu}(u)\g^{\mu\nu}\cder_{\nu}(\la) \\
&& + \delta^{'}(u)\g^{\mu\nu}\der_{\mu}(\la\der_{\nu}(u)) + \g^{\mu\nu}\delta^{''}(u)\der_{\mu}(u)\der_{\nu}(u)\la 
\end{eqnarray*}
(by the symmetry of the metric tensor). Thus,
\bea
\notag
\Box_{\g}^{(A)}(\la\delta(u)) &=&  \Box_{\g}^{(A)}(\la)\delta(u) + \delta^{'}(u)(\Box_{\g}(u)\la + 2g^{\mu\nu}\der_{\nu}u\cder_{\mu}\la) \\
&& + \delta^{''}(u)(\g^{\mu\nu}\der_{\mu}u\der_{\nu}u)\la \label{1}
\eea
Now, we want to compute $\Box_{\g}(u) = \der^{\a}\der_{\a}u$, at $q \in N^{-}(p) \backslash \{p\}.$

\begin{lemma}
We have, $\Box_{\g}u = tr \chi$, at $q \in N^{-}(p) \backslash \{p\}.$
\end{lemma}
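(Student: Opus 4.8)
The plan is to compute $\Box_{\g}u = \der^{\a}\der_{\a}u$ at a point $q \in N^{-}(p)\backslash\{p\}$ by using the geometry of the level sets of $u$. Recall that $u$ is the optical function whose level set through $q$ is a null cone $N^{-}(q')$, and that on $N^{-}(p)$ itself we have $u \equiv 0$; moreover $L = \frac{\pa}{\pa s}\ga_{\om}(s)$ satisfies $\der_L L = 0$ and $\g(L,L) = 0$. First I would observe that since $u$ is optical, $\g^{\a\b}\der_{\a}u\,\der_{\b}u = 0$, i.e. the gradient $\der^{\a}u$ is null; and since the level sets of $u$ are exactly the null cones $N^{-}(T_\eps(t))$, the null generator $L$ of $N^{-}(p)$ is (up to normalization) proportional to $\der^{\a}u$ restricted to $N^{-}(p)$. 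The normalization condition $\g(l_\om, \T_p) = 1$ together with $\der_L u = 0$ (since $u$ is constant along the generators) pins down $\der^{\a}u = -L^{\a}$ (or the appropriate sign/constant), so it suffices to compute $\der_{\a}L^{\a}$ along $N^{-}(p)$.

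Next I would compute the divergence $\der_{\a}L^{\a}$ in the null frame $\{L, \Lb, e_1, e_2\}$ adapted to $N^{-}(p)$, using the dual-frame expansion of the metric: $\g^{\a\b} = -\frac{1}{2}(L^{\a}\Lb^{\b} + \Lb^{\a}L^{\b}) + \sum_a e_a^{\a}e_a^{\b}$, consistent with $\g(L,\Lb) = -2$ from \eqref{defnullframe2}. Then
$$\der_{\a}L^{\a} = \g^{\a\b}\g(\der_{e_\a}L, e_\b) = -\tfrac12\g(\der_L L, \Lb) - \tfrac12\g(\der_{\Lb}L, L) + \sum_a \g(\der_{e_a}L, e_a).$$
The first term vanishes because $\der_L L = 0$. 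The second term $\g(\der_{\Lb}L, L) = \tfrac12\der_{\Lb}\g(L,L) = 0$ since $\g(L,L) \equiv 0$ in a neighbourhood (this extension was arranged in the proof of Lemma~\ref{definitionoftraceofchi}). The remaining sum is $\sum_a \g(\der_{e_a}L, e_a) = \sum_a \chi(e_a, e_a) = \chi_{11} + \chi_{22} = tr\chi$ by the definition \eqref{definitionofchi} of $\chi$ and the definition of $tr\chi$ in Lemma~\ref{definitionoftraceofchi}. Assembling these pieces (and checking the sign against the normalization of $u$) gives $\Box_{\g}u = tr\chi$ at $q \in N^{-}(p)\backslash\{p\}$.

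The main obstacle I anticipate is purely bookkeeping: getting the relation between $\der^{\a}u$ and $L^{\a}$ exactly right — including the constant and the sign — given the specific normalization $\g(l_\om, \T_p) = 1$ of Definition~\ref{definitionoftheparameters} and the way $u$ is defined off $N^{-}(p)$ via the family $N^{-}(T_\eps(t))$. One must check that $u$ is indeed a genuine optical function (smooth with null gradient) in a neighbourhood of $q$, which uses the assumed regularity of the null cones past $\Sigma$, and that the affine parametrization by $s$ is compatible with $u$ having the correct gradient normalization so that no extra factor appears. Once the identification $\der_{\a}u = -L_{\a}$ along $N^{-}(p)$ is secured, the frame computation above is routine and the identity follows.
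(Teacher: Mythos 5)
Your proposal is correct and follows essentially the same route as the paper: both identify the gradient $\der^{\a}u$ with the geodesic generator $L$ via the eikonal equation and the normalization $\g(l_{\om},\T_{p})=1$ (note the sign comes out positive, $\der u = +L$, since $\T_{p}(u)=1=\g(cL,\T_{p})=c$), and then reduce $\Box_{\g}u$ to a null-frame trace in which only the angular block $\sum_{a}\g(\der_{e_{a}}L,e_{a})=tr\chi$ survives. Your divergence form $\der_{\a}L^{\a}$ is the paper's Hessian computation with the derivative moved to the other slot using $\g(L,e_{a})=0$, so the two arguments coincide.
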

\begin{proof} \

Now, let $\der u= \der^{\nu}u\partial_{\nu}$, defined in a neighbourhood $D_{\eps}$ of $N^{-} \cap \Sigma^{+}$. Since $$L=\frac{d}{ds}\ga_{\omega}(s)$$ where $\ga_{\omega}(s)$ is the null geodesic initiating at $p$, we have $L \in T_{q}(N^{-}(p))$ for $q \in N^{-}(p) \backslash \{p\}$ and since $u$ is constant on $N^{-}(m)$, for $m \in T_{\eps}(t)$, $t \in [1-\eps, 1+\eps]$, we have
\bea
L(u) = 0 = du(L) = \g(\der u, L) \label{Lu=0}
\eea
And since $e_{a} \in T_{q}(N^{-}(p))$, $a \in \{ 1, 2 \}$. We also have,
\bea
e_{a}(u) = du(e_{a}) = \g(\der u, e_{a}) = 0 \label{eau=0}
\eea

\eqref{Lu=0} and \eqref{eau=0} give that,
\beaa
\der u (p) = f(p)  L  
\eeaa

Hence,
\bea
\der^{\nu}u\der_{\nu} u = (\der u ) u =  f L(u) = 0 \label{eikonalequation}
\eea

At a point $p$ of the space-time, one can choose a normal frame, which means a frame such that $
\g(e_\a, e_\b) (p) =\mbox{diag}(-1,1,\ldots,1) $, and $\frac{\pa}{\pa \si} \g(e_{\a}, e_{\b})(p) = 0$. Hence, in such a frame $\Ga_{k l}^{i} = \frac{1}{2} \g^{im} (\frac{\pa \g_{mk}}{\pa x^{l}} + \frac{\pa \g_{ml}}{\pa x^{k}} - \frac{\pa \g_{kl}}{\pa x^{m}} )= 0$.
Computing in such a frame,

\beaa
( \der_{\der u} \der u )^{\ga} (p)&=& (\der_{\der^{\nu}u\partial_{\nu} } ( \der^{\mu}u\partial_{\mu}  ) )^{\ga}= ( \der^{\nu}u \der_{\nu } ( \der^{\mu}u\partial_{\mu} ) )^{\ga} \\
&=&  (\der^{\nu}u ( \der_{\nu } \der^{\mu}u ) \partial_{\mu})^{\ga} + (\der^{\nu}u  \der^{\mu}u \der_{\nu } \partial_{\mu} )^{\ga} \\
&=& \der^{\nu}u ( \der_{\nu } \der^{\mu}u ) {\delta_{\mu}}^{\ga} + \der^{\nu}u  \der^{\mu}u  \Ga^{\ga}_{\nu\mu} \\
&=& \der^{\nu}u ( \der^{\mu } \der_{\nu}u ) {\delta_{\mu}}^{\ga} + \der^{\nu}u  \der^{\mu}u  \Ga^{\ga}_{\nu\mu} \\
&& \text{(using that the metric is compatible. i.e. $\der \g = 0$)} \\
&=& \frac{1}{2} \der^{\mu } (\der_{\nu}u  \der^{\nu}u ) {\delta_{\mu}}^{\ga} \\
&=& 0 
\eeaa
(in view of \eqref{eikonalequation}).

Therefore, $\der u$ is parallel, this gives,
\beaa
\der u = c  L
\eeaa
where $c$ is a constant. We have,
\beaa
T(u) (p) = 1 = \g(\der u, \T_{p})  = \g(c L, \T_{p}) 
\eeaa
In view of \eqref{normalisationcondition}, we have,
\beaa
\g(c L, \T_{p})  = c
\eeaa
Thus, $$c = 1$$
which gives,
\bea
\der u =  L = \der^{\nu}u\partial_{\nu} \label{expressionL}
\eea
Computing,
$$\Box_{\g}u = \der^{\a}\der_{\a}u = \der^{L}\der_{L}u +  \der^{\underline{L}}\der_{\underline{L}}u +  \der^{a}\der_{a}u$$
$a =\{1, 2\}$, at $q \in N^{-}(p) \backslash \{p\}$. Therefore, at $q \in N^{-}(p) \backslash \{p\}$,\
\begin{eqnarray*}
\Box_{\g}u &=& \g^{L\a}\der_{\a}\der_{L}u +  \g^{\underline{L}\a}\der_{\a}\der_{\underline{L}}u + \g^{a\a}\der_{\a}\der_{a}u \\
 &=& -\frac{1}{2}\der_{\Lb}\der_{L}u - \frac{1}{2}\der_{L}\der_{\underline{L}}u + \der_{a}\der_{a}u \\
 &=&  -\frac{1}{2}\der_{\Lb}(\der_{L}u) + \frac{1}{2}\der_{\der_{\Lb}L} u - \frac{1}{2}\der_{L}(\der_{\underline{L}}u) + \frac{1}{2}\der_{\der_{L}\Lb} u + \der_{a}(\der_{a}u) - \der_{\der_{a}e_{a}} u 
\end{eqnarray*}
and at $q \in N^{-}(p) \backslash \{p\}$,
\begin{eqnarray*}
&&\der_{L}u = L(u) = du(L) = \g(L,L), \\
&& \der_{\underline{L}}u = \underline{L}(u) = du(\underline{L}) = \g(L,\underline{L}), \\
&& \der_{a}u = e_{a}(u) = du(e_{a}) = \g(L,e_{a}).
\end{eqnarray*}

We get 
$$\der_{\underline{L}}(\der_{L}u) = \underline{L}\g(L,L) = \g(\der_{\underline{L}}L, L) + \g(L, \der_{\underline{L}}L)$$
thus
\begin{eqnarray*}  
\der_{\underline{L}}(\der_{L}u) &=& 2g(\der_{\underline{L}}L,L) \\
\der_{L}(\der_{\underline{L}}u) &=& Lg(L, \underline{L}) = 0 \\
\der_{a}(\der_{a}u) &=& e_{a}\g(L,e_{a}) = 0
\end{eqnarray*}
Therefore,
$$ \Box_{\g}u = - \g(\der_{\underline{L}}L,L) + \frac{1}{2}\der_{\der_{\Lb}L} u + \frac{1}{2}\der_{\der_{L}\Lb} u  - \der_{\der_{a}e_{a}} u
$$
We recall that in the frame $\{ L, \Lb, e_{a}, e_{b} \}$ a vector field $X$ can be written as:
\bea
X= - \frac{1}{2} \g(X, \Lb) L -  \frac{1}{2} \g(X, L) \Lb  + \g(X, e_{a}) e_{a} \label{representationinnullframe}
\eea
Thus, taking $X = \der_{L}\underline{L}$, we get $$\der_{L}\underline{L}= - \frac{1}{2} \g(\der_{L}\underline{L}, \Lb) L -  \frac{1}{2} \g(\der_{L}\underline{L}, L) \Lb  + \g(\der_{L}\underline{L}, e_{a}) e_{a}$$

Therefore, $$\frac{1}{2}\der_{\der_{L}\Lb} u = \frac{1}{2}\g(L, \der_{L}\Lb) = -  \frac{1}{4} \g(\g(\der_{L}\underline{L}, L) \Lb, L) = \frac{1}{2} \g(\der_{L}\underline{L}, L) = 0$$

$$\frac{1}{2}\der_{\der_{\Lb}L} u = \frac{1}{2}\g(L, \der_{\Lb}L) = -  \frac{1}{4} \g(\g(\der_{\Lb}L, L) \Lb, L) = \frac{1}{2} \g(\der_{\Lb}L, L) = 0$$

We are left with $- \der_{\der_{a}e_{a}} u$\

Taking $X = \der_{a}e_{a}$, we obtain,
\bea
\notag
\der_{a}e_{a}&=& - \frac{1}{2} \g(\der_{a}e_{a}, \Lb) L -  \frac{1}{2} \g(\der_{a}e_{a}, L) \Lb  + \g(\der_{a}e_{a}, e_{b}) e_{b} \\
\notag
&=& \frac{1}{2} \g(e_{a}, \der_{a}\Lb) L +  \frac{1}{2} \g(e_{a}, \der_{a} L) \Lb  + \g(\der_{a}e_{a}, e_{b}) e_{b} \\
&=&  \frac{1}{2} tr{\underline{\chi}} L +  \frac{1}{2} tr \chi \Lb  + \g(\der_{a}e_{a}, e_{b}) e_{b} \label{derivativeaa}
\eea

Finally, $$- \der_{\der_{a}e_{a}} u = -  \g(L , \der_{a}e_{a}) = -  \g(L , \frac{1}{2} tr \chi \Lb) = tr \chi $$

This yields to
\bea
\Box_{\g}u = tr \chi
\eea
\end{proof}
Going back to \eqref{1}, we have now shown that 
\bea
&& \Box_{\g}^{(A)}(\la\de(u)) \label{2} \\
\notag
&=& \Box_{\g}^{(A)}(\la)\delta(u) + 2\delta^{'}(u)(\frac{Tr X}{2}\la + \g^{\mu\nu}\der_{\nu}u\cder_{\mu}\la) + \delta^{''}(u)(\g^{\mu\nu}\der_{\mu}u\der_{\nu}u)\la
\eea
 at $q \in N^{-}(p) \backslash \{p\}$.\

We recall that $u$ is constant on $N^{-}(p)$ and $L \in T_{q}N^{-}(p)$ for $q \in N^{-}(p) \backslash \{p\}$.\

\eqref{Lu=0} and \eqref{expressionL} yield to 
$$L(u) = 0 = \g^{\mu\nu}\der_{\mu}u\der_{\nu}u$$
at $q$. Also, 
$$ \g^{\mu\nu}\der_{\nu}u\cder_{\mu}\la = L^{\mu}\cder_{\nu}\la$$ 
(since $L = \g^{\mu\nu}\der_{\mu}u\partial_{\nu}$ gives $L^{\nu} =  \g^{\mu\nu}\der_{\mu}u$).

Thus \eqref{2} becomes,
\bea
\Box_{\g}^{(A)}(\la\de(u)) = \Box_{\g}^{(A)}(\la)\delta(u) + 2\delta^{'}(u)(\cder_{L}\la + \frac{tr \chi}{2}\la)
\eea
Hence,
\bea
\int_{\Omega_{\eps}}<\Box_{\g}^{(A)}(\la\delta(u)), F> = \int_{\Omega_{\eps}}<\Box_{\g}^{(A)}(\la)\delta(u) + 2\de^{'}(u)(\cder_{L}\la + \frac{tr \chi}{2}\la), F> 
\eea

\subsection{Evaluating $\int_{\Om_{\eps}} \de^{'}(u)<\cder_{L}\la_{\a\b} + \frac{tr \chi}{2}\la_{\a\b}, F^{\a\b}>$ }\

We have $\der_{\Lb}\de(u) = \de^{'}(u)\der_{\Lb}(u)$.
$$\der_{\Lb}(u) = du(\Lb) = \g(\der u, \Lb) = \g(L, \Lb) = -2$$

Thus, 
$$\der_{\Lb}\de(u) = -2\de^{'}(u)$$ or $$\de^{'}(u) = - \frac{1}{2}\der_{\Lb}\de(u)$$

This yields to,
\bea
\notag
&& 2\int_{\Om_{\eps}}\de^{'}(u)<\cder_{L}\la + \frac{tr \chi}{2}\la, F> \\
\notag
&=& 2(\frac{-1}{2})\int_{\Om_{\eps}}\der_{\Lb}\de(u)<\cder_{L}\la + \frac{tr \chi}{2}\la, F> \\
\notag
&=& - \int_{J^{-}(p)\cap\Sigma_{t}}\g(\Lb, T) \de(u)<\cder_{L}\la + \frac{tr \chi}{2}\la, F>  \\
\notag
&& + \int_{\Om_{\eps}}\de(u)<\cder_{\Lb}(\cder_{L}\la + \frac{tr \chi}{2}\la), F> \\
\notag
&& + \int_{\Om_{\eps}}\de(u)<\cder_{L}\la + \frac{tr \chi}{2}\la, \cder_{\Lb}F>  \\
&& + \int_{\Om_{\eps}}\de(u) \der_{\a} \Lb^{\a} <\cder_{L}\la + \frac{tr \chi}{2}\la, F> \label{derivativetransport}
\eea

 (by integration by parts)\

The integrals $$- \int_{J^{-}(p)\cap\Sigma_{t}}\g(\Lb, T)\de(u)<\cder_{L}\la + \frac{tr \chi}{2}\la, F>$$ $$\int_{\Om_{\eps}}\de(u)<\cder_{L}\la + \frac{tr \chi}{2}\la, \cder_{\Lb}F>$$ and $$\int_{\Om_{\eps}}\de(u) \der_{\a} \Lb^{\a} <\cder_{L}\la + \frac{tr \chi}{2}\la, F>$$ depend only on the values of the integrated function on $N^{-}(p)$ and on the normalisation condition $\g(L, T) (p) = 1$. As $\cder_{L}\la + \frac{tr \chi}{2}\la = 0$ on $N^{-}(p)$, and due to the presence of $\de(u)$, these three terms vanish. Thus, \eqref{derivativetransport} can be written as,
\bea
\notag
2\int_{\Om_{\eps}}\de^{'}(u)<\cder_{L}\la + \frac{tr \chi}{2}\la, F> = \int_{\Om_{\eps}}\de(u)< \cder_{\Lb} ({\cder}_{L} \la )+ \cder_{\Lb} ( \frac{tr \chi}{2}\la ), F> \\
\eea
and thus,
\bea
\notag
&& \int_{\Omega_{\eps}}<\Box_{\g}^{(A)}(\la\delta(u)), F> \\
&=& \int_{\Omega_{\eps}}<\Box_{\g}^{(A)}(\la)\delta(u), F>  \\
\notag
&& + \int_{\Omega_{\eps}}\de(u)<\cder_{\Lb} ({\cder}_{L} \la ) + \cder_{\Lb} (\frac{tr \chi}{2}\la), F> \label{Lbarderivativeoftransport}
\eea

\subsection{Evaluating $\int_{\Omega_{\eps}}<\Box_{\g}^{(A)}(\la_{\a\b})\delta(u), F^{\a\b}>$}\

Now, in its turn, we would like to compute the tensorial $$\Box_{\g}^{(A)}(\la) = \g^{\a\b}{\cder}^{2}_{\a\b}\la$$ where $$ \g^{\a\b}{\cder}^{2}_{\a\b}\la =  \g^{\a\b}\cder_{\a}\cder_{\b}\la$$ which we will distinguish from $\g^{\a\b}\cder_{\a}(\cder_{\b}\la)$ as ${\cder}^{2}_{\a\b}$ is the tensorial second order derivative defined by
\bea
{\cder}^{2}_{\a\b}\la_{\mu\nu} = (\cder_{\a}(\cder_{\b}\la))_{\mu\nu} - (\cder_{\der_{\a}e_{\b}}\la)_{\mu\nu}
\eea
where the tensorial derivative $\cder_{\a}\la$ is defined by,
\bea
(\cder_{\a}\la)(X, Y) = \cder_{\a}(\la(X, Y)) - \la(\der_{\a}X, Y) - \la (X, \der_{\a}Y)
\eea
for any $X$, $Y \in TM$.
We have,
\bea
\Box_{\g}^{(A)}\la_{\a\b} = - \frac{1}{2}{\cder}_{L\Lb}^{2}\la_{\a\b} - \frac{1}{2}{\cder}_{\Lb L}^{2}\la_{\a\b} + \de^{ab}{\cder}_{ab}^{2}\la_{\a\b} \label{boxlamda}
\eea
\begin{eqnarray*}
{\cder}_{L\Lb}^{2}\la_{\a\b} - {\cder}_{\Lb L}^{2}\la_{\a\b} &=& \der_{L}\der_{\Lb}\la_{\a\b} - \der_{\Lb}\der_{L}\la_{\a\b} + [F_{L\Lb}, \la_{\a\b}] \\
&=& {{R_{\a}}^{\ga}}_{\Lb L}\la_{\ga\b} +  {{R_{\b}}^{\ga}}_{\Lb L}\la_{\a\ga} + [F_{L\Lb}, \la_{\a\b}] 
\end{eqnarray*}
\bea
{\cder}^{2}_{\Lb L}\la_{\mu\nu}=  \cder_{\Lb}(\cder_{L}\la_{\a\b}) - \cder_{\der_{\Lb}L}\la_{\a\b} \label{LbarLlamda}
\eea

\begin{lemma}
We have,
\bea
\der_{\Lb}L = 2\ze_{a}e_{a} - 2\om L \label{LbarL}
\eea
where,
\bea
\ze_{a} &=& \frac{1}{2}\g(\der_{a}L, \Lb)  \label{defzea}\\
\om &=& - \frac{1}{4}\g(\der_{\Lb}\Lb, L)  \label{defom}
\eea

\end{lemma}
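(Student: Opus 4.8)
The plan is to expand $\der_{\Lb}L$ in the null frame $\{L,\Lb,e_1,e_2\}$ by means of the decomposition formula \eqref{representationinnullframe}, i.e. to write
$\der_{\Lb}L = -\frac12\g(\der_{\Lb}L,\Lb)\,L - \frac12\g(\der_{\Lb}L,L)\,\Lb + \g(\der_{\Lb}L,e_a)\,e_a$, and then to identify each of the three coefficients with the quantities appearing on the right-hand side. So the whole proof reduces to computing $\g(\der_{\Lb}L,\Lb)$, $\g(\der_{\Lb}L,L)$ and $\g(\der_{\Lb}L,e_a)$.

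First I would use that $\der u = L$, see \eqref{expressionL}, so that $\der_{\Lb}L=\der_{\Lb}\der u$ is (a value of) the Hessian of the optical function $u$: for all vector fields $X,Y$ one has $\g(\der_X L,Y)=\der_X(\der_Y u)-\der_{\der_X Y}u$, which is symmetric in $X$ and $Y$ because $\der$ is torsion free. In particular this immediately gives the $e_a$-coefficient: $\g(\der_{\Lb}L,e_a)=\g(\der_{e_a}L,\Lb)=\g(\der_a L,\Lb)=2\ze_a$ by the definition \eqref{defzea} of $\ze_a$. For the $\Lb$-coefficient I would note $\g(\der_{\Lb}L,L)=\frac12\der_{\Lb}\g(L,L)=0$ since $\g(L,L)=0$ holds in the neighbourhood in which the frame is extended (equivalently, by the Hessian symmetry and $\der_L L=0$ this equals $\g(\der_L L,\Lb)=0$), so the $\Lb$-term drops out.

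It then remains to compute the $L$-coefficient. Since $\g(L,\Lb)=-2$ is constant (see \eqref{scalarproductLLbar}), metric compatibility gives $0=\der_{\Lb}\g(L,\Lb)=\g(\der_{\Lb}L,\Lb)+\g(L,\der_{\Lb}\Lb)$, hence $\g(\der_{\Lb}L,\Lb)=-\g(\der_{\Lb}\Lb,L)=4\om$ by the definition \eqref{defom} of $\om$. Substituting $\g(\der_{\Lb}L,\Lb)=4\om$, $\g(\der_{\Lb}L,L)=0$ and $\g(\der_{\Lb}L,e_a)=2\ze_a$ into \eqref{representationinnullframe} yields $\der_{\Lb}L=-\frac12(4\om)L+2\ze_a e_a=2\ze_a e_a-2\om L$, which is the assertion.

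The only delicate point, and the one I would state carefully, is the use of $\der u=L$ for the transverse direction $\Lb$: one must make sure the extension of $L$ off $N^{-}(p)$ agrees with the gradient $\der u$ (which is consistent, since the level sets of the optical function $u$ are null hypersurfaces, so $u$ solves the eikonal equation $\g(\der u,\der u)=0$ in a full neighbourhood and $\der u$ is null and geodesic there), so that the Hessian-symmetry argument legitimately applies to $\g(\der_{\Lb}L,e_a)$. Once this is granted, the remaining two coefficients are routine frame bookkeeping with metric compatibility.
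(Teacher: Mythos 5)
Your proof is correct, and for the only nontrivial coefficient it takes a genuinely different route from the paper. Both arguments start from the decomposition \eqref{representationinnullframe} and treat the $L$- and $\Lb$-coefficients identically: $\g(\der_{\Lb}L,L)=\frac12\der_{\Lb}\g(L,L)=0$, and $\g(\der_{\Lb}L,\Lb)=-\g(L,\der_{\Lb}\Lb)=4\om$ by compatibility applied to $\g(L,\Lb)=-2$. The difference is in showing $\g(\der_{\Lb}L,e_a)=2\ze_a$, i.e.\ in transferring the derivative from the transverse direction $\Lb$ to the tangential direction $e_a$. You do this via the symmetry of the Hessian of the optical function, using $L=\der u$ from \eqref{expressionL}, so that $\g(\der_X L,Y)=\der^2_{X,Y}u$ is symmetric and $\g(\der_{\Lb}L,e_a)=\g(\der_{a}L,\Lb)$ immediately. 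The paper instead writes $\g(\der_{\Lb}L,e_a)=-\g(L,\der_{\Lb}e_a)$ and then shows $\g(L,[\Lb,e_a])=0$ by a flow argument (the composed flows of $\Lb$ and $e_a$ preserve the foliation by the spheres $S_t$, so the commutator is tangent to them and hence orthogonal to $L$), which yields $\g(L,\der_{\Lb}e_a)=\g(L,\der_{e_a}\Lb)=-\g(\der_a L,\Lb)$. Your route is cleaner and is the standard one in the optical-function formalism, but, as you correctly flag, it commits you to extending $L$ off $N^{-}(p)$ as $\der u$; the paper's route avoids invoking $\der u=L$ transversally but pays for it with the (somewhat informally argued) tangency of $[\Lb,e_a]$, and in any case both proofs implicitly fix an extension of the frame for which $\g(L,L)$, $\g(L,\Lb)$ and $\g(L,e_a)$ are constant along $\Lb$. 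Since $\der_{\Lb}L$ itself depends on that extension, making the choice $L=\der u$ explicit, as you do, is if anything a point in favour of your version.
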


\begin{proof}
Using \eqref{representationinnullframe},
\begin{eqnarray*}
\der_{\Lb}L &=& - \frac{1}{2} \g(\der_{\Lb}L, \Lb) L -  \frac{1}{2} \g(\der_{\Lb}L, L) \Lb  + \g(\der_{\Lb}L, e_{a}) e_{a} \mbox{,         }  a \in {1, 2}\\
& =& \frac{1}{2} \g(\der_{\Lb}\Lb, L) L + 0  -  \g(L,\der_{\Lb} e_{a}) e_{a}
\end{eqnarray*}

Let $q_{m}$ be the 1-parameter group generated by $\Lb$, $\th_{r}$ the 1-parameter group generated by $e_{a}$. Let $\Om (r, m) = \th_{-r}\circ  q_{-m}\circ\th_{r}\circ  q_{m}$. We have,

\beaa
[\Lb, e_{a} ] (p) = \frac{\pa^{2}}{\pa r \pa m }\Om (0, 0 ) (p) 
\eeaa

$\th_{r}$ maps $J^{-}(p)\cap \Sigma_{t}$ into itself for all $t$, and $q_{m}$ maps $J^{-}(p)\cap \Sigma_{t}$ into say $Q_{m}(t)$, where the vector field $e_{a}$ can still be constructed to be tangent to $Q_{m}(t)$ for all $t$ in a neighborhood of $p$, and therefore $\th_{r}$ maps $Q_{m}(t)$ into itself. We get that $\th_{-r}\circ  q_{-m}\circ\th_{r}\circ  q_{m}$ maps $J^{-}(p)\cap \Sigma_{t}$ into itself for each $t$ and consequently, $\Om(r, m)$ maps $J^{-}(p)\cap \Sigma_{t}$ into itself. Therefore $[\Lb, e_{a}]$ is tangential to $J^{-}(p)\cap \Sigma_{t}$. Hence, $$\g(L, [\Lb,  e_{a}]) = 0$$

Thus,
\begin{eqnarray*}
\der_{\Lb}L &=&  \frac{1}{2} \g(\der_{\Lb}\Lb, L) L -  \g(L,\der_{e_{a}} \Lb) e_{a} \\
&=& \frac{1}{2} \g(\der_{\Lb}\Lb, L) L +  \g(\der_{e_{a}}L, \Lb) e_{a} 
\end{eqnarray*}

With the notation \eqref{defzea} and \eqref{defom}, we get \eqref{LbarL}.\\

\end{proof}

Therefore \eqref{LbarLlamda} can be written as, 
\bea
\label{LLbarlambda}
&& {\cder}_{L\Lb}^{2}\la_{\a\b} \\
\notag
&=& \cder_{\Lb}(\cder_{L}\la_{\a\b}) - 2\ze_{a}\cder_{a}\la_{\a\b} + 2\om\cder_{L}\la_{\a\b}\\
\notag
&&  + {{R_{\a}}^{\ga}}_{\Lb L}\la_{\ga\b} +  {{R_{\b}}^{\ga}}_{\Lb L}\la_{\a\ga}  + [F_{L\Lb}, \la_{\a\b}]
\eea

We define,

\bea
\notag
\lap\la_{\a\b} = \de^{ab}{\cder}^{2}_{ab}\la_{\a\b}   \label{laplaciannullcone}
\eea

Injecting \eqref{LLbarlambda} in \eqref{boxlamda}, we obtain,\

\bea
\notag
\Box_{\g}^{(A)}\la_{\a\b} &=& - \frac{1}{2}\cder_{\Lb}(\cder_{L}\la_{\a\b}) - \frac{1}{2} {\cder}^{2}_{\Lb L}\la_{\a\b} + \ze_{a}\cder_{a}\la_{\a\b} - \om\cder_{L}\la_{\a\b} \\
&& - \frac{1}{2}{{R_{\a}}^{\ga}}_{\Lb L}\la_{\ga\b}  - \frac{1}{2}{{R_{\b}}^{\ga}}_{\Lb L}\la_{\a\ga} + [F_{L \Lb}, \la_{\a\b}] + \lap\la_{\a\b}  \label{beforelastexpressionboxlamda}
\eea

Recall \eqref{LbarL}, then \eqref{LbarLlamda} can be written as,
\bea
{\cder}^{2}_{\Lb L}\la_{\a\b} = \cder_{\Lb}(\cder_{L}\la_{\a\b}) - 2\ze_{a} \cder_{e_{a}} \la_{\a\b}  + 2\om  \cder_{L }\la_{\a\b} \label{secondexpressionLbarLlamda}
\eea

Injecting  \eqref{secondexpressionLbarLlamda} in \eqref{beforelastexpressionboxlamda}, we get
\bea
&& \Box_{\g}^{(A)}\la_{\a\b}  \label{lastexpressionboxlambda}  \\
\notag
&=& - \cder_{\Lb}(\cder_{L}\la_{\a\b}) + \ze_{a}\cder_{a}\la_{\a\b} - \om\cder_{L}\la_{\a\b} - \frac{1}{2}{{R_{\a}}^{\ga}}_{\Lb L}\la_{\ga\b} \label{lastexpressionboxlambda} \\
\notag
&&- \frac{1}{2}{{R_{\b}}^{\ga}}_{\Lb L}\la_{\a\ga} + [F_{L \Lb}, \la_{\a\b}] + \lap\la_{\a\b}  + \frac{1}{2}\cder_{\der_{\Lb}L}\la_{\a\b} 
\eea

\subsection{Revisiting $\int_{\Omega_{\eps}}<\Box_{\g}^{(A)}(\la_{\a\b}\delta(u)), F^{\a\b}>$}\

We showed \eqref{Lbarderivativeoftransport} that is,
\begin{eqnarray*}
\int_{\Omega_{\eps}}<\Box_{\g}^{(A)}(\la\delta(u)), F> &=& \int_{\Omega_{\eps}}<\Box_{\g}^{(A)}(\la)\delta(u), F> \\
&& + \int_{\Omega_{\eps}}\de(u)<\cder_{\Lb} (\cder_{L}\la )+ \cder_{\Lb}(\frac{tr X}{2}\la), F> \\
\cder_{\Lb}(\frac{tr\chi}{2}\la_{\a\b}) &=& \der_{\Lb}(\frac{tr \chi}{2})\la_{\a\b} + \frac{tr\chi}{2}\cder_{\Lb}\la_{\a\b} 
\end{eqnarray*}

Recall \eqref{LbarLlamda}, we also have $\cder_{L}\la_{\a\b} + \frac{tr\chi}{2}\la_{\a\b} = 0$ at $q \in N^{-}(p) \backslash \{p\}$, using  \eqref{lastexpressionboxlambda} we obtain,
\begin{eqnarray*}
&& \int_{\Omega_{\eps}}<\Box_{\g}^{(A)}(\la_{\a\b} \delta(u)), F^{\a\b} > \\
&=& \int_{\Omega_{\eps}} \delta(u) <\ze_{a}\cder_{a}\la_{\a\b} - \om\cder_{L} \la_{\a\b} - \frac{1}{2}{{R_{\a}}^{\ga}}_{\Lb L}\la_{\ga\b} - \frac{1}{2}{{R_{\b}}^{\ga}}_{\Lb L}\la_{\a\ga} \\
&& \quad+ [F_{L \Lb}, \la_{\a\b}] + \lap\la_{\a\b} + \frac{1}{2}tr \chi \cder_{\Lb}\la_{\a\b}    + \frac{1}{2}\cder_{\der_{\Lb}L}\la_{\a\b} + \der_{\Lb}(\frac{tr\chi}{2})\la_{\a\b} , F^{\a\b}> \\
&=&  \int_{\Omega_{\eps}} \delta(u)<\ze_{a}\cder_{a}\la_{\a\b} - 2 \om\cder_{L} \la_{\a\b} + \om\cder_{L} \la_{\a\b} -\frac{1}{2} tr\underline{\chi} \cder_{L}\la_{\a\b}  \\
&& \quad + \frac{1}{2} tr \underline{\chi} \cder_{L}\la_{\a\b} - \frac{1}{2}{{R_{\a}}^{\ga}}_{\Lb L}\la_{\ga\b}  - \frac{1}{2}{{R_{\b}}^{\ga}}_{\Lb L}\la_{\a\ga} + [F_{L \Lb}, \la_{\a\b}] + \lap\la_{\a\b}   \\
&& \quad +  \frac{1}{2}tr \chi \cder_{\Lb}\la_{\a\b} + \frac{1}{2}\cder_{\der_{\Lb}L}\la_{\a\b} + \der_{\Lb}(\frac{tr\chi}{2})\la_{\a\b} , F^{\a\b}>
\end{eqnarray*}

Let $\mu$ be the mass aspect function defined by
\bea
\mu = \der_{\Lb}tr \chi + \frac{1}{2}tr \chi tr \underline{\chi} + 2\om tr \chi
\eea
We have, 
\bea
\notag
&& \int_{\Omega_{\eps}}<\Box_{\g}^{(A)}(\la\delta(u)), F>\\
\notag
&=& \int_{\Omega_{\eps}} \delta(u)<\ze_{a}\cder_{a}\la_{\a\b}  +  \om\cder_{L} \la_{\a\b} + \frac{1}{2} tr\underline{\chi} \cder_{L}\la_{\a\b} - \frac{1}{2}{{R_{\a}}^{\ga}}_{\Lb L}\la_{\ga\b}  - \frac{1}{2}{{R_{\b}}^{\ga}}_{\Lb L}\la_{\a\ga} \\
\notag
&&\quad + [F_{L \Lb}, \la_{\a\b}] + \lap\la_{\a\b} +  \frac{1}{2}tr \chi \cder_{\Lb}\la_{\a\b} + \frac{1}{2}\cder_{\der_{\Lb}L}\la_{\a\b} + \frac{1}{2}\mu\la_{\a\b}, F^{\a\b}> \\
\notag
&=& \int_{\Omega_{\eps}} \delta(u) <2 \ze_{a} \cder_{a} \la_{\a\b}  + \frac{1}{2} tr \underline{\chi} \cder_{L}\la_{\a\b} - \frac{1}{2}{{R_{\a}}^{\ga}}_{\Lb L}\la_{\ga\b} - \frac{1}{2}{{R_{\b}}^{\ga}}_{\Lb L}\la_{\a\ga}  \\
&&\quad + [F_{L \Lb}, \la_{\a\b}]  + \lap\la_{\a\b} +  \frac{1}{2}tr \chi \cder_{\Lb}\la_{\a\b} + \frac{1}{2}\mu\la_{\a\b}, F^{\a\b}> \label{boxlambdadeltaF} 
\eea
(where we used \eqref{LbarL})

\subsection{Estimating $\lim_{\eps \to 0} |- \int_{J^{-}(p)\cap\Sigma_{1 - \eps}} <\la_{\a\b}\delta(u), \cder_{T}F^{\a\b}>| $ }\

Computing $\de^{ab}{\cder}^{2}_{ab}\la_{\a\b}$ we have,
\bea
{\cder}^{2}_{ab}\la_{\a\b} = \cder_{a}(\cder_{b}\la_{\a\b}) - \cder_{\der_{a}e_{b}}\la_{\a\b}  \label{der_ab^2}
\eea

\begin{definition}

We define a restriction of the covariant derivative to the span of $\{ e_{a} \}$, $a \in \{1, 2 \}$ at $q \in N^{-}(p) \backslash \{p\}$ as being,
\bea
\rder_{a}e_{b} = \der_{a}e_{b} - \frac{1}{2}\chi_{ab}\Lb  - \frac{1}{2}\underline{\chi}_{ab} L
\eea

where
\bea
\underline{\chi}_{ab} = \g(\der_{a} \Lb, e_{b})
\eea
We have $$\g(\rder_{a}e_{b}, L) = \g(\der_{a}e_{b}, L) - \frac{1}{2}\chi_{ab}\g(\Lb, L)  - \frac{1}{2}\underline{\chi}_{ab}\g(L, L) = \g(\der_{a}e_{b}, L) + \chi_{ab}$$

We have $\g(e_{b}, L) = 0$ along $N^{-}(p)$ and since $e_{a}$ is tangent to $N^{-}(p)$ at $q$, we get $$e_{a}\g(e_{b}, L) = 0 = \g(\der_{a}e_{b}, L) + \g(e_{b}, \der_{a}L)$$

so $$\g(\der_{a}e_{b}, L) = - \g(e_{b}, \der_{a}L) = - \chi_{ab} $$ and hence $$\g(\rder_{a}e_{b}, L) = 0$$

Therefore, $\rder_{a}e_{b}$ is tangent to $N^{-}(p)$.\

Computing, $$\g(\rder_{a}e_{b}, \Lb) = \g(\der_{a}e_{b}, \Lb) - \frac{1}{2}\chi_{ab}\g(\Lb, \Lb)  - \frac{1}{2}\underline{\chi}_{ab}\g(L, \Lb) = \g(\der_{a}e_{b}, \Lb) + \underline{\chi}_{ab}$$

Similarly, we get $$\g(\der_{a}e_{b}, \Lb)  = - \underline{\chi}_{ab} $$ and therefore $$\g(\rder_{a}e_{b}, \Lb) = 0$$

Finally, we get that $\rder_{a}e_{b}$ is in the span of $\{ e_{a} \}$, $a \in \{1, 2 \}$. \ 

\end{definition}

Going back to ${\cder}^{2}_{ab}\la_{\a\b}$, we have $$\cder_{\der_{a}e_{b}}\la_{\a\b} = \cder_{\rder_{a}e_{b}+\frac{1}{2}\chi_{ab}\Lb +\frac{1}{2}\underline{\chi}_{ab}L }\la_{\a\b} =  \cder_{\rder_{a}e_{b}}\la_{\a\b} +  \frac{1}{2}\chi_{ab}\cder_{\Lb}\la_{\a\b} +  \frac{1}{2}\underline{\chi}_{ab}\cder_{L}\la_{\a\b} $$

Hence, we have at $q \in N^{-}(p) \backslash \{p\}$, $$\de^{\a\b}{\cder}^{2}_{ab}\la_{\a\b} = \de^{\a\b}\cder_{a}(\cder_{b}\la_{\a\b}) - \de^{ab}\cder_{\rder_{a}e_{b}}\la_{\a\b} - \frac{1}{2}tr \chi\cder_{\Lb}\la_{\a\b} - \frac{1}{2}tr \underline{\chi}\cder_{L}\la_{\a\b}$$

$\rder_{a}e_{b}$ is in the span of $e_{a} \in T_{q}N^{-}(p)$ for $a \in \{1, 2 \}$, we therefore define
\bea
\hat{\D}^{2}_{ab}\la_{\a\b} = \cder_{a}(\cder_{b}\la_{\a\b}) - \cder_{\rder_{a}e_{b}}\la_{\a\b}
\eea
where $\hat{\D}$ is the restriction of the gauge covariant derivative $\cder$ along to the span of $e_{a} \in T_{q}N^{-}(p)$, $a \in \{1, 2 \}$. We get
\bea
\notag
\de^{ab}{\cder}^{2}_{ab}\la_{\a\b} &=& \de^{ab}\hat{\D}^{2}_{ab}\la_{\a\b} - \frac{1}{2}\de^{ab}\chi_{ab}\cder_{\Lb}\la_{\a\b}   - \frac{1}{2}\de^{ab}\underline{\chi}_{ab}\cder_{L}\la_{\a\b} \\
&=& \de^{ab}\hat{\D}^{2}_{ab}\la_{\a\b} - \frac{1}{2}tr \chi \cder_{\Lb}\la_{\a\b} - \frac{1}{2}tr \underline{\chi} \cder_{L}\la_{\a\b} \label{laplaciannullcone}
\eea

\begin{definition}

Let $\hat{\lap}^{(A)} \la_{\a\b}$ be the induced Laplacian on the span of  $\{e_{a}\}$, $a \in \{ 1, 2 \}$ defined by,
\bea
\notag
\hat{\lap}^{(A)} \la_{\a\b} = \de^{ab}\hat{\D}^{2}_{ab}\la_{\a\b} =  \lap^{(A)} \la_{\a\b} + \frac{1}{2}tr\underline{\chi} \cder_{L}\la_{\a\b} +  \frac{1}{2}tr\chi \cder_{\Lb} \la_{\a\b}  \label{laplacianonab} \\
\eea

\end{definition}

We obtain after injecting \eqref{laplacianonab} and \eqref{boxlambdadeltaF} in \eqref{afterdiv},
\begin{eqnarray*}
&&\int_{\Omega_{\eps}}<\la\delta(u),\Box_{\g}^{(A)}F>_{\g} \\
&=& \int_{\Omega_{\eps}}< \hat{\lap}^{(A)}\la_{\a\b}  + 2 \ze_{a}  \cder_{a}  \la_{\a\b} + \frac{1}{2}\mu\la_{\a\b} + [F_{L \Lb}, \la_{\a\b}]  \\
&& - \frac{1}{2}{{R_{\a}}^{\ga}}_{\Lb L}\la_{\ga\b} - \frac{1}{2}{{R_{\b}}^{\ga}}_{\Lb L}\la_{\a\ga}, F^{\a\b}> \\
&& - [\int_{J^{-}(p)\cap\Sigma_{t}}<\la_{\a\b}\delta(u), \cder_{T}F^{\a\b}>]^{t=1-\eps}_{t=0} \\
&&  + [\int_{J^{-}(p)\cap\Sigma_{t}}<\cder_{T}(\la_{\a\b}\delta(u)), F^{\a\b}>]^{t=1-\eps}_{t=0} 
\end{eqnarray*}

We have by definition,
$$  \int_{N^{-}(p)}<\la_{\a\b}, \cder_{T}F^{\a\b}>= \int_{\SSS^{2}} \int_{0}^{\infty} <\la_{\a\b}, \cder_{T}F^{\a\b}> (u=0, s, \om) ds dA_{s}$$

We have $$\frac{dt}{ds} = L(t) = dt(L) = \g(-T, L)$$

We define,
\bea
\phi =  \g(T,L)^{-1}
\eea
 the null lapse function. We have,
\bea
ds = - \phi dt
\eea
We denote by $dA_{S_{t}(p)}$ the area element of the 2-surface $S_{t}(p) = N^{-}(p)\cap\Sigma_{t}$\

Thus,
\beaa
&&  \int_{0}^{\infty} \int_{\SSS^{2}} <\la_{\a\b}, \cder_{T}F^{\a\b}> (u=0, s, \om)  dA_{s} ds \\
&=&   \int_{t = 1}^{ t=-\infty} \int_{S_{t}} <\la_{\a\b}, \cder_{T}F^{\a\b}>   (-\phi) dA_{t} dt\\
&=& \int_{t = -\infty}^{ t=1}  \int_{S_{t}} <\la_{\a\b}, \cder_{T}F^{\a\b}>   \phi dA_{t} dt
\eeaa

We get, $$- \int_{\J^{-}(p)\cap\Sigma_{1-\eps}}<\la_{\a\b}\delta(u), \cder_{T}F^{\a\b}> = - \int_{N^{-}(p)\cap\Sigma_{1 - \eps}}<\la_{\a\b}, \cder_{T}F^{\a\b}>\phi dA_{S_{t = 1 - \eps}(p)}$$

\begin{definition}
We define positive definite Riemannian metric in the following manner:

\bea
h(e_{\a}, e_{\b}) = \g(e_{\a}, e_{\b}) + 2 \g(e_{\a},  \frac{\pr}{\pr \hat{t}} ) . \g(e_{\b}, \frac{\pr}{\pr \hat{t}} ) \label{positiveriemannianmetrich}
\eea

where
\bea
\frac{\pr}{\pr \hat{t}} = (- \g( \frac{\pr}{\pr t} , \frac{\pr}{\pr t}  ) )^{-\frac{1}{2}} \frac{\pr}{\pr t} 
\eea
\end{definition}
\begin{definition}
For any ${\cal G}$-valued 2-tensor $K$, we let
\bea
|K|^{2} =  h_{\a\mu} h_{\b\nu} |K^{\mu\nu}|. |K^{\a\b}|
\eea
\end{definition}

\begin{lemma}
For any two  ${\cal G}$-valued tensors $K$ and $G$, we have
\bea
| <K_{\a\b}, G^{\a\b}>| \les ( |K|^{2} )^{\frac{1}{2}}. ( |G|^{2} )^{\frac{1}{2}} \label{Cauchy-Schwarzinequalitywithmetrich}
\eea
\end{lemma}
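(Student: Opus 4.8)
The plan is to reduce the statement to two familiar Cauchy--Schwarz inequalities after choosing a convenient frame. Since $<K_{\a\b}, G^{\a\b}>$ is a genuine scalar, independent of the frame, I would first pick at the point in question a $\g$-orthonormal frame $\{e_{0}, e_{1}, e_{2}, e_{3}\}$ with $e_{0} = \frac{\pr}{\pr \hat{t}}$, so that $\g(e_{\a}, e_{\b}) = \mbox{diag}(-1, 1, 1, 1)$; this is possible because $\frac{\pr}{\pr \hat{t}}$ is unit timelike and its $\g$-orthogonal complement is spacelike. In this frame the definition \eqref{positiveriemannianmetrich} of $h$ gives $h(e_{0}, e_{0}) = -1 + 2(-1)(-1) = 1$, $h(e_{0}, e_{i}) = 0$, and $h(e_{i}, e_{j}) = \delta_{ij}$, hence $h_{\a\mu} = \delta_{\a\mu}$; moreover $\g^{\a\mu}$ is diagonal with entries $\pm 1$, so raising an index only changes a sign and in particular $|K^{\a\b}| = |K_{\a\b}|$, $|G^{\a\b}| = |G_{\a\b}|$, where $|\cdot|$ is the norm deduced from $<\, ,\, >$.

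Next, since $\g^{\a\mu}$ is diagonal in this frame one has $G^{\a\b} = \g^{\a\a}\g^{\b\b}G_{\a\b}$ (no summation) with $|\g^{\a\a}| = 1$, so that $<K_{\a\b}, G^{\a\b}> = \sum_{\a, \b} \g^{\a\a}\g^{\b\b}<K_{\a\b}, G_{\a\b}>$ and therefore
\[
| <K_{\a\b}, G^{\a\b}> | \le \sum_{\a, \b} | <K_{\a\b}, G_{\a\b}> | \le \sum_{\a, \b} |K_{\a\b}| \, |G_{\a\b}| ,
\]
the last step being the Cauchy--Schwarz inequality on $({\cal G}, <\, ,\, >)$, which is legitimate since $<\, ,\, >$ is positive definite. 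A Cauchy--Schwarz inequality for the finite double sum over $\a, \b$ then gives
\[
\sum_{\a, \b} |K_{\a\b}| \, |G_{\a\b}| \le \Big( \sum_{\a, \b} |K_{\a\b}|^{2} \Big)^{\frac{1}{2}} \Big( \sum_{\a, \b} |G_{\a\b}|^{2} \Big)^{\frac{1}{2}} .
\]

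Finally I would identify the right hand side with the claimed quantity: because $h_{\a\mu} = \delta_{\a\mu}$ in the chosen frame,
\[
|K|^{2} = h_{\a\mu} h_{\b\nu} |K^{\mu\nu}| . |K^{\a\b}| = \sum_{\a, \b} |K^{\a\b}|^{2} = \sum_{\a, \b} |K_{\a\b}|^{2},
\]
and likewise for $G$, which yields $| <K_{\a\b}, G^{\a\b}> | \le (|K|^{2})^{\frac{1}{2}} (|G|^{2})^{\frac{1}{2}}$, in fact with constant one, so a fortiori the $\les$ in \eqref{Cauchy-Schwarzinequalitywithmetrich}. I do not expect a genuine obstacle here: the only point requiring a little care is the bookkeeping of index raising/lowering together with the verification that $h$ reduces to the Euclidean metric in a $\g$-orthonormal frame adapted to $\frac{\pr}{\pr \hat{t}}$; once that observation is made the inequality is just the Lie-algebra Cauchy--Schwarz followed by the finite-sum Cauchy--Schwarz.
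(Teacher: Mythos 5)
Your proof is correct and follows essentially the same route as the paper's: lower/raise indices, observe that $h$ dominates $|\g|$ componentwise, apply the Lie-algebra Cauchy--Schwarz pointwise, and finish with Cauchy--Schwarz over the finite index sum. The only difference is cosmetic: you make the adapted $\g$-orthonormal frame (in which $h_{\a\mu}=\delta_{\a\mu}$) explicit, whereas the paper keeps the inequality $|\g_{\a\mu}|\le h_{\a\mu}$ in component form; both rest on the same frame choice.
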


\begin{proof}
\beaa
| <K_{\a\b}, G^{\a\b}>| &=& | \g_{\a\mu} \g_{\b\nu}  <K^{\mu\nu}, G^{\a\b}> | \\
&\le&  | \g_{\a\mu}|.| \g_{\b\nu} |. |<K^{\mu\nu}, G^{\a\b}> | \\
&\le&   h_{\a\mu} h_{\b\nu}  | <K^{\mu\nu}, G^{\a\b}> |  \\
&\le&   h_{\a\mu} h_{\b\nu}  | K^{\mu\nu}|. |G^{\a\b} |  \\
&\le&  ( h_{\a\mu} h_{\b\nu}   | K^{\mu\nu}|. |K^{\a\b} |  )^{\frac{1}{2} } . (  h_{\a\mu} h_{\b\nu}   | G^{\mu\nu}|. |G^{\a\b} |   )^{\frac{1}{2}} 
\eeaa
(by applying Cauchy-Schwarz)\\
\end{proof}

Hence,

\begin{eqnarray*}
&&|\int_{N^{-}(p)\cap\Sigma_{1 - \eps}}<\la_{\a\b}, \cder_{T}F^{\a\b}>\phi dA_{S_{1 - \eps}(p)}| \\
&& \lesssim ||\phi||_{L^{\infty}} (\int_{N^{-}(p)\cap\Sigma_{1 - \eps}}|\la|^{2}dA_{S_{1 - \eps}(p)})^{\frac{1}{2}}(\int_{N^{-}(p)\cap\Sigma_{1 - \eps}}|\cder_{T}F|^{2}dA_{S_{1 - \eps}(p)})^{\frac{1}{2}}
\end{eqnarray*}

(by Cauchy-Schwarz)
\bea
\lesssim ||\phi||_{L^{\infty}} ||\cder_{T}F||_{L^{\infty}} ||\la||_{L^{2}(N^{-}(p)\cap\Sigma_{1 - \eps})}|A_{S_{1 - \eps}(p)}|^{\frac{1}{2}} \label{philambdaarea}
\eea
where $A_{S_{1 - \eps}(p)}$ denote the area of $S_{1 - \eps}(p) = N^{-}(p)\cap\Sigma_{t = 1 - \eps}$ and where,
\bea
||\la||_{L^{2}(\Sigma_{1 - \eps}(p))} = ( \int_{S_{1 - \eps} =  N^{-}(p)\cap\Sigma_{1 - \eps}}   h_{\a\mu} h_{\b\nu} |\la^{\mu\nu}|. |\la^{\a\b}| dA_{S_{1 - \eps}(p)}  )^{\frac{1}{2}}
\eea
and
\bea
||\cder_{T}F ||_{L^{\infty}} = ||(  h_{\a\mu} h_{\b\nu} |\cder_{T}F^{\mu\nu}|. |\cder_{T}F^{\a\b}| )^{\frac{1}{2}}||_{L^{\infty}}
\eea

We want to study now the behavior of $|A_{S_{1 - \eps} (p)}|^{\frac{1}{2}}$ near $\eps = 0$.\

\begin{lemma}
We have,
\bea 
\notag
|A_{S_{  t } }| &=& 4\pi s^{2} +  o(s^{2}) \\
&=& 4\pi (1-t)^{2} +  o( (1-t)^{2} ) \label{areaexpression}
\eea
\end{lemma}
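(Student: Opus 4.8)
The plan is to compute the area of the 2-surface $S_t(p) = N^-(p) \cap \Sigma_t$ by integrating the area element $dA_{S_t(p)}$ over $\mathbb{S}^2$, and to expand this element in powers of the affine parameter $s$ near the vertex $p$ ($s=0$). First I would recall that along each null geodesic $\gamma_\omega(s)$ the area element $dA_s$ on the $s = \text{constant}$ cross-section evolves according to a transport equation driven by $\mathrm{tr}\chi$; concretely, one has the first variation formula
\bea
\notag
\frac{d}{ds}\, dA_s = (\mathrm{tr}\chi)\, dA_s,
\eea
which follows directly from the definition \eqref{definitionofchi} of $\chi$ and $L = \frac{\partial}{\partial s}\gamma_\omega(s)$. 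Integrating this ODE along each generator, together with the normalization \eqref{normalisationcondition} that pins down the conformal factor of the initial metric on the null cone at the vertex, gives $dA_s = (\text{Jacobian factor})\, d\sigma^2$, and the task becomes to expand that Jacobian factor.

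The key step is the short-distance asymptotics of $\mathrm{tr}\chi$. Since $N^-(p)$ is a smooth null cone emanating from $p$ with regular vertex, standard expansions of the null second fundamental form along geodesics from a point (Jacobi field / Riemann normal coordinate computation) yield
\bea
\notag
\mathrm{tr}\chi = \frac{2}{s} + O(s),
\eea
exactly as noted in \eqref{controlontracechi} that $\left|\,2s^{-1} - \mathrm{tr}\chi\,\right| = O(s^2)$ in the quantity $(\frac{1}{s} - \mathrm{tr}\chi)$ discussion; in fact one needs the slightly sharper statement that the $O(s)$-correction integrates to give a relative $o(s^2)$ correction to the area. Plugging this into the transport equation and integrating from $0$ to $s$ gives $dA_s = s^2\big(1 + o(1)\big)\, d\sigma^2$, hence
\bea
\notag
|A_{S_t}| = \int_{\SSS^2} dA_s = 4\pi s^2 + o(s^2).
\eea
Finally, to pass to the $t$-variable I would invoke \eqref{relationsandt}, namely $s = t_p - t + O(t_p - t)$ (here with $t_p = 1$ in the normalization $t$-scale, so $s = (1-t) + o(1-t)$), and substitute to obtain $|A_{S_t}| = 4\pi(1-t)^2 + o((1-t)^2)$.

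The main obstacle is getting the error term to be genuinely $o(s^2)$ rather than merely $O(s^3)$ or worse — this requires controlling the $O(s)$ behavior of $\mathrm{tr}\chi$ and checking that the subleading terms in the Taylor expansion of the metric on the null cone (which involve the curvature of $(M,\g)$ at $p$) contribute only at relative order $o(1)$ to the area after integration over $\mathbb{S}^2$. Since $(M,\g)$ is assumed sufficiently smooth, the requisite expansions hold, and the angular integration of the first-order correction (which is odd or trace-free in the appropriate sense) is what kills the would-be $O(s^3)$ term; I expect the argument to reduce to the classical fact that the area of a small geodesic sphere on a null cone agrees with the Euclidean value to leading order, with curvature corrections entering only at higher order. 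The conversion between $s$ and $1-t$ via \eqref{relationsandt} is then routine.
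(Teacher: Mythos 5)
Your proposal is correct and follows essentially the same route as the paper: both rest on the first variation of area driven by $tr\chi$, the vertex asymptotics $tr\chi = \frac{2}{s} + o(s^{-1})$ coming from the Raychaudhuri equation with $(s\,tr\chi)(p)=2$, the fact that the null lapse $\phi \to 1$ at $p$, and the relation $s = 1-t+o(1-t)$; the only difference is that you evolve the area element in $s$ along each generator and integrate over $\SSS^{2}$ at the end, whereas the paper differentiates the total area in $t$. Your worry about angular cancellations is unnecessary: since $|tr\chi - 2s^{-1}|\to 0$ pointwise, its integral from $0$ to $s$ is already $o(s)$, which yields the relative $o(1)$ correction to $dA_{s}/s^{2}$ without invoking any parity of the curvature terms.
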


\begin{proof}\

We have,
\bea
\frac{d}{dt}|A_{S_{t}(p)}| = \int_{S_{t}(p)} \phi tr\chi dA \label{derivativearea}
\eea

We also have,
\bea
\frac{d}{ds}(tr\chi) + \frac{1}{2}(tr\chi)^{2} = - |\hat{\chi}|^{2} - Ric(L, L)
\eea
where $\hat{\chi}$ is the traceless part of $\chi$ and $(str\chi)(p) = 2, \hat{\chi}(p) = 0$. This yields to
\bea
\lim_{q \longmapsto p} |tr\chi(q) - \frac{2}{s}| = 0 \label{behaviourtracechi}
\eea
(see [Wang]).

$$\phi =  \g(T,L)^{-1} = ( \der_{T} u )^{-1} .$$ Since $u$ is smooth and $$\phi (p) = 1 = ( \der_{T} u )^{-1} (p)$$ we also have that $\phi $ smooth and bounded near $p$. Thus,
\bea
\lim_{q \longmapsto p}  |\phi (q) - 1| = 0 \label{behaviourphinearp}
\eea
Since, $ds = - \phi dt$, we get,
\beaa
\int_{0}^{s} 1. ds =\int_{1}^{t} - \phi  dt 
\eeaa
Hence, using \eqref{behaviourphinearp}, we get
\bea
s = 1 - t + o(1-t) \label{relationsandt}
\eea

 \eqref{derivativearea}, \eqref{behaviourtracechi}, \eqref{behaviourphinearp} and \eqref{relationsandt} yield to \eqref{areaexpression}.

\end{proof}

Injecting \eqref{areaexpression} in \eqref{philambdaarea}, we get,
\bea
\notag
| - \int_{N^{-}(p)\cap\Sigma_{1 - \eps}} <\la_{\a\b}, \cder_{T}F^{\a\b}> \phi dA_{S_{1 - \eps} (p)}| &\lesssim& \eps ||\phi||_{L^{\infty}} ||\cder_{T} F ||_{L^{\infty}} ||\la ||_{L^2(S_{1 - \eps})}  \\  \label{lambdatf}
\eea
and
\beaa
||\la||_{L^2(S_{1 - \eps})} = ( \int_{S_{1 - \eps}} |\la|^{2} dA )^{\frac{1}{2}} &\lesssim& ||\la||_{L^{\infty}(S_{1 - \eps})} |S_{1-\eps}|^{\frac{1}{2}} \lesssim \eps ||\la ||_{L^{\infty}(S_{1-\eps})}
\eeaa

Now, we would want to study the behavior of $||\la ||_{L^{\infty}(S_{1-\eps})}$ when $\eps \longmapsto 0$.

\begin{lemma}
Let $\Psi_{\a\b} $ be a ${\cal G}$-valued tensor, and $|\Psi_{\a\b}| = <\Psi_{\a\b}, \Psi_{\a\b}> ^{\frac{1}{2}} $. Then,
\bea
| \der_{\si}  |\Psi_{\a\b}|  | &\leq& |\cder  \Psi_{\a\b}  | +  | \Psi (\der_{\si} e_{\a}, e_{\b})  |  + | \Psi (e_{\a}, \der_{\si}  e_{\b})  | \label{derivativeestimateonthenormofacomponent}
\eea
\end{lemma}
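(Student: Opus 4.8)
The plan is to reduce the estimate to a pointwise Cauchy--Schwarz inequality on the Lie algebra $\mathcal{G}$, after invoking the Leibniz rule for the gauge covariant derivative with respect to the Ad-invariant inner product $<\,,\,>$ that was already established in the Appendix (the lemma giving $\der_\ga<K_{\a\b},G^{\a\b}> = <\cder_\ga K_{\a\b},G^{\a\b}> + <K_{\a\b},\cder_\ga G^{\a\b}>$). First I would work at a point where $|\Psi_{\a\b}| = <\Psi_{\a\b},\Psi_{\a\b}>^{1/2} \neq 0$, so that $|\Psi_{\a\b}|$ is smooth there. Viewing $\Psi_{\a\b} = \Psi(e_\a,e_\b)$ for fixed $\a,\b$ as a $\mathcal{G}$-valued function, the very same argument — $<\,,\,>$ is a fixed bilinear form on $\mathcal{G}$, and the cross terms $<[A_\si,\Psi_{\a\b}],\Psi_{\a\b}>$ cancel by Ad-invariance — gives $\der_\si |\Psi_{\a\b}|^2 = 2<\cder_\si(\Psi_{\a\b}),\Psi_{\a\b}>$, where $\cder_\si(\Psi_{\a\b})$ denotes $\textbf{D}^{(A)}_\si$ applied to the function $\Psi_{\a\b}$. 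Since $\der_\si |\Psi_{\a\b}|^2 = 2|\Psi_{\a\b}|\der_\si|\Psi_{\a\b}|$, dividing by $2|\Psi_{\a\b}|$ and applying Cauchy--Schwarz on $\mathcal{G}$ yields $|\der_\si|\Psi_{\a\b}|| \le |\cder_\si(\Psi_{\a\b})|$.

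Next I would unfold $\cder_\si(\Psi_{\a\b})$ by the definition of the tensorial (gauge) covariant derivative given early in the paper, namely $\cder_\si(\Psi(e_\a,e_\b)) = (\cder_\si\Psi)(e_\a,e_\b) + \Psi(\der_\si e_\a, e_\b) + \Psi(e_\a, \der_\si e_\b)$, so that the triangle inequality in $\mathcal{G}$ gives precisely $|\der_\si|\Psi_{\a\b}|| \le |(\cder_\si\Psi)(e_\a,e_\b)| + |\Psi(\der_\si e_\a,e_\b)| + |\Psi(e_\a,\der_\si e_\b)|$, which is the claimed bound (with $|\cder\Psi_{\a\b}|$ read as $|(\cder_\si\Psi)(e_\a,e_\b)|$). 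To handle the points where $|\Psi_{\a\b}|$ vanishes — the only place smoothness fails — I would instead run the computation on the regularization $(|\Psi_{\a\b}|^2 + \eps^2)^{1/2}$, whose $\si$-derivative is $<\cder_\si(\Psi_{\a\b}),\Psi_{\a\b}>(|\Psi_{\a\b}|^2+\eps^2)^{-1/2}$ and is therefore bounded in absolute value by $|\cder_\si(\Psi_{\a\b})|$ uniformly in $\eps$; letting $\eps\to 0$ passes the inequality to the limit wherever $|\Psi_{\a\b}|$ is differentiable, and at an interior zero of the nonnegative function $|\Psi_{\a\b}|$ its derivative vanishes in any case.

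I do not expect any substantive obstacle here: the content is exactly the Leibniz rule for $<\,,\,>$ under $\cder$ (already available), followed by Cauchy--Schwarz on the fibers $\mathcal{G}$ (available since $<\,,\,>$ is positive definite), followed by the algebraic splitting of $\cder_\si$ acting on a component $\Psi(e_\a,e_\b)$ into the tensorial derivative plus the frame-rotation terms. The mildly delicate step is the non-smoothness of the norm at its zeros, which is dealt with by the standard $\eps$-regularization just described.
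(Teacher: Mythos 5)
Your proposal is correct and follows essentially the same route as the paper's proof: differentiate the norm, use Ad-invariance of $<\;,\;>$ to convert the partial derivative of the component into the gauge covariant derivative $\cder_{\si}(\Psi_{\a\b})$, split that into the tensorial derivative plus the frame-rotation terms $\Psi(\der_{\si}e_{\a},e_{\b})$ and $\Psi(e_{\a},\der_{\si}e_{\b})$, and finish with Cauchy--Schwarz and the triangle inequality. Your $\eps$-regularization at the zeros of $|\Psi_{\a\b}|$ is a small extra care the paper omits (it divides by $|\Psi_{\a\b}|$ without comment), but otherwise the two arguments coincide.
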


\begin{proof}

We can compute
\beaa
\der_{\si}  |\Psi_{\a\b}|  &=& \frac{2 <\der_{\si} ( \Psi_{\a\b}), \Psi_{\a\b}>  }{ 2<\Psi_{\a\b}, \Psi_{\a\b}> ^{\frac{1}{2}}}   \\
&=& \frac{2 <\der_{\si} ( \Psi_{\a\b}), \Psi_{\a\b}> - 2  <\Psi_{\a\b}, [A_{\si}, \Psi_{\a\b}]> + 2 <\Psi_{\a\b},[A_{\si}, \Psi_{\a\b}]> }{ 2<\Psi_{\a\b}, \Psi_{\a\b}> ^{\frac{1}{2}}}  \\
&=& \frac{2 <\der_{\si} ( \Psi_{\a\b}), \Psi_{\a\b}> - 2  <[\Psi_{\a\b}, A_{\si}], \Psi_{\a\b}> + 2 <\Psi_{\a\b}, [A_{\si}, \Psi_{\a\b}]> }{ 2<\Psi_{\a\b}, \Psi_{\a\b}> ^{\frac{1}{2}}}  
\eeaa
(since $< \;,\; >$ is Ad-invariant)
\beaa
\notag
&=& \frac{2 <\der_{\si}  \Psi_{\a\b}, \Psi_{\a\b}>  + 2  <[A_{\si}, \Psi_{\a\b}] , \Psi_{\a\b}> + 2 <\Psi_{\a\b}, [A_{\si}, \Psi_{\a\b}]> }{ 2<\Psi_{\a\b}, \Psi_{\a\b}> ^{\frac{1}{2}}}  \\
\notag
&& +  \frac{  2 < \Psi (\der_{\si} e_{\a}, e_{\b}), \Psi_{\a\b}> +  2< \Psi (e_{\a}, \der_{\si}  e_{\b}) , \Psi_{\a\b}>}{ 2<\Psi_{\a\b}, \Psi_{\a\b}> ^{\frac{1}{2}}} 
\eeaa
Hence,
\beaa
\notag
| \der_{\si}  |\Psi_{\a\b}|  | &\leq& \frac{|\cder_{\si}  \Psi_{\a\b}  | |\Psi_{\a\b}|}{| \Psi_{\a\b}|} + \frac{| \Psi (\der_{\si} e_{\a}, e_{\b})  | |\Psi_{\a\b}|}{| \Psi_{\a\b}|} +  \frac{|\Psi (e_{\a}, \der_{\si}  e_{\b})  | |\Psi_{\a\b}|}{| \Psi_{\a\b}|} \\
&\leq& |\cder  \Psi_{\a\b}  | +  | \Psi (\der_{\si} e_{\a}, e_{\b})  |  + | \Psi (e_{\a}, \der_{\si}  e_{\b})  | 
\eeaa

\end{proof}

\begin{lemma}
We have,
\bea
\der_{\si} h(e_{\a}, e_{\b}) &=& 2     \g(e_{\a}, \der_{\si} \frac{\pr}{\pr \hat{t}} ) . \g(e_{\b}, \frac{\pr}{\pr \hat{t}} )  + 2  \g( e_{\a},  \frac{\pr}{\pr \hat{t}} ) . \g(e_{\b}, \der_{\si} \frac{\pr}{\pr \hat{t}} )   \label{derivativeofthemetrich}
\eea
\end{lemma}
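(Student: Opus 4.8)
The plan is to differentiate the defining relation for $h$ directly, using nothing more than the Leibniz rule for the tensorial covariant derivative together with the metric compatibility $\der\g = 0$ of the Levi-Civita connection on $(M,\g)$. Recall that by definition $h(e_{\a}, e_{\b}) = \g(e_{\a}, e_{\b}) + 2\g(e_{\a}, \frac{\pr}{\pr\hat{t}})\g(e_{\b}, \frac{\pr}{\pr\hat{t}})$, so that $h = \g + 2\,\omega\otimes\omega$, where $\omega$ denotes the one-form $\omega(X) = \g(X, \frac{\pr}{\pr\hat{t}})$.

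First I would apply $\der_{\si}$ to this relation and use that the tensorial covariant derivative is a derivation with respect to tensor products, obtaining $\der_{\si} h = \der_{\si}\g + 2(\der_{\si}\omega)\otimes\omega + 2\,\omega\otimes(\der_{\si}\omega)$. Next I would observe that $\der_{\si}\g = 0$ since $\der$ is the Levi-Civita connection, so the first term drops out. It then remains to identify $\der_{\si}\omega$: again by metric compatibility, for any vector field $X$ one has $(\der_{\si}\omega)(X) = \der_{\si}(\g(X, \frac{\pr}{\pr\hat{t}})) - \g(\der_{\si}X, \frac{\pr}{\pr\hat{t}}) = (\der_{\si}\g)(X, \frac{\pr}{\pr\hat{t}}) + \g(X, \der_{\si}\frac{\pr}{\pr\hat{t}}) = \g(X, \der_{\si}\frac{\pr}{\pr\hat{t}})$, i.e. $\der_{\si}\omega = \g(\,\cdot\,, \der_{\si}\frac{\pr}{\pr\hat{t}})$. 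Substituting back and evaluating on $(e_{\a}, e_{\b})$ yields exactly the claimed formula $(\der_{\si}h)(e_{\a},e_{\b}) = 2\g(e_{\a}, \der_{\si}\frac{\pr}{\pr\hat{t}})\g(e_{\b}, \frac{\pr}{\pr\hat{t}}) + 2\g(e_{\a}, \frac{\pr}{\pr\hat{t}})\g(e_{\b}, \der_{\si}\frac{\pr}{\pr\hat{t}})$.

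I do not expect any genuine obstacle here; the only point that needs a little care is the bookkeeping between the plain derivative $\pr_{\si}$ of the scalar $h(e_{\a},e_{\b})$ and the tensorial covariant derivative $(\der_{\si}h)(e_{\a},e_{\b})$, that is, remembering the correction terms $-h(\der_{\si}e_{\a},e_{\b}) - h(e_{\a},\der_{\si}e_{\b})$. These are matched term by term by the corresponding corrections hidden inside $\der_{\si}\g$ and inside $\der_{\si}\omega$, so everything cancels consistently and the stated identity survives. Alternatively, since both sides are tensorial, one may simply verify the identity pointwise in a $\g$-normal frame at the base point, where the Christoffel symbols vanish, $\der_{\si}$ reduces to $\pr_{\si}$ on the scalar quantities, and the computation becomes an immediate application of the product rule — which is presumably the route the expository style of the manuscript will follow. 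The practical upshot, exploited later in the proof of Lemma \ref{estimateonderivativeofafullcontractionwithhintermsofmixedterms}, is that $\der h$ is pointwise controlled by $\der\frac{\pr}{\pr\hat{t}}$, hence by the deformation tensor of $\frac{\pr}{\pr t}$.
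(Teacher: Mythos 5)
Your proposal is correct and follows essentially the same route as the paper: the paper expands $(\der_{\si}h)(e_{\a},e_{\b})$ via the tensorial Leibniz rule in components and uses $\der\g=0$ to convert $\pa_{\si}\g(e_{\a},\frac{\pr}{\pr\hat{t}})-\g(\der_{\si}e_{\a},\frac{\pr}{\pr\hat{t}})$ into $\g(e_{\a},\der_{\si}\frac{\pr}{\pr\hat{t}})$, which is exactly your identification of $\der_{\si}\omega$. Your packaging of $h=\g+2\,\omega\otimes\omega$ is just a cleaner way of writing the same bookkeeping, and the cancellations you describe are the ones the paper carries out explicitly.
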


\begin{proof}

\beaa
\der_{\si} h(e_{\a}, e_{\b}) &=& \pa_{\si} h(e_{\a}, e_{\b})  -   h(\der_{\si}e_{\a}, e_{\b}) -  h(e_{\a}, \der_{\si}e_{\b})  \\
&=& \der_{\si} \g(e_{\a}, e_{\b}) + 2 \der_{\si} [\g(e_{\a},  \frac{\pr}{\pr \hat{t}} ) . \g(e_{\b}, \frac{\pr}{\pr \hat{t}} ) ] \\
&=& 2 \pa_{\si} [\g(e_{\a},  \frac{\pr}{\pr \hat{t}} ) . \g(e_{\b}, \frac{\pr}{\pr \hat{t}} ) ] -  2  \g(\der_{\si}e_{\a},  \frac{\pr}{\pr \hat{t}} ) . \g(e_{\b}, \frac{\pr}{\pr \hat{t}} )  \\
&& - 2 \g( e_{\a},  \frac{\pr}{\pr \hat{t}} ) . \g( \der_{\si} e_{\b}, \frac{\pr}{\pr \hat{t}} ) 
\eeaa
(since the metric $\g$ is Killing)
\beaa
&=& 2  \pa_{\si} \g(e_{\a},  \frac{\pr}{\pr \hat{t}} ) . \g(e_{\b}, \frac{\pr}{\pr \hat{t}} )  +  2 \g(e_{\a},  \frac{\pr}{\pr \hat{t}} ) . \pa_{\si}  \g(e_{\b}, \frac{\pr}{\pr \hat{t}} )  \\
&& -  2  \g(\der_{\si}e_{\a},  \frac{\pr}{\pr \hat{t}} ) . \g(e_{\b}, \frac{\pr}{\pr \hat{t}} )   - 2  \g(e_{\a},  \frac{\pr}{\pr \hat{t}} ) . \g( \der_{\si}  e_{\b}, \frac{\pr}{\pr \hat{t}} )  \\
&=& 2  [ \pa_{\si} \g(e_{\a},  \frac{\pr}{\pr \hat{t}} )   -    \g(\der_{\si}e_{\a},  \frac{\pr}{\pr \hat{t}} )] . \g(e_{\b}, \frac{\pr}{\pr \hat{t}} ) \\
&& + 2 [  \pa_{\si} \g(e_{\b},  \frac{\pr}{\pr \hat{t}} )    -  \g(\der_{\si} e_{\b},  \frac{\pr}{\pr \hat{t}} ) ]. \g(e_{\a}, \frac{\pr}{\pr \hat{t}} )  
\eeaa

Using the fact that $\der g = 0$, we get,

\beaa
\der_{\si} h(e_{\a}, e_{\b}) &=& 2     \g(e_{\a}, \der_{\si} \frac{\pr}{\pr \hat{t}} ) . \g(e_{\b}, \frac{\pr}{\pr \hat{t}} )  + 2  \g( e_{\a},  \frac{\pr}{\pr \hat{t}} ) . \g(e_{\b}, \der_{\si} \frac{\pr}{\pr \hat{t}} )  
\eeaa

\end{proof}

Let,
\beaa
\hat{t}_{\a} = ( \frac{\pr}{\pr \hat{t}} )_{\a} = \g_{\mu\a}  ( \frac{\pr}{\pr \hat{t}} )^{\mu}
\eeaa
Hence, we can write \eqref{positiveriemannianmetrich} as,
\bea
h_{\a\b} = \g_{\a\b} + 2 (\frac{\pr}{\pr \hat{t}})_{\a} (\frac{\pr}{\pr \hat{t}})_{\b}
\eea
and \eqref{derivativeofthemetrich} as,
\bea
\der_{\si} h_{\a\b} &=& 2  [ \der_{\si} \hat{t}_{\a} . \hat{t}_{\b} +    \hat{t}_{\a}  .  \der_{\si} \hat{t}_{\b} ]
\eea

\begin{lemma}
Let $\Psi$ be a ${\cal G}$-valued tensor, we have,
\bea
| \der_{\si} |\Psi|^{2} |(p) &\le& C(p) [ |\cder_{\si} \Psi|^{2} +   |\Psi|^{2} ] \label{estimateonderivativeofthenormwithrespecttoh}
\eea
where $C(p)$ depends on the space-time geometry on the point $p$.
\end{lemma}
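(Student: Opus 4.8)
The plan is to deduce this at once from the already established Lemma \eqref{estimateonderivativeofafullcontractionwithhintermsofmixedterms}. That lemma gives $| \der_{\si} |\Psi|^{2} |(p) \les C(p) [ |\cder_{\si} \Psi|. | \Psi | + |\Psi|^{2} ]$, so it suffices to dominate the mixed term $|\cder_{\si}\Psi|.|\Psi|$ by $|\cder_{\si}\Psi|^{2} + |\Psi|^{2}$. This is precisely Young's inequality $a.b \les a^{2} + b^{2}$ applied with $a = |\cder_{\si}\Psi|$ and $b = |\Psi|$; substituting and absorbing the resulting numerical factor into $C(p)$ produces the stated bound.

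Equivalently, one can rerun the computation in the proof of Lemma \eqref{estimateonderivativeofafullcontractionwithhintermsofmixedterms} from the start: expand $\der_{\si}|\Psi|^{2} = \der_{\si}(h_{\a\mu}h_{\b\nu}|\Psi^{\mu\nu}|.|\Psi^{\a\b}|)$ by the Leibniz rule, bound $\der_{\si}h_{\a\mu}$ using \eqref{derivativeofthemetrich} together with the smoothness of $\g$ and of $\frac{\pr}{\pr \hat{t}}$, and bound $\der_{\si}|\Psi^{\mu\nu}|$ using \eqref{derivativeestimateonthenormofacomponent}. Evaluating the full contractions in a normal frame at $p$ kills the Christoffel-symbol contributions $\der_{\si}e^{\mu}$ coming from the tensorial derivative, leaving a sum of terms of the form $|\cder_{\si}\Psi|.|\Psi|$ and $|\Psi|^{2}$ with coefficients controlled by the local geometry; one then applies $a.b \les a^{2} + b^{2}$ to the first type to reach the claimed inequality.

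I do not expect any genuine obstacle here; the statement is a routine strengthening of the preceding lemma. The only points to watch are that the normal-frame evaluation is legitimate because $|\Psi|^{2}$, $|\cder_{\si}\Psi|^{2}$ and $|\Psi|^{2}$ are scalars built by full contraction with $h$ and $\g$, hence frame independent, and that the constant $C(p)$ in the new inequality must also absorb both the passage between $h$ and $\g$ and the factor produced by Young's inequality, while still depending only on the space-time geometry at $p$. This is the form of the estimate that is invoked later to control gradient terms such as $|\cder_{a}\la|^{2}$ and $|\cder_{a}F|^{2}$ in the Gr\"onwall-type arguments.
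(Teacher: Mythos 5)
Your proposal is correct and your second route is essentially the paper's own proof: expand $\der_{\si}(h_{\a\mu}h_{\b\nu}|\Psi^{\mu\nu}|.|\Psi^{\a\b}|)$ by Leibniz, control $\der_{\si}h$ via \eqref{derivativeofthemetrich} and the component derivatives via \eqref{derivativeestimateonthenormofacomponent}, then apply Cauchy--Schwarz and $a.b\les a^{2}+b^{2}$. The shortcut via Lemma \eqref{estimateonderivativeofafullcontractionwithhintermsofmixedterms} plus Young's inequality is also valid and non-circular, since that lemma is proved independently by the same computation.
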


\begin{proof}

\beaa
 \der_{\si} |\Psi|^{2} &=&  \der_{\si} ( h_{\a\mu} h_{\b\nu}  |\Psi^{\mu\nu}|. |\Psi^{\a\b} | ) =  \der_{\si} ( h_{\a\mu} h_{\b\nu} ) .  |\Psi^{\mu\nu}|. |\Psi^{\a\b} | \\
&& +  h_{\a\mu} h_{\b\nu}  . \der_{\si} ( |\Psi^{\mu\nu}|. |\Psi^{\a\b} | ) \\
\eeaa
Therefore,
\beaa
| \der_{\si} |\Psi|^{2} | &\le& | (\der_{\si} h_{\a\mu})  h_{\b\nu}  |.  |\Psi^{\mu\nu}|. |\Psi^{\a\b} | + | h_{\a\mu} (\der_{\si} h_{\b\nu} )| .  |\Psi^{\mu\nu}|. |\Psi^{\a\b} | \\
&& + | h_{\a\mu} h_{\b\nu} | .  ( | \cder_{\si} \Psi^{\mu\nu}| + |\Psi(\der_{\si} e^{\mu}, e^{\nu})| + | \Psi(e^{\mu}, \der_{\si} e^{\nu}|   ) . |\Psi^{\a\b} |) \\
&& + | h_{\a\mu} h_{\b\nu}|. |  \Psi^{\mu\nu}|. ( |\cder_{\si} \Psi^{\a\b} |  + | \Psi(\der_{\si} e^{\a}, e^{\b}) |+ | \Psi (e^{\a}, \der_{\si} e^{\b}) | ) 
\eeaa
(due to \eqref{derivativeestimateonthenormofacomponent}).\\

Using \eqref{derivativeofthemetrich}, applying Cauchy-Schwarz, using the fact that the metric is smooth, and the inequality $a.b \les a^{2} + b^{2}$, we get,
\beaa
\notag
| \der_{\si} |\Psi|^{2} |(p) &\le& C(p) [ h_{\a\mu} h_{\b\nu}  |\cder_{\si} \Psi^{\mu\nu}|. |\cder_{\si} \Psi^{\a\b} | +  h_{\a\mu} h_{\b\nu}  |\Psi^{\mu\nu}|. |\Psi^{\a\b} | ] \\ 
&\les&  |\cder_{\si} \Psi|^{2} +   |\Psi|^{2} 
\eeaa

\end{proof}

Finally, we get

\begin{lemma} \label{boundingB}
We have,
\bea
\sup _{0 \le \overline{s} \le s}  |\overline{s} \la|^{2} \leq C(p, s) |J|^{2}
\eea
\end{lemma}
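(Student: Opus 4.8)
The plan is to derive a pointwise differential inequality for $|\overline{s}\la|^2$ along the null generators of $N^-(p)$ and then integrate it via a Gr\"onwall-type argument. First I would use the transport equation \eqref{eq:transport}, namely $\cder_L\la_{\a\b}+\tfrac12 tr\chi\,\la_{\a\b}=0$, to compute $\cder_L(\overline{s}^2\la_{\a\b})$. Since $L(s)=1$ along the generators, we get $\cder_L(\overline{s}^2\la_{\a\b})=2\overline{s}\la_{\a\b}+\overline{s}^2\cder_L\la_{\a\b}=\overline{s}^2\big(2\overline{s}^{-1}-tr\chi\big)\la_{\a\b}$, so that $\cder_L(\overline{s}^2\la_{\a\b})=\overline{s}\big(2-\overline{s}\,tr\chi\big)\la_{\a\b}$. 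The crucial point is that, by \eqref{behaviourtracechi}, $tr\chi=\tfrac{2}{s}+o(s^{-1})$ near $p$, so $2-\overline{s}\,tr\chi=o(1)$ as $\overline{s}\to 0$; in particular $|2-\overline{s}\,tr\chi|\le C(p)\overline{s}$ on a neighbourhood of $p$ on $N^-(p)$, using that the metric (hence $\chi$) is sufficiently smooth away from the vertex and the asymptotic behaviour at the vertex.

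Next I would pass to the norm $|\cdot|$ induced by the Riemannian metric $h$. Applying Lemma \ref{estimateonderivativeofthenormwithrespecttoh} with $\Psi=\overline{s}^2\la$ (or directly estimating $\der_L|\overline{s}^2\la|^2$), we obtain $\big|\der_L|\overline{s}^2\la|^2\big|\le C(p)\big(|\cder_L(\overline{s}^2\la)|^2+|\overline{s}^2\la|^2\big)$. Inserting the identity above for $\cder_L(\overline{s}^2\la)$ and using $|2-\overline{s}\,tr\chi|\le C(p)\overline{s}$ together with $|\overline{s}\cdot\overline{s}\la|=|\overline{s}^2\la|$, the first term is bounded by $C(p)\overline{s}^2|\overline{s}\la|^2\le C(p)|\overline{s}^2\la|^2$ on the relevant neighbourhood; thus $\big|\der_L|\overline{s}^2\la|^2\big|\le C(p)|\overline{s}^2\la|^2$ there. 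Actually, to land on the statement with $|\overline{s}\la|^2$ rather than $|\overline{s}^2\la|^2$, I would instead work directly with $\cder_L(\overline{s}\la)$: here $\cder_L(\overline{s}\la_{\a\b})=\la_{\a\b}+\overline{s}\cder_L\la_{\a\b}=(1-\tfrac12\overline{s}\,tr\chi)\la_{\a\b}=\tfrac12(2-\overline{s}\,tr\chi)\la_{\a\b}+\tfrac12\cdot\tfrac{2}{\overline s}\cdot 0$ — more carefully, $\cder_L(\overline{s}\la)=(1-\tfrac{\overline s}{2}tr\chi)\la$, and $1-\tfrac{\overline s}{2}tr\chi=\tfrac12(2-\overline s\,tr\chi)$ has a zero of order $\ge 1$ at the vertex, so $|\cder_L(\overline s\la)|\le C(p)\,\overline s\,|\la|=C(p)|\overline s\la|$. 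Hence by Lemma \ref{estimateonderivativeofthenormwithrespecttoh}, $\big|\der_L|\overline s\la|^2\big|\le C(p)\big(|\cder_L(\overline s\la)|^2+|\overline s\la|^2\big)\le C(p)|\overline s\la|^2$.

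Finally I would integrate along each generator: fix $\om\in\SSS^2$ and let $f(\overline s)=|\overline s\la|^2(0,\overline s,\om)$. The inequality above gives $|f'(\overline s)|\le C(p)f(\overline s)$ for $\overline s$ in the neighbourhood of the vertex, and at $\overline s=0$ the initial condition \eqref{eq:initial condition} gives $f(0)=|\J_{\a\b}(p)|^2\le C|J|^2$ (the constant accounting for the $h$-norm versus the $<\,,\,>$-norm at $p$). By Gr\"onwall, $f(\overline s)\le f(0)\,e^{C(p)\overline s}\le C(p,s)|J|^2$ for all $0\le\overline s\le s$ in that neighbourhood; taking the supremum over $\overline s$ and over $\om$ yields the claim near the vertex. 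Away from the vertex, $\la$ is smooth and $N^-_\tau(p)$ minus a neighbourhood of $p$ is compact, so $|\overline s\la|^2$ is bounded there by a constant times $|J|^2$ by continuous dependence of the transport ODE on the data; combining the two regions gives $\sup_{0\le\overline s\le s}|\overline s\la|^2\le C(p,s)|J|^2$. The main obstacle is the behaviour at the vertex $s=0$: one must genuinely exploit the improved cancellation $tr\chi-\tfrac{2}{s}\to 0$ (from \eqref{behaviourtracechi}) to kill the otherwise singular factor $\tfrac1s$, and one must be careful that all constants $C(p)$, $C(p,s)$ depend only on the spacetime geometry (through the smoothness of $\g$, the bounds on $\chi$, and the metric $h$) and linearly on $|J|^2$ through the data, not on $\la$ itself.
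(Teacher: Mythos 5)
Your proposal is correct and follows essentially the same route as the paper: compute $\cder_{L}(\overline{s}\la)=-\frac{\overline{s}}{2}(tr\chi-\frac{2}{\overline{s}})\la$ from the transport equation, exploit the vertex cancellation $tr\chi-\frac{2}{s}\to 0$ to get $|\cder_{L}(\overline{s}\la)|\lesssim|\overline{s}\la|$, apply Lemma \eqref{estimateonderivativeofthenormwithrespecttoh} to bound $|\der_{L}|\overline{s}\la|^{2}|\lesssim|\overline{s}\la|^{2}$, and integrate from the initial value $|J|^{2}$. The only (immaterial) difference is that you close the estimate with Gr\"onwall, whereas the paper integrates and absorbs the $O(s)\sup|\overline{s}\la|^{2}$ term by taking $s$ small depending on $p$.
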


\begin{proof}\

We also have at $q \in N^{-}(p) \backslash \{p\}$, $$\cder_{L}(s\la_{\a\b}) = L(s) \la_{\a\b} + s \cder_{L}\la_{\a\b} = \la_{\a\b} + s(- \frac{1}{2}tr\chi\la_{\a\b}) = - \frac{s}{2}\la_{\a\b}(tr\chi - \frac{2}{s})$$

As $|tr\chi - \frac{2}{s}| \longmapsto 0$, we get
\bea
\cder_{L}(s\la_{\a\b}) = O(1) s\la_{\a\b} \label{cderLslambda}
\eea

Hence,
\beaa
| \der_{L} |s\la|^{2} | &\les&   |\cder_{L} ( s\la ) |^{2} +   |s\la|^{2} \\
&\les&    |s\la|^{2}
\eeaa
(due to \eqref{cderLslambda}).\\

For all $(u=0, \overline{s}, \om) \in N^{-}_{\tau}(p)$,  $$\int_{0}^{s} \der_{L} |s\la|^{2} d\overline{s} = |s\la|^{2}(s) - |s\la|^{2}(p) \leq O(s) C(p) \sup _{0 \le \overline{s} \le s} |s\la|^{2}$$

As $$|s\la|^{2}(p) = |J|^{2}$$ choosing $s$ small depending on $p$ we have \eqref{boundingB}.

\end{proof}

Therefore $s\la_{\a\b}$ remains bounded near $p$, and it is also smooth away from $p$,  so $\eps ||\la ||_{L^{\infty}(S_{1 - \eps})}$ remains bounded and therefore $||\la ||_{L^{2}(S_{1 - \eps})}$ remains bounded. Therefore \eqref{lambdatf} gives,
\bea
\lim_{\eps \to 0} |- \int_{J^{-}(p)\cap\Sigma_{1 - \eps}} <\la_{\a\b}\delta(u), \cder_{T}F^{\a\b}>| = 0 \label{firstinitialdataterm}
\eea

\subsection{Estimating $ \lim_{\eps \to 0} \int_{ J^{-}(p)\cap\Sigma_{1 - \eps}}<\cder_{T}(\la_{\a\b}\delta(u)), F^{\a\b}> $}\

Examining now, 
\bea
\notag
&& \int_{J^{-}(p)\cap\Sigma_{1 - \eps}}<\cder_{T}(\la_{\a\b}\delta(u)), F^{\a\b}> \\
\notag
&&= \int_{J^{-}(p)\cap\Sigma_{1 - \eps}} <\cder_{T}(\la_{\a\b})\delta(u) + \der_{T}u\delta^{'}(u)\la_{\a\b}, F^{\a\b}> \\
\notag
&&=  \int_{J^{-}(p)\cap\Sigma_{1 - \eps}} \delta(u)<\cder_{T}(\la_{\a\b}), F^{\a\b}> +  \int_{J^{-}(p)\cap\Sigma_{1 - \eps}} \delta^{'}(u)\phi^{-1}<\la_{\a\b}, F^{\a\b}> \\
&& = I_{\eps}^{1} + I_{\eps}^{2} \label{i1andi2}
\eea

At $q \in N^{-}(p) \backslash \{p\}$ define,
\bea
N = \phi L + T
\eea
 We have,
\begin{eqnarray*}
\g(N, N) &=& \g(\phi L + T, \phi L + T) = \phi \g(T, L) + \phi \g(L,T) + \g(T,T) \\
&=& 2\phi \phi^{-1}  -1 = 2 -1 = 1 
\end{eqnarray*}
 means that $N$ is unit. For all $X \in T_{q}S_{1 - \eps}(p)$ tangent to $S_{1 - \eps}(p)$, i.e. $X \in T_{q}N^{-}(p)\cap T_{q}\Sigma_{1 - \eps}$, we have
\bea
\g(N, X) = \g(\phi L + T, X) = \phi \g(L, X) + \g(T, X) 
\eea
\bea
\g(L, X) = 0 \label{glx}
\eea
(since $X \in T_{q}N^{-}(p)$), and
\bea
\g(T, X) = 0 \label{gtx}
\eea
(since $X \in T_{q}\Sigma_{1 - \eps}$).\

\eqref{glx} and \eqref{gtx} show that $N$ is the unit normal to $S_{1 - \eps} = N^{-}(p) \cap \Sigma_{1 - \eps}$, it can be extended locally to define a vectorfield.\

Thus, we have
\begin{eqnarray*}
\der_{N}\delta(u) &=& \der_{N}(u).\delta^{'}(u) = \der_{\phi L + T}(u).\delta^{'}(u) \\
&=& (\phi \der_{L}(u) + \der_{T}(u) ).\delta^{'}(u) = \der_{T}(u).\delta^{'}(u) \\
&=& \phi^{-1}\delta^{'}(u)
\end{eqnarray*}

Thus
\bea
\notag
I_{\eps}^{2} &=&  \int_{J^{-}(p)\cap\Sigma_{1 - \eps}} \der_{N}\delta(u) <\la_{\a\b}, F^{\a\b}>    \\
\notag
&=& - \int_{J^{-}(p)\cap\Sigma_{1 - \eps}} \delta(u) \der_{N}<\la_{\a\b}, F^{\a\b}> \\
\notag
&& - \int_{J^{-}(p)\cap\Sigma_{1 - \eps}} \text{div}(N).\delta(u) <\la_{\a\b}, F^{\a\b}>  \label{ibpforn}
\eea
(by integration by parts)\

We choose $e_{1}, e_{2}$ tangent to $S_{1 - \eps}$. Since $N, e_{1}, e_{2}$ are unit we have
\beaa
\text{div} N = \g(\der_{N}N, N) + \g(\der_{a}N, e_{a}), \quad a \in \{1, 2\} 
\eeaa

$\g(N, N) = 1$ gives $\g(\der_{N}N, N) = 0$. We get div$N = \g(\der_{a}N, e_{a})$ and $N$ is unit normal to $S_{1 - \eps}$, and $N \in T_{q}\Sigma_{1 - \eps}$ since,
\beaa
\g(N, T) = \g(\phi L + T, T) = \phi \g(L, T) + \g(T, T) = \phi \phi^{-1} -1 = 1-1 = 0 
\eeaa
so we get div$N = tr\th$, where $\th$ is the second fundamental form of the surface $S_{1-\eps}$ embedded in $\Sigma_{1-\eps}$, defined as,
\bea
\th(X, Y) = \g(\der_{X}N, Y)
\eea
for all $X, Y \in T_{q}S_{1 - \eps}$.\
Thus   \eqref{ibpforn} becomes,
\bea
\notag
I_{\eps}^{2} &=& - \int_{J^{-}(p)\cap\Sigma_{1 - \eps}} \delta(u) (\der_{N}<\la_{\a\b}, F^{\a\b}> + tr\th<\la_{\a\b}, F^{\a\b}> ) \\
\notag
&=& - \int_{J^{-}(p)\cap\Sigma_{1 - \eps}} \delta(u) (<\cder_{N}\la_{\a\b}, F^{\a\b}> + tr\th<\la_{\a\b}, F^{\a\b}> ) \\
&& - \int_{J^{-}(p)\cap\Sigma_{1 - \eps}} \delta(u) <\la_{\a\b}, \cder_{N}F^{\a\b}> \label{traceth}
\eea

We showed \eqref{firstinitialdataterm}, in the same manner, we have,
\bea
\lim_{\eps \to 0}  \int_{J^{-}(p)\cap\Sigma_{1 - \eps}} \delta(u) <\la_{\a\b}, \cder_{N}F^{\a\b}> = 0 \label{inthesamemanner}
\eea
Thus, injecting \eqref{inthesamemanner} and \eqref{traceth} in \eqref{i1andi2}, we get,
\bea
\notag
 \lim_{\eps \to 0} I_{\eps}^{1} + I_{\eps}^{2} &=& \lim_{ \eps \to 0}  (\int_{J^{-}(p)\cap\Sigma_{1 - \eps}} \delta(u) [<\cder_{T}\la_{\a\b}, F^{\a\b}> \\
\notag
&&  - <\cder_{N}\la_{\a\b}, F^{\a\b}> + tr\th<\la_{\a\b}, F^{\a\b}> ])  \\
\notag 
&=& \lim_{\eps \to 0} - \int_{J^{-}(p)\cap\Sigma_{1 - \eps}} \delta(u) (<\cder_{N - T}\la_{\a\b}, F^{\a\b}> + tr\th<\la_{\a\b}, F^{\a\b}> ) \\   \label{i1andi2secondform}
\eea

We recall that $N = \phi L + T$, thus $\phi L = N - T$, therefore $$\cder_{N - T}\la_{\a\b} = \phi \cder_{L}\la_{\a\b}$$ and we recall that by construction of $\la$, we have $\cder_{L}\la_{\a\b} + \frac{1}{2}tr \chi \la_{\a\b} = 0$ at $q \in N^{-}(p) \backslash \{p\}$.\

We obtain,
\bea
\cder_{N - T}\la_{\a\b} = - \frac{1}{2} tr\chi \phi \la_{\a\b} \label{derivnminustlamda}
\eea
at $q \in N^{-}(p) \backslash \{p\}. $\

Hence, from \eqref{derivnminustlamda} we can write \eqref{i1andi2secondform} as,
\bea
\lim_{\eps \to 0}  I_{\eps}^{1} + I_{\eps}^{2} = - \lim_{\eps \to 0}  \int_{J^{-}(p)\cap\Sigma_{1 - \eps}} \delta(u) ( - \frac{1}{2} \phi tr\chi + tr\th) <\la_{\a\b}, F^{\a\b}> \label{i1andi2thirdform}
\eea

\begin{eqnarray*}
\th_{aa} &=& \g(\der_{a}N, e_{a}) = \g(\der_{a}(\phi L + T), e_{a}) \\
&=& \g(e_{a}(\phi)L + \phi\der_{a}L + \der_{a}T, e_{a}) \\
&=&  e_{a}(\phi) \g(L, e_{a}) + \phi \g(\der_{a}L, e_{a}) + \g(\der_{a}T, e_{a}).
\end{eqnarray*}

We have $\g(L, e_{a}) = 0$, therefore,
\bea
\th_{aa} = \phi \chi_{aa} + k_{aa} \label{thaa}
\eea
where,
\bea
k_{aa} = \g(\der_{a}T, e_{a}) \label{kaa}
\eea

Injecting \eqref{thaa} and \eqref{kaa} in \eqref{i1andi2thirdform} we get,
\bea
\lim_{\eps \to 0}  I_{\eps}^{1} + I_{\eps}^{2} = - \lim_{\eps \to 0} \int_{\Sigma_{1 - \eps}} \delta(u) (\frac{1}{2} \phi tr\chi + k_{aa}).<\la_{\a\b}, F^{\a\b}>
\eea

where the repeated index $k_{aa}$ means summation $\sum_{a=1,2} k_{aa} = \delta^{ab}k_{ab}$.\

We get,
\bea
&& \lim_{\eps \to 0}   I_{\eps}^{1} + I_{\eps}^{2} \label{laststepi1andi2} \\
\notag
&=& - \frac{1}{2} \lim_{\eps \to 0} \int_{\Sigma_{1 - \eps}} \delta(u)\phi tr\chi <\la_{\a\b}, F^{\a\b}> - \lim_{\eps \to 0} \int_{\Sigma_{1 - \eps}} \delta(u) \delta^{ab}k_{ab} <\la_{\a\b}, F^{\a\b}>
\eea

\begin{lemma} \label{deltaabkab}
We have,
\beaa
\lim_{\eps \to 0} \int_{\Sigma_{1 - \eps}} \delta(u) \delta^{ab}k_{ab} <\la_{\a\b}, F^{\a\b}> = 0
\eeaa
\end{lemma}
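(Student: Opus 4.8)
The plan is to observe that this is an integral of the same kind as the one handled in \eqref{firstinitialdataterm}, only easier, since here no derivative lands on $F$. First I would unfold the distributional integral by means of Definition \ref{definitionintegralonthenullcone}: since $\delta(u)$ localises the integral onto $N^{-}(p)$ and $ds = -\phi\, dt$ with $\phi = \g(T,L)^{-1}$ the null lapse function, one has
\beaa
\int_{\Sigma_{1-\eps}} \delta(u)\, \delta^{ab}k_{ab} <\la_{\a\b}, F^{\a\b}> = \int_{S_{1-\eps}} \delta^{ab}k_{ab} <\la_{\a\b}, F^{\a\b}>\, \phi\, dA_{S_{1-\eps}(p)},
\eeaa
where $S_{1-\eps} = N^{-}(p)\cap\Sigma_{1-\eps}$ is the $2$-surface on the past null cone that shrinks to the vertex $p$ as $\eps\to 0$.

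Next I would bound the integrand pointwise. The scalar $\delta^{ab}k_{ab}$ is built from the smooth metric and is thus bounded on a fixed neighbourhood of $p$; $\phi$ is bounded near $p$ by \eqref{behaviourphinearp}; and $|F|$, measured with the Riemannian metric $h$, is bounded near $p$ because the local Yang-Mills solution is smooth. Using the triangle inequality, pulling out these $L^{\infty}$ norms, and applying the Cauchy-Schwarz inequality \eqref{Cauchy-Schwarzinequalitywithmetrich} to $<\la_{\a\b}, F^{\a\b}>$, the problem reduces to showing that $\int_{S_{1-\eps}} |\la|\, dA_{S_{1-\eps}(p)} \to 0$ as $\eps\to 0$.

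For that step I would apply Cauchy-Schwarz on $S_{1-\eps}$, namely
\beaa
\int_{S_{1-\eps}} |\la|\, dA_{S_{1-\eps}(p)} \leq ||\la||_{L^{2}(S_{1-\eps})}\, |A_{S_{1-\eps}(p)}|^{\frac{1}{2}},
\eeaa
and then invoke two facts already available: $||\la||_{L^{2}(S_{1-\eps})}$ stays bounded as $\eps\to 0$ (noted just before \eqref{firstinitialdataterm}, a consequence of Lemma \ref{boundingB} that $s\la$ is bounded near $p$), and $|A_{S_{1-\eps}(p)}| = 4\pi\eps^{2} + o(\eps^{2})$ by \eqref{areaexpression}. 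Multiplying, $\int_{S_{1-\eps}} |\la|\, dA_{S_{1-\eps}(p)} \les \eps$, hence the whole integral is $\les \eps\, ||F||_{L^{\infty}}$, which tends to $0$.

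I do not anticipate any genuine obstacle; the only thing to get right is the counting of powers of $\eps$ near the vertex — the $\eps^{-1}$ blow-up of $\la$ at $p$ is overcome by the $\eps^{2}$ vanishing of the area of $S_{1-\eps}$, leaving a net factor $\eps$. This is exactly the cancellation already used for \eqref{firstinitialdataterm} and \eqref{inthesamemanner}, so the argument is routine.
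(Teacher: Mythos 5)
Your argument is correct and is essentially identical to the paper's own proof: unfold the $\delta(u)$ integral onto $S_{1-\eps}$ via the null lapse $\phi$, pull out $\|k\|_{L^{\infty}}$, $\|\phi\|_{L^{\infty}}$, $\|F\|_{L^{\infty}}$, apply Cauchy–Schwarz to land on $\|\la\|_{L^{2}(S_{1-\eps})}\,|A_{S_{1-\eps}(p)}|^{1/2}\lesssim\eps$, and conclude from the boundedness of $\|\la\|_{L^{2}(S_{1-\eps})}$ established via Lemma \eqref{boundingB}. The only cosmetic difference is that you first reduce to $\int_{S_{1-\eps}}|\la|\,dA$ before applying Cauchy–Schwarz, whereas the paper applies it directly to the pairing; the power counting in $\eps$ is the same.
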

\begin{proof}:\\
\begin{eqnarray*}
&&|\int_{\Sigma_{1 - \eps}} \delta^{ab}k_{ab} \delta(u) <\la_{\a\b}, F^{\a\b}>| = \int_{N^{-}(p)\cap\Sigma_{1 - \eps}} \delta^{ab}k_{ab} <\la_{\a\b}, F^{\a\b}> \phi dA_{S_{t = 1 - \eps}}| \\
&&\lesssim ||k||_{L^{\infty}}  ||F||_{L^{\infty}} ||\phi||_{L^{\infty}} ||\la||_{L^{2}(S_{1-\eps}(p))} |A_{S_{1-\eps}(p)}|^{\frac{1}{2}} \\
&&\lesssim \eps ||k||_{L^{\infty}}  ||F||_{L^{\infty}} ||\phi||_{L^{\infty}} ||\la||_{L^{2}(S_{1-\eps}(p))}  
\end{eqnarray*}

And as we showed previously $ ||\la||_{L^{2}(S_{1-\eps}(p))}$ remains bounded as $\eps \longmapsto 0$. Thus,
\bea
\lim_{\eps \to 0} \int_{\Sigma_{1 - \eps}} \delta(u) \delta^{ab}k_{ab} <\la_{\a\b}, F^{\a\b}> = 0
\eea
\end{proof}
We are left to estimate $- \frac{1}{2} \lim_{\eps \to 0}  \int_{\Sigma_{1 - \eps}} \delta(u) \phi tr\chi <\la_{\a\b}, F^{\a\b}>$ in  \eqref{laststepi1andi2}.
\begin{lemma} \label{FtoFpinthelimitintegral}
We have, $$- \frac{1}{2} \lim_{\eps \to 0}  \int_{\Sigma_{1 - \eps}} \delta(u) \phi tr\chi <\la_{\a\b}, F^{\a\b}> = - \frac{1}{2}\lim_{\eps \to 0} \int_{N^{-}(p)\cap\Sigma_{1 - \eps}} \phi^{2} tr\chi <\la_{\a\b}, F^{\a\b}(p)> dA$$
\end{lemma}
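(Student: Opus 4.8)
The plan is to peel off the distribution in two moves: first unfold $\delta(u)$ so that the left-hand integral becomes a genuine surface integral over $S_{1-\eps}:=N^{-}(p)\cap\Sigma_{1-\eps}$, and then freeze $F$ at the vertex $p$. For the first move, the defining property of the null-cone integral in Definition~\ref{definitionintegralonthenullcone} (already used, e.g., when passing from $\int_{J^{-}(p)\cap\Sigma_{1-\eps}}<\la_{\a\b}\delta(u),\cder_{T}F^{\a\b}>$ to its surface form) says that integrating against $\delta(u)$ on $\Sigma_{1-\eps}$ amounts to restricting to $S_{1-\eps}$ and inserting the null lapse $\phi$ and the area element $dA_{S_{1-\eps}(p)}$; applied with the integrand $\phi\, tr\chi <\la_{\a\b},F^{\a\b}>$ it turns the left-hand side into $-\frac{1}{2}\int_{S_{1-\eps}}\phi^{2}\, tr\chi <\la_{\a\b},F^{\a\b}>\,dA$. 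This differs from the right-hand side only in that $F^{\a\b}$ has not yet been replaced by its value $F^{\a\b}(p)$ at the vertex, so the lemma reduces to showing that
\[
R_{\eps}\;:=\;\int_{S_{1-\eps}}\phi^{2}\, tr\chi <\la_{\a\b},\,F^{\a\b}-F^{\a\b}(p)>\,dA\;\longrightarrow\;0\qquad(\eps\to 0),
\]
where $F^{\a\b}(p)$ is understood componentwise in a fixed frame about $p$.

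For the second move I would estimate $R_{\eps}$ by a weighted $L^{1}$ bound, using that the surface $S_{1-\eps}$ collapses to $p$ as $\eps\to 0$. Near $p$ one has $\phi=1+o(1)$ by \eqref{behaviourphinearp}, $tr\chi=\frac{2}{s}+o(\frac1s)$ by \eqref{behaviourtracechi}, and $(|\la|^{2})^{\frac12}\les |J|/s$ on $N^{-}_{\tau}(p)$ by Lemma~\ref{boundingB}; moreover $s=1-t+o(1-t)$ by \eqref{relationsandt}, so on $S_{1-\eps}$ every point $q$ obeys $s(q)\les\eps$ and, since $F$ is smooth up to $p$, $(|F-F(p)|^{2})^{\frac12}(q)\les d(q,p)\les s$. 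Feeding these into the Cauchy--Schwarz inequality \eqref{Cauchy-Schwarzinequalitywithmetrich} and the area asymptotics $|A_{S_{1-\eps}(p)}|=4\pi s^{2}+o(s^{2})$ of \eqref{areaexpression} gives
\[
|R_{\eps}|\;\les\;\int_{S_{1-\eps}}\frac{1}{s}\cdot\frac{|J|}{s}\cdot s\;dA\;\les\;\frac{|J|}{s}\,|A_{S_{1-\eps}(p)}|\;\les\;|J|\,s\;\longrightarrow\;0 .
\]
Away from $p$ the integrand is smooth and bounded and contributes nothing in the limit, so taking $-\frac{1}{2}\lim_{\eps\to0}$ of both sides of the identity of the first paragraph yields the asserted equality.

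The computation is otherwise routine, and I expect the only delicate point to be the \emph{uniformity} in $\om\in\SSS^{2}$ of all the near-vertex asymptotics ($\phi\to1$, $tr\chi\sim 2/s$, boundedness of $|s\la|$, the Lipschitz bound on $F$, and $|A_{S_{1-\eps}}|\sim 4\pi s^{2}$) as $S_{1-\eps}$ shrinks to $p$, which is exactly what makes the single power of $s$ gained from the continuity of $F$ absorb the $s^{-2}$ growth of $tr\chi\cdot\la$ against the $O(s^{2})$ area element. Once this uniformity is secured the conclusion is immediate, and the same $O(s)$ margin is what will make the subsequent evaluation of $-\frac{1}{2}\lim_{\eps\to0}\int_{S_{1-\eps}}\phi^{2}\,tr\chi<\la_{\a\b},F^{\a\b}(p)>\,dA$ convergent and explicitly computable in terms of $\J$ and $F(p)$.
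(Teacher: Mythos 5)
Your proposal is correct and follows essentially the same route as the paper: unfold $\delta(u)$ into a surface integral over $S_{1-\eps}$ with the lapse $\phi$, then kill the remainder term containing $F-F(p)$ using the boundedness of $s\la$, the asymptotics $tr\chi\sim 2/s$ and $|A_{S_{1-\eps}}|\sim 4\pi s^{2}$, and the behaviour of $F$ at the vertex. The only (harmless) difference is that you invoke a Lipschitz bound $|F-F(p)|\les s$, gaining an $O(s)$ rate, whereas the paper's Cauchy--Schwarz estimate only needs $\|F-F(p)\|_{L^{\infty}(S_{1-\eps}(p))}\to 0$, i.e.\ continuity of $F$ at $p$.
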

\begin{proof}:\

$$- \frac{1}{2} \lim_{\eps \to 0}  \int_{\Sigma_{1 - \eps}} \delta(u) \phi tr\chi <\la_{\a\b}, F^{\a\b}> = - \frac{1}{2} \lim_{\eps \to 0}  \int_{N^{-}(p)\cap\Sigma_{t = 1 - \eps}} \phi^{2} tr\chi <\la_{\a\b}, F^{\a\b}>_{\g}dA$$

We have
\begin{eqnarray*}
&& | \int_{N^{-}(p)\cap\Sigma_{1 - \eps}} \phi^{2} tr\chi <\la_{\a\b}, F^{\a\b} - F^{\a\b}(p)> dA| \\
&=& | \int_{S_{1 - \eps}(p)} \phi^{2} tr\chi <\la_{\a\b}, F^{\a\b} - F^{\a\b}(p)> dA |\\
&\lesssim& ||\phi||_{L^{\infty}}^{2} ||A||_{L^2 (S_{1-\eps}(p))} ||F - F(p)||_{L^{\infty} (S_{1-\eps}(p))} |S_{1-\eps}(p)|^{\frac{1}{2}} ||tr\chi||_{L^{\infty} (S_{1-\eps}(p))} 
\end{eqnarray*}
As,
\beaa
|S_{1-\eps}(p)| \sim 4\pi\eps^{2} \quad \text{as} \quad \eps \longmapsto 0
\eeaa
 and
\beaa
|tr\chi| \sim \frac{2}{ \eps } \quad \text{as} \quad \eps \longmapsto 0
\eeaa
we get,
\beaa
|S_{1-\eps}(p)|^{\frac{1}{2}} ||tr\chi||_{L^{\infty} (S_{1-\eps}(p))} \sim 8\pi \quad \text{as} \quad \eps \longmapsto 0
\eeaa
This yields to
\begin{eqnarray*}
&&| \int_{N^{-}(p)\cap\Sigma_{1 - \eps}} \phi^{2} tr\chi <\la_{\a\b}, F^{\a\b} - F^{\a\b}(p)> dA|\\
&&\lesssim ||\phi||_{L^{\infty}}^{2} ||A||_{L^2 (S_{1-\eps}(p))} ||F - F(p)||_{L^{\infty} (S_{1-\eps}(p))}
\end{eqnarray*}

Since, $$\lim_{\eps \to 0} ||F - F(p)||_{L^{\infty} (S_{1-\eps}(p))} = 0$$ and as we showed $ ||\la_{\a\b}||_{L^{2}(S_{1-\eps}(p))}$ remains bounded as $\eps \longmapsto 0$, we get $$\lim_{\eps \to 0} |\int_{N^{-}(p)\cap\Sigma_{1 - \eps}} \phi^{2} tr\chi <\la_{\a\b}, F^{\a\b} - F^{\a\b}(p)>_{\g}dA| = 0$$

Therefore,
\bea
\notag
&& \lim_{\eps \to 0} \int_{N^{-}(p)\cap\Sigma_{1 - \eps}} \phi^{2} trX <\la_{\a\b}, F^{\a\b}> dA \\
&=& \lim_{\eps \to 0} \int_{N^{-}(p)\cap\Sigma_{1 - \eps}} \phi^{2} trX <\la_{\a\b}, F^{\a\b}(p)> dA
\eea
\end{proof}
\begin{lemma} \label{finalleammatogetFp}
We have, $$\lim_{\eps \to 0} \int_{N^{-}(p)\cap\Sigma_{1 - \eps}} \phi^{2} trX <\la_{\a\b}, F^{\a\b}(p)> dA = 8\pi <(\J_{p})_{\a\b}, F^{\a\b} (p)>$$
\end{lemma}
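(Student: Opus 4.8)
The plan is to treat $F^{\a\b}(p)$ as a fixed element of the relevant tensor--Lie-algebra space (it does not depend on $\eps$), and, using the continuity of $\langle\,\cdot\,,\,\cdot\,\rangle$, to reduce the claim to the ${\cal G}$-valued limit
$$ \lim_{\eps\to 0}\ \int_{N^{-}(p)\cap\Sigma_{1-\eps}} \phi^{2}\, tr\chi\, \la_{\a\b}\, dA \ =\ 8\pi\,(\J_{p})_{\a\b}. $$
First I would parametrize the $2$-surface $S_{1-\eps}=N^{-}(p)\cap\Sigma_{1-\eps}$ by $\om\in\SSS^{2}$ through the null geodesics $\ga_{\om}$ used to define $N^{-}(p)$, with $s=s(1-\eps,\om)$, and write the induced area element as $dA_{S_{1-\eps}}=\rho(s,\om)\,d\sigma^{2}$, where $d\sigma^{2}$ is the canonical measure on $\SSS^{2}$ with $\int_{\SSS^{2}}d\sigma^{2}=4\pi$. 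By smoothness of the metric and of the exponential map at $p$ together with the normalisation \eqref{normalisationcondition}, one has $\rho(s,\om)/s^{2}\to 1$ as $\eps\to 0$, uniformly in $\om$; this is the pointwise refinement underlying \eqref{areaexpression}. Substituting, the integral becomes $\int_{\SSS^{2}}\phi^{2}\,(s\,tr\chi)\,(s\la_{\a\b})\,\big(\rho(s,\om)/s^{2}\big)\,d\sigma^{2}$.

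Next I would record the behaviour of the remaining factors as $\eps\to 0$, all uniform in $\om$ by smoothness of the background geometry: $\phi^{2}\to 1$ by \eqref{behaviourphinearp}; $s\,tr\chi\to 2$ by \eqref{behaviourtracechi}; and $s\la_{\a\b}\to(\J_{p})_{\a\b}$. The last limit is the heart of the matter, and I would obtain it from the transport equation \eqref{eq:transport}--\eqref{eq:initial condition}: the identity \eqref{cderLslambda}, namely $\cder_{L}(s\la_{\a\b})=O(1)\,s\la_{\a\b}$, together with the boundedness of $s\la$ near $p$ from Lemma \ref{boundingB}, shows that the components of $s\la_{\a\b}$ in a fixed frame near $p$ satisfy, along each $\ga_{\om}$, a linear ordinary differential equation in $s$ whose coefficients stay bounded down to $s=0$ (the singular part of $tr\chi$ being absorbed by \eqref{behaviourtracechi}); hence $s\la_{\a\b}$ extends continuously to the vertex with value $(\J_{p})_{\a\b}$ there, and one gets the quantitative bound $|(s\la_{\a\b})(q)-(\J_{p})_{\a\b}|\lesssim s$ with a constant depending only on the geometry near $p$, hence uniform in $\om$.

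With these four factors converging uniformly on $\SSS^{2}$ to $1$, $2$, $(\J_{p})_{\a\b}$ and $1$, the integrand converges uniformly to $2\,(\J_{p})_{\a\b}$; since $\SSS^{2}$ has finite measure I may pass to the limit under the integral and obtain $\int_{\SSS^{2}}2\,(\J_{p})_{\a\b}\,d\sigma^{2}=8\pi\,(\J_{p})_{\a\b}$. Pairing with $F^{\a\b}(p)$ and invoking continuity of $\langle\,\cdot\,,\,\cdot\,\rangle$ then yields $\lim_{\eps\to 0}\int_{N^{-}(p)\cap\Sigma_{1-\eps}}\phi^{2}\,tr\chi\,\langle\la_{\a\b},F^{\a\b}(p)\rangle\,dA=8\pi\,\langle(\J_{p})_{\a\b},F^{\a\b}(p)\rangle$, which is the assertion. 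The main obstacle is exactly the vertex limit $s\la_{\a\b}\to(\J_{p})_{\a\b}$: it forces one to control $tr\chi-\tfrac{2}{s}$, and likewise the area density relative to $s^{2}$, all the way through the tip of the null cone, which is where the delicate behaviour of $\chi$ near a vertex enters; this is handled via the evolution equation for $tr\chi$ along $L$ and the asymptotics \eqref{behaviourtracechi}, valid because $(M,\g)$ is assumed sufficiently smooth. Everything else is a routine uniform-convergence argument on the compact surface $\SSS^{2}$.
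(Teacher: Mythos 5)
Your proposal is correct and follows essentially the same route as the paper: both arguments rewrite the integrand so that each factor ($\phi$, $s\,tr\chi$ or $s^{-1}tr\chi$, $s\la_{\a\b}$, and the area element normalized by $s^{2}$) has a finite limit at the vertex, invoke the asymptotics \eqref{behaviourtracechi}, \eqref{behaviourphinearp}, \eqref{areaexpression} and the initial condition $(s\la)(p)=\J_{p}$, and then pass to the limit to obtain $2\cdot 4\pi\,\langle(\J_{p})_{\a\b},F^{\a\b}(p)\rangle$. The only cosmetic difference is that you pull the area element back to $\SSS^{2}$ explicitly, while the paper keeps $dA$ and uses $|S_{1-\eps}(p)|\sim 4\pi\eps^{2}$; the content is identical.
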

\begin{proof}\

We have,
\bea
\notag
&& \lim_{\eps \to 0} \int_{N^{-}(p)\cap\Sigma_{1 - \eps}} \phi^{2} tr\chi <\la_{\a\b}, F^{\a\b}(p)> \\
&=& \lim_{\eps \to 0} \int_{N^{-}(p)\cap\Sigma_{1 - \eps}} \phi^{2} (s^{-1}tr\chi) <(s\la_{\a\b}), F^{\a\b}(p)> \label{ssminus1}
\eea

As,
\bea
|tr\chi - \frac{2}{s}| = O(s^{2})
\eea
where $O$ depends on the geometry of the space-time (see for example proposition 3.2 in the thesis of Q. Wang [Wang]), we get,
\bea
\lim_{s \to 0} \sup _{S_{1-\eps}(p)} |s^{-1}tr\chi - \frac{2}{s^{2}}|  = 0
\eea
and we know that,
\bea
\lim_{s \to 0} \sup _{S_{1-\eps}(p)} |\phi - 1| = 0
\eea
and,
\bea
\lim_{s \to 0} (s\la_{\a\b})  = \J_{p} 
\eea
This yields to
\bea
|s^{-1}tr\chi| \sim_{\eps \to 0} \frac{2}{\eps^{2}}  \\
|\phi| \sim_{\eps \to 0} 1  \\
(s\la_{\a\b}) \sim_{\eps \to 0} \J_{p} 
\eea
and therefore,
\bea
\notag
&& \int_{N^{-}(p)\cap\Sigma_{1 - \eps}} \phi^{2} (s^{-1}tr\chi) <(s\la_{\a\b}), F^{\a\b}(p)> dA \\
&& \sim_{\eps \to 0} \int_{N^{-}(p)\cap\Sigma_{1 - \eps}} \frac{2}{\eps^{2}} <(\J_{p})_{\a\b}, F^{\a\b}(p)> dA
\eea

and since,
\bea
|N^{-}(p)\cap\Sigma_{1 - \eps}| = |S_{1-\eps}(p)| \sim_{\eps \to 0} 4\pi\eps^{2}
\eea
we get,
\bea
\int_{N^{-}(p)\cap\Sigma_{1 - \eps}} \frac{2}{\eps^{2}} <(\J_{p})_{\a\b}, F^{\a\b}(p)> dA =  \frac{2}{\eps^{2}}(4\pi) \eps^{2} <(\J_{p})_{\a\b}, F^{\a\b}(p)> + O(\eps)
\eea
where $O(\eps) \longmapsto 0$ as $\eps \longmapsto 0$\

Given \eqref{ssminus1}, this yields to 

\bea
\lim_{\eps \to 0} \int_{N^{-}(p)\cap\Sigma_{1 - \eps}} \phi^{2} tr\chi <\la_{\a\b}, F^{\a\b}(p)> = 8\pi <(\J_{p})_{\a\b}, F^{\a\b}(p)>
\eea
\end{proof}
From \eqref{FtoFpinthelimitintegral} we get,
$$- \frac{1}{2} \lim_{\eps \to 0}  \int_{\Sigma_{1 - \eps}} \delta(u) \phi tr\chi <\la_{\a\b}, F^{\a\b}> = - 4\pi <(\J_{p})_{\a\b}, F(p)^{\a\b}>$$

\subsection{The parametrix}\

Finally, combining \eqref{limitonomegatocoverp}, \eqref{afterdiv}, \eqref{boxlambdadeltaF}, \eqref{firstinitialdataterm}, \eqref{i1andi2}, \eqref{laststepi1andi2}, \eqref{deltaabkab}, \eqref{FtoFpinthelimitintegral} and \eqref{finalleammatogetFp}, we get,

\begin{eqnarray*}
&&\int_{\Omega}<\la_{\a\b}\delta(u),\Box_{\g}^{(A)}F^{\a\b}> \\
&=& \lim_{\eps \to 0} [ \int_{\Omega_{\eps}}\delta(u)< \hat{\lap}^{(A)}\la_{\a\b}  +2 \ze_{a} \cder_{a} \la_{\a\b} + \frac{1}{2}\mu\la_{\a\b} \\
&&+ [F_{L \Lb}, \la_{\a\b}] - \frac{1}{2}{{R_{\a}}^{\ga}}_{\Lb L}\la_{\ga\b} - \frac{1}{2}{{R_{\b}}^{\ga}}_{\Lb L}\la_{\a\ga}, F^{\a\b}> ] + 0 - 4\pi <(\J_{p})_{\a\b}, F^{\a\b}(p)>\\
&& + \int_{J^{-}(p)\cap\Sigma_{t}}<\la_{\a\b}\delta(u), \cder_{T}F^{\a\b}>|_{t=0}  - \int_{J^{-}(p)\cap\Sigma_{t}}<\cder_{T}(\la_{\a\b}\delta(u)), F^{\a\b}>|_{t=0} 
\end{eqnarray*}

Therefore,
\bea
\notag
 4\pi <(\J_{p})_{\a\b}, F^{\a\b}(p)> &=& - \int_{\Omega}<\la\delta(u),\Box_{\g}^{(A)}F^{\a\b}> \\
\notag
&&  + \int_{\Omega}\delta(u)< \hat{\lap}^{(A)}\la_{\a\b}  + 2 \ze_{a} \cder_{a} \la_{\a\b} + \frac{1}{2}\mu\la_{\a\b} \\
\notag
&& + [F_{L \Lb}, \la_{\a\b}] - \frac{1}{2}{{R_{\a}}^{\ga}}_{\Lb L}\la_{\ga\b} - \frac{1}{2}{{R_{\b}}^{\ga}}_{\Lb L}\la_{\a\ga}, F^{\a\b}> \\
\notag
&& + \int_{J^{-}(p)\cap\Sigma}<\la_{\a\b}\delta(u), \cder_{T}F^{\a\b}> \\
&& - \int_{J^{-}(p)\cap\Sigma}<\cder_{T}(\la_{\a\b}\delta(u)), F^{\a\b}> \label{KSparametrixYMsetting}
\eea

where $\hat{\lap}^{(A)} \la_{\a\b}$ is the induced Laplacian on the span of $\{e_{a}\}$, $a \in \{ 1, 2 \}$, of $\la_{\a\b}$, defined by \eqref{laplacianonab}, and where the last two terms are the contribution of the initial data, the first term is the contribution of the nonlinear term in the tensorial wave equation, and the middle term is related to the geometry of the problem.\\



\begin{thebibliography}{Mo-Se-So}

\expandafter\ifx\csname url\endcsname\relax
\def\url#1{{\tt #1}}\fi
\expandafter\ifx\csname urlprefix\endcsname\relax\def\urlprefix{URL }\fi

\bibitem[Br]{Br} Y. Choqu\'et-Bruhat, \textit{Th\'eor\`eme d'existence pour certains
syst\`emes d'\'equations aux d\'eriv\'ees partielles nonlin\'eaires.}, Acta Math.
\textbf{88} (1952), 141-225.

\bibitem[C]{C} D. Christodoulou, \textit{The formation of black holes in general relativity}, Monographs in Mathematics, European
Mathematical Soc. 2009.

\bibitem[C-K]{C-K} D. Christodoulou, S. Klainerman, \textit{The global nonlinear stability of
the Minkowski space}, Princeton Math. Series {\bf 41}, 1993.

\bibitem[CS]{CS} P. Chru\'sciel, J. Shatah, \textit{Global existence of solutions of the Yang-Mills equations on globally hyperbolic four-dimensional Lorentzian manifolds},
Asian J. Math. {\bf 1} (1997), no. 3, 530--548.

\bibitem[EM1]{EM1} D. Eardley, V. Moncrief, \textit{The global
existence of Yang-Mills-Higgs fields in $4$-dimensional Minkowski space.
I. Local existence and smoothness properties.}  Comm. Math. Phys. {\bf
83} (1982), no. 2, 171--191.

\bibitem[EM2]{EM2} D.  Eardley, V. Moncrief, \textit{The global existence of
Yang-Mills-Higgs fields in $4$-dimensional Minkowski space. II.
Completion of proof.}  Comm. Math. Phys. {\bf  83} (1982), no. 2,
193--212.

\bibitem[Fried]{Fried} H. G. Friedlander, {\it The Wave Equation on a Curved Space-time},
Cambridge University Press, 1976.

\bibitem[HE]{HE}
S. W. Hawking \& G. F. R. Ellis,
{\it The Large Scale Structure of Space-time}, Cambridge: Cambridge University Press,
1973

\bibitem[KR1]{KR1} S. Klainerman, I. Rodnianski,
\textit{A Kirchoff-Sobolev parametrix for  the
 wave equation and applications}, Journ. Hyperb Diff. Eqts. 4, Nr 3 (2007), 401-433

 \bibitem[KR2]{KR2} S. Klainerman, I. Rodnianski,
 \textit{Causal Geometry of Einstein-Vacuum Spacetimes with Finite Curvature Flux},
Invent. math. 159 (2005), 437-529.

 \bibitem[KR3]{KR3} S. Klainerman, I. Rodnianski,
 \textit{On the breakdown criterion in General Relativity},
 J. Amer. Math. Soc. 23 (2010), 345-382

\bibitem[M]{M} V. Moncrief, \textit{An integral equation for space-time curvature in General Relativity}, Surveys in Differential Geometry, Vol X, International Press (2006), pages 109-146.

\bibitem[Pach]{Pach} B. G. Pachpatte, \textit{A note on integral inequalities of the Bellman-Bihari type}, J. Math. Anal. Appl., {\bf 49} (1975), 295-301.

\bibitem[Wang]{Wang} Q. Wang, \textit{Causal geometry of Einstein vacuum space-times}. PhD thesis, Princeton
University, 2006.

\bibitem[W]{W} R. M. Wald, \textit{General Relativity}, University of Chicago Press (Chicago,
1984). 

\end{thebibliography}
\end{document}